\documentclass[a4paper, 11pt]{article}

\usepackage[T1]{fontenc}
\usepackage[latin1]{inputenc}
\usepackage{url}

\usepackage[backref=page,colorlinks,bookmarksopen=true]{hyperref}
\usepackage{amsfonts,amsmath,amssymb}
\usepackage{amsthm}
\usepackage{enumerate}
\usepackage{amsrefs}

\usepackage{mathrsfs, xspace}
\usepackage{stmaryrd}
\usepackage{a4wide}
\usepackage[all]{xy}

\usepackage{pgf,tikz}
\usetikzlibrary{arrows}

\delimitershortfall=-0.1pt
\theoremstyle{plain}
\newtheorem{proposition}{Proposition}[section]
\newtheorem{theorem}[proposition]{Theorem}
\newtheorem{conj}[proposition]{Conjecture}

\newtheorem{lemma}[proposition]{Lemma}
\newtheorem{corollary}[proposition]{Corollary}

\newtheorem*{claim*}{Claim}

\theoremstyle{definition}
\newtheorem{definition}[proposition]{Definition}

\theoremstyle{remark}
\newtheorem{remark}[proposition]{Remark}


\DeclareMathOperator{\Fix}{Fix}

\DeclareMathOperator{\Aut}{Aut}
\DeclareMathOperator{\Out}{Out}
\DeclareMathOperator{\Inn}{Inn}
\DeclareMathOperator{\Stab}{Stab}
\DeclareMathOperator{\Ker}{Ker}
\DeclareMathOperator{\proj}{proj}

\DeclareMathOperator{\Haar}{Haar}

\DeclareMathOperator{\Ch}{Ch}
\DeclareMathOperator{\con}{con}
\DeclareMathOperator{\Res}{Res}
\DeclareMathOperator{\Sym}{Sym}

\newcommand{\tdlc}{t.d.l.c.\xspace}

\renewcommand{\d}{\mathrm{d}}

\DeclareMathOperator{\GL}{GL}
\DeclareMathOperator{\SL}{SL}
\DeclareMathOperator{\PSL}{PSL}
\DeclareMathOperator{\PGL}{PGL}

\DeclareMathOperator{\Isom}{Isom}

\DeclareMathOperator{\pr}{pr}

\newcommand{\eps}{\varepsilon}

\newcommand{\RR}{\mathbf{R}}                          
\newcommand{\NN}{\mathbf{N}}                          
\newcommand{\ZZ}{\mathbf{Z}}                          
\newcommand{\FF}{\mathbf{F}}

\newcommand{\scrS}{\mathscr W}
\newcommand{\scrF}{\mathscr F}
\newcommand{\op}{\mathrm{op}}

\newcommand{\bfH}{\mathbf H}
\newcommand{\bfG}{\mathbf G}
\newcommand{\bfV}{\mathbf V}

\newcommand{\LL}{\operatorname{L}^0}

\newcommand{\D}{\Delta}

\begin{document}

\title{On the linearity of lattices in affine buildings\\ and ergodicity of the singular Cartan flow}
%
\author{Uri Bader, Pierre-Emmanuel Caprace and  Jean L\'ecureux}

\maketitle

\begin{abstract}
Let $X$ be a locally finite irreducible affine building of dimension~$\geq 2$ and $\Gamma \leq \Aut(X)$ be a discrete group acting cocompactly. The goal of this paper is to address the following question: When is $\Gamma$ linear? More generally, when does $\Gamma$ admit a finite-dimensional representation with infinite image over a commutative unital ring? If $X$ is the Bruhat--Tits building of a simple algebraic group over a local field and if $\Gamma$ is an arithmetic lattice, then $\Gamma$ is clearly linear. We prove that if $X$ is of type $\widetilde A_2$, then the converse holds. In particular,  cocompact lattices in exotic $\widetilde A_2$-buildings are   non-linear. As an application, we obtain the first infinite family of lattices in exotic $\widetilde A_2$-buildings of arbitrarily large thickness, providing   a partial answer to a question of W.~Kantor from 1986. We also show that if $X$ is Bruhat--Tits of arbitrary type, then the linearity of $\Gamma$ implies that $\Gamma$ is virtually contained in the linear part of the automorphism group of $X$; in particular $\Gamma$ is an arithmetic lattice. The proofs are based on the machinery of algebraic representations of ergodic systems recently developed by U.~Bader and A.~Furman. The implementation of that tool in the present context requires the geometric construction of a suitable ergodic $\Gamma$-space attached to the the building $X$, which we call the \emph{singular Cartan flow}.
\end{abstract}

\tableofcontents

\section{Introduction}

\subsection{Are lattices in affine buildings linear?}

Let $X$ be a locally finite affine building with a cocompact automorphism group, and $\Gamma < \Aut(X)$ be a lattice. The goal of this work is to address the following question: When does $\Gamma$ admit a  faithful finite-dimensional linear representations  over a field, or more generally over a commutative ring with unity? A group admitting  such a representation is called \textbf{linear}.

Let us review some known facts concerning that question.

The most important locally finite affine buildings are the so-called \textbf{Bruhat--Tits buildings}. Such a building $X$ is associated with a semi-simple algebraic group $\mathbf G$ over a local field $k$. The dimension of $X$ coincides with the $k$-rank of $\mathbf G$. When $\mathbf G$ is $k$-simple, the building $X$ is irreducible. In that case,   if we assume moreover that $\dim(X)\geq 2$,  celebrated theorems by Margulis \cite{Margulis} and Venkataramana \cite{Venka} ensure  that lattices in $\mathbf G(k)$ are all arithmetic. It is tempting to deduce that lattices in Bruhat--Tits buildings of dimension~$\geq 2$ are classified, but this is actually not the case. Indeed, the full automorphism group $\Aut(X)$ of the Bruhat--Tits building $X$ of $\mathbf G(k)$ is isomorphic to  $\Aut(\mathbf G(k))$ (assuming that the $k$-rank of $\mathbf G$ is~$\geq 2$), which is an extension of the group $\Aut(\mathbf G)(k)$ of algebraic automorphisms over $k$ by a subgroup $\Aut(k)$ (see \cite{BoTi} and Proposition~\ref{prop:Aut} below). As long as $\Aut(k)$ is infinite --- which happens if and only if the local field $k$ is of positive characteristic --- a lattice in $\Aut(X)$ can potentially have infinite image in $\Aut(k)$ and, hence, fail to satisfy the hypotheses of the arithmeticity theorem. A lattice in $\Aut(X)$ with infinite image in $\Aut(k)$ will be called a \textbf{Galois lattice}. Galois lattices exist in rank~$1$ (see Proposition~\ref{prop:RankOneGaloisLattice} below), while their existence in higher rank is an open question at the time of writing.

By the seminal work of Tits (see the last corollary in \cite{Tits_cours84}), irreducible locally finite affine building of dimension~$\geq 3$ are all  Bruhat--Tits buildings of  simple algebraic groups over local fields. This is not the case in low dimensions.

If $\dim(X)=1$, namely if $X$ is a tree  with vertex degrees~$\geq 2$ ,  any uniform lattice in $\Aut(X)$ is linear,   indeed virtually free. However, a non-uniform   lattice  can be non-linear.

If $\dim(X)=2$ and $X$ is reducible, then   $X$ is the product of two trees. In that case  Burger and Mozes \cite{BurgerMozes_lattices} constructed celebrated examples of uniform lattices $\Gamma < \Aut(X)$ that are simple groups, and thus cannot be linear, as they fail to be residually finite.    It is still unknown whether an  irreducible lattice in a product of more than two topologically simple groups, each acting cocompactly  on a tree, can be non-linear (in this context, the condition of \textit{irreducibility} means that the projection of the lattice to each simple factor is faithful with dense image).

If $\dim(X)=2$ and $X$ is irreducible, but not Bruhat--Tits, then we say that $X$ is \textbf{exotic}. Some of the  known constructions of exotic affine buildings and lattices acting on them  are referenced in \S\ref{sec:ReviewExotic} below.

The simplest and most studied  exotic affine buildings are those  of type $\widetilde{A}_2$. They will be of core interest in this paper. Let us recall that (locally finite) $\widetilde{A}_2$-buildings may be characterized among simply connected $2$-dimensional simplicial complexes by the property that the link of every vertex is the incidence graph of a (finite) projective plane; this follows from \cite[Th.~7.3]{CharneyLytchak}. %
%
From the classification, it follows that the locally finite Bruhat--Tits buildings of type $\widetilde{A}_2$ are precisely those associated with $\mathrm{PGL}_3(D)$, where $D$ is a finite dimensional division algebra over a local field $k$ (see \cite[Chapter~28]{WeissBook}). Thus a locally finite $\widetilde A_2$-building is exotic if and only if it is not isomorphic to the Bruhat--Tits building of such a group $\mathrm{PGL}_3(D)$.

Our first main result, whose proof occupies the major part of this paper, is the following.

\begin{theorem}\label{nonlinear}
Let $X$ be a locally finite $\widetilde A_2$-building and $\Gamma < \Aut(X)$ be a discrete group acting cocompactly. Assume that $X$ is not isomorphic to the building associated to $\mathrm{PGL}_3(D)$, with $D$ a finite dimensional division algebra over a local field.

Then, for any commutative unital ring $R$ and any $n \geq 1$, any homomorphism $\Gamma \to \GL_n(R)$ has a finite image.
\end{theorem}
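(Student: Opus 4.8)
The plan is to argue by contradiction: assuming that some homomorphism $\rho\colon\Gamma\to\GL_n(R)$ has infinite image, I aim to conclude that $X$ is, after all, the Bruhat--Tits building of some $\PGL_3(D)$. Since $X$ is an $\tld A_2$-building and $\Gamma<\Aut(X)$ is a cocompact lattice, $\Gamma$ is finitely generated and enjoys Kazhdan's property~(T), the link of each vertex of $X$ being the incidence graph of a finite projective plane with a uniform spectral gap. Standard reductions --- replacing $R$ by a finitely generated subring and then by a finitely generated field quotient, and using that a finitely generated linear group with property~(T) that is virtually solvable is finite, so that the image is not virtually solvable --- together with the argument underlying the Tits alternative, provide a local field $k$ and a representation $\Gamma\to\GL_n(k)$ with unbounded image. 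Taking the Zariski closure, dividing by its solvable radical and passing to the adjoint group (property~(T) preventing the unboundedness from being absorbed into an amenable quotient), one obtains a connected semisimple adjoint $k$-group $\mathbf G$ and a homomorphism $\pi\colon\Gamma\to\mathbf G(k)$ whose image is Zariski-dense and unbounded; in particular $\mathbf G$ is $k$-isotropic.

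The geometric heart of the proof is the construction of an ergodic probability-measure-preserving $\Gamma$-space $(\Omega,\mu)$ attached to $X$, the \emph{singular Cartan flow}. In the model case where $X$ is the Bruhat--Tits building of a group $\mathbf H(k_0)$, one would take $\Gamma\backslash\mathbf H(k_0)$ equipped with translation by a one-parameter subgroup of a maximal $k_0$-split torus pointing in a \emph{wall} (singular) direction, ergodicity being a consequence of Howe--Moore; but for a general, possibly exotic $X$ there is no ambient algebraic group, so $\Omega$ must be manufactured directly from the combinatorial geometry of $X$ and of its spherical building at infinity $\bd X$, with ergodicity extracted from the cocompactness of $\Gamma$ and from suitable mixing properties of the building. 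One arranges in addition that $\Omega$ carries $\Gamma$-equivariant factor maps onto the set of points and onto the set of lines of the projective plane $\mathcal P$ coordinatizing $\bd X$ (the two types of vertices of the rank-two building $\bd X$), and that $\Omega$ is amenable in the sense of Zimmer and metrically ergodic, so that it plays the role of a Furstenberg-type boundary of $\Gamma$.

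Feeding $(\Omega,\mu)$ and $\pi\colon\Gamma\to\mathbf G(k)$ into the Bader--Furman theory of algebraic representations of ergodic systems, the unboundedness and Zariski-density of $\pi(\Gamma)$ together with the amenability and metric ergodicity of $\Omega$ produce a proper parabolic $k$-subgroup $\mathbf P<\mathbf G$ and a $\Gamma$-equivariant measurable map $\Omega\to(\mathbf G/\mathbf P)(k)$; composing with the factor maps above yields $\Gamma$-equivariant measurable maps from the points, and from the lines, of $\mathcal P$ to suitable partial flag varieties of $\mathbf G$. Exploiting the incidence relations in $\mathcal P$ together with the Zariski-density and unboundedness of $\pi(\Gamma)$, one shows that these maps are essentially induced by $k$-morphisms, that $\mathbf G$ has $k$-rank $2$ with relative root system of type $A_2$, and that the resulting identifications realize $\bd X$, as an incidence geometry, as the projective plane $\mathbb P^2(D)$ attached to $\mathbf G(k)\cong\PGL_3(D)$ for a finite-dimensional division algebra $D$ over $k$. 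Combining this with the cocompactness of $\Gamma$ and rigidity properties of Euclidean buildings, one concludes that $X$ itself is the Bruhat--Tits building of $\PGL_3(D)$ --- contrary to hypothesis. Hence $\rho(\Gamma)$ is finite.

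The two genuinely delicate points are the construction of the singular Cartan flow and the final rigidity step. The former is the main obstacle: in the absence of an ambient algebraic group, the $\Gamma$-space $\Omega$ and its ergodicity must be produced by purely geometric means, and it is precisely the use of a \emph{singular} rather than a regular direction that retains enough mixing while keeping the transverse structure, and the factor maps onto $\bd X$, under control. The latter requires promoting a priori merely measurable equivariant maps to algebraic ones and then reading off both the relative type of $\mathbf G$ and the isomorphism type of the projective plane at infinity, which is where the rigidity of boundary maps between $X$ and the flag varieties of $\mathbf G$ must be brought to bear.
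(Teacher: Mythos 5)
Your first half follows the paper's route: the reduction of a linear representation over a ring to a homomorphism $\Gamma\to\mathbf G(k)$ with unbounded Zariski-dense image into an adjoint $k$-simple group, the construction of a ``singular Cartan flow'' from the geometry of $X$, and the application of the Bader--Furman machinery to produce equivariant measurable maps into homogeneous spaces $\mathbf G/\mathbf H(k)$. But the endgame you propose is not the paper's, and as sketched it has a genuine gap. You claim that the measurable $\Gamma$-maps from the points and lines of the plane at infinity into partial flag varieties ``are essentially induced by $k$-morphisms'', from which you read off that $\mathbf G$ has $k$-rank $2$ of type $A_2$ and that $\bd X$ is the plane of $\PGL_3(D)$. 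Nothing in the setting supports this promotion from measurable to algebraic: when $X$ is exotic there is no ambient non-discrete group containing $\Gamma$ (so no Howe--Moore, no continuous extension of the representation), and the boundary $\partial X$ carries no a priori algebraic structure --- endowing it with one is precisely what has to be proved, so this step is circular. The gate theory only gives existence and uniqueness of measurable equivariant maps; it does not make them morphisms of varieties, and no rigidity theorem applicable to a lattice in a possibly discrete $\Aut(X)$ is invoked. The paper's actual mechanism is different: via Proposition~\ref{Yoneda} the gate of the singular Cartan flow yields a \emph{continuous} linear representation of the locally compact group $M^{+}$ commuting with $\Gamma$, which contains the closure of the projectivity group of $\Delta$ acting on a panel tree; a flip element shows this representation is non-trivial, and then Theorem~\ref{thm:charBT} --- resting on the existence of horocyclic projectivities, the Burger--Mozes/Caprace--Stulemeijer structure theory of boundary-transitive tree groups, and Schleiermacher's theorem (Theorem~\ref{thm:Schleiermacher}) that a plane whose projectivity group is a Moufang set is Moufang --- forces $X$ to be Bruhat--Tits. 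Your proposal contains no substitute for this characterization, and without it the contradiction is not reached.

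A secondary but real inconsistency: you describe your space $\Omega$ as an ergodic probability-measure-preserving $\Gamma$-space that is simultaneously amenable in the sense of Zimmer and of boundary type. Since $\Gamma$ has property~(T), an ergodic p.m.p.\ action that is Zimmer-amenable would force $\Gamma$ to be amenable, which is absurd. In the paper these roles are kept separate: the amenable, doubly ergodic object is $\Ch(\Delta)$ with the visual measure class (only a quasi-invariant class, no invariant probability), while the singular Cartan flow $\scrS/S$ carries a $\Gamma$-invariant \emph{Radon} (infinite) measure, with the invariant probability living on the quotient $\scrS/\Gamma$ for the $M\times S$-action; its ergodicity is obtained by a Hopf argument, not by mixing of an ambient group.
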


We shall also establish a similar result for Galois lattices in arbitrary Bruhat--Tits buildings of  dimension~$\geq 2$.

\begin{theorem}\label{thm:nonlinearGalois}
Let $\mathbf G$ be a $k$-simple algebraic group defined over a local field $k$, of $k$-rank~$\geq 2$. Let $\Gamma$ be a lattice in $\Aut(\mathbf G(k))$.

If $\Gamma$ has infinite image in $\Out(\mathbf G(k))$, then for any commutative unital ring $R$ and any   $n\geq 1$, any homomorphism $\Gamma \to \GL_n(R)$ has a finite image.
\end{theorem}

Combining the latter theorem with the aforementioned Arithmeticity Theorem, we draw the following consequence.

\begin{corollary}\label{cor:Galois}
	Let $X$ be the Bruhat--Tits building associated with a $k$-simple algebraic group $\mathbf G$ defined over a local field $k$, of $k$-rank~$\geq 2$.
	
	A lattice $\Gamma \leq \Aut(X)$ admits a finite-dimensional linear representation with infinite image if and only if $\Gamma$ is virtually contained (necessarily as an arithmetic subgroup) in the image of $\mathbf G(k)$ in $\Aut(X)$.
\end{corollary}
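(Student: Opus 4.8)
The plan is to combine Theorem~\ref{thm:Galois} (or rather its consequence Theorem~\ref{thm:nonlinearGalois}) with the structural description of $\Aut(X)$ provided by Proposition~\ref{prop:Aut} and with the Margulis--Venkataramana arithmeticity theorem. The key point is that by Proposition~\ref{prop:Aut}(v) and (vi), since $\mathbf G$ has $k$-rank~$\geq 2$, the group $\Aut(X)$ is canonically isomorphic (as a topological group) to $\Aut(\mathbf G(k))$, and under this identification the image of $\mathbf G(k)$ in $\Aut(X)$ corresponds to the normal subgroup $\Inn(\mathbf G(k))$, which is closed, cocompact, and of the same covolume class. So a lattice $\Gamma \leq \Aut(X)$ is the same thing as a lattice $\Gamma \leq \Aut(\mathbf G(k))$, and the quotient $\Aut(\mathbf G(k))/\Inn(\mathbf G(k))$ is (virtually) the compact group $\Out(\mathbf G(k))$, itself virtually a subgroup of $\Aut(k)$ by Proposition~\ref{prop:Aut}(ii).

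For the \emph{if} direction: suppose $\Gamma$ is virtually contained in (the image of) $\mathbf G(k)$, i.e. there is a finite-index subgroup $\Gamma_0 \leq \Gamma$ with $\Gamma_0 \leq \mathbf G(k)$. Then $\Gamma_0$ is a lattice in $\mathbf G(k)$, hence (since $\mathbf G$ is $k$-simple of $k$-rank~$\geq 2$) an arithmetic group by the Margulis arithmeticity theorem together with Venkataramana's extension covering the positive-characteristic case; in particular $\Gamma_0$ is linear with infinite image (it sits inside $\mathbf G(k) \leq \GL_n(k)$ and is Zariski-dense, hence infinite). Inducing this representation up from $\Gamma_0$ to $\Gamma$ gives a finite-dimensional linear representation of $\Gamma$ with infinite image. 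This also justifies the parenthetical ``necessarily as an arithmetic subgroup''.

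For the \emph{only if} direction: suppose $\Gamma \leq \Aut(X) \cong \Aut(\mathbf G(k))$ admits a finite-dimensional linear representation with infinite image, over a commutative unital ring $R$. By Theorem~\ref{thm:nonlinearGalois} (applied to the lattice $\Gamma$ in $\Aut(\mathbf G(k))$), the image of $\Gamma$ in $\Out(\mathbf G(k))$ must be \emph{finite} — otherwise every homomorphism $\Gamma \to \GL_n(R)$ would have finite image, contradicting our hypothesis. Let $\Gamma_0 = \Gamma \cap \Aut(\mathbf G)(k)$, which has finite index in $\Gamma$ since $\Aut_{\mathbf G}(k)$ is open in $\Aut(k)$ and $\Gamma$ has finite image in the relevant Galois quotient. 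Then $\Gamma_0$ is a lattice in $\Aut(\mathbf G)(k)$, and since $\Inn(\mathbf G(k)) \cong \mathbf G(k)$ has finite index in $\Aut(\mathbf G)(k)$ by Proposition~\ref{prop:Aut}(ii), the subgroup $\Gamma_1 = \Gamma_0 \cap \mathbf G(k)$ has finite index in $\Gamma_0$, hence in $\Gamma$. Thus $\Gamma$ is virtually contained in $\mathbf G(k)$, as required; that $\Gamma_1$ is moreover arithmetic in $\mathbf G(k)$ follows again from the arithmeticity theorem. The main obstacle here is purely bookkeeping: one must be careful that ``virtually contained in $\mathbf G(k)$'' is extracted correctly through the chain of finite-index inclusions $\Gamma_1 \leq \Gamma_0 \leq \Gamma$, using the finiteness of $[\Aut(k):\Aut_{\mathbf G}(k)]$, the finiteness of $[\Aut(\mathbf G)(k):\Inn(\mathbf G(k))]$, and the finiteness of the image of $\Gamma$ in $\Out(\mathbf G(k))$ supplied by Theorem~\ref{thm:nonlinearGalois}; no new geometry is needed beyond Proposition~\ref{prop:Aut}.
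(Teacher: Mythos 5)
Your proposal follows essentially the same route as the paper: the hard (``only if'') direction is the contrapositive of Theorem~\ref{thm:Galois} (equivalently Theorem~\ref{thm:nonlinearGalois}) combined with the structure of $\Aut(X)$ from Proposition~\ref{prop:Aut}, while the easy direction and the parenthetical come from arithmeticity; your finite-index bookkeeping $\Gamma_1\le\Gamma_0\le\Gamma$ is fine.

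The one step you gloss over --- and it is precisely the step the paper's short proof consists of --- is the identification $\Aut(X)\cong\Aut(\mathbf G(k))$. Proposition~\ref{prop:Aut} is stated for an \emph{absolutely almost simple adjoint} group (after Weil restriction), whereas in Corollary~\ref{cor:Galois} the group $\mathbf G$ is only assumed $k$-simple; if $\mathbf G$ has non-trivial centre, $\mathbf G(k)$ does not even embed into $\Aut(X)$, so parts (v) and (vi) of that proposition cannot be quoted verbatim for $\mathbf G$ itself. The paper fixes this by first passing to the adjoint quotient: by Borel--Tits one has $\mathbf G(k)^+/Z\cong\mathbf G_{\mathrm{ad}}(k)^+$, whence $\Aut(X)\cong\Aut(\mathbf G_{\mathrm{ad}}(k))$, and Theorem~\ref{thm:Galois} is then applied with $\mathbf G_{\mathrm{ad}}$ in place of $\mathbf G$. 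With that reduction your argument goes through, except that it yields $\Gamma$ virtually contained in $\Inn(\mathbf G_{\mathrm{ad}}(k))\cong\mathbf G_{\mathrm{ad}}(k)$ rather than in the image of $\mathbf G(k)$; to pass to the latter, observe that this image contains $\mathbf G_{\mathrm{ad}}(k)^+$, that $\mathbf G_{\mathrm{ad}}(k)/\mathbf G_{\mathrm{ad}}(k)^+$ is compact and virtually abelian, and that $\Gamma$ has property (T), so its image in that quotient is finite (in positive characteristic the quotient can be infinite, so this last step is not vacuous, although the paper also leaves it implicit). So: right approach, correct in outline, but the adjoint reduction must be made explicit before Proposition~\ref{prop:Aut} and Theorem~\ref{thm:Galois} can be invoked.
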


In the case of $\widetilde A_2$-buildings, we obtain a similar statement without the hypothesis that the ambient building is Bruhat--Tits:

\begin{corollary}\label{cor:equiv}
Let $X$ be a locally finite $\widetilde A_2$-building and $\Gamma \leq \Aut(X)$ be a discrete group acting cocompactly.

Then $\Gamma$ admits a representation with infinite image of dimension~$n\geq 1$ over a commutative unital ring  if and only if $X$ is Bruhat--Tits and $\Gamma$ is arithmetic.
\end{corollary}

We mention that by an unpublished work of Yehuda Shalom and Tim Steger we know that every non-trivial normal subgroup of a lattice in an $\widetilde{A}_2$-building is of finite index. Clearly if the lattice is linear then finite index normal subgroups are in abundance. The question of finding   finite quotients of lattices in possibly  exotic affine buildings was already asked by W. Kantor \cite[Problem~C.6.2]{Kantor84}. Some computer experiments we held indicate that some non-linear lattices might have no normal subgroups at all. We suggest the following.

\begin{conj}\label{conj:simplicity}
Let $X$ be a locally finite affine building of type $\widetilde A_2$. Assume that $X$ is not isomorphic to the building associated to $\mathrm{PGL}_3(D)$, where $D$ is a finite dimensional division algebra over a local field. Let $\Gamma$ be a lattice in $\Aut(X)$.
Then $\Gamma$ is virtually simple.
\end{conj}

It should be emphasized that, while there are several known constructions of locally finite $\widetilde A_2$-buildings with lattices, it is generally a delicate problem to determine whether or not a given such building arising from one of those constructions is Bruhat--Tits. Actually, until very recently, the only examples that could be proved to be exotic are buildings of thickness~$\leq 10$, and in most cases the proof of exoticity relied on computer calculations.  As an   application of Theorem~\ref{nonlinear}, presented in Section~\ref{app:ExoticLargeOrder}, we will establish an exoticity criterion for a large concrete family of $\widetilde A_2$-lattices which was first defined by Ronan~\cite[\S3]{Ronan_triangle} and  Kantor~\cite[\S C.3]{Kantor84} and more recently investigated by Essert \cite{Essert}. In particular, we obtain the following. 

\begin{corollary}\label{cor:ExistenceExotic}
	Given $q$ a power of $2$ which is not a power of  $8$, there exists an $\widetilde A_2$-building $X$ of order $q$ that is not Bruhat--Tits and admits an automorphism group $\Gamma \leq \Aut(X)$ acting freely and transitively on the set of panels of each type.
\end{corollary}

That result provides the first infinite family of $\widetilde A_2$-lattices in exotic buildings of arbitrarily large thickness, and may be viewed as a partial answer to a question of W.~Kantor, who wrote in the remarks following Corollary~C.3.2 in \cite{Kantor84}: ``\textit{It would be very interesting to know which, if any, of these buildings are ``classical'' ones obtained from $\PSL_3(K)$ for complete local (skew) fields $K$}.'' After a first version of this paper was circulated, two more sources of infinite families of $\widetilde A_2$-lattices in exotic buildings have been found: see Theorem~C and Corollary~E in \cite{Radu_GeomDed}, and Proposition~7 and Remark~8 in \cite{CapHypT}.

\subsection{Proof ingredients}

The proofs of both Theorems~\ref{nonlinear} and~\ref{thm:nonlinearGalois} establish the contrapositive assertion, and thus start by assuming  that the lattice $\Gamma$ has a linear representation with infinite image over a ring.

The first step of the proof is a reduction showing that the given representation can be assumed to have its image contained in a $k$-simple algebraic group $\mathbf G$ over a local field $k$, with an unbounded and Zariski dense image. That step is actually a consequence of an abstract statement of independent interest, valid at a high level of generality, and presented in Appendix~\ref{app:RingLinear}.

At that stage, the proof of Theorem~\ref{nonlinear} reduces to that of the following.

\begin{theorem}\label{thm:nonlinear-local}
Let $X$ be a locally finite $\widetilde A_2$-building and $\Gamma \leq  \Aut(X)$ a discrete group consisting of type preserving automorphisms which acts cocompactly. Assume that  there exist   a local field $k$, a connected adjoint $k$-simple $k$-algebraic group $\mathbf{G}$ and a group homomorphism $\Gamma\to \mathbf{G}(k)$ with an unbounded and Zariski dense image. Then $X$ is a Bruhat--Tits building.
\end{theorem}

\paragraph*{Analogy with Margulis' Superrigidity Theorem.}

Our strategy to approach Theorem~\ref{thm:nonlinear-local} is inspired by the recent proof of the Margulis Superrigidity  Theorem by the first author and A.~Furman, developed in \cite{BF-Superrigid}. The  tools developed in that work are designed to study a representation $\rho \colon\Gamma\to  \mathbf J(k)$ of an arbitrary countable group $\Gamma$ to a $k$-simple algebraic group over a local field $k$, with Zariski dense image.  A key notion from \cite{BF-Superrigid}  is that of a \textbf{gate}  (see \S\ref{sub:BF} below for more details). This gate is an algebraic variety canonically associated with any standard Borel space $Y$ equipped with a measure-class preserving ergodic action of $\Gamma$. The gate moreover provides a natural representation from $\Aut_\Gamma(Y)$ to a subquotient of the algebraic group $\mathbf J(k)$, which is non-trivial  under suitable assumptions on $\Gamma$ and $Y$.

Therefore, in order to prove Theorem~\ref{thm:nonlinear-local}, we need  to provide an ergodic $\Gamma$-space $Y$   such that $\Aut_{\Gamma}(Y)$ does not admit any non-trivial representation. The space $Y$ we shall construct will be called the \textbf{singular Cartan flow}.
In order to give a better understanding of its construction, let us first explain what this space is  in the `classical' case when $\Gamma$ is a lattice in $\SL_3(k)$, where $k$ is a local field.

Let $A$ be the subgroup of $G = \SL_3(k)$ consisting of diagonal matrices, and $S$ the singular torus
$$S=\left\{ \begin{pmatrix}
a & 0& 0\\
0&a&0\\
0&0&a^{-2}
\end{pmatrix}\mid a\in k^*\right\}$$
We consider the $\Gamma$-spaces $G/A$ and $G/S$ (or the actions of $A$ and $S$ on $\Gamma\backslash G$). Ergodicity is then provided by the Howe--Moore Theorem.  The gate of the space $Y=G/S$ gives a representation of $N_G(S)/S\simeq \PGL_2(k)$.

\paragraph*{Geometric analogues in the exotic case.}

Now let us go back to the setting of Theorem~\ref{thm:nonlinear-local}, where $\Gamma$ is a lattice of an   $\widetilde A_2$-building $X$ that is possibly exotic. In particular the whole group $\Aut(X)$ could be discrete, and the $\Gamma$-spaces we need to construct can thus hardly be homogeneous spaces of a non-discrete locally compact enveloping group of $\Gamma$. The spaces we shall construct are of a geometric nature; the measures will be constructed explicitly by hand, and a crucial step in our proof will be to establish their ergodicity.

In order to construct a geometric analogue of the space $G/A$, we first interpret this space geometrically. The group $A$ acts 
as a group of translations (with compact kernel) on some model apartment $\Sigma$ of the Bruhat--Tits building $X$ associated to $G$. It can be viewed as the stabilizer of this oriented apartment. Hence, $G/A$ is the $G$-orbit of this oriented apartment in $X$, and since $G$ acts transitively, $G/A$ is the space 
of all such apartments. We can also go up one step  and consider $G$, equipped with the action of $G$ (left multiplication) and $A$ (right multiplication). As a dynamical system, this is analogous to the set of simplicial embeddings of the Coxeter complex $\Sigma$ into $X$, with the action of $G$ by post-composition and the action of $A$ by pre-composition.

Therefore, in our setting, we are led consider the set $\scrF$ consisting of all simplicial embeddings of the Coxeter complex $\Sigma$ of type $\widetilde A_2$ to $X$. 
The group $\ZZ^2$ acts on the set $\scrF$ via its action on $\Sigma$, while $\Gamma$ acts on $\scrF$ via its action on $X$. In particular these two actions commute, and we get a $\ZZ^2$-action on the compact space $\scrF/\Gamma$. The latter action is called the \textbf{Cartan Flow}.  A measure on $\scrF$ is constructed in \S\ref{subsec:d2} and \S\ref{subsec:measgen}. Its construction will be natural enough so that this measure is invariant under both the action of $\Gamma$ and the action of $\ZZ^2$.

A geometric analogue of the  space $G/S$ from the classical picture is more technical to realize. Let us again describe it first in the classical setting. We want to understand $G/S$ both as a $G$-space and a $N_G(S)$-space, with both actions commuting. The group $S$ acts by translations (with compact kernel) on some singular line $\ell \subset X$. The set of all lines parallel to $\ell$ form a regular tree $T$, which is a  so-called \emph{panel tree} of $X$. This tree is the Bruhat--Tits tree of the Levi  subgroup $N_G(S)$. Hence, an analogue of the $G\times N_G(S)$-dynamical system $G$ is given by a set of embeddings of $T\times\ell$ into $X$. However, since we want the action of $G$ to be transitive, we must consider only a $G$-orbit  of embeddings. This amounts to ensure that we embed $T$ ``as a Bruhat--Tits tree'' of $N_G(S)$. 

In the geometric setting, we therefore introduce the space $\widetilde \scrS$ of all embeddings of $T\times \ell$ into our building. It is endowed with an action of $\Gamma$ (by postcomposition), and $\ZZ$ (by precomposition with translates of the line $\ell$). However, we want to restrict to the analogue of the $G$-orbit.
 In order to do so, we use   the \textbf{projectivity group}  of the projective plane at infinity of $X$. That group naturally acts  by automorphisms on $T$, and we denote by $P$ its closure in $\Aut(T)$ (see below for more about this group). We introduce the \emph{singular Cartan flow} $\scrS$ as the orbit of an element by the (commuting) actions of $\Gamma$, $P$ and $\ZZ$.  We again define a natural measure on this space, which takes into account both the geometric measure on $\scrF$ and the Haar measure on $P$ (see \S\ref{subsec:dpm} for more details).

\paragraph*{Ergodicity results}

A fundamental step in our proof of Theorem~\ref{thm:nonlinear-local} is  the following (see Theorem \ref{CartanFlow} and Theorem \ref{ergodic} for more precise statements).

 \begin{theorem}
 The Cartan flow and the singular Cartan flow are ergodic.
 \end{theorem}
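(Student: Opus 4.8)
The plan is to establish ergodicity of each flow by exhibiting it as a fiber bundle over a base flow whose ergodicity is already understood, and then invoking a Fubini-type argument together with ergodicity of the fibers. More precisely, recall that the singular Cartan flow $\scrS$ surjects onto the Cartan flow $\scrF/\Gamma$ (forgetting the choice of a parallel class and retaining the apartment), so I would begin by proving ergodicity of the Cartan flow and then bootstrap to the singular Cartan flow. For the Cartan flow, the key observation is that the $\ZZ^2$-action on $\scrF/\Gamma$ should be compared with the geodesic flow on the space of chambers at infinity: an element of $\scrF$ is determined by a chamber of the spherical building at infinity $\bd X$ together with a germ of the embedded apartment, so $\scrF$ fibers $\Gamma$-equivariantly over the space of pairs of opposite chambers of $\bd X$ (the ``Weyl chambers in general position''), with fiber the set of geodesics joining them. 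Ergodicity of the $\Gamma$-action on opposite pairs of chambers at infinity is the analogue, in the building setting, of the Howe--Moore phenomenon; here it will have to be proved directly, using the cocompactness of $\Gamma$, mixing properties of the ``panel-regular'' retraction, and a Hopf-type argument playing the two endpoints at infinity against each other.

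Concretely, the steps I would carry out are: (1) give an explicit description of the measure $\mu$ on $\scrF$ constructed in \S\ref{subsec:d2}, and verify it is $\ZZ^2 \times \Gamma$-invariant and projects to a probability measure on the compact quotient $\scrF/\Gamma$; (2) identify the ergodic decomposition of the $\ZZ^2$-action with the structure at infinity, reducing ergodicity of the Cartan flow to ergodicity of the diagonal $\Gamma$-action on the space $\mathrm{Ch}(\bd X)^{(2)}$ of opposite pairs of chambers at infinity, equipped with the natural measure class; (3) prove this latter ergodicity by a mixing argument --- the building analogue of Howe--Moore --- exploiting that $\Gamma$ acts cocompactly on $X$ so that the geodesic flow in a fixed singular (or regular) direction is ``mixing'' at the level of $L^2$, via decay of matrix coefficients expressed combinatorially through the structure of the Coxeter group $\tld A_2$ and the associated Iwahori--Hecke algebra; and (4) for the singular Cartan flow $\scrS$, present it as a $\ZZ$-space fibering over the Cartan flow with fiber a space of parallel singular geodesics, show the fiberwise action is ergodic (essentially a one-dimensional unipotent-like statement), and conclude by a relative ergodicity / Mautner-type argument that $\scrS$ is ergodic.

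The step I expect to be the main obstacle is (3): proving the Howe--Moore-type mixing for the $\Gamma$-action at infinity, or equivalently the decay of matrix coefficients for the singular geodesic flow on $\scrF/\Gamma$. In the classical case this is literally the Howe--Moore theorem for $\SL_3(k)$, but in the exotic building setting there is no ambient semisimple group to appeal to, so the decay has to be extracted purely combinatorially from the building geometry. I would attempt this by a ``ping-pong at infinity'' or Hopf argument: given an $L^2$-function invariant under the $\ZZ$ (or $\ZZ^2$) flow, use the flow to push it to be measurable with respect to the $\sigma$-algebra of one chamber at infinity and, separately, with respect to the opposite chamber at infinity; since those two boundary $\sigma$-algebras are ``independent'' with respect to the measure (this independence being itself a nontrivial point requiring the geometry of the building and its buildings at infinity), the function is constant. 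A technically delicate ingredient is controlling how singular geodesics that are parallel but not equal contribute to the fibers --- this is precisely where the passage from $\scrF$ to $\scrS$ introduces genuine content, since parallel singular geodesics in an $\tld A_2$-building form a Euclidean strip whose combinatorial structure must be analyzed --- and I would isolate this as a separate geometric lemma about parallelism classes of singular geodesics before feeding it into the ergodicity argument.
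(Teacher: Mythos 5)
Your treatment of the Cartan flow is essentially the paper's: the measure on opposite pairs of chambers at infinity is shown to lie in the product class $\mu\times\mu$, and a Hopf argument (contract in the singular direction, push an invariant function onto each of the two boundary $\sigma$-algebras, use their independence) gives simultaneously the ergodicity of the $A$-action on $\scrF/\Gamma$ and of the $\Gamma$-action on $\Delta^{2,\op}$. Your alternative suggestion in step (3) of extracting decay of matrix coefficients via the Iwahori--Hecke algebra is a dead end in the exotic case --- there is no non-discrete ambient group and no Howe--Moore phenomenon to appeal to, which is exactly why the Hopf route is taken --- but since you fall back on the Hopf argument anyway, this part of the proposal is fine.

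The genuine gap is step (4), the singular Cartan flow. Your plan is to fiber $\scrS/\Gamma$ over the Cartan flow, prove fiberwise ergodicity as a ``one-dimensional unipotent-like statement'', and conclude by a relative ergodicity or Mautner-type argument. This fails on two counts. First, the acting group for the singular Cartan flow is $S\cong\ZZ$, the singular direction only; what the Cartan flow theorem gives is ergodicity of the full rank-two group $A$ on $\scrF/\Gamma$, and the restriction of that action to the singular $\ZZ$ is not known (and not claimed) to be ergodic, so the base of your proposed fibration is not an ergodic $\ZZ$-system and the relative argument has nothing to stand on. Second, a Mautner-type argument needs a group acting on the total space whose representation theory forces extra invariance, and in the exotic setting no such group is given a priori. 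The actual content of the proof is precisely the construction of that missing group: one enlarges single singular geodesics to entire parallelism classes (wall trees), restricts to a single equivalence class $\scrS$ of marked wall trees, and identifies the fibers of $\scrS/S\to\D_\pm^\op$ with the closure $M$ of the projectivity group acting on the panel tree. A second application of the Hopf argument, now with boundaries the spaces $\overline\Delta_\pm$ of marked panel trees, reduces ergodicity to showing that a $\Gamma$-invariant function pulled back from both $\overline\Delta_+$ and $\overline\Delta_-$ is constant on almost every fiber; this in turn requires proving that for almost every vertex at infinity the subgroup of $M$ generated by perspectivities along almost-every chains of consecutively opposite vertices (together with $\Gamma$-translates) is dense in $M$, and only then does ergodicity of $\Gamma$ on $\D_\pm^\op$ (a consequence of the Cartan flow theorem) finish the proof. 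Your closing remark about a ``separate geometric lemma about parallelism classes of singular geodesics'' points in the right direction but does not contain this key mechanism --- the projectivity group acting on wall/panel trees and the density-of-perspectivities argument --- which is the technical heart of the theorem.
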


As mentioned above, those flows are a not necessarily homogeneous spaces of some locally compact groups; in particular the ergodicity cannot be deduced from   the Howe--Moore Theorem as it is done in the classical picture.  The relevant tool happens to be the one originally used in the study of the geodesic flow on manifolds with variable negative curvature, namely Hopf's  argument    \cite{Hopf} (see also \cite{Coudene} for a nice introduction). In Appendix~\ref{app-hopf} we give an abstract version of the Hopf argument which is suited for our purpose. The Cartan flow is not needed for the end of the proof, but it is an intermediate step in the proof of the ergodicity of the singular Cartan flow.

If $Y$ denotes the singular Cartan flow, the gate theory provides us with a linear representation of $\Aut_{\Gamma}(Y)$ (the latter group is $\PGL_2(k)$ in the classical case). It follows from the construction of the singular Cartan flow that the group $\Aut_{\Gamma}(Y)$ naturally contains the group $P$ introduced above, which is the closure of the natural image in $\Aut(T)$ of the projectivity group  of the projective plane at infinity of $X$. This projectivity group has a lot more structure than $\Gamma$: it acts on a locally finite tree $T$ (a so-called \emph{panel tree} of $X$) with a $3$-transitive action on its boundary. The final step in the proof  of Theorem~\ref{thm:nonlinear-local} will be deduced from the following result of independent interest (see Theorem~\ref{thm:charBT}):

\begin{theorem}\label{thm:ProjIntro}
Let  $X$ be a locally finite $\widetilde A_2$-building and  $P$ be the closure of the projectivity group in the locally compact group $\Aut(T)$. If an open subgroup of $P$ admits a faithful continuous representation $P\to \GL_n(k)$, where $k$ is a local field, then $X$ is a Bruhat--Tits building.
\end{theorem}

It should be underlined that, while all the other steps of the proof generalize with only small modifications to    Euclidean buildings of  types other than  $\widetilde A_2$, we do not know how to extend Theorem~\ref{thm:ProjIntro}  to $\widetilde C_2$ or $\widetilde  G_2$ buildings. Indeed the proof of the latter relies on a result on projective planes due to A.~Schleiermarcher (Theorem~\ref{thm:Schleiermacher}), whose analogue for generalized quadrangles and generalized hexagons is still unknown. 
In this regard, we make the following remark.

\begin{remark}
Let  $X$ be a locally finite 2-dimensional affine building and $\Gamma$ a discrete cocompact group of isometries of $X$. Let $P$ be the closure of the projectivity group of $X$ in the locally compact group $\Aut(T)$.
By the methods of our proof of Theorem~\ref{nonlinear} below, one could show that if $P$ is 
non-linear then also $\Gamma$ is non-linear. 
While we do not know in general that $P$ is non-linear for exotic locally finite buildings of type $\widetilde C_2$ or $\widetilde G_2$, given a concrete example of such a building it might be possible to check the non-linearity of $P$ directly. For example, in the panel-regular exotic buildings of type $\widetilde C_2$ considered in  \cite{Kantor81} and \cite{Essert}, some of the residues are non-classical finite generalized quadrangles of order $(q-1, q+1)$ for a prime power $q$. In the case where $q \geq 7$, it would  suffice to check that the projectivity groups of those quadrangles contain the full alternating group to ensure that the closure of the projectivity group in the automorphism group of the panel tree is locally symmetric or alternating, hence non-linear as a consequence of the main results from \cite{Radu_classif}.
\end{remark}

\medskip
Let us now briefly discuss the proof of Theorem~\ref{thm:nonlinearGalois}, which is presented in Section~\ref{app:Galois}. It  also relies on the Bader--Furman machinery from \cite{BF-Superrigid}. The starting point of that proof is the observation that the argument for Theorem~\ref{nonlinear} sketched above also provides relevant information when $X$ is the Bruhat--Tits building of $\PGL_3(D)$. Indeed, while in that case the projectivity group is isomorphic to $\PGL_2(D)$, and is thus linear by hypothesis, it turns out that the group $\Aut_\Gamma(Y)$ of automorphisms of the Cartan flow commuting with $\Gamma$ considered above can be strictly larger than the projectivity group. More precisely, if $\Gamma$ is a Galois lattice, then $\Gamma$ has an infinite (hence non-virtually abelian by property (T)) image in the quotient $\Aut(X)/  \PGL_3(D)$, which is virtually the group of field automorphisms of $X$. It then turns out that $\Aut_\Gamma(Y)/\PGL_2(D)$ is also a non-abelian group of field automorphisms of $\PGL_2(D)$. On the other hand, the gate theory provides us with a continuous faithful representation of the locally compact group $\Aut_\Gamma(Y)$, and by the general structure theory of linear locally compact groups from \cite{CapStu}, this implies that $\Aut_\Gamma(Y)/\PGL_2(D)$ must in fact be virtually abelian. This shows that if $\Gamma$ is linear, then it cannot be a Galois lattice. This part of the argument does not require $X$ to be of type $\widetilde A_2$, and is actually valid for arbitrary irreducible Bruhat--Tits buildings of dimension~$\geq 2$. Indeed  the relevant $\Gamma$-spaces are   homogeneous spaces of the non-discrete group $\Aut(X)$, and their ergodicity can be established by a suitable application of the Howe--Moore Theorem.

\subsection{A review of exotic $2$-dimensional buildings with lattices}\label{sec:ReviewExotic}

The aforementioned classification of irreducible affine buildings of dimension~$\geq 3$ was completed in the early days of the 1980's. The existence of exotic  $2$-dimensional affine buildings came soon after. Some of them have no non-trivial automorphism whatsoever (see e.g. \cite{Ronan_free}); this is in particular the case of \emph{generic} $\widetilde A_2$-buildings (in a suitably defined Baire categorical sense), see \cite[Th.~5]{BarrePichot}. Some have a cocompact automorphism group, with a global fixed vertex at infinity, see \cite[\S7]{HVM}. The existence of such a fixed point prevents that group from being unimodular (see \cite[Th.~M]{CaMo}), and hence cannot contain any lattice. The known exotic $\widetilde A_2$-buildings relevant to Theorem~\ref{nonlinear}  are those endowed with a proper cocompact action of a discrete group. They are constructed as universal covers of finite simple complexes having finite projective planes as residues, or of suitably defined finite complexes of groups. The constructions that we are aware of are the following (the \textbf{order} of a locally finite $\widetilde A_2$-building is the common order of its \textbf{residue planes}, i.e. the finite projective planes appearing as its rank~$2$ residues).

\begin{itemize}
\item In \cite[\S3.1]{Tits_Andrews} and \cite[Th.~2.5]{Ronan_triangle}, Tits and Ronan construct four $\widetilde A_2$-buildings of order~$2$ with cocompact lattices. The lattices act \textbf{regularly} (i.e. sharply transitively) on the set of chambers of their building. It is shown  in  \cite[\S3.1]{Tits_cours85} and \cite{KMW}  that two of those are Bruhat--Tits buildings with arithmetic lattices, while the other two are exotic (see    also  \cite[\S3.6]{Kantor84}).

\item In \cite[\S3.1]{Tits_Andrews}  and \cite[\S3.2]{Tits_cours85}, Tits constructs 44 buildings of type $\widetilde A_2$, of order $8$, with cocompact lattices acting regularly on the set of chambers of their building. It is asserted that 43 of them are exotic.

\item In \cite[\S3]{Ronan_triangle} and \cite[\S C.3]{Kantor84}, Ronan and Kantor describe a framework providing   in particular cocompact lattices  of $\widetilde A_2$-buildings  acting regularly on the panels of each given type. Such lattices are called \textbf{panel-regular}.

\item In \cite{CMSZ2}, Cartwright--Mantero--Steger--Zappa construct $65$ exotic $\widetilde A_2$-buildings of order $3$  with cocompact lattices. In those examples, the lattices act regularly on the set of vertices of their building; in particular the types of vertices are permuted cyclically.

\item In \cite[\S3]{Barre_exotic}, Barr\'e construct an exotic $\widetilde A_2$-building of order $2$  with a cocompact lattice. The full automorphism group of that building is discrete, and has exactly two orbits of vertices.

\item In \cite{Essert}, Essert  provides  presentations of arbitrary  panel-regular lattices in $\widetilde A_2$-buildings. When the building is of order $2$, there are exactly two inequivalent presentations of such groups. One of them gives rise to an arithmetic group, the other is exotic (see Remark~\ref{rem:residually-2} and Lemma~\ref{lem:G_0} below). The $\widetilde A_2$-buildings of order~$3$ and $5$ admitting  a panel-regular lattice have recently been classified by S.~Witzel (see \cite{Witzel}), who determined which are exotic and described their full automorphism group.  We show in Section~\ref{app:ExoticLargeOrder} how to use Theorem~\ref{nonlinear} to establish  Corollary~\ref{cor:ExistenceExotic} and, more generally, to construct, from a given  panel-regular lattice in an exotic $\widetilde A_2$ of order $p$, an infinite family of panel-regular lattices in an exotic $\widetilde A_2$-building of order $p^n$. 

\item In all of the previous examples, the residue planes are always Desarguesian. Recently, N.~Radu \cite{Radu} constructed an example of an exotic $\widetilde A_2$-building of order~$9$, with a vertex-transitive, virtually torsion-free, discrete automorphism group and with non-Desarguesian residues (isomorphic to the Hughes plane of order~$9$).

\end{itemize}

After the first version of the present paper was circulated, two more sources of exotic $\widetilde A_2$-buildings with lattices have been identified:
\begin{itemize}

\item In Proposition~7 and Remark~8 in \cite{CapHypT}, it is shown that for every non-Desarguesian finite projection plane $\mathscr P$, there is a locally finite $\widetilde A_2$-building $X$ with a discrete group $\Gamma \leq \Aut(X)$ acting cocompactly, such that $X$ has a residue plane isomorphic to $\mathscr P$. In particular $X$ must be exotic. Since there exist non-Desarguesian finite projective planes of arbitrarily large order, this provides another infinite family of lattices in exotic   $\widetilde A_2$-buildings of unbounded order. It is not known whether the lattices constructed in this way are virtually torsion-free. 

\item In Theorem~C and Corollary~E in \cite{Radu_GeomDed}, it is proved that, among the panel-regular lattices in $\widetilde A_2$-buildings studied by Essert and mentioned above, the proportion of the exotic ones tends (exponentially fast) to $1$ as the order of the building tends to infinity. In particular, it follows from Theorem~\ref{nonlinear}  that the ``{vast majority}'' of panel-regular lattices in $\widetilde A_2$-buildings considered by Essert are non-linear. 
\end{itemize}

Remark that in all those examples, the lattices are cocompact: no exotic example with a non-uniform lattice is known.

Exotic buildings of type $\widetilde C_2$ and $\widetilde G_2$ have been less investigated, but some constructions are known, see \cite{Kantor81} and \cite{Essert}. In fact,   the most fascinating example of an exotic $2$-dimensional affine building with a cocompact lattice is probably the building of type $\widetilde G_2$ constructed by Kantor \cite{Kantor81} as the universal cover of a simplicial complex associated with the finite sporadic simple group of Lyons (see also \cite[\S3.5]{Tits_cours85}). The corresponding building is exotic (an assertion  attributed to Tits, without proof, in \cite{Kantor81}). This can be deduced from the fact that the full automorphism group of that building is discrete, a fact established by Gr\"uninger \cite[Cor.~3.3]{Gruninger}.

\subsection{Structure of the paper}

The next two sections are of introductory nature and review known material.
The next section, \S\ref{sec:buildings}, serves as a concise introduction to $\widetilde A_2$-buildings.
In particular we recall the notions of {\em wall trees} and {\em panel trees} which are of great importance in this work.
In \S\ref{sec:projgroup} we recall the notion of the {\em projectivity group} and explain how it acts on panel trees.
In subsection \S\ref{subsec:treesandalggroups} we discuss linear representations and algebraicity of groups acting on trees.
In \S\ref{sec:repofproj} we use the results of the previous section in order to study linear representations of the projectivity group.
The main result of this section is Theorem~\ref{thm:charBT} which roughly states that the projectivity group of a building admits a linear representation if and only if the building is Bruhat--Tits.
This result may be of independent interest. The proof of Theorem~\ref{thm:nonlinear-local} will be carried eventually by a reduction to this theorem.

The next three sections, \S\ref{sec:topspaces}, \S\ref{sec:meas} and \S\ref{sec:ergodicity}
are devoted to the construction and the study of some new spaces, the most important of which are the {\em the space of marked flats} and {\em space of (restricted) marked wall trees}.
We define these as topological spaces in \S\ref{sec:topspaces} and introduce their measured structure in \S\ref{sec:meas}. Finally, in \S\ref{sec:ergodicity}, we prove ergodicity results for these spaces.
The ergodicity of the singular Cartan flow, Theorem~\ref{ergodic}, is the technical heart of this paper.

In \S\ref{sec:nonlinear} we combine Theorem~\ref{ergodic} with {\em the theory of algebraic gates} in order to reduce the proof of Theorem~\ref{thm:nonlinear-local} to Theorem~\ref{thm:charBT}.
We then proceed and prove Theorem~\ref{nonlinear}.

The last two sections contain supplementary results.
In \S\ref{app:Galois} we consider Galois lattices.
The goal of that section  is to establish Theorem~\ref{thm:nonlinearGalois}. 
Finally, in  \S\ref{app:ExoticLargeOrder}, we use the results of the paper to construct a concrete infinite family of exotic $\widetilde A_2$-lattices, a result recorded in  Corollary~\ref{cor:ExistenceExotic}.

The paper has two appendices.
They contain   essential ingredients in the proof of the main theorems of the paper,
which are presented as appendices due to their independent characters.
The first is \S\ref{app:RingLinear} which is devoted to the proof of Theorem~\ref{thm:ringlinear}.
This theorem is the main ingredient in the reduction of Theorem~\ref{nonlinear} to Theorem~\ref{thm:nonlinear-local}.
In \S\ref{app-hopf} we review the classical Hopf argument in an abstract setting. In particular we prove Theorem~\ref{hopf}, which is essential in the proofs given in \S\ref{sec:ergodicity}.

\subsection{Acknowledgements}

The contribution of Alex Furman to the developments of the methods we use in this paper is indispensable.
We thank him  for many hours of discussions. We are also grateful to both referees for their comments on an earlier version of the manuscript.
UB acknowledges the support of ERC grant \#306706.
PEC ackowledges the support of F.R.S.-FNRS and of ERC grant \#278469.  JL is supported in part by ANR grant ANR-14-CE25-0004 GAMME and ANR-16-CE40-0022-01 AGIRA.

\section{$\widetilde A_2$-buildings and their boundaries} \label{sec:buildings}

In this section we recall some basic facts on $\widetilde A_2$-buildings and their boundaries.
In \S\ref{subsec:buildingsintro} we give a general introduction to $\widetilde A_2$-buildings and in \S\ref{subsec:buildingsboundaries} we discuss their boundaries.
In \S\ref{subsec:paneltree} we discuss the important notions of {\em wall trees} and {\em panel trees}.
Most of the results we will state here are well known and we will not provide proofs in these cases.
Standard references on the subject are \cite{AbramenkoBrown}, \cite{Garrett} and \cite{Ronan}.

\subsection{Buildings of type $\widetilde A_2$} \label{subsec:buildingsintro}

Let us start by recalling the definition of a building of type $\widetilde A_2$  (or $\widetilde A_2$-building for short).
We denote by $\Sigma$ a \textbf{model apartment} : it is the $2$-dimensional simplicial complex afforded by the tessellation of the Euclidean plane $\RR^2$ by equilateral triangles. A \textbf{wall} in $\Sigma$ is a line in $\Sigma$ which is a union of edges. We fix a vertex $0\in \Sigma$.

\begin{definition}
A \textbf{building} of type $\widetilde A_2$ is a triangular complex satisfying the two following conditions, in which we call \textbf{apartment} a subcomplex of $X$ isomorphic to $\Sigma$:

\begin{itemize}
\item Any two simplices of $X$ are contained in an apartment,
\item If $F$ and $F'$ are two apartments, there exists a simplicial isomorphism $F\to F'$ which fixes pointwise $F\cap F'$.
\end{itemize}
\end{definition}

The $2$-dimensional simplices of an $\widetilde A_2$-building are called \textbf{chambers}. The space $X$ is endowed with a natural metric, defined as follows: the distance between two points $x$ and $y$ is the distance between $x$ and $y$ calculated in any apartment containing $x$ and $y$. This is well-defined, and furthermore gives a CAT(0) metric on $X$.
Apartments are also called \textbf{flats}, because in the CAT($0$)-metric on $X$, the apartments are maximal subspaces isometric to a Euclidean space.

An $\widetilde A_2$-building $X$ is equipped with a coloring $X^{(0)} \to \{0, 1, 2\}$ of its vertex set $X^{(0)}$ such that each chamber has exactly one vertex of each color. The color of a vertex is called its \textbf{type}.

We assume throughout that the building $X$ is  \textbf{thick}, i.e.   there is more than two chambers adjacent to each edge.

It is worth noting that the local geometry is forced by the geometry of the building. For a start, the building is \textbf{regular}: the cardinality of the set of chambers adjacent to an edge is constant (see for example \cite[Theorem 2.4]{ParkinsonHecke}). When that number is finite, we define the \textbf{order} of the $\widetilde A_2$-building $X$ as the number $q$ such that the number of chambers adjacent to each edge equals $q+1$. Since we assumed $X$ to be thick we have $q>1$.

Furthermore, the link at a vertex is always a finite projective plane, and the boundary of the building is a compact (profinite) projective plane. For the convenience of the reader, we recall the definition\footnote{Strictly speaking, the definition we give is the definition of the incidence graph of a projective plane; the identification of a projective plane and its incidence graph implicit throughout this paper should not cause any confusion.}:

\begin{definition}\label{def:ProjectivePlane}
A \textbf{projective plane} is a bipartite graph, with two types of vertices (called \textbf{points} and \textbf{lines}), such that
\begin{itemize}
\item Any two lines are adjacent to a unique point,
\item Any two points are adjacent to a unique line.
\item There exist four points, no three of which are adjacent to a common line.
\end{itemize}
\end{definition}

Projective planes are also known as generalized triangles, or thick buildings of type $A_2$.

Another striking feature of $\widetilde A_2$- (or more general affine) buildings is the following fact:

\begin{theorem}\label{aptfull}
Let $\Omega$ be a subset of $X$ which is either convex or of non-empty interior. If $\Omega$ is isometric to a subset of $\RR^2$, then there exists an apartment $F$ such that $\Omega\subset F$.
\end{theorem}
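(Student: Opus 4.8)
The plan is to reduce the statement to two building blocks: the fact that $\tld A_2$-buildings are \emph{uniquely geodesically complete} in the sense that apartments are characterized by being maximal flats, and the standard combinatorial machinery of retractions onto apartments. First I would dispose of the case where $\Omega$ has non-empty interior. In that case $\Omega$ contains a small open ball $B$, which must lie inside a single chamber's closed star that is isometric to a planar region; picking any chamber $C$ meeting $B$ and any apartment $F$ containing $C$, the retraction $\rho = \rho_{F,C}\colon X\to F$ onto $F$ centered at $C$ is $1$-Lipschitz, restricts to an isometry on every apartment through $C$, and in particular is an isometry on a neighborhood of $B$. The real content is then to show that $\rho$ is \emph{globally} isometric on $\Omega$, so that $\rho(\Omega)$ is a faithful isometric copy of $\Omega$ inside $F$, and then to "pull $\Omega$ straight" — i.e. show $\Omega$ itself lies in an apartment.

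The cleaner route, and the one I would actually write, is to handle the convex case first and deduce the interior case from it by noting that a subset with non-empty interior which is isometric to a subset of $\RR^2$ is contained in its own convex hull computed in $X$, which (being CAT(0)) is still convex and still isometric to a subset of $\RR^2$ — one must check the convex hull does not grow "out of the plane", which follows because the ball $B\subset\Omega$ forces the local picture to be flat and geodesics between points of a flat region stay in that flat region by convexity of the metric. So assume $\Omega$ is convex. Then $\Omega$, with the induced metric, is a convex subset of $\RR^2$. Pick a point $x$ in the relative interior of $\Omega$ (if $\Omega$ is a point or a segment the statement is easy, as any geodesic segment lies in an apartment), so that $\Omega$ spans a $2$-dimensional flat region near $x$. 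Choosing a chamber $C$ with $x$ in its interior (or barycentrically subdividing so that we may assume $x$ is a vertex and $C$ a chamber containing it) and an apartment $F\supset C$, consider $\rho=\rho_{F,C}$.

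The key steps are then: (1) $\rho|_\Omega$ is an isometric embedding. For this, take $y,z\in\Omega$; the geodesic $[y,z]$ lies in $\Omega$ by convexity, hence passes near $x$ only in a controlled way — more to the point, one uses that $\rho$ maps the geodesic $[x,y]$ isometrically onto a geodesic in $F$ for every $y$ (a standard property of retractions centered at a chamber containing the basepoint), and that the "angle" at $x$ between $[x,y]$ and $[x,z]$ is preserved because $\rho$ is a local isometry near $x$ where $\Omega$ looks like a planar sector; combining via the CAT(0) law of cosines in $\Omega$ (which is genuinely Euclidean there) and in $F$ forces $d(\rho y,\rho z)=d(y,z)$. (2) Hence $\Omega':=\rho(\Omega)\subset F$ is convex and isometric to $\Omega$, with $\rho$ restricting to an isometry $\Omega\to\Omega'$ fixing $x$ and agreeing with the identity near $x$. (3) Finally, show $\Omega=\Omega'$, i.e. $\Omega\subset F$. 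Here I would argue that for any $y\in\Omega$ the geodesic from $x$ to $y$ in $X$ coincides with the geodesic from $x$ to $\rho(y)$ in $F$: both start in the same direction at $x$ (since $\rho$ is a local isometry near $x$ and $d(x,y)=d(x,\rho y)$ with $[x,y]\subset\Omega$ isometrically embedded), and a geodesic in a CAT(0) space is determined by its initial point and direction; since a geodesic ray issuing from a point in a chamber $C$ into a flat $F$ through $C$ stays in $F$ as long as it agrees with the retraction's geodesic, we get $y=\rho(y)\in F$.

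The main obstacle is step (1)/(3): making precise that the retraction is \emph{globally} isometric on $\Omega$ and that geodesics of $X$ emanating into $\Omega$ do not "branch away" from $F$. Branching is exactly what happens in a building at walls and panels, so one must genuinely use that $\Omega$ is flat (isometric to a planar region) to exclude it — the mechanism is that at any point $p\in\Omega$ the set of directions into $\Omega$ spans an arc in the link of $p$ of length $\geq\pi$ if $p$ is interior, or the directions along $\Omega$ form a geodesic in the spherical link, and a geodesic of length $\pi$ in the link of a building vertex is contained in a single apartment of that link (this is the $2$-dimensional, spherical analogue of the very theorem we are proving, and holds because links are projective planes / buildings of type $A_2$, which are themselves "geodesically rigid"). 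Feeding this in at every point of a geodesic of $X$ lying in $\Omega$ propagates the containment in a single flat along the whole of $\Omega$. I would organize this as: first prove it for geodesic segments and then for sectors (using the link argument at the apex), then for arbitrary convex $\Omega$ by writing $\Omega$ as an increasing union of such pieces based at an interior point, invoking that a nested union of apartments through a fixed chamber stabilizes because the building is locally finite — or, without local finiteness, that any two points of $\Omega$ already lie in a common apartment and a Helly-type argument over finitely many points combined with the uniqueness axiom for apartments glues these together.
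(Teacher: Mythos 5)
The paper does not actually prove Theorem~\ref{aptfull}: it is recalled as a classical fact about affine buildings (it goes back to Tits; see the standard references cited at the start of \S\ref{sec:buildings}), so there is no in-paper argument to compare yours with. Judged on its own terms, your outline assembles the right raw ingredients (flat germs correspond to closed geodesics of length $2\pi$ in links, geodesics of length $\le\pi$ in the spherical $A_2$ links lie in link-apartments, exhaustion plus local finiteness), but the steps you lean on at the decisive moments are incorrect or circular as written.

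Concretely: (i) the retraction $\rho_{F,C}$ is \emph{not} a local isometry near $C$; it is $1$-Lipschitz, preserves distances to points of $C$, and is isometric on each apartment containing $C$, but it folds together the chambers outside $F$ that share a panel with $C$. Consequently the induced map on the link of $x$ can only decrease angles, and your law-of-cosines comparison yields $d(\rho y,\rho z)\le d(y,z)$ -- the wrong inequality -- unless you have already arranged that the circle of directions of $\Omega$ at $x$ is an apartment of the link containing the direction of $C$ and that $F$ is adapted to this germ; that is a local form of the very statement being proved and is not free. (ii) The assertion that a geodesic in a CAT(0) space is determined by its initial point and direction is false, and it fails exactly in buildings (branching along walls); this is the heart of the difficulty, and the clause meant to repair it (``stays in $F$ as long as it agrees with the retraction's geodesic'') is circular. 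Indeed the conclusion $\Omega\subset F$ is simply false for an arbitrary apartment $F$ through a chamber at $x$: the apartment has to be produced from $\Omega$, e.g.\ as a limit of apartments containing larger and larger pieces of $\Omega$ (your closing compactness remark points in the right direction, but note that literally nested apartments coincide, since apartments are maximal flats, so ``stabilization'' is not the correct formulation). (iii) The reduction of the non-empty-interior case to the convex case via convex hulls is unsupported and essentially circular: such subsets need not be convex, and the claim that the convex hull of a flat subset with interior is again flat is not easier than the theorem -- the natural way to prove it is to first place $\Omega$ in an apartment. To obtain an actual proof you should follow the classical route: establish the statement locally via the links, propagate along geodesics and sectors, and extract the apartment by exhaustion and a compactness argument using local finiteness, treating the non-empty-interior case by a separate propagation argument rather than by convex hulls.
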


An important class of subsets of $X$ to which we will apply this theorem is the following:

\begin{definition}
A \textbf{sector} based at $0$ in $\Sigma$ is a connected component of the complement in $\Sigma$ of the union of the walls passing through $0$.

A sector of $X$ is a subset of $X$ which is isometric to a sector in $\Sigma$.
\end{definition}

\subsection{Boundaries of buildings} \label{subsec:buildingsboundaries}

Let $X$ be a building of type $\widetilde A_2$.  Although we shall primarily view $X$ as a $2$-dimensional simplicial complex, we will occasionally refer to $X$ as a CAT($0$) metric space. Such a metric realization is obtained by giving each each of $X$ length~$1$, and endowing each $2$-simplex with the  metric of a Euclidean  equilateral triangle with side length~$1$. Since $X$ is a CAT($0$) space, its ideal boundary $\partial X$ is naturally endowed with the structure of a CAT($1$) space. The combinatorial nature of $X$ is such that $\partial X$ inherits a much finer structure, namely that of a compact projective plane, equipped with a   family of natural measures.

\subsubsection{The projective plane at infinity}

A boundary point $v \in \partial X$ is called a \textbf{vertex at infinity} if it is the endpoint of a geodesic ray parallel to a wall in $X$. Two sectors  $S$ and $S'$ in $X$ are \textbf{equivalent} if their intersection contains a sector; equivalently their ideal boundaries $\partial S$ and $\partial S'$ coincide. An equivalence class of sectors (or, equivalently, the ideal boundary of a sector) is called a \textbf{Weyl chamber}.

Two Weyl chambers are called \textbf{adjacent} if they contain a common vertex at infinity. This happens if and only if they are represented by two sectors whose intersection contains a geodesic ray. We let $\Delta$ denote the graph whose vertices are the vertices at infinity, such that two vertices define an edge if the corresponding vertices at infinity lie in a common Weyl chamber. The set of edges of $\Delta$ is denoted by $\Ch(\Delta)$. The ideal boundary $\partial X$ may be viewed as a CAT($1$) metric realization of the graph $\Delta$.

%

We can finally state the fact alluded to above.

\begin{theorem}
The graph $\Delta$  is a projective plane.
\end{theorem}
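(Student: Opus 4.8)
The plan is to verify that the graph $\Delta$ whose vertices are the vertices at infinity of $X$, with edges the adjacent pairs lying in a common Weyl chamber, satisfies the three axioms of Definition~\ref{def:ProjectivePlane}. The main tool is Theorem~\ref{aptfull}: inside a single apartment $F\cong\Sigma$, the boundary $\partial F$ is a circle subdivided into $6$ vertices at infinity and $6$ edges forming a hexagon, which is exactly an apartment of a projective plane (the incidence graph of a triangle). So the strategy is to reduce every incidence statement about $\Delta$ to this model hexagon by producing, for any finite configuration of vertices/Weyl chambers at infinity, an apartment of $X$ containing representatives of all of them, and then reading off the conclusion in $\partial F$.

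First I would set up the dictionary: each vertex at infinity $v\in\partial X$ is the ideal endpoint of a ray parallel to a wall, and the combinatorial ``opposition'' relation on $\partial X$ coincides with being antipodal in some $\partial F$. A key lemma (which follows from standard building theory plus Theorem~\ref{aptfull}) is that any two Weyl chambers at infinity, and more relevantly any two vertices at infinity, are contained in a common apartment: given two sectors $S$, $S'$ representing them, one uses the fact that in a building any two sectors are contained in a common apartment (this is the analogue for affine buildings of the fact that any two chambers lie in a common apartment), and Theorem~\ref{aptfull} guarantees that the convex hull of suitable subsectors is flat, hence contained in an apartment. I would then phrase the bipartition: the two color classes of $\Delta$ come from the two $\Aut(\Sigma)$-orbits of vertices at infinity in the model hexagon (equivalently, the two types of walls/panels), so $\Delta$ is bipartite with ``points'' and ``lines'' well defined.

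Next I would check the three axioms. For ``any two points are on a unique line'' (and dually), take two vertices at infinity $p,p'$ of the same type; they lie in a common apartment $F$, and in the hexagon $\partial F$ two same-type vertices are at distance $2$ in the incidence graph, hence joined by a unique intermediate vertex of the opposite type, giving existence of a common line $\ell$. For uniqueness one argues that if $\ell$ and $\ell'$ were two lines through both $p$ and $p'$, the four rays involved span a subset that is flat (again Theorem~\ref{aptfull} applied to the convex hull), landing in an apartment where uniqueness is visible from the hexagon; alternatively one invokes that opposite vertices at infinity determine a unique apartment. The thickness hypothesis on $X$ (more than two chambers per edge) is what supplies the non-degeneracy axiom: a single apartment already exhibits four vertices at infinity in ``general position'' inside its hexagon, no three collinear, so $\Delta$ contains such a quadrilateral.

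The main obstacle I anticipate is the \emph{uniqueness} half of the incidence axioms, i.e.\ ruling out that two vertices at infinity lie in two genuinely different common Weyl chambers / have two common neighbors of the opposite type. Existence is essentially a packaging of Theorem~\ref{aptfull} and the ``two sectors lie in an apartment'' fact, but uniqueness requires knowing that the apartment through an opposite pair of boundary vertices (or through a vertex and a non-incident vertex) is unique, which in turn rests on the second building axiom (retractions) together with the combinatorial convexity of apartments. I would therefore isolate as the central lemma: \emph{two vertices at infinity that are not of the same type and not incident are opposite, and an opposite pair lies in a unique apartment}; once this is in hand, every incidence assertion is a one-line consequence of inspecting the model hexagon $\partial\Sigma$. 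Regularity/thickness of $X$ is used only to guarantee the plane is ``fat'' (the fourth axiom and $q>1$), not for the incidence axioms themselves.
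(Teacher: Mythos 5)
The paper itself gives no proof of this theorem (it is quoted as a standard fact about buildings at infinity, with the key ingredient being that any two chambers at infinity lie in a common apartment at infinity), so the comparison here is really about whether your plan would actually go through. The existence half of your argument is fine in spirit, though one statement you lean on is false as written: two sectors need \emph{not} be contained in a common apartment; the correct standard fact is that any two sectors contain \emph{subsectors} lying in a common apartment, which is enough since it does not change the Weyl chambers at infinity. The genuine gap is in the uniqueness (girth~$6$) step, which you rightly identify as the crux but do not actually secure. Your ``alternative'' route is based on a false claim: opposite \emph{vertices} at infinity do not determine a unique apartment --- only opposite \emph{chambers} do (this is exactly Lemma~\ref{opp-apt}); for an opposite pair of vertices there is an entire wall tree's worth of parallel geodesics, and each such geodesic bounds many half-apartments, so there are many apartments containing the pair (this non-uniqueness is precisely the wall-tree/panel-tree phenomenon of Proposition~\ref{isompaneltree} that the paper exploits later). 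Consequently your ``central lemma'' is partly false, and its first half (non-incident vertices of different types are opposite) already presupposes that the incidence graph has diameter~$3$ and girth~$6$, i.e.\ part of what is to be proved.

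Your primary route to uniqueness is also circular: to rule out a $4$-cycle $p-\ell-p'-\ell'-p$ you propose to apply Theorem~\ref{aptfull} to ``the four rays involved,'' but Theorem~\ref{aptfull} only applies to a subset already known to be isometric to a subset of $\RR^2$ (or convex), and the flatness of the union of the four sectors/rays pointing at such a configuration is essentially equivalent to the nonexistence of the configuration --- it is the thing to be proved, not an input. The substantive content of the theorem is exactly this girth bound, and the standard way to obtain it is to verify that $\partial X$, with apartments the boundaries $\partial F$ of affine apartments, satisfies \emph{both} building axioms (the second via retractions based at sector germs, or an equivalent convexity argument), after which a thick rank-$2$ spherical building of type $A_2$ is a projective plane; alternatively one argues in the link of a base vertex, where a would-be $4$-cycle at infinity produces a short closed gallery violating the generalized-triangle structure of the residue (compare the degeneration argument in Lemma~\ref{lem:NonTriv}). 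As it stands, your proposal defers precisely this step to ``the second building axiom together with combinatorial convexity'' without carrying it out, so the proof is incomplete at its decisive point.
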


A key point in the proof of this theorem is to prove that two chambers are always in some apartment at infinity. An intermediate step is the following lemma, which is also useful for us:

 \begin{lemma}
 Let $x$ be a vertex in $X$, and $C\in \Ch(\Delta)$. There exists a unique sector based at $x$ in the equivalence class of $C$. We denote this sector by $Q(x,C)$.
 \end{lemma}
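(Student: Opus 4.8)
The statement to prove is: given a vertex $x \in X$ and a chamber $C \in \Ch(\Delta)$ at infinity, there is a unique sector $Q(x,C)$ based at $x$ whose ideal boundary equals $C$. The plan is to produce such a sector by a limiting/direction argument and then prove uniqueness by using convexity together with Theorem~\ref{aptfull}. First I would fix a representative sector $S_0$ for the class $C$, say based at some vertex $y$. Since $X$ is CAT(0), the subbuilding structure is well-behaved, and a standard fact about affine buildings (which follows from the axioms via retraction onto an apartment containing $S_0$) is that one can ``retract toward $C$'': there is a sector $S_1$ based at $x$ with $S_1$ equivalent to $S_0$. The quickest way to see existence concretely is: pick an apartment $F$ containing both $x$ and a subsector of $S_0$ (such $F$ exists because any two simplices lie in a common apartment, and by cofinality one can arrange $F$ to contain a deep enough subsector representing $C$). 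Inside the apartment $F \cong \Sigma$, the boundary $\partial F$ is the Coxeter complex at infinity of type $A_2$; the chamber $C$ is one of its chambers, and in $\Sigma$ there is exactly one sector based at the image of $x$ whose boundary is that chamber — namely the parallel translate of the given sector to the basepoint $x$. Call its image in $X$ the candidate $Q(x,C)$; it is a sector based at $x$ with $\partial Q(x,C) = \partial S_0 = C$.

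For uniqueness, suppose $Q$ and $Q'$ are two sectors based at $x$ with $\partial Q = \partial Q' = C$. A sector is a convex subset of $X$ isometric to a Euclidean sector, so $\Conv(Q \cup Q')$ is convex, and — the key point — because $Q$ and $Q'$ have the same ideal boundary, this convex hull is still ``thin'', i.e. isometric to a subset of $\RR^2$: one checks that $Q \cup Q'$ lies in a common apartment by applying Theorem~\ref{aptfull}. The cleanest route: each of $Q,Q'$ lies in some apartment, and since they share the same chamber at infinity $C$ (hence a full Weyl chamber's worth of directions), one can build a single apartment $F$ with $Q \cup Q' \subset F$ — either directly from the building axioms (intersection of two apartments is a convex subcomplex, and two apartments sharing a Weyl chamber at infinity can be glued) or by invoking Theorem~\ref{aptfull} to the convex hull once one knows the hull is flat. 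Once $Q, Q' \subset F \cong \Sigma$ with the same basepoint $x$ and the same boundary chamber $C$, uniqueness is immediate: in the Euclidean Coxeter complex $\Sigma$, a sector is determined by its tip and its chamber at infinity, so $Q = Q'$.

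The step I expect to be the main obstacle is establishing that $Q \cup Q'$ (or $S_1$ together with the given $S_0$) actually lies in a common apartment — equivalently, that the relevant convex hull is isometric to a planar region so that Theorem~\ref{aptfull} applies. Two sectors with the same boundary chamber need not a priori be nested, and the subtlety is ruling out that their union ``branches'' in $X$; this is exactly where one must use that $C$ is a chamber (a full Weyl chamber at infinity, not just a vertex at infinity), so that the directions spanned at large distance are the full two-dimensional cone and branching is obstructed by flatness of $\Sigma$. Concretely I would argue: deep inside, $Q$ and $Q'$ each contain a subsector, and since both subsectors represent $C$, their intersection contains a subsector $S_\infty$ (two sectors are equivalent iff their intersection contains a sector); then $Q \cup S_\infty$ and $Q' \cup S_\infty$ are each convex and planar, hence each lies in an apartment, and the apartment containing $Q \cup S_\infty$ also contains $Q'$ by the standard rigidity of apartments sharing a chamber at infinity (or again by a direct application of Theorem~\ref{aptfull} to $\Conv(Q \cup Q')$, whose flatness follows once one knows $Q, Q'$ agree near infinity and are both based at $x$). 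After that, everything reduces to the elementary geometry of $\Sigma$, which is routine.
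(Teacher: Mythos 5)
The paper states this lemma without proof, as part of the standard background (the introduction to \S\ref{sec:buildings} explicitly says proofs of well-known facts are omitted), so there is no argument of the authors to compare against; judging your proposal on its own terms, the uniqueness half has a genuine gap, precisely at the point you yourself flag. Both of your justifications for placing $Q$ and $Q'$ in a common apartment are either circular or unproven: the ``standard rigidity of apartments sharing a chamber at infinity'', in the form you need (an apartment $F$ containing $x$ with $C$ in its boundary must contain \emph{every} sector based at $x$ representing $C$), is exactly the uniqueness statement under proof, since $F$ already contains one such sector; and the alternative route through Theorem~\ref{aptfull} applied to $\Conv(Q\cup Q')$ requires knowing beforehand that this hull is isometric to a subset of $\RR^2$, which is again essentially the conclusion. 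Nothing in your sketch supplies that flatness: knowing that $Q\cap Q'$ contains a subsector $S_\infty$ and that $Q$, $Q'$ are convex does not control the picture near the tip, because the convex hull of $\{x\}\cup S_\infty$ is a proper subset of a sector, so convexity alone does not force $Q$ and $Q'$ to agree near $x$ --- and ruling out branching at the tip is the whole content of uniqueness.

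The clean repair uses CAT(0) asymptotic geometry rather than apartments. Since $X$ is a complete (indeed proper, being locally finite) CAT(0) space, for every $\xi\in\partial X$ there is a unique geodesic ray issuing from $x$ asymptotic to $\xi$. A sector based at $x$ is a closed convex subset isometric to a Euclidean sector with tip $x$, hence it is the union of the geodesic rays from $x$ to the points of its ideal boundary (each point of a Euclidean sector lies on a ray from the tip, and these are rays of $X$ by convexity; conversely each ray from $x$ to a boundary point of the sector must be the one inside the sector, by uniqueness of rays). As $\partial Q=\partial Q'$ is the closed chamber at infinity determined by $C$, this gives $Q=Q'$ at once; alternatively one can argue via the retraction of $X$ onto an apartment centred at the chamber $C$ at infinity. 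On the existence side your reduction is to the correct standard statement --- there is an apartment containing $x$ and a subsector of a representative of $C$ --- but this does not follow from ``any two simplices lie in a common apartment'' plus ``cofinality'': an apartment through $x$ and a far chamber of $S_0$ need not contain the intermediate part of $S_0$, and the standard proofs of this fact use sector-based retractions (or, in the locally finite case, a compactness/limit argument that would have to be carried out). That fact should be quoted from the standard references rather than derived this way.
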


Two chambers in $\Delta$ are called \textbf{opposite} if they are at maximal distance (\textit{i.e.} at distance three). Similarly, two vertices are called opposite if they are at maximal distance.

By convexity of apartments, the convex hull of two chambers is contained in any apartment containing both of them. This explains the following.

 \begin{lemma}\label{opp-apt}
 Two opposite chambers are contained in a unique apartment.
 \end{lemma}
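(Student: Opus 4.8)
The statement has two parts: existence of an apartment containing two opposite chambers, and its uniqueness. The existence part is already available: as recalled above, any two chambers of $\Delta$ (in particular any two opposite ones) lie in a common apartment at infinity, that apartment being the ideal boundary of a flat of $X$. So the real content is uniqueness, and I would derive it from the convexity of apartments together with the combinatorial structure of an apartment of the spherical building $\Delta$.

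\textbf{Key step.} Let $C,C'\in\Ch(\Delta)$ be opposite, and let $A$ be an apartment of $\Delta$ containing both. As an apartment of a building of type $A_2$, $A$ is (isomorphic to) the Coxeter complex of type $A_2$, i.e.\ a hexagon: six vertices and six chambers arranged in a $6$-cycle. In this hexagon $C$ and $C'$ are at distance equal to the diameter~$3$, so there are exactly two minimal galleries from $C$ to $C'$ (one going around each way), and these two galleries together exhaust the six chambers of $A$. Hence the convex hull of $C\cup C'$ computed inside $A$ is all of $A$; equivalently, in the CAT($1$) realization $C$ and $C'$ realize as antipodal arcs, and the two geodesics joining a midpoint of $C$ to its antipode in $C'$ already cover the whole circle, so $\Conv(C\cup C')=A$.

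\textbf{Conclusion.} Now suppose $A'$ is any apartment of $\Delta$ containing $C$ and $C'$. By convexity of apartments, $A'$ contains $\Conv(C\cup C')$, hence $A'\supseteq A$ by the previous step. But $A$ and $A'$ are both apartments, hence both isomorphic to the same finite complex (the hexagon of type $A_2$), so the inclusion $A\subseteq A'$ forces $A=A'$. Therefore the apartment containing the two opposite chambers is unique.

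\textbf{Remarks on difficulty.} There is essentially no obstacle here; the only inputs are the existence of a common apartment for two chambers of $\Delta$ (a standard building fact, recalled above as a key point in the identification of $\Delta$ with a projective plane) and the elementary fact that two opposite chambers span the whole hexagon. As a self-contained cross-check one can also argue purely in the projective plane: writing the opposite chambers as flags $(p,\ell)$ and $(p',\ell')$ with $p\notin\ell'$ and $p'\notin\ell$, any apartment (triangle) containing both must have $p,p'$ among its three vertices, and then $\ell$ cannot be the side $pp'$ (else $p'\in\ell$), so $\ell$ is the side joining $p$ to the third vertex; likewise for $\ell'$; hence the third vertex is forced to be $\ell\cap\ell'$, and the three sides are $pp'$, $\ell$, $\ell'$ — which one checks is indeed a genuine triangle — giving both existence and uniqueness at once.
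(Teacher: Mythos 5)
Your argument proves uniqueness of the \emph{apartment of the spherical building $\Delta$} containing the two opposite chambers (the hexagon), and that part is correct. But in this paper ``apartment'' means an apartment of $X$, i.e.\ a flat (see \S\ref{subsec:buildingsintro}: ``Apartments are also called flats''), and the lemma is invoked exactly in that form: in the proof of Lemma~\ref{lem:op=F} it is what guarantees that the fibers of $\theta\colon\scrF\to\D^{2,\op}$ are single $A$-orbits, i.e.\ that two marked flats whose boundaries contain the same pair of opposite chambers have the \emph{same image in $X$}. Your proof does not give this: knowing that the boundary hexagon in $\Delta$ is unique does not rule out, a priori, two distinct flats $F\neq F'$ of $X$ with $\partial F=\partial F'$ (in a general CAT(0) space parallel flats with the same ideal boundary do exist). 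Closing that gap is the actual content of the lemma, and it is what the sentence preceding it in the paper is pointing at: convexity \emph{inside $X$}. Concretely, if $F$ and $F'$ both contain the chambers $C,C'$ at infinity, then $F\cap F'$ contains subsectors $Q(z,C)$ and $Q(z',C')$ representing them; since $C$ and $C'$ are opposite, the convex hull of two such opposite sectors computed inside the Euclidean plane $F$ is all of $F$, while by convexity of $F'$ this convex hull lies in $F'$; hence $F\subseteq F'$ and by symmetry $F=F'$. (Alternatively one may quote the standard bijection between apartments of a Euclidean building and apartments of its building at infinity, but that is itself a nontrivial theorem, not something your hexagon computation supplies.)

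The existence half of your proposal is fine, since it rests on the recalled fact that any two chambers of $\Delta$ lie in a common apartment at infinity, which is by definition the boundary of a flat of $X$. So the needed repair is only in the uniqueness step: replace (or supplement) the argument at infinity by the convex-hull-of-opposite-sectors argument in $X$ sketched above.
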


 We note for further use the following standard fact \cite[Proposition 29.50]{WeissSphBook}:

 \begin{lemma}\label{lem:twoop}
 For every pair of chambers $x,y\in \Ch(\Delta)$, there exists a chamber $z\in \Ch(\Delta)$ which is opposite to both $x$ and $y$.
 \end{lemma}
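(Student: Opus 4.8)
The plan is to prove Lemma~\ref{lem:twoop} by a counting/genericity argument inside the projective plane $\Delta$, exploiting the fact that being opposite (i.e.\ at maximal distance~$3$) is a ``generic'' condition. Recall that in the incidence graph of a projective plane of order $q$, a chamber is a flag $(p,\ell)$ with $p \in \ell$; two chambers $(p,\ell)$ and $(p',\ell')$ are opposite precisely when $p \notin \ell'$, $p' \notin \ell$ and $p \neq p'$, $\ell \neq \ell'$ --- equivalently, when $p'$ is not on $\ell$ and $\ell'$ does not pass through $p$ and the flag $(p',\ell')$ avoids the flag $(p,\ell)$ in the incidence graph. So, given $x=(p_1,\ell_1)$ and $y=(p_2,\ell_2)$, I want to produce a flag $z=(p,\ell)$ with $p\notin \ell_1\cup\ell_2$ and $\ell\not\ni p_1,p_2$.

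First I would handle the existence of the point: the lines $\ell_1$ and $\ell_2$ each carry $q+1$ points, so $\ell_1\cup\ell_2$ has at most $2(q+1)$ points, while the plane has $q^2+q+1$ points; since $q\ge 2$ (thickness), $q^2+q+1 > 2(q+1)$, so there is a point $p$ off both lines. Dually, I would then need a line $\ell$ through $p$ avoiding both $p_1$ and $p_2$: there are $q+1$ lines through $p$; at most one of them passes through $p_1$ and at most one through $p_2$, so at least $q+1-2 = q-1 \ge 1$ lines through $p$ avoid both points. Picking such an $\ell$ gives a flag $z=(p,\ell)$, and by construction $p\notin\ell_1$, $p\notin\ell_2$, $p_1\notin\ell$, $p_2\notin\ell$, so $z$ is opposite to both $x$ and $y$. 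This also silently uses that a thick projective plane has a well-defined finite order $q\ge 2$ (with $q^2+q+1$ points and lines, $q+1$ points per line and $q+1$ lines per point), which is standard; in the locally finite $\tld A_2$-building setting $\Delta$ is finite of order $q>1$ as recorded above, so this applies. (If one does not want to assume $\Delta$ finite, the same incidence-avoidance argument works directly from Definition~\ref{def:ProjectivePlane}: one uses that any line misses some point and, given a point off two lines, there is a pencil line through it missing two prescribed points — but citing \cite[Proposition 29.50]{WeissSphBook} is the cleanest route.)

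Since the statement is already attributed to \cite[Proposition 29.50]{WeissSphBook}, the honest ``proof'' here is essentially a citation plus, optionally, the short self-contained counting sketch above for the reader's convenience. I expect no real obstacle: the only thing to be mildly careful about is the degenerate bookkeeping when $q$ is small (e.g.\ $q=2$), where the margins $q^2+q+1-2(q+1)$ and $q+1-2$ are tight but still positive, so thickness $q>1$ is exactly what is needed. I would therefore write: ``This is \cite[Proposition 29.50]{WeissSphBook}; alternatively, it follows from a direct count in the projective plane $\Delta$, using that $\Delta$ has order $q>1$: pick a point $p$ not on the $\le 2(q+1) < q^2+q+1$ points of $\ell_1\cup\ell_2$, then pick one of the $\ge q-1\ge 1$ lines through $p$ missing both $p_1$ and $p_2$.''
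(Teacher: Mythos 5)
Your proposal is correct and matches the paper, which gives no argument at all for Lemma~\ref{lem:twoop} beyond the citation of \cite[Proposition 29.50]{WeissSphBook}. Your optional counting argument is also sound (with $p\ne p_1,p_2$ guaranteed since $p\notin\ell_1\cup\ell_2$, the bounds $q^2+q+1>2(q+1)$ and $q+1-2\ge 1$ indeed only need thickness $q\ge 2$, and the incidence-avoidance version extends to non-locally-finite $\Delta$), so it is a fine self-contained alternative to the citation.
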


\subsubsection{Topology on $\Delta$}\label{sec:TopologyAtInfinity}

The projective plane $\Delta$, seen as the boundary of $X$, is endowed with a natural topology: indeed, $\Delta$ is naturally endowed with the structure of a compact projective plane.  One way to topologize $\Delta$ consists in observing that the ideal boundary $\partial X$ with its natural $\mathrm{CAT}(1)$ metric is a metric realization of the graph $\Delta$, so that a vertex of $\Delta$ is a point in $\partial X$ and an edge of $\Delta$ is a geodesic segment in $\partial X$. Moreover, the vertex-set of $\Delta$ is a closed subset of the ideal boundary $\partial X$ endowed with the cone topology. Thus the vertex-set of $\Delta$ inherits a compact topology from the cone topology on $\partial X$. Similarly, the edge-set of $\Delta$, denoted by $\Ch(\Delta)$, can be topologized via the map the sends each edge of $\Delta$ to its midpoint in $\partial X$.  In that way the vertex-set and edge-set of $\Delta$ are naturally endowed with a topology that is compact and totally disconnected (see \cite[Prop.~3.5]{CaMo_coctamen}). A basis of open sets of  the topology on $\Ch(\Delta)$  is provided by the sets of the form $\Omega_x(y):=\{C \in \Ch(\Delta) \mid y\in Q(x,C)\}$, where $x$ and $y$ are vertices of $X$.

%
%

We note the following fact for further use.

\begin{lemma}\label{lem:oppopen}
Let $C$ be a chamber. The set of chambers which are opposite to $C$ is an open subset of $\Ch(\Delta)$.
\end{lemma}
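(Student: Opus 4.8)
The statement to prove is: given a chamber $C \in \Ch(\Delta)$, the set $\mathrm{Opp}(C)$ of chambers opposite to $C$ is open in $\Ch(\Delta)$. The plan is to exploit the basis of the topology on $\Ch(\Delta)$ provided just above, namely the sets $\Omega_x(y) = \{ D \in \Ch(\Delta) \mid y \in Q(x,C)\}$ for vertices $x, y$ of $X$, together with the geometric characterization of oppositeness at infinity in terms of sectors in $X$: two chambers $C, D$ are opposite if and only if, fixing a base vertex $x$, the sectors $Q(x,C)$ and $Q(x,D)$ span a whole apartment (equivalently, $Q(x,C) \cup Q(x,D)$ is contained in an apartment $F$ with $\partial C$ and $\partial D$ opposite chambers of $\partial F$). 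I would fix once and for all a base vertex $x \in X$, and work with the sector $Q := Q(x,C)$.

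First I would recall/establish the following local criterion for oppositeness: $D$ is opposite to $C$ if and only if the two sectors $Q(x,C)$ and $Q(x,D)$ are ``opposite at $x$'' in the sense that their germs at $x$ (i.e.\ the corresponding chambers of the link $\Lk_x(X)$, which is a spherical building of type $A_2$) are opposite chambers of that finite spherical building, \emph{and} moreover $Q(x,C) \cup Q(x,D)$ lies in a single apartment. Actually, by convexity of apartments (Lemma~\ref{opp-apt} applied at infinity, or rather its proof), once $\partial C$ and $\partial D$ are opposite they lie in a unique apartment $\partial F$; so the real content is to see that oppositeness is detected at a bounded distance from $x$. Concretely: $D$ is opposite to $C$ iff for some (equivalently every) vertex $y \in Q(x,C)$ far enough from $x$, the geodesic rays from $y$ to the vertices of $\partial D$ behave like rays in an apartment containing $Q(x,C)$ and $y$. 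The key step is to pin down a single vertex $y_0 \in Q(x,C)$ (depending only on $C$ and $x$, e.g.\ the vertex of $Q(x,C)$ at combinatorial distance $2$ or $3$ from $x$ in the appropriate direction) such that: $D$ is opposite to $C$ $\iff$ $D \in \Omega_x(y_0)^c$ intersected with finitely many more basic conditions — more precisely, I expect oppositeness to be expressible as $D \in \Omega_{y_0}(x')$ for a suitable vertex $x'$ on the ``far side'' determined by $C$, which is manifestly open.

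The cleanest route, which I would try first: show that $\mathrm{Opp}(C) = \Omega_{y_0}(x)$ for an appropriate choice of $y_0 \in Q(x,C)$, or a finite intersection/union of such basic sets. Indeed, if $y_0$ is chosen deep inside the sector $Q(x,C)$, then a chamber $D$ at infinity is opposite to $C$ precisely when the sector $Q(y_0, D)$ based at $y_0$ passes through $x$ — this is the statement that the geodesic from the direction of $D$ back through $y_0$ continues to $x$ and beyond, which is exactly the ``opposite apartment'' picture seen from the interior point $y_0$. Since $\{D \mid x \in Q(y_0,D)\} = \Omega_{y_0}(x)$ is a basic open set, this gives openness immediately. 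I would verify the equivalence using Theorem~\ref{aptfull}: the union $Q(x,C) \cup [x, y_0] \cup \{y_0\}$-based sector $Q(y_0,D)$, when it contains $x$, is a convex set isometric to a subset of $\RR^2$ (it contains a line segment through $x$ with a sector on each side arranged at angle $\pi$), hence lies in an apartment $F$; within $F$ one reads off directly that $\partial C$ and $\partial D$ are opposite. Conversely if $D$ is opposite $C$, the unique apartment $\partial F$ containing both pulls back to an apartment of $X$ containing $Q(x,C)$, in which $y_0$ and the sector toward $D$ are positioned so that $Q(y_0,D) \ni x$.

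\textbf{Main obstacle.} The delicate point is making precise ``$y_0$ deep enough'' independently of $D$: a priori the minimal depth at which oppositeness becomes visible could depend on $D$. I expect this to be handled by the standard fact that in an $\tld A_2$-building the convex hull of a sector $Q(x,C)$ and a point, or of two sectors, is controlled, combined with the characterization of $Q(x,C)$ via retractions; concretely, the vertex $y_0 = Q(x,C)$'s vertex at distance $1$ from $x$ along the central direction already suffices, because oppositeness of chambers in the link $\Lk_{y_0}(X)$ is a finite (open) condition and, once the germs at $y_0$ are opposite, Theorem~\ref{aptfull} upgrades this to a genuine opposite apartment. So the proof reduces to: (i) the link-level statement that the relevant configuration is open in $\Lk_{y_0}(X)$ — trivial, as links are finite; and (ii) the ``local-to-global'' promotion via Theorem~\ref{aptfull}. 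If the single-vertex description is too rigid, the fallback is to write $\mathrm{Opp}(C) = \bigcup_y \Omega_{y}(\,\cdot\,)$ over vertices $y \in Q(x,C)$ with appropriate parameters, which is still a union of open sets; either way openness follows. I would present the one-basic-set description as the main argument and invoke Theorem~\ref{aptfull} as the essential tool.
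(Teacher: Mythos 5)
Your primary route — the claim that the set of chambers opposite to $C$ equals a single basic open set $\Omega_{y_0}(x)$ for one well-chosen vertex $y_0\in Q(x,C)$ — is false, and the failure is in the inclusion $\mathrm{Opp}(C)\subseteq\Omega_{y_0}(x)$. You justify that inclusion by asserting that for $D$ opposite $C$ "the unique apartment containing both pulls back to an apartment of $X$ containing $Q(x,C)$". That is not so: the unique apartment $F$ of $X$ whose boundary contains $C$ and $D$ (Lemma~\ref{opp-apt}) contains \emph{some} sector representing $C$, hence shares a subsector with $Q(x,C)$, but in a thick building it can avoid the base vertex $x$ (and $y_0$) entirely — e.g.\ fold an apartment containing $Q(x,C)$ across a wall far out in the sector and take $D$ opposite $C$ in the new apartment. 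For such $D$ the sector $Q(y_0,D)$ has no reason to pass through $x$ (if $y_0\in F$ it lies in $F$, which misses $x$), so $D\notin\Omega_{y_0}(x)$. This is also clear on size grounds: $\mathrm{Opp}(C)$ is a large open set (in the Bruhat--Tits case it is the dense big cell), whereas $\Omega_{y_0}(x)$ is one small compact open set, disjoint for instance from $\Omega_{y_0}(x')$ for a vertex $x'\neq y_0$ of $Q(y_0,C)$. At best one can hope for the inclusion $\Omega_{y_0}(x)\subseteq\mathrm{Opp}(C)$ when the direction from $y_0$ to $x$ is regular, and even there your appeal to Theorem~\ref{aptfull} is not yet a proof: that $Q(x,C)\cup Q(y_0,D)$ is isometric to a planar set is essentially the statement to be established, not something one can read off. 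The fallback you mention ("a union of $\Omega_y(\cdot)$ over $y\in Q(x,C)$ with appropriate parameters") does not repair this: the second parameter is left unspecified, and with the natural choice (the fixed $x$) the union still fails to cover $\mathrm{Opp}(C)$ for exactly the same reason; with unspecified parameters it is merely the tautology that an open set is a union of basic open sets.

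The missing idea is that the base point of the basic open sets must be allowed to move with $D$: openness means producing, for \emph{each} opposite chamber $D$, a basic neighbourhood of $D$ inside $\mathrm{Opp}(C)$, and the witnessing vertex has to be taken inside (or near) the apartment spanned by $C$ and $D$, not at a vertex $x$ fixed in advance. This is how the paper argues: it fixes an apartment $A$ with $C\subset\partial A$ and the opposite chamber $C'\subset\partial A$, and describes $\mathrm{Opp}(C)$ as the union of the sets $\Omega_x(y)$ where now $x$ ranges over the vertices of $A$ and $y$ over (suitable) vertices of the opposite sector $Q(x,C')$. Given $D$ opposite $C$, the apartment spanned by $C$ and $D$ meets $A$ in a convex region containing a $C$-sector; choosing $x$ deep in that region, the sectors $Q(x,D)$ and $Q(x,C')$ share an initial piece, which produces a vertex $y$ with $D\in\Omega_x(y)$, while membership in such a set (for $y$ in the interior direction of $Q(x,C')$) forces opposition. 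Your proposal, by anchoring everything to the single pair $(x,y_0)$, cannot capture the opposite chambers whose spanning apartment with $C$ branches away before reaching $x$, and so it does not prove the lemma.
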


\begin{proof}
Fix an apartment $A$ containing $C$, and let $C'$ be the chamber in $A$ opposite to $C$. Let $O$ be the set of chambers opposite to $C$. Then $O$ is the union of sets of the form $\Omega_x(y)$, where $x$ is a vertex in $A$ and $y$ is a vertex in the sector $Q(x,C')$.
\end{proof}

\subsection{Wall trees and panel trees}\label{subsec:paneltree}

An important tool in our study of triangular buildings are \textbf{wall trees} and \textbf{panel trees}. These closely related notions were introduced by Tits in his study of the structure and classification of affine buildings. A good reference is \cite[Chapter 10 and 11]{WeissBook}.

Let us start by defining wall trees. These trees are defined as equivalence classes of parallel lines. A geodesic line $l$ is said \textbf{singular} if it parallel to a wall in some (hence any) apartment containing it, otherwise it is called \textbf{regular}.

\begin{definition}
Two geodesic lines $\ell$ and $\ell'$ are called \textbf{parallel} if they are contained in a common apartment, and are parallel in this apartment. The \textbf{distance} $d(\ell,\ell')$ between parallel lines is the Euclidean distance between them in any apartment containing them.
\end{definition}

Parallelism is an equivalence relation. Furthermore, the distance between parallel lines turns each class into a metric space.

For regular lines, this space is just isometric to $\RR$. However, these spaces turn out to be interesting for singular lines \cite[Proposition 10.14 and Corollary 10.16]{WeissBook}:

\begin{proposition}\label{prop:walltree}
Let $\ell$ be a singular line, and $T_\ell$ be the set of lines parallel to $\ell$. Then $T_\ell$ is a  thick tree. If $X$ is locally finite of order~$q$, then $T_\ell$ is regular of degree~$q+1$.
\end{proposition}

The tree $T_\ell$ defined above is called a \textbf{wall tree}. Note that every geodesic line parallel to $\ell$ has the same two endpoints at infinity as $\ell$. These two endpoints are by definition vertices (or panels) of the building at infinity $\Delta$.

Now we turn to the definition of a panel tree. Let $v$ be a vertex of $\Delta$. Then $v$ is represented by an equivalence class of geodesic rays, two of them being equivalent if they contain  subrays that are contained in a common apartment and are parallel in that apartment. 

\begin{definition}
Two  geodesic rays are said to be \textbf{asymptotic} if their intersection contains a ray.
\end{definition}

If two rays $r,r':[0,+\infty)\to X$ belong to the same equivalence class, their \textbf{distance} is defined as
$$\inf_s\lim\limits_{t\to+\infty} d(r(t+s),r'(t)).$$
It is clear that this distance only depends on the asymptotic classes of $r$ and $r'$. It is possible to check that we have indeed a distance on the set of asymptotic classes of rays in the class of $v$.

\begin{proposition}\label{prop:paneltree}
The metric space, denoted $T_v$, of asymptotic classes of rays in the class of $v$, is a thick tree. If $X$ is locally finite of order~$q$, then $T_v$ is regular of degree~$q+1$.
\end{proposition}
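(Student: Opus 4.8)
The plan is to mimic, for rays in the class of a fixed vertex at infinity $v$, the structure already obtained for wall trees in the preceding proposition, using the CAT(0) geometry of $X$ together with the fact that the link of every vertex is a projective plane. First I would establish that the given function on asymptotic classes of rays in the class of $v$ is a well-defined metric: symmetry and positivity are routine from the CAT(0) convexity of the distance function $t\mapsto d(r(t),r'(t))$ for a pair of rays, which forces the $\inf_s\lim_t$ to be attained and to vanish exactly when the rays are eventually equal; the triangle inequality follows by comparing three rays pairwise inside apartments, using Theorem~\ref{aptfull} to place the relevant parallel strips in common apartments. The key geometric input here is that any two rays in the class of $v$ are eventually parallel (two asymptotic rays with the same endpoint $v$ become, after a bounded time, contained in a common apartment and parallel there), which is precisely what makes the "distance between asymptotic rays" notion coherent.

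Next I would show the resulting metric space $T_v$ is a tree, i.e.\ $0$-hyperbolic and geodesic. The cleanest route is to identify $T_v$ with a wall tree. Fix a ray $r$ in the class of $v$; the endpoint $v$ lies on a unique wall direction, so inside any apartment containing $r$ the line $\ell$ through $r$ is singular, with endpoints $v$ and some opposite vertex $v^-$. I would then argue that the map sending an asymptotic class of rays toward $v$ to "the germ at $v$ of a parallel line" is well-behaved, and that $T_v$ is isometric (up to a constant factor) to a branch of a wall tree, or more directly: given two rays $r,r'$ toward $v$, their union is contained in a tree-like configuration because the building retracts onto apartments and because, after reaching a common apartment, parallel rays toward $v$ branch exactly as the vertices of the corresponding link-projective-plane do. Concretely, I would prove the tree axioms by taking three rays, showing the three pairwise "bifurcation times" satisfy the ultrametric-type inequality, which reduces to a statement about geodesic triangles in $X$ whose three sides are asymptotic to $v$: such triangles are degenerate (thin) because of the CAT(0) flat-strip theorem applied to the parallel rays. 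Thickness and regularity of degree $q+1$ then follow from the local structure: the set of directions at $v$ in which one can continue a ray "one more combinatorial step away" is in bijection with the set of chambers of a residue, i.e.\ with the lines (or points) through a point of a projective plane of order $q$, giving exactly $q+1$ branches at each point, and thickness ($\geq 3$ branches) from $q>1$.

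The main obstacle I expect is the verification that the metric is genuinely an $\RR$-tree and in particular that it is geodesic and complete — more precisely, producing the geodesic between two points of $T_v$ and showing bifurcation happens at a single well-defined point rather than over an interval. This requires controlling the behaviour of $d(r(t+s),r'(t))$ as $t\to\infty$ uniformly: one must show that two rays toward $v$ which are not eventually equal separate \emph{linearly}, not sub-linearly, so that the infimum over the reparametrization shift $s$ is attained and the "distance" detects a sharp branch point. This is where the combinatorial rigidity of affine buildings (Theorem~\ref{aptfull}, and the tree structure of walls from the previous proposition) does the real work: once both rays are placed in a common apartment they are honest parallel Euclidean rays, so linear divergence is immediate there, and the only subtlety is patching the pre-apartment initial segments, which contributes only a bounded error washed out by the $\inf_s\lim_t$. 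I would therefore organize the proof so that all hyperbolicity/tree claims are reduced, via Theorem~\ref{aptfull} and the flat strip theorem, to elementary Euclidean computations in a single apartment, and cite \cite[Chapters 10--11]{WeissBook} for the parallel statements on wall trees that can be transported verbatim.
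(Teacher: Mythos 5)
A point of context first: the paper does not prove this proposition at all --- it is quoted from \cite[Corollary 11.18]{WeissBook}, and the derivation intended there is exactly the one you sketch: wall trees are thick $(q+1)$-regular trees (\cite[Proposition 10.14 and Corollary 10.16]{WeissBook}, quoted just above), and $T_v$ is identified with a wall tree $T_\ell$ for any singular line $\ell$ ending at $v$ (\cite[Proposition 11.16]{WeissBook}, quoted just below as Proposition~\ref{isompaneltree} --- note it is an isometry onto all of $T_v$, not onto ``a branch'' and not merely up to a constant factor). So your overall strategy is the standard route rather than a genuinely different one; the question is whether your sketch actually carries it out.

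It does not, because the two steps you treat as routine are precisely the nontrivial content. (a) The claim that any two rays asymptotic to $v$ become, after bounded time, parallel inside a common apartment --- equivalently, that every asymptotic class toward $v$ is represented by a line parallel to a fixed singular line $\ell$ with endpoint $v$, i.e.\ the surjectivity of the map $T_\ell\to T_v$ --- is not a consequence of CAT(0) convexity. An arbitrary ray toward $v$ extends to a singular line whose opposite endpoint is some vertex opposite $v$, generally different from that of $\ell$, and such a line is \emph{not} parallel to $\ell$ (parallel lines share both endpoints); producing a parallel representative in the given asymptote class is exactly \cite[Proposition 11.16]{WeissBook} and requires the building axioms, not the convexity of $t\mapsto d(r(t+s),r'(t))$. (b) The tree structure itself: the flat strip theorem only yields that the parallel set of $\ell$ splits as $Y\times\RR$ with $Y$ convex; that $Y$ is a \emph{simplicial} tree with discrete branching (which is also what positivity of your metric on asymptote classes really uses --- convexity alone does not rule out distinct classes at distance $0$), and that every vertex has degree exactly $q+1$, come from the combinatorics of walls and panel residues, i.e.\ from the Chapter~10 results of \cite{WeissBook} you propose to ``transport verbatim''. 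Once you cite those together with \cite[Proposition 11.16]{WeissBook}, your argument collapses to the citation the paper already gives; as a self-contained proof it leaves the main lemmas unproved.
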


This proposition can be found in \cite[Corollary 11.18]{WeissBook}. The tree $T_v$ is called the \textbf{panel tree} associated to $v$.

Panel trees are related  to wall trees via canonical maps \cite[Proposition 11.16]{WeissBook}:

\begin{proposition}\label{isompaneltree}
Let $\ell$ be a singular geodesic line, and $v$ be an endpoint of $\ell$. Then the map $T_\ell\to T_v$ which associates to a line parallel to $\ell$ its asymptotic class is an isometry.
\end{proposition}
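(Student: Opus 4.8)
The plan is to establish the two halves separately: that the natural map $\varphi \colon T_\ell \to T_v$ is well-defined and distance-preserving, and that it is surjective (injectivity being immediate once distance-preservation is known). First I would check that $\varphi$ is well-defined: if $\ell'$ is parallel to $\ell$, then $\ell'$ and $\ell$ lie in a common apartment $F$ and are parallel there, so in $F$ the ray $\ell'|_{[0,\infty)}$ (oriented towards $v$) is parallel in the Euclidean sense to $\ell|_{[0,\infty)}$; hence they represent the same vertex $v \in \Delta$, and the asymptotic class of $\ell'|_{[0,\infty)}$ is a well-defined point of $T_v$ depending only on the parallelism class of $\ell'$.

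Next I would compare the two metrics. Given $\ell, \ell'$ parallel singular lines, pick an apartment $F$ containing both (using the building axioms, or Theorem~\ref{aptfull} applied to their convex hull, which is a flat strip isometric to a subset of $\RR^2$). Inside $F$ the computation is purely Euclidean: the wall-tree distance $d(\ell,\ell')$ is the width of the parallel strip they bound, while the ray-distance $\inf_s \lim_{t\to\infty} d(r(t+s), r'(t))$ for the corresponding rays $r = \ell|_{[0,\infty)}$, $r' = \ell'|_{[0,\infty)}$ is, again computed in $F$, exactly that same Euclidean width (the limit is independent of $t$ once both rays travel in the common direction $v$, and the infimum over the shift $s$ is realized by the orthogonal alignment). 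So $d_{T_\ell}(\ell,\ell') = d_{T_v}(\varphi(\ell),\varphi(\ell'))$. In particular $\varphi$ is injective and isometric onto its image; since both $T_\ell$ and $T_v$ are thick trees of the same degree $q+1$ by the preceding propositions, and an isometric embedding of one regular tree into another of the same degree is automatically onto, surjectivity follows — though in the general (non-locally-finite) case I would instead argue surjectivity directly.

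The one genuinely substantive point — and the step I expect to be the main obstacle — is surjectivity in the general case: given a ray $r' \colon [0,\infty)\to X$ asymptotic to $v$, one must produce a line in $X$ parallel to $\ell$ whose positive half is asymptotic to $r'$. The idea is that $r'$, being in the class of $v$, is eventually parallel to a subray of $\ell$; passing to that subray, $r'$ and a subray of $\ell$ span a flat half-strip, and one extends $r'$ backwards: the union of $r'$ with the geodesic ray issuing from $r'(0)$ in the direction opposite to $v$ inside a suitable apartment gives a full singular line parallel to $\ell$. Making ``a suitable apartment'' precise uses Theorem~\ref{aptfull} (the half-strip has non-empty interior, or is convex, hence lies in an apartment) together with the fact that singular directions are preserved; I would also invoke that $X$ is thick and that the asymptotic class of the resulting line does not depend on the choices made. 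Once surjectivity is in hand, the isometry statement is complete. (If one prefers, in the locally finite case one can bypass this entirely by the degree-counting remark above, which is why the cited reference \cite[Proposition 11.16]{WeissBook} may phrase it that way.)
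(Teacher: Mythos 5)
The paper itself offers no argument for Proposition~\ref{isompaneltree}: it simply quotes \cite[Proposition~11.16]{WeissBook}, so your proposal has to stand on its own. The easy half of your plan is fine: parallelism is an equivalence relation, so any two lines parallel to $\ell$ lie in a common apartment, apartments are convex (hence isometrically embedded) in $X$, and your Euclidean computation correctly identifies both the wall-tree distance and the ray-distance with the width of the strip; well-definedness, distance-preservation and injectivity follow. The gap is in surjectivity, which you rightly flag as the crux but do not actually close. Given a ray $r'$ in the class of $v$, you pass to a subray $r''$ parallel to a subray $\ell''$ of $\ell$, put the resulting half-strip in an apartment $F''$ (Theorem~\ref{aptfull}), and extend $r''$ backwards inside $F''$. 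The line $m$ so obtained is indeed singular and asymptotic to $r'$ at its positive end, but nothing guarantees that $m$ is \emph{parallel to $\ell$}: by definition this requires a common apartment containing the two full lines, and in particular $m$ must have the same pair of endpoints $\{v,v'\}$ as $\ell$. The negative endpoint of $m$ is the antipode of $v$ in $\partial F''$, and $F''$ only contains a positive subray of $\ell$, so its boundary need not contain $v'$ at all (take $F''$ branching off the apartment of $\ell$ along the wall through $\ell''$ at its initial vertex); even when it does, the backward extension of $r''$ need not point to it. What must be proved --- and is the actual content of \cite[Prop.~11.16]{WeissBook} --- is that the half-strip spanned by $r''$ and $\ell''$ extends to a flat region containing a full line with endpoints $v$ and $v'$, equivalently that some line of the parallel family of $\ell$ (not merely some singular line ending at $v$) is asymptotic to $r'$. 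Your appeal to Theorem~\ref{aptfull} only re-proves that the half-strip lies in an apartment, which you already had from the definition of the class of $v$.

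The parenthetical degree-counting shortcut also cannot be used as stated: a distance-preserving embedding of a $(q+1)$-regular metric tree into another $(q+1)$-regular metric tree need not be onto unless the edge lengths coincide. The image is a convex subtree, but it can be a ``spread-out'' one whose branch points occupy only some of the vertices of the target --- for instance the $3$-regular tree with edge length $2$ embeds convexly, hence isometrically, into the $3$-regular tree with edge length $1$ without being onto. Since the edge length of $T_v$ is not known in advance to equal that of $T_\ell$ (ruling out intermediate asymptotic classes between two closest parallel lines is essentially surjectivity itself), the remark does not bypass the direct argument even in the locally finite case.
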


The set of ends of a panel tree is also well understood. Any ray in $T_v$ can be lifted to a sequence of adjacent parallel geodesic rays in some apartment $F$ containing $v$. To such a sequence one can associate the chamber, in the boundary of $F$, which contains $v$ and goes in the direction of the sequence of rays. This explains the construction of the bijection in the following proposition \cite[Proposition 11.22]{WeissBook}:

\begin{proposition}\label{bndpaneltree}
Let $v$ be a vertex in $\Delta$. There is a canonical $\Aut(X)_v$-equivariant bijection between the set of ends of the panel tree $T_v$  and the set $\Ch(v)$ of chambers of $\Delta$ containing $v$.
\end{proposition}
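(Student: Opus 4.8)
The plan is to build a canonical map $\Psi\colon \Ch(v)\to \{\text{ends of } T_v\}$ out of sectors, prove it is a bijection, and note that equivariance is then automatic. Throughout I would use Proposition~\ref{isompaneltree} to regard $T_v$ as the wall tree of a singular line having $v$ as an ideal endpoint, so that a vertex of $T_v$ is a wall of $X$ with $v$ among its two ideal endpoints, two such walls being adjacent exactly when they are parallel and at the minimal positive distance. To define $\Psi$: given $C\in\Ch(v)$, with ideal vertices $v$ and $u$, and a vertex $x$ of $X$, take the sector $Q(x,C)$, its bounding ray pointing to $v$, and the wall $m^C_x$ containing that ray and having endpoint $v$. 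Working inside an apartment $F\supseteq Q(x_0,C)$, an elementary computation shows that $m^C_x$ depends only on the depth of $x$ in the $u$-direction, and that this dependence is, up to scaling, an isometric embedding of a half-line; hence the walls $m^C_x$, $x\in Q(x_0,C)$, are the vertex set of a geodesic ray of $T_v$, whose end is independent of $x_0$ since any two sectors representing $C$ contain a common subsector. Set $\Psi(C)$ to be that end.

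For surjectivity I would start from an end $\xi$ and a geodesic ray $m_0,m_1,m_2,\dots$ of $T_v$ converging to $\xi$. Consecutive $m_k$ are parallel; realising the successive flat strips between them and taking unions, Theorem~\ref{aptfull} puts $m_0,\dots,m_k$ in a common apartment for each $k$ and shows that the closed convex hull $H$ of $\bigcup_k m_k$ is convex and isometric to a planar region; applying Theorem~\ref{aptfull} once more places $H$ inside an apartment $F$. In $F$, the ideal boundary of $H$ is an arc with $v$ as an endpoint (every $m_k$ runs through $v$), and it is nondegenerate because $d(m_0,m_k)\to\infty$; let $C\in\Ch(v)$ be the chamber of $F$ at infinity which it contains at $v$. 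The recession cone of $H$ then contains the cone on $C$, so $Q(x,C)\subseteq H$ for every $x\in H$; consequently the ray of $T_v$ computing $\Psi(C)$ consists of walls with endpoint $v$ lying in $H$, and it has $(m_k)$ as a cofinal subsequence, so $\Psi(C)=\xi$.

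For injectivity, assume $\Psi(C)=\Psi(C')$. The two rays of $T_v$ above then share a tail, hence a common edge: concretely a flat half-strip $W$ which is the first slice, measured along the side pointing to $v$, of a subsector of $Q(x_0,C)$ and also of one of $Q(x_0',C')$. Writing $W=\{t e_v+s e_u:\ t\ge 0,\ 0\le s\le\delta\}$ in an apartment, with $e_v$ the common direction of the two long sides of $W$ (both pointing to $v$) and $e_u$ the direction of the short side along which the first subsector extends $W$, one observes that of the two directions making angle $\pi/3$ with $e_v$ only $e_u$ lies on the same side of the $e_v$-line as $W$; a sector containing $W$ with its $v$-pointing side along the $s=0$ edge must therefore open toward $e_u$. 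Hence both subsectors extend $W$ in the same way, so $C=\partial_\infty C=\partial_\infty C'=C'$ (equality of chambers at infinity), and $\Psi$ is injective.

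Finally, every ingredient of $\Psi$ — the sectors $Q(x,C)$, parallelism of walls, germs at $v$, convex hulls — is canonically attached to the building, so $g\Psi(C)=\Psi(gC)$ for every $g\in\Aut(X)_v$, and $\Psi$ is the desired equivariant bijection. I expect the only genuinely delicate point to be the opening move of the surjectivity argument, namely that the lift of a geodesic ray of $T_v$ into $X$ has a convex planar hull contained in a single apartment: this is exactly where the structure theory of wall and panel trees of $\tld A_2$-buildings (Propositions~\ref{prop:paneltree}--\ref{isompaneltree}, ultimately \cite[Chapters~10--11]{WeissBook}) is used in an essential way; granting that, all remaining verifications reduce to elementary Euclidean geometry inside one apartment.
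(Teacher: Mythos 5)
Your construction of $\Psi$, its well-definedness and equivariance, and the surjectivity argument are essentially sound; note that the paper itself gives no proof but cites \cite[Proposition 11.22]{WeissBook}, after sketching exactly your surjectivity map (lift a ray of $T_v$ to a sequence of adjacent parallel rays in one apartment and take the chamber at $v$ in the direction of the sequence). The lifting step you flag is real but fillable: either observe that the union of all lines parallel to a fixed wall with endpoint $v$ is convex and isometric to $T_\ell\times\RR$, inside which your region $H$ is (tree ray)$\times\RR$, hence flat and convex, or argue by induction using that the line midway between $M_{k-1}$ and $M_{k+1}$ in the apartment they span represents the midpoint of the tree geodesic and is therefore $M_k$; then Theorem~\ref{aptfull} applies as you say. (A small separate point: the wall $m^C_x$ is not unique, since the ray $[x,v)$ may extend backwards in several ways, but its class in $T_v$ is, which is all you use.)

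The genuine gap is injectivity. From $\Psi(C)=\Psi(C')$ you retain only one common edge of the two rays of $T_v$, i.e.\ a single common half-strip $W$ of width one row, and you deduce $C=C'$ from the observation that any sector with $v$-side along the $s=0$ edge of $W$ must open toward $e_u$. That observation is correct but only pins down the germ of the sector along $W$, equivalently its first slice, and in a thick building the first slice does not determine the chamber at infinity: the building branches along the far wall of $W$, so for a fixed first slice there are infinitely many chambers $D\in\Ch(v)$ — one for each end of the thick tree $T_v$ passing through the edge $(p,p')$ in question — whose sectors, based along the same ray $[z,v)$ and opening to the same side, contain a cofinal portion of that same slice. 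Concretely, glue onto the far wall of $W$ a half-apartment different from the one you started in; with the near half-plane it forms an apartment whose chamber at $v$ on the $W$-side is in general distinct from $C$, yet its sector has the same first slice. So ``both subsectors extend $W$ in the same way'' cannot yield $C=C'$. A correct argument must exploit the agreement of the two rays at \emph{every} depth, and even then one needs a building-theoretic step to pass from cofinal agreement of all slices to equality of the chambers at infinity (for instance, produce a common subsector of $Q(x_0,C)$ and $Q(x_0',C')$, or show that the chamber produced by your surjectivity construction from the end $\xi$ is the unique element of $\Psi^{-1}(\xi)$); this is not a one-apartment Euclidean computation, and it, rather than the lifting step, is where the substance of \cite[Proposition 11.22]{WeissBook} lies.
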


\section{Structure of the projectivity group} \label{sec:projgroup}

The projectivity group is an important invariant of the building at infinity $\Delta$, and plays a crucial role in our proofs.
In this chapter we  define it and discuss its properties.
In the next section, \S\ref{sec:repofproj}, we will study its linear representations.
First,  in \S\ref{sub:defproj} we  define the projectivity group in the context of a general projective plane, and then, in \S\ref{sec:projtree},
in the restricted setting of an affine building.
In the latter setting, we will explain how the projectivity group acts on a panel tree.
The last subsection, \S\ref{subsec:treesandalggroups},  is devoted to the general study of linear representations of groups acting on trees.
This is a preparation for \S\ref{sec:repofproj}.

\subsection{Projectivity groups of projective planes}\label{sub:defproj}

In this section, we recall some basic terminology on projectivity groups, and record important known results that will be needed in the sequel.

Let $\Delta$ be a thick building of type $A_2$ (i.e. $\Delta$ is a projective plane). If $v$ is a vertex of $\Delta$, we denote by $\Ch(v)$ the set of chambers of $\Delta$ adjacent to $v$.

Let $v$ and $v'$ be opposite vertices of $\Delta$. The \textbf{combinatorial projection} to $v$ is the map from $\Delta$ to $\Ch(v)$ which associates to $C\in \Delta$ the unique chamber of $\Ch(v)$ which is at minimal distance from $C$. The fact that we chose $v$ and $v'$ opposite implies that this projection, restricted to $\Ch(v')$, is an involutory bijection.

Given a sequence $(v_0, \dots, v_k)$ of vertices such that $v_i$ is opposite $v_{i-1}$ and $v_{i+1}$ for all $i \in \{1, \dots, k-1\}$, we obtain a bijection of $\Ch(v_0)$ to $\Ch(v_k)$ by composing the successive projection maps from $\Ch(v_{i-1})$ to $\Ch(v_i)$.  That bijection is called a \textbf{perspectivity}; it is denoted by $[v_0; v_1 ; \dots ; v_k]$. In the special case when  $v_0 =  v_k$, it is called a \textbf{projectivity} at $v_0$. The collection of all projectivities at a vertex $v_0$ is denoted by $\Pi(v_0)$; it is  a permutation group of the set $\Ch(v_0)$, called the \textbf{projectivity group of $\Delta$ at $v_0$} and denoted by $(\Pi(v_0), \Ch(v_0))$. The isomorphism type of the permutation group $(\Pi(v_0), \Ch(v_0))$ does not depend on the choice of the vertex $v_0$. Indeed, given another vertex $v$, there exists a perspectivity from $\Ch(v_0)$ to $\Ch(v)$ (because $\Delta$ is thick, see Lemma~\ref{lem:twoop}), which conjugates $(\Pi(v_0), \Ch(v_0))$ onto $(\Pi(v), \Ch(v))$. For that reason, it makes sense to define \textbf{the projectivity group} of the building $\Delta$ as any representative $(\Pi, \Omega)$ of the isomorphism class of the projectivity group $(\Pi(v), \Ch(v))$ at a vertex $v$.

\begin{proposition}\label{prop:3trans}
The projectivity group  $(\Pi, \Omega)$  of a  thick building of type $A_2$  is $3$-transitive.
\end{proposition}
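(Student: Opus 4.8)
The plan is to establish $3$-transitivity of $(\Pi,\Omega)$ by realizing $\Omega = \Ch(v_0)$ as the set of ends of the panel tree $T_{v_0}$ (via Proposition~\ref{bndpaneltree}) and exploiting the rich structure of projectivities on this tree; alternatively, one can argue purely combinatorially inside the projective plane $\Delta$. Since $\Delta$ is thick, each $\Ch(v)$ has at least three elements, so $3$-transitivity is a meaningful statement. I would first reduce to a more tractable claim: because the isomorphism type of $(\Pi(v_0),\Ch(v_0))$ is independent of $v_0$, and because perspectivities between chamber-sets at opposite vertices are available in abundance (Lemma~\ref{lem:twoop}), it suffices to produce, given two triples $(C_1,C_2,C_3)$ and $(C_1',C_2',C_3')$ of pairwise distinct chambers in $\Ch(v_0)$, a projectivity at $v_0$ carrying the first triple to the second.

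The key step is the following transitivity bootstrap. A perspectivity $[v_0;v_1;v_0]$ with $v_1$ opposite $v_0$ is determined by the choice of $v_1$, and as $v_1$ ranges over vertices opposite $v_0$ one gets a large family of involutions (or bijections) of $\Ch(v_0)$; composing two such, $[v_0;v_1;v_0]\circ[v_0;v_2;v_0]$, already yields many projectivities. The standard approach is: (i) show $\Pi(v_0)$ is transitive on $\Omega$ — given $C,C'\in\Ch(v_0)$, pick a vertex $w$ opposite $v_0$ with the projection of $w$-related data moving $C$ to $C'$; this uses thickness to find enough opposite vertices and the fact that a chamber of $\Ch(v_0)$ contains a unique vertex $\neq v_0$, so moving chambers amounts to moving those vertices. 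Then (ii) show the stabilizer $\Pi(v_0)_C$ is transitive on $\Omega\setminus\{C\}$, and (iii) show the pointwise stabilizer $\Pi(v_0)_{C,C'}$ is transitive on $\Omega\setminus\{C,C'\}$. Each of steps (ii)–(iii) is proved by exhibiting explicit perspectivity cycles: one routes through an apartment at infinity containing suitable opposite pairs, uses Lemma~\ref{opp-apt} to control the geometry, and verifies that the composite projectivity fixes the prescribed chambers while acting transitively on the rest; the thickness hypothesis and the axioms of a projective plane (any two points on a unique line, etc.) guarantee the needed configurations exist.

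I would organize the computation so that each stage adds one fixed point, invoking the description in Proposition~\ref{bndpaneltree} of $\Ch(v_0)$ as $\partial T_{v_0}$ to make the "one vertex moves, the rest is free" bookkeeping transparent — a projectivity acts on $\partial T_{v_0}$, and having it fix an end corresponds to it fixing a boundary point of the tree, after which one still has an edge-transitive supply of further projectivities. The main obstacle I anticipate is step (iii): fixing *two* chambers while still acting transitively on the remaining ones requires a careful choice of the intermediate vertices $v_1,\dots,v_k$ so that the two prescribed chambers lie in the common "axis" of the composite perspectivity; getting this configuration to exist in full generality is where thickness and a nontrivial use of the projective-plane axioms (together with Lemma~\ref{lem:twoop} applied iteratively) are genuinely needed, and where one must be most careful that the perspectivity one writes down is actually a projectivity, i.e. returns to $v_0$. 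Once these three transitivity statements are in place, they combine immediately to give $3$-transitivity of $(\Pi,\Omega)$.
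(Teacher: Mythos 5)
There is a genuine gap: what you have written is a plan, not a proof. The paper itself does not reprove this fact; it simply cites Proposition~2.4 of Cameron's book, so any self-contained argument would have to actually carry out the constructions, and yours does not. Steps (i)--(iii) are announced ("exhibit explicit perspectivity cycles", "one routes through an apartment at infinity \dots and verifies \dots") but no projectivity is ever written down, and you yourself flag step (iii) -- transitivity of the two-point stabilizer -- as an unresolved obstacle. That is precisely the step carrying the content of $3$-transitivity, so the proposal stops short of a proof.

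Moreover, the one concrete mechanism you do propose is flawed. For $v_1$ opposite $v_0$, the projection $\Ch(v_0)\to\Ch(v_1)$ and the projection back are mutually inverse (this is exactly the "involutory bijection" statement in \S\ref{sub:defproj}), so every perspectivity of the form $[v_0;v_1;v_0]$ is the identity, and composites such as $[v_0;v_1;v_0]\circ[v_0;v_2;v_0]$ are again the identity; they do not furnish "a large family of involutions". Nontrivial projectivities only arise from chains of length at least four, $[v_0;v_1;v_2;v_3;v_0]$ (compare Lemma~\ref{lem:NonTriv}), and the whole difficulty is to choose the intermediate vertices so as to prescribe the images of three chambers -- which is exactly what is missing. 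Finally, the appeal to Proposition~\ref{bndpaneltree} and the panel tree $T_{v_0}$ is not available here: Proposition~\ref{prop:3trans} concerns an arbitrary thick building of type $A_2$, i.e.\ an abstract (possibly finite) projective plane, not necessarily the building at infinity of an affine building, so there is no panel tree to act on. If you want a proof rather than a citation, you must produce, for two given triples of distinct chambers in $\Ch(v_0)$, an explicit chain of opposite vertices realizing the required map; this is the classical argument in Cameron's Proposition~2.4, and none of it is reproduced in your sketch.
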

\begin{proof}
See Proposition~2.4 in \cite{CameronBook}.
\end{proof}

We remark that projectivity groups can be defined for arbitrary generalized polygons; a result of N.~Knarr~\cite[Lemma~1.2]{Knarr} ensures that they are always $2$-transitive. 

The projectivity group $(\Pi, \Omega)$ is an important invariant of a projective plane $\Delta$. For example, it is known that $(\Pi, \Omega)$ is sharply $3$-transitive if and only if $\Delta$ is a projective plane over a commutative field, see Theorem~2.5 in \cite{CameronBook}. We will need an  important variant of that fact. In order to state it, we recall that a $2$-transitive permutation  group $(\Pi, \Omega)$ is called a \textbf{Moufang set} if there exists a set $\{U_x\}_{x \in \Omega}$ of subgroups of $\Pi$ such that for each $x \in \Omega$, the group $U_x$  acts sharply transitively on $\Omega \setminus \{x\}$ and $g U_x g^{-1} = U_{g(x)}$ for all $g \in \Pi$. The subgroups   $U_x$ for $x \in \Omega$ are called the \textbf{root groups} of the Moufang set.

\begin{theorem}\label{thm:Schleiermacher}
Let $\Delta$ be a thick building of type $A_2$. Then $\Delta$ is Moufang if and only if the projectivity group $(\Pi, \Omega)$ is a Moufang set.
\end{theorem}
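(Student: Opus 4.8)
The statement is an equivalence which I would prove one implication at a time, in both cases relaying through the coordinatization theory of projective planes. The one external fact I would treat as a black box is the classical theorem (Pickert; Bruck--Kleinfeld) that a thick projective plane $\Delta$ is Moufang exactly when it admits a coordinatization by an alternative division ring $A$ --- with $A$ associative if and only if it is a (possibly non-commutative) field, and the only non-associative possibilities the octonion division algebras. The assertion is the correct one-sided strengthening of the already-quoted Theorem~2.5 of \cite{CameronBook} (the sharply $3$-transitive, i.e.\ Pappian, case), and I would use that as a guide.

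For ``$\Delta$ Moufang $\Rightarrow (\Pi,\Omega)$ a Moufang set'', fix a point $p$ of $\Delta$ and identify $\Omega = \Ch(p)$ with the projective line $A \cup \{\infty\}$ over a coordinatizing alternative division ring $A$; the point is to compute $\Pi$ explicitly. The basic mechanism, valid in any plane, is that a central collineation of $\Delta$ acts on the pencil at any point other than its center as a \emph{perspectivity}, since corresponding lines meet the axis in the same point; hence the little projective group of $\Delta$, generated by elations, induces on $\Ch(p)$ a group of projectivities, and when $\Delta$ is Moufang there are enough elations that this group is all of $\Pi$ and is generated by the ``Hua maps'' $x \mapsto x + a$, $x \mapsto ax$ ($a \in A^\times$) and $x \mapsto x^{-1}$, suitably extended to $A \cup \{\infty\}$. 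Setting $U_\infty = \{x \mapsto x + a : a \in A\}$, one checks that $U_\infty$ acts sharply transitively on $\Omega \setminus \{\infty\} = A$, that $U_\infty \le \Pi_\infty$, and that $U_\infty$ is normalised by the remaining generators of $\Pi_\infty$ --- trivially by the multiplications, and for the inversion by exactly the Moufang/alternativity identities in $A$, which is what keeps the conjugate of a translation a translation. Thus $U_\infty \triangleleft \Pi_\infty$, so $(\Pi, \Omega)$ is a Moufang set. (In building language $U_\infty$ is the image in $\Pi$ of the root group $U_\alpha$ of the Moufang building $\Delta$ for a root $\alpha$ ending at the panel $p$, and its two properties just restate the Moufang condition on $\Delta$.)

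The reverse implication is the substantive one. Assume $(\Pi,\Omega)$ is a Moufang set with regular normal subgroup $U_x \triangleleft \Pi_x$; fix a point $p$ of $\Delta$ and identify $\Omega$ with the pencil at $p$. I would argue in three steps. (i) \emph{Algebra.} A regular normal subgroup $U_x$ of the stabilizer $\Pi_x$, where $\Pi$ is $3$-transitive by Proposition~\ref{prop:3trans}, together with a second root group and a Hua map swapping $x$ with a second point, equips $\Omega$ with an algebraic structure $A$ whose addition is $U_x$; the normality of $U_x$ together with $3$-transitivity forces $A$ to satisfy the alternative laws. This is the group-theoretic core of Schleiermacher's argument. (ii) \emph{Lifting.} Show that the elements of $U_x$ are induced by \emph{collineations} of $\Delta$ --- concretely by elations fixing a common flag. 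This is a von Staudt--type statement and is the hard part. (iii) \emph{Conclusion.} Once $\Delta$ is known to carry, for a point $p$ and a line through it, the full elation group realising the translations of $A$ --- and hence, using that the projectivity group has the same isomorphism type at every vertex together with $3$-transitivity, the corresponding statement at every incident flag --- $\Delta$ is Moufang by definition.

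The genuine obstacle is step (ii): a projectivity of $\Delta$ need not extend to a collineation, so the collineation group cannot simply be read off the permutation group $\Pi$. Schleiermacher's way around this is to first extract enough regularity from the regular normal subgroup to pin down the coordinate ring $A$ and to identify which projectivities are forced to be geometric, and then to verify the lift to elations directly in coordinates. For the present paper it is enough to cite Schleiermacher's theorem, but the sketch above is the route I would take to reconstruct it; the only non-classical ingredient beyond the standard coordinatization dictionary is the bookkeeping that upgrades ``$U_x$ regular and normal'' to ``$A$ alternative''.
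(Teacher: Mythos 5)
The paper's own proof of Theorem~\ref{thm:Schleiermacher} is simply a citation of the main result of \cite{Schleiermacher}, and your proposal ultimately takes the same route: you correctly identify the statement as Schleiermacher's theorem and conclude that citing it suffices, so the operative approach coincides with the paper's. Your accompanying reconstruction sketch is a plausible outline of the classical argument but is not load-bearing here, and a few of its details (the exact identification of $\Pi$ with the group generated by the Hua maps, the normality check "via inversion" even though the inversion does not fix $\infty$, and the precise sense in which a central collineation induces a perspectivity between pencils) would need more care if one actually carried it out.
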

\begin{proof}
See the main result from \cite{Schleiermacher}.
\end{proof}

We will also need the following elementary criterion, ensuring that a projectivity of a very specific form is non-trivial.

\begin{lemma}\label{lem:NonTriv}
Let  $\Delta$ be a thick building of type $A_2$. Let $C, C_0, C_1, C_2, C_3$ be chambers such that $\Sigma_{i, j} = C_i \cup C \cup C_j$    is a half-apartment  of $\Delta$ for all $(i, j) \in \{0, 2\} \times \{1, 3\}$.

For all $i \mod 4$, let $v_i$ be the vertex of $C_i$ that is not contained in $C$. 
If $C_0 \neq C_2$ and $C_1 \neq C_3$, then the projectivity $[v_0; v_1; v_2; v_3; v_0]$ is non-trivial: indeed, its only fixed point in $\Ch(v_0)$ is  $C_0$.
\end{lemma}

\begin{proof}
Observe first that the projectivity 	$g = [v_0; v_1; v_2; v_3; v_0]$  fixes $C_0$. 
Let $C'_0 \in \Ch(v_0)$ be any chamber different from $C_0$. We claim that  $g = [v_0; v_1; v_2; v_3; v_0]$ does not fix $C'_0$. Indeed, we define inductively, for each $i=0, 1, 2, 3$, the chamber $C'_{i+1}$ as the projection of $C'_i$ to $\Ch(v_{i+1})$, and  $D_i$ as the unique chamber which is adjacent to both $C'_i$ and $C'_{i+1}$. It then follows that for all $i = 0, 1, 2$, the chamber $D_{i}$ is adjacent to $D_{i+1}$. Moreover $D_3$ is adjacent to $C'_4$.

By definition, we have $g(C'_0) = C'_4$. Therefore, if $g(C'_0) = C'_0$, then $D_3$ is adjacent to $D_0$, so that $(D_0, D_1, D_2, D_3, D_0)$ forms a closed gallery of length~$4$. Therefore, we must have $D_0= D_1$ or $D_1 = D_2$, which forces respectively $C_0 = C_2$ or $C_1 = C_3$.
\end{proof}

\subsection{Projectivity groups of Euclidean buildings}\label{sec:projtree}

Now, let us go back to the situation when $X$ is a thick  Euclidean building of type $\widetilde A_2$, and $\Delta$ is the set of chambers of its building at infinity.  In that particular case, the projectivity group $(\Pi, \Omega)$ of $\Delta$ is more than just a permutation group of a set $\Omega$. Indeed, to each vertex $v \in \Delta$, one associates the panel tree $T_v$, as in Proposition \ref{prop:paneltree}.

Let $v$ and $v'$ be two opposite vertices of different types in $\Delta$, and let $\ell$ be a geodesic line in $X$ joining these two vertices.
By Proposition \ref{isompaneltree}, the panel trees $T_v$ and $T_{v'}$ are both canonically isomorphic to the wall tree $T_\ell$.
Hence we have a  canonical involutory isomorphism between the panel trees $T_v$ and $T_{v'}$.

From that observation, it follows that any perspectivity of $\Ch(v)$ to $\Ch(v')$ defines an isomorphism of panel trees from $T_v$ to $T_{v'}$, and that the projectivity group at $v$ acts by automorphisms on the panel tree $T_v$.
We define the \textbf{projectivity group of $X$ at $v$} as $(\Pi(v), T_v)$ by viewing the usual projectivity group $\Pi(v)$ as a subgroup of $\Aut(T_v)$.
Thus the projectivity group of $\Delta$ at $v$ is   $(\Pi(v), \partial T_v)$, since the set $\Ch(v)$ is canonically isomorphic to the set of ends of the panel tree $T_v$ by Proposition \ref{bndpaneltree}.
Note that using isomorphisms as described above, all wall trees and all panel trees are mutually isomorphic (though not canonically).

\begin{definition} \label{def:modeltree}
We fix once and for all a tree $T$ which is isomorphic to all panel and wall trees in $X$ and call it the \textbf{model   tree} of $X$.
By choosing an actual identification of $T$ with a panel tree $T_v$ and pulling back the projectivity group $\Pi(v)$ we define $\Pi$, the \textbf{model projectivity group} of $X$.
\end{definition}

\begin{lemma} \label{lem:modeltree}
The projectivity group $\Pi$ acts $3$-transitively on $\partial T$ and vertex-transitively on $T$. In particular $T$ is regular. If moreover $X$ is locally finite of order $q$, then $T$   is locally finite of degree~$q+1$).
\end{lemma}

\begin{proof}
The $3$-transitivity of $\Pi$ on $\partial T$ follows from Proposition~\ref{prop:3trans}.
$\Pi$ is vertex-transitive on $T$, since every vertex can be uniquely determined by a triple of distinct ends.
The regularity of $T$ follows. The last assertion follows from Propositions~\ref{prop:walltree} and~\ref{isompaneltree}.
\end{proof}

\subsection{Algebraic groups and locally finite trees} \label{subsec:treesandalggroups}

In this section we recall some general results on groups acting on trees that we need in our analysis of the group of projectivities of an $\widetilde A_2$-building.

\subsubsection{Linear simple locally compact groups are algebraic}

A locally compact group is called \textbf{linear} if it has a continuous, faithful, finite-dimensional representation over a locally compact field.

\begin{theorem}\label{thm:LocallyLinear}
Let $G$ be a   \tdlc{} group such that the intersection $G^+$ of all non-trivial closed normal subgroups of $G$ is compactly generated, topologically simple and non-discrete, and that the quotient $G/G^+$ is compact. If $G$ has a linear open subgroup,  then the following assertions hold.
\begin{enumerate}[(i)]
\item $G^+$ is algebraic: indeed $G^+$ is isomorphic to $H(k)/Z$, where $k$ is a local field, $H$ is an absolutely simple, simply connected algebraic group defined and isotropic over $k$, and $Z$ is the center of $H(k)$.

\item $G/G^+$ is virtually abelian.
\end{enumerate}
\end{theorem}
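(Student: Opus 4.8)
The plan is to exploit the structure theory of compactly generated, topologically simple, non-discrete t.d.l.c.\ groups, combined with the classification of linear locally compact groups. First I would fix a linear open subgroup $U \leq G$, with a continuous faithful representation $U \hookrightarrow \GL_m(k_0)$ over a local field $k_0$, and replace $U$ by $U \cap G^+$, which is still open (as $G/G^+$ is compact, $G^+$ is open if and only if... actually $G^+$ is closed and cocompact, hence $U\cap G^+$ is open in $G^+$) and still linear. So without loss of generality $G^+$ itself has a linear open subgroup. The heart of the matter is then the following: a compactly generated, topologically simple, non-discrete t.d.l.c.\ group which is \emph{locally linear} (in the above sense) must be of the form $H(k)/Z$ for $H$ absolutely simple simply connected isotropic over a local field $k$. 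This is where I would invoke the structure theory for such groups: by a theorem of Caprace--Stulemeijer \cite{CapStu} (cited as the ``general structure theory of linear locally compact groups'' in the introduction), a topologically simple t.d.l.c.\ group with a linear open subgroup is either discrete or of the stated algebraic form; since $G^+$ is assumed non-discrete, the second alternative holds. This gives assertion~(i) essentially directly once the reduction to $G^+$ being locally linear is in place.

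For assertion~(ii), the idea is to analyze the conjugation action of $G$ on $G^+ \cong H(k)/Z$. Since $G^+ \trianglelefteq G$, conjugation gives a continuous homomorphism $G \to \Aut(G^+)$, and by rigidity of the algebraic group $H$ over $k$ (the automorphism group of $H(k)/Z$ is, up to the inner part $G^+/Z(G^+)$ which is itself essentially $G^+$, an extension of the algebraic automorphisms $\mathrm{Aut}(H)(k)$ by field automorphisms $\mathrm{Aut}(k)$, cf.\ Proposition~\ref{prop:Aut} and \cite{BoTi}), one obtains a map $G/G^+ \to \Out(G^+)$ whose image lies in this extension. The subtlety is that $G/G^+$ is compact, so its image is a compact subgroup of $\Out(G^+)$. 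The algebraic-automorphism part $\mathrm{Aut}(H)(k)/\mathrm{Inn}$ is a linear algebraic group over $k$, so a compact subgroup of it is virtually abelian (a compact subgroup of $\GL_n$ over a local field is virtually a pro-$p$ group of bounded rank; more to the point, the compact image, being linear over $k$, has a virtually abelian... here one should rather argue that the image in the diagram-automorphism part $\mathrm{Out}(H)$ is finite, and the image in $\mathrm{Aut}(k)$ part interacts with $k$). Let me restructure: the image of $G/G^+$ in $\Out(G^+)$ fits into an exact sequence with the diagram automorphisms (a finite group) and a subgroup of $\mathrm{Aut}(k) \ltimes (\text{torus part})$; compactness forces the relevant pieces to be virtually abelian.

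\medskip

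The step I expect to be the main obstacle is making assertion~(ii) clean: controlling the compact image of $G/G^+$ inside $\Out(H(k)/Z)$. The issue is that $\mathrm{Aut}(k)$ for $k$ of positive characteristic can be large and non-abelian, so one cannot simply say ``compact subgroups of $\Out$ are virtually abelian'' without using that $G/G^+$ acts \emph{compactly}, i.e.\ the action on $G^+$ preserves a compact open subgroup up to commensurability, which pins the field automorphisms down to a compact — hence, for $\mathrm{Aut}(k)$, virtually trivial, since $\mathrm{Aut}(k)$ with its natural topology has no nontrivial compact subgroups acting continuously in a way compatible with a lattice of valuations — I would need to argue that a compact group of field automorphisms of a local field is finite (indeed trivial in the non-archimedean case, as $\mathrm{Aut}(k)$ is discrete for $k$ a local function field? no — $\mathbf{F}_q((t))$ has $\mathrm{Aut}$ which is a non-discrete group, but any automorphism preserving the valuation ring and acting continuously... ). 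Concretely, I would show: the conjugation action of a compact group on $G^+$ normalizes a maximal compact subgroup $K \leq G^+$, hence descends to an action on the Bruhat--Tits building fixing a point, and the pointwise stabilizer of a chamber in $\mathrm{Out}$ is trivial on the algebraic side and the residual action is on a finite object, yielding finiteness of the image in $\mathrm{Out}(H)$ and an abelian (torus) contribution — so $G/G^+$ is virtually abelian. I would lean on Proposition~\ref{prop:Aut} to package the description of $\Aut(\mathbf{G}(k))$ and on \cite{CapStu} for both the algebraicity in~(i) and the virtual abelianity statement in~(ii), which \cite{CapStu} may in fact establish directly; in that case the proof of~(ii) reduces to a citation.
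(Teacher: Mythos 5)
Your treatment of (i) is essentially the paper's: after the harmless reduction to $U\cap G^+$, it is a direct appeal to Caprace--Stulemeijer (the paper cites \cite{CapStu}*{Cor.~1.4} verbatim for this), so there is nothing to object to there.

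Part (ii), however, has a genuine gap, and the route you sketch cannot be repaired in the form you propose. Your plan is to deduce (ii) from (i) alone: conjugation gives $G/G^+\to\Out(G^+)$, and you hope that \emph{compactness} of $G/G^+$, together with the Borel--Tits description of $\Aut(H(k)/Z)$ as an extension of $\Aut(H)(k)$ by field automorphisms, forces the image to be virtually abelian. This fails precisely at the field-automorphism part: for $k$ of positive characteristic, $\Aut(k)$ is itself a \emph{compact} (profinite) group which is very far from virtually abelian --- by Camina's theorem \cite{Camina} it contains a copy of every countably based pro-$p$ group, a fact this very paper exploits in Proposition~\ref{prop:RankOneGaloisLattice} to build Galois lattices in $\SL_2(k)\rtimes\Aut(k)$. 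So the intermediate claims you flag yourself ("a compact group of field automorphisms of a local field is finite/trivial", or that fixing a point of the building kills the Galois part) are false, and groups such as $\SL_2\bigl(\FF_q(\!(t)\!)\bigr)\rtimes\Aut(k)$ show that $G^+$ algebraic, $G/G^+$ compact and non-virtually-abelian can coexist. What rules this out in the theorem is the hypothesis that $G$ \emph{itself} has a linear open subgroup --- exactly the information you discard when you replace $U$ by $U\cap G^+$ at the start. The paper's proof of (ii) uses this: it applies the structure theorem \cite{CapStu}*{Th.~1.1} to the whole group $G$, and then uses the observation that the compact open subgroups of $G^+$ are not solvable (being Zariski-dense in a simple algebraic group) to locate $G^+$ in the semisimple layer of that decomposition, whence the top quotient $G/G^+$ is virtually abelian. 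Your closing hedge ("\cite{CapStu} may establish this directly") is in fact the correct proof, but it is not the argument you developed; as written, your (ii) does not go through.
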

\begin{proof}
Assertion (i) follows from Corollary~1.4 from \cite{CapStu}. Assertion (ii) follows from Theorem~1.1 in  \cite{CapStu} and the fact that the compact open subgroups of $G^+$ are not solvable (indeed they are Zariski-dense in a simple algebraic group).
\end{proof}

\subsubsection{Boundary-transitive automorphism groups of trees}

The following result, due to Burger--Mozes  \cite{BurgerMozes}, provides a natural set-up where all hypotheses of Theorem~\ref{thm:LocallyLinear} other than the linearity are satisfied.

\begin{theorem}\label{thm:BuMo}
Let $T$ be a thick locally finite tree and $G \leq \Aut(T)$ be a closed subgroup acting $2$-transitively on the set of ends $\partial T$.

Then the intersection $G^+$ of all non-trivial closed normal subgroups of $G$ is compactly generated, topologically simple and non-discrete. It acts edge-transitively on $T$ and $2$-transitively on $\partial T$, and the quotient $G/G^+$ is compact. In particular, $G$ is unimodular.
\end{theorem}

\begin{proof}
See Proposition 3.1.2 from \cite{BurgerMozes} and Theorem~2.2 from \cite{CaDM2}.
For the unimodularity of $G$, observe that $G^+$ must be in the kernel of the modular character of $G$. Thus the modular character of $G$ factors through the quotient $G/G^+$, which is compact. Hence $G$ is indeed unimodular.
\end{proof}

An automorphism of a tree $T$ is called \textbf{horocyclic} if it fixes pointwise a horosphere of $T$.
Combining the previous two statements, we obtain a subsidiary result which is a key tool in the proof of our characterization.

\begin{proposition}\label{prop:aux}
Let $T$ be a thick locally finite tree and $G \leq \Aut(T)$ be a closed subgroup acting $2$-transitively on the set of ends $\partial T$. If $G$ has a linear open subgroup, then the following assertions hold.

\begin{enumerate}[(i)]
\item The group $G^+$, defined as in Theorem~\ref{thm:BuMo}, is isomorphic to $H(k)/Z$, where $k$ is a local field, $H$ is an absolutely simple, simply connected algebraic $k$-group of $k$-rank~$1$, and $Z$ is the center of $H(k)$.

\item $(G, \partial T)$ is a Moufang set, whose root groups are the unipotent radicals of proper $k$-parabolic subgroups of $G^+$.

\item Each element $u \in G$ which is a horocyclic automorphism of $T$ belongs to some root group.

\item The quotient $G/G^+$ is virtually abelian.
\end{enumerate}

\end{proposition}

\begin{proof}
By Theorems~\ref{thm:LocallyLinear} and~\ref{thm:BuMo}, we know that $G^+$, defined as in Theorem~\ref{thm:BuMo}, is isomorphic to $H(k)/Z$, where $k$ is a local field, $H$ is an absolutely simple, simply connected algebraic $k$-group of $k$-rank~$>0$. Since $G^+$ acts properly and cocompactly on a thick locally finite tree, its $k$-rank must be~$1$ (see for example \cite[Corollary~4]{Tits_LieKolchin}). This proves (i).

It is well known, and easy to see using maximal compact subgroups, that $G^+$ has, up to isomorphism, a unique continuous proper cocompact action by automorphisms on a thick locally finite tree (see e.g. Lemma~2.6(viii) from \cite{CaDM1} for a proof). Thus $T$ is isomorphic to the Bruhat--Tits tree of $G^+$. It follows that $(G^+, \partial T)$ is a Moufang set, whose root groups are the unipotent radicals of proper $k$-parabolic subgroups of $G^+$. Since $G^+$ is normal in $G$, it follows that the root groups are permuted by conjugation under $G$. Therefore, the permutation group $(G, \partial T)$ is also a Moufang set. This proves (ii).

In order to establish (iii), we recall that the \textbf{contraction group} of an element $h \in G$ is defined as
$$\con(h) = \{ g \in G \; | \; h^n g h^{-n} \to 1\}.$$
Let now $u \in G$ be a horocyclic element, and let $e \in \partial T$ be the center of the horosphere of $T$ fixed by $u$. Given any hyperbolic element $h \in G$ whose repelling fixed point is $e$, we have $u \in \con(h)$. We can choose such a hyperbolic $h$ in $G^+$ (this is possible because $G^+$ is $2$-transitive on $\partial T$).  The image of $\con(h)$ in the compact quotient $G/G^+$ must be trivial, so that $\con(h) \leq G^+$. We now invoke   \cite[Lemma~2.4]{Prasad}, ensuring that $\con(h)$ is a root group of the Moufang set  $(G, \partial T)$.   This proves (iii). The assertion (iv) holds by Theorems~\ref{thm:LocallyLinear}(ii) and~\ref{thm:BuMo}.
\end{proof}

\section{Structure and representations of the group of projectivities} \label{sec:repofproj}

The goal of this section is to prove Theorem~\ref{thm:charBT} which roughly states that the projectivity group is linear if and only if the building $X$ is a Bruhat--Tits building.
The proof of this theorem will be carried in \S\ref{subsec:projgrplinear}.
It relies  on Proposition~\ref{prop:Unipotent} below, whose proof is our main concern in \S\ref{subsec:horocyclic}.

\subsection{Existence of horocyclic elements} \label{subsec:horocyclic}

This subsection is devoted to the proof of the following technical result.

\begin{proposition}\label{prop:Unipotent}
Let $X$ be a  thick Euclidean building of type $\widetilde A_2$. Then the projectivity group   contains a non-trivial horocyclic automorphism of  the panel tree defined in Definition~\ref{def:modeltree}.
\end{proposition}

We need to collect a number of subsidiary facts, assuming throughout that $X$ is a  thick Euclidean building of type $\widetilde A_2$ whose projective plane at infinity is denoted by $\Delta$. The proof of Proposition~\ref{prop:Unipotent} is deferred to the end of the section.

\begin{lemma}\label{lem:FixedPoint}
Let $e_0, e_1, e_2, e_3$ be vertices of $\Delta$, and $D_0, D_1, D_2, D_3 \subset X$ be geodesic lines such that the two endpoints of the line $D_i$ are $e_i$ and  $e_{i+1}$ for all $i$, where the indices are written modulo~$4$.

If  $D_i \cap D_{i+1}$ is non-empty for all $i$ modulo~$4$, then the projectivity $[e_0; e_1; e_2; e_3; e_0]$ fixes the point of the panel tree $T_{e_0}$ represented by $D_0$.
\end{lemma}
\begin{proof}
Let $x \in T_{e_0}$ be the point represented by $D_0$. Its image under  the projectivity $g =[e_0; e_1; e_2; e_3; e_0]$  is the point of $T_{e_0}$ represented by $D_3$. Since the intersection $D_0 \cap D_3$ is non-empty, it contains a geodesic ray tending to $e_0$. Thus $g$ fixes $x$ as claimed.
\end{proof}

\begin{lemma}\label{lem:TechnicalParallelLines}
Let $e_0, e_1, e_2$ be vertices of $\Delta$, and $D_0, D_1 \subset X$ be geodesic lines with $D_0 \cap D_1 \neq \varnothing$, such that  the two endpoints of the line $D_i$ are $e_i$ and $e_{i+1}$ for   $i=0,1$.

Let $x \in T_{e_0}$ be the point represented by $D_0$. Suppose that $x$ is a vertex of $T_{e_0}$ and let $x' \in T_{e_0}$ be a vertex adjacent to $x$. Let $D'_0$ be the geodesic line parallel to $D_0$ and representing $x'$. Let also $D'_1$ be the geodesic line parallel to $D_1$ and representing the same point of $T_{e_1}$ as $D'_{0}$.

Let $c$ be a chamber of $X$ with vertices $u, v, w$. If $u$ and $v$ are both contained in $D_0 \cap D_1$ and if $w$ is contained in $D'_0$, then $w \in D'_1$.
\end{lemma}

\begin{proof}
We refer to Figure~\ref{fig:FlatStrip}.
\begin{figure}[h]
\begin{center}
\includegraphics[width=\textwidth]{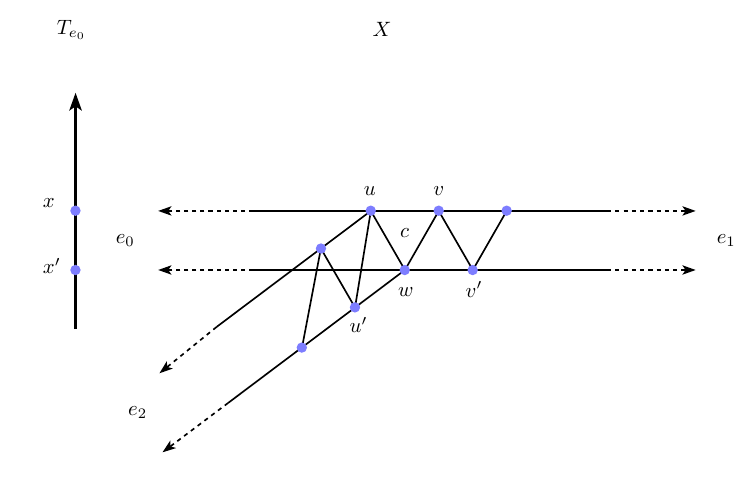}
\caption{Illustration of Lemma~\ref{lem:TechnicalParallelLines}} \label{fig:FlatStrip}
\end{center}
\end{figure}
Upon exchanging $u$ and $v$, we can assume that $u$ belongs to the geodesic ray $[v, e_0)$. Therefore $u$ also belongs to the geodesic ray $[v, e_2)$ since $u, v \in D_1$.

Let $u'$ (resp. $v'$) be the first vertex different from $w$ on the geodesic ray $[w, e_2)$ (resp. $[w, e_1)$). We claim that $u'$ is opposite $v'$ in the residue $\Res(w)$ of the vertex $w$.

To establish the claim, we first observe that $u'$ is adjacent to $u$. Indeed, by hypothesis the first vertex different from $v$ on the geodesic segment $[v, e_2)$ is $u$. Since the line $D_1$ is singular, it is a wall in any apartment that contains it. Since the chamber $c \in \Ch(X)$ has a panel on $D_1$, we may find an apartment containing both the line $D_1$ and the chamber $c$. In particular, that apartment contains the geodesic rays $[v, e_2)$ and $[w, e_2)$, which are parallel. It follows that $u$ is adjacent to $u'$ as desired. Similarly $v$ is adjacent to $v'$. 

This observation implies that the vertices $u'$ and $v$ have the same type. Hence $u'$ and $v'$ have different types. Therefore, if $u'$ and $v'$ are not opposite in $\Res(w)$, then they are adjacent. But the unique vertex in $\Res(w)$ which is adjacent to both $u$ and $v'$ is $v$. Thus, if the claim fails, then  we have $u'=v$. This implies that the first vertex different from $w$ on $[w, e_2)$ is $v$. Therefore the first vertex different from $v$ on $[v, e_2)$ is opposite $w$ in $\Res(v)$. On the other hand, as seen above, the first vertex different from $v$ on $[v, e_2)$ is $u$. Thus $u$ is opposite $w$ in $\Res(v)$. This is absurd since $u$ is adjacent to $w$. The claim is established.

The claim implies that the union of the geodesic rays $(e_1, w] \cup [w, e_2)$ is a geodesic line intersecting $D'_0$ in the  geodesic ray $[w, e_1)$. Therefore that line must coincide with $D'_1$, which confirms that $w \in D'_1$.
\end{proof}

\begin{lemma}\label{lem:AdjacentPoints}
Let $e_0, e_1, e_2, e_3$ be vertices of $\Delta$, and $D_0, D_1, D_2, D_3 \subset X$ be geodesic lines such that  two endpoints of the line $D_i$ are $e_i$ and $e_{i+1}$ for all $i \mod 4$.

Suppose that the point $x \in T_{e_0}$  represented by $D_0$ is a vertex and let $x' \in T_{e_0}$ be a vertex adjacent to $x$. Let $D'_0$ be the geodesic line parallel to $D_0$ and representing $x'$. For $i>0$, define inductively $D'_i$ as the geodesic line parallel to $D_i$ and representing the same point of $T_{e_i}$ as $D'_{i-1}$.

For each $n>0$, if $D_0 \cap D_1 \cap D_2 \cap D_3$ is a geodesic segment of length~$\geq n$, then $D'_0 \cap D'_1 \cap D'_2 \cap D'_3$ is a geodesic segment of length~$\geq n-1$.
\end{lemma}

\begin{proof}
Since the line $D_0$ represents a vertex of $T_{e_0}$, it lies on the boundary of a half-apartment, and thus entirely consists of vertices and edges of $X$. Let $(v_0, v_1, \dots, v_m)$ be the geodesic segment (where $v_i$ is a vertex) which is the intersection $D_0 \cap D_1 \cap D_2 \cap D_3$, so that $m \geq n$ by hypothesis.

For all $i \in \{0, 1, \dots,  m-1\}$, let $v'_i$ be the unique vertex belonging to $D'_0$ which is adjacent to both $v_i$ and $v_{i+1}$. By invoking Lemma~\ref{lem:TechnicalParallelLines} three times successively, we see that $v'_i$ is contained in $D'_1$, $D'_2$ and $D'_3$.
This proves that the segment $[v'_0, v'_{m-1}]$, which is of length $m-1 \geq n-1$, is contained in $D'_0 \cap D'_1 \cap D'_2 \cap D'_3$.
\end{proof}

\begin{lemma}\label{lem:FixedBall}
Let $e_0, e_1, e_2, e_3 \in \Delta$ be vertices, and $D_0, D_1, D_2, D_3 \subset X$ be geodesic lines such that  two endpoints of the line $D_i$ are $e_i$ and $e_{i+1}$ for all $i \mod 4$. Suppose that the point $x \in T_{e_0}$  represented by $D_0$ is a vertex.

If $D_0 \cap D_1 \cap D_2 \cap D_3$ is a geodesic segment of length~$\geq n$, then the projectivity $[e_0; e_1; e_2; e_3; e_0]$ fixes pointwise the ball of radius $n$ around   $x$.
\end{lemma}

\begin{proof}
Let $x'$ be a vertex at distance~$r \leq n$ from $x$. Let $D'_0$ be the geodesic line parallel to $D_0$ and representing $x'$. For $i>0$, define inductively $D'_i$ as the geodesic line parallel to $D_i$ and representing the same point of $T_{e_i}$ as $D'_{i-1}$.

An immediate induction on $r = d(x, x')$, using Lemma~\ref{lem:AdjacentPoints}, shows that $D'_0 \cap D'_1 \cap D'_2 \cap D'_3$ is a geodesic segment of length~$\geq n-r$. By Lemma~\ref{lem:FixedPoint}, this implies that $x'$ is fixed by $[e_0; e_1; e_2; e_3; e_0]$.
\end{proof}

\begin{lemma}\label{lem:horocyclic}
Let $v \in X$ be a vertex. Let $Q, Q_0, Q_1, Q_2, Q_3$ be sectors based at $v$, such that $A_{i, j} = Q_i \cup Q \cup Q_j$ is a half-apartment  of $X$ for all $(i, j) \in \{0, 2\} \times \{1, 3\}$. For all $i \mod 4$, let $D_i$ be the boundary line of $A_{i, i+1}$, and let $e_i$ be the common endpoint of $D_{i-1}$ and $D_i$.

Then the projectivity $[e_0; e_1; e_2; e_3; e_0]$ is a horocyclic automorphism of the tree $T_{e_0}$.
\end{lemma}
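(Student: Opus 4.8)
The plan is to show that $[e_0;e_1;e_2;e_3;e_0]$ fixes the point $x\in T_{e_0}$ represented by $D_0$ together with an unbounded horosphere based at the end of $T_{e_0}$ corresponding to the half-apartment $A_{0,1}$, while not fixing that end's antipode behaviour, so that it is horocyclic. Concretely, the set-up is a ``pinwheel'' of four half-apartments $A_{i,i+1}$ glued along the common sector $Q$ based at $v$, and the key observation is that the intersection $D_0\cap D_1\cap D_2\cap D_3$ contains the full boundary ray of $Q$ (all four $D_i$ contain a wall of $Q$ through $v$), hence is a geodesic \emph{ray}, not merely a long segment. I would therefore run the computation of Lemma~\ref{lem:FixedBall} with $n$ arbitrarily large.

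First I would identify the point $x\in T_{e_0}$ represented by $D_0$: since $D_0$ is the boundary line of the half-apartment $A_{0,1}$, it is singular, so $x$ is a vertex of $T_{e_0}$. The end $\xi\in\partial T_{e_0}$ toward which $A_{0,1}$ ``opens'' corresponds, under the bijection of Proposition~\ref{bndpaneltree}, to the chamber of $\Ch(e_0)$ determined by $A_{0,1}$ (equivalently by $Q$); geometrically, moving along $T_{e_0}$ from $x$ toward $\xi$ amounts to translating $D_0$ across $A_{0,1}$ away from $D_0$, i.e.\ deeper into $Q$. Second, I would observe that $D_0\cap D_1\cap D_2\cap D_3\supseteq$ a wall of $Q$: each $A_{i,i+1}$ contains $Q$, and $D_i$ is a wall of $A_{i,i+1}$ passing through $v$, so all four lines contain the same boundary wall of $Q$ emanating from $v$. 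Hence for every $n$, the hypothesis of Lemma~\ref{lem:FixedBall} is satisfied, and the projectivity $g=[e_0;e_1;e_2;e_3;e_0]$ fixes pointwise the ball $B(x,n)$ in $T_{e_0}$. Since $n$ is arbitrary and the $B(x,n)$ exhaust $T_{e_0}$\dots well, not quite --- that would say $g=1$, which is false --- so the correct statement is subtler: the segment $D_0\cap D_1\cap D_2\cap D_3$ is a ray toward \emph{one} of the two ends of the model line, and Lemma~\ref{lem:AdjacentPoints} only propagates fixed points in the direction in which the intersection ray stays long. I would therefore rerun Lemmas~\ref{lem:AdjacentPoints} and~\ref{lem:FixedBall} more carefully, tracking in which direction within $T_{e_0}$ the fixed set spreads: starting from $x$ and moving to an adjacent vertex $x'$, the intersection $D_0'\cap D_1'\cap D_2'\cap D_3'$ is again a ray provided $x'$ lies ``on the $Q$ side'', i.e.\ in the half-tree of $T_{e_0}\setminus\{x\}$ not containing $\xi$; on that whole half-tree the intersection stays a ray and $g$ fixes everything, while toward $\xi$ one loses length by $1$ at each step exactly as in Lemma~\ref{lem:AdjacentPoints}.

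The output is then: $g$ fixes pointwise the half-tree of $T_{e_0}$ cut off by $x$ away from $\xi$, together with $x$ itself; equivalently $g$ fixes pointwise a horoball centered at $\xi$ (the one whose horospheres are $\{y: d(y,x)=r,\ y\text{ toward }\xi\}$-translates), hence $g$ fixes pointwise a horosphere of $T_{e_0}$, so $g$ is horocyclic by definition. To see $g$ is horocyclic and not merely elliptic with large fixed set, I would invoke Lemma~\ref{lem:NonTriv} (with the chambers $C_i$ there taken as the chambers of $\Delta$ corresponding to the ends of the $D_i$, noting $C_0\neq C_2$ and $C_1\neq C_3$ because the four sectors $Q_i$ are pairwise distinct in the pinwheel) to conclude that $g\neq 1$, whence the fixed set of $g$ in $T_{e_0}$ is a proper subtree; combined with the fact that it contains a half-tree based at $x$ toward all directions except $\xi$, the fixed set is exactly that half-tree, and $g$ is genuinely horocyclic (center $\xi$), not elliptic fixing an edge or vertex globally. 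The main obstacle is this directional bookkeeping: one must verify that the ray $D_0\cap\cdots\cap D_3$ points ``into $Q$'', i.e.\ that the direction in $T_{e_0}$ along which Lemmas~\ref{lem:AdjacentPoints}–\ref{lem:FixedBall} propagate fixed points is precisely the complement of the end $\xi$ determined by $Q$, so that the fixed set is a horoball and not all of $T_{e_0}$. Once that orientation is pinned down, the statement follows by assembling Lemma~\ref{lem:FixedBall} (run toward the $Q$-side, where $n\to\infty$), Lemma~\ref{lem:AdjacentPoints} (controlling the $\xi$-side), and the non-triviality from Lemma~\ref{lem:NonTriv}.
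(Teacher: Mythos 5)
Your key geometric claim is false, and the argument collapses there. The boundary line $D_i$ of the half-apartment $A_{i,i+1}=Q_i\cup Q\cup Q_{i+1}$ is the union of the two \emph{outer} boundary rays of $Q_i$ and $Q_{i+1}$; the two walls of the middle sector $Q$ are interior walls of $A_{i,i+1}$, so $D_i$ meets $\overline{Q}$ only at the vertex $v$. Consequently $D_0\cap D_1\cap D_2\cap D_3$ is, in the relevant situation (e.g.\ that of Proposition~\ref{prop:Unipotent}, where $C_0\neq C_2$ and $C_1\neq C_3$), just the single point $v$ --- not a boundary ray of $Q$, not even a segment of positive length. So Lemma~\ref{lem:FixedBall} applied at the vertex $x_0$ represented by $D_0$ yields nothing beyond the fixed point $x_0$ itself, and your ``directional bookkeeping'' has no correct premise to start from. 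Note also that Lemma~\ref{lem:FixedBall} is not directional: if your premise were true, taking $n$ arbitrary would force the projectivity to be the identity, so the inconsistency you noticed cannot be repaired by tracking directions; it signals that the premise itself is wrong.

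The missing idea, which is the paper's actual proof, is to transport the whole configuration into the building \emph{before} applying Lemma~\ref{lem:FixedBall}: let $(x_0,x_1,\dots)$ be the geodesic ray of $T_{e_0}$ represented by the sector $Q_0$, and for each $n$ let $D_0'$ be the line parallel to $D_0$ representing $x_n$, with $D_1',D_2',D_3'$ defined inductively as in Lemma~\ref{lem:FixedBall}. Each $D_i'$ lies in $A_{i,i+1}$ at distance $n$ from its boundary $D_i$, hence crosses the \emph{common} sector $Q$ in a segment of length $n$; therefore $D_0'\cap D_1'\cap D_2'\cap D_3'$ is a segment of length $\geq n$, and Lemma~\ref{lem:FixedBall} gives that the projectivity fixes the ball of radius $n$ around $x_n$ pointwise. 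The union of these balls is a horoball centered at the end of $T_{e_0}$ corresponding to the chamber $\partial Q_0$, whence the projectivity is horocyclic. Observe that the fixed set grows \emph{toward} that end, i.e.\ deeper into the half-apartments --- the opposite of your picture --- and that your ``end $\xi$ determined by $Q$'' is not an end of $T_{e_0}$ at all, since $\partial Q$ is not a chamber containing $e_0$. Two further slips, inessential by comparison: a half-tree away from $\xi$ is not ``a horoball centered at $\xi$'', and the non-triviality argument via Lemma~\ref{lem:NonTriv} belongs to Proposition~\ref{prop:Unipotent}, not to this lemma, whose hypotheses do not even guarantee non-triviality.
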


\begin{proof}
We refer to Figure~\ref{fig}.
\begin{figure}[h]
\begin{center}
\includegraphics[width=\textwidth]{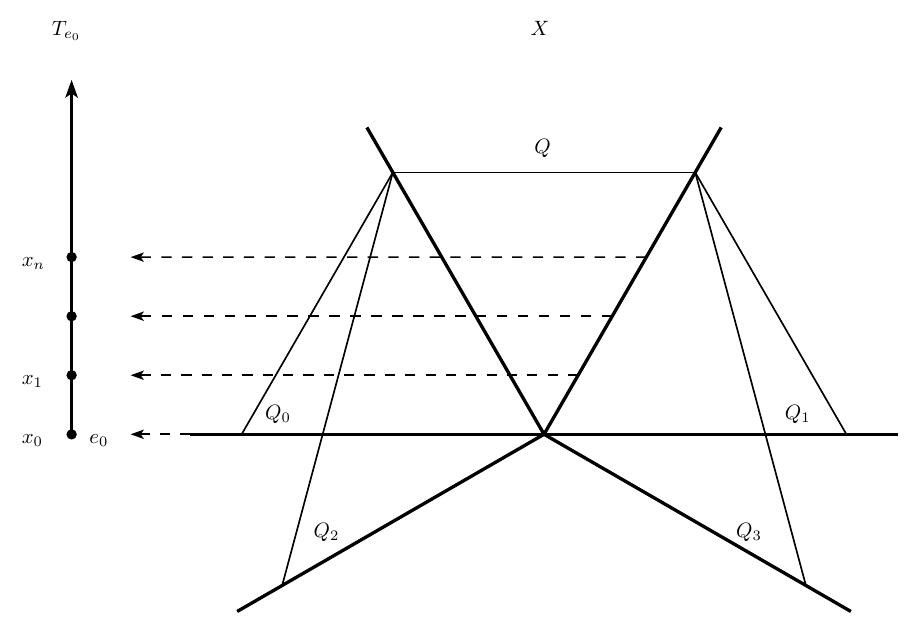}
\caption{Illustration of Lemma~\ref{lem:horocyclic}} \label{fig}
\end{center}
\end{figure}

Let $x_0 \in T_{e_0}$ be the vertex represented by $D_0$, and let $(x_0, x_1, \dots)$ be the geodesic ray of $T_{e_0}$ represented by the sector $Q_0$.

We claim that the projectivity $u = [e_0; e_1; e_2; e_3; e_0]$ fixes pointwise the ball of radius $n$ around $x_n$, so that $u$ is indeed horocyclic.

To see this, let $D'_0$ be the geodesic line parallel to $D_0$ and representing $x_n$. For $i>0$, define inductively $D'_i$ as the geodesic line parallel to $D_i$ and representing the same point of $T_{e_i}$ as $D'_{i-1}$.

By construction, the line $D'_i$ is contained in the half-apartment $A_{i, i+1}$. Therefore, it intersects the sector $Q$ in a segment of length $n$. In particular the intersection $D'_0 \cap D'_1 \cap D'_2 \cap D'_3$ is a geodesic segment of length~$\geq n$. The claim then follows from Lemma~\ref{lem:FixedBall}.
\end{proof}

\begin{proof}[Proof of Proposition~\ref{prop:Unipotent}]
Let $v \in X$ be a vertex. Let $Q, Q_0, Q_1, Q_2, Q_3$ be sectors based at $v$, such that $A_{i, j} = Q_i \cup Q \cup Q_j$ is a half-apartment  of $X$ for all $(i, j) \in \{0, 2\} \times \{1, 3\}$. Since $X$ is thick, we may choose those sectors such that $Q_i \cap Q_{i+2}$ is a geodesic ray, for all $i =0, 1$. In particular, denoting the boundary chamber of $Q_i$ by $C_i \in \Ch(\Delta)$, we have $C_0 \neq C_2$ and $C_1 \neq C_3$.

For all $i \mod 4$, let $D_i$ be the boundary line of $A_{i, i+1}$, and let $e_i$ be the common endpoint of $D_{i-1}$ and $D_i$.

By Lemma~\ref{lem:horocyclic}, the projectivity $u = [e_0; e_1; e_2; e_3; e_0]$ is a horocyclic automorphism of the tree $T_{e_0}$. By Lemma~\ref{lem:NonTriv}, the projectivity $u$ is a non-trivial permutation of $\partial T_{e_0}$, so that $u$ is indeed a non-trivial automorphism of $T_{e_0}$.
\end{proof}

\subsection{$\widetilde A_2$-buildings with a linear projectivity group are Bruhat--Tits} \label{subsec:projgrplinear}

\begin{theorem}\label{thm:charBT}
Let $X$ be a thick, locally finite, Euclidean building of type $\widetilde A_2$. Let  $\Pi$  be its projectivity group, viewed as a subgroup of $\Aut(T)$, where $T$ is the model tree of $X$. Let moreover  $P = \overline{\Pi} \leq \Aut(T)$ denote the closure of $\Pi$ in the group $\Aut(T)$ endowed with the compact-open topology.

If $P$ has an open subgroup admitting a continuous, faithful, finite-dimensional representation over a local field, then $X$ is a Bruhat--Tits building.
\end{theorem}

By the classification of Bruhat--Tits buildings, one knows that $X$ is then the Euclidean building associated with $\mathrm{PGL}_3(D)$, where $D$ is a finite-dimensional division algebra over a local field (see \cite[Chapter~28]{WeissBook}). Notice moreover that the converse to Theorem~\ref{thm:charBT} holds: indeed, if $X$ is the Euclidean building associated with $\mathrm{PGL}_3(D)$, then its projectivity group is $\mathrm{PGL}_2(D)$ acting on its Bruhat--Tits tree (this follows from Proposition~2.3 from \cite{Knarr}).

\begin{proof}[Proof of Theorem~\ref{thm:charBT}]
By Proposition~\ref{prop:3trans}, the projectivity group $(\Pi, \partial T)$ is $3$-transitive. In particular $P$ is $3$-transitive on $\partial T$. We may therefore invoke Proposition~\ref{prop:aux}, which ensures that $(P, \partial T)$ is a Moufang set.

By Proposition~\ref{prop:Unipotent}, the group $\Pi$ contains a non-trivial element $u$ which is a horocyclic automorphism of $T$. By Proposition~\ref{prop:aux}(iii), that element $u$ belongs to a root group $U_e$ of $P$, for some end $e \in \partial T$. Since $(P, \partial T)$ is Moufang, it follows that $U_e$ is normal in $P_e$. In particular $U_e \cap \Pi_e$ is normal in $\Pi_e$. Since $\Pi$ is $3$-transitive on $\partial T$, it follows that $\Pi_e$ is $2$-transitive on $\partial T \setminus \{e\}$, and hence any non-trivial normal subgroup of $\Pi_e$ is transitive on $\partial T \setminus \{e\}$. We have seen that $U_e \cap \Pi_e$ is a non-trivial normal subgroup of $\Pi_e$, and is thus transitive on $\partial T \setminus \{e\}$. Since the root group $U_e$ is sharply transitive on $\partial T \setminus \{e\}$, we must have $U_e \leq \Pi_e$. It follows that $(\Pi, \partial T)$ is a Moufang set. Therefore the building at infinity $\Delta$ is Moufang by Theorem~\ref{thm:Schleiermacher}. Equivalently  $X$ is Bruhat--Tits.
\end{proof}

\section{Other topological spaces associated with $\widetilde{A}_2$-buildings} \label{sec:topspaces}

Throughout this section, we let $X$ be a locally finite $\widetilde A_2$-building and $\Gamma \leq \Aut(X)$ be a discrete subgroup consisting of type preserving automorphisms which acts cocompactly on $X$. The goal of this section is to define   auxiliary topological spaces associated with the pair $(X, \Gamma)$ and needed for subsequent developments. In particular we define in \S\ref{subsec:cartan-setup} the space $\scrF$ consisting of \emph{marked flats}
and in \S\ref{susec:walltrees} the space $\widetilde\scrS$ consisting of \emph{marked wall trees}.
It turns out that the space $\widetilde\scrS$ is too big for our purposes.
We therefore define the notion of \emph{restricted marked wall trees} and study the space of such objects in \S\ref{subsec:rmwt}.
Unfortunately, this makes our discussion a bit technical.
With the hope to help the reader to overcome those technicalities, we  provide in   \S\ref{subsec:broadpic}  a broad overview of the spaces we consider and the maps connecting them.

\subsection{The space of marked flats} \label{subsec:cartan-setup}

We fix a model flat $\Sigma$ for $X$, a base vertex $0\in \Sigma$ and a sector $Q^+$ based at $0$.
The direction of $Q^+$ is called the \textbf{positive Weyl chamber}. The sector of $\Sigma$ based at $0$ and opposite $Q^+$ is denoted by $Q^-$. We let $A$ be the group of simplicial translations of $\Sigma$ (not necessarily type-preserving). The group
$A$ acts sharply transitively on the vertices of $\Sigma$.
We let $W$ be the group of type-preserving automorphisms of $\Sigma$ fixing $0$. It is  called the \textbf{(spherical) Weyl group} of $X$ and is isomorphic to the symmetric group $S_3$.

\begin{definition} \label{def:flats}
A \textbf{marked flat} in $X$ is a simplicial (not necessarily type-preserving) embedding $\Sigma\to X$.
We denote by $\scrF$ the set of marked flats in $X$ and endow it with the topology of pointwise convergence.
\end{definition}

\begin{remark}
By Theorem~\ref{aptfull} the image of a marked flat in $X$ is indeed a flat in $X$.
\end{remark}

Note that $\scrF$ is locally compact as $X$ is locally finite.
In fact it is easy to see how $\scrF$ is decomposed into a disjoint union of compact open subspaces.
We define a map
$$\scrF\to X^{(0)} :
 \phi \mapsto \phi(0),$$
where as before $X^{(0)}$ denotes the vertex-set of $X$. For each $x\in X^{(0)}$ we let $\scrF_x$ denote the preimage of $x$ under this map.
Clearly the subsets $\scrF_x$ are compact open and $\scrF=\sqcup \scrF_x$.

The group $W\ltimes A$ acts on $\scrF$ by precomposition and $\Gamma$ acts on $\scrF$ by postcomposition.
Thus these two actions commute.
We endow $\scrF/\Gamma$ and $\scrF/A$ with the quotient topologies.
Note that $\scrF/\Gamma$ is a $W\ltimes A$-space and $\scrF/A$ is a $\Gamma\times W$-space.

\begin{lemma}\label{lem:FmodGammaCompact}
The space $\scrF/\Gamma$ is a compact $W\ltimes A$-space.
\end{lemma}

\begin{proof}
The union of the spaces $\scrF_x$ taken over a finite set of representatives of the $\Gamma$-orbits in $X^{(0)}$ is compact, and maps surjectively onto $\scrF/\Gamma$. The latter is thus indeed compact.
\end{proof}

Recall that two chambers in $\Delta$ are called \textbf{opposite} if they are at maximal distance in the graph $\Delta$ (i.e. at distance 3). We denote by $\Delta^{2,\op}\subset \Ch(\Delta) \times \Ch(\Delta)$ the set of ordered pairs of opposite chambers, and endow it with the topology induced by the product topology.
This is a locally compact topology.
We consider the map
$$\theta \colon \scrF\to \D^{2,\op}
:  \phi \mapsto ([\phi(Q^+)],[\phi(Q^-)]) , $$
where $[\phi(Q^+)]$ denotes the equivalence class of the sector $\phi(Q^+)\subset X$ which is a chamber in $\Ch(\Delta)$ and similarly for $[\phi(Q^-)]$ which is an opposite chamber.

\begin{lemma} \label{lem:op=F}
The map $\theta \colon \scrF\to \D^{2,\op}$ induces a $\Gamma\times W$-equivariant homeomorphism between $\scrF/A$ and $\Delta^{2,\op}$.
\end{lemma}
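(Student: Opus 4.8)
The plan is to construct an explicit inverse to the map induced by $\theta$ and check that both the map and its inverse are continuous. First I would observe that $\theta$ is constant on $A$-orbits: precomposition by a translation $a \in A$ does not change the equivalence class of the sectors $\phi(Q^+)$ and $\phi(Q^-)$, since $a(Q^+)$ and $Q^+$ differ by a bounded amount and hence have the same ideal boundary (and likewise for $Q^-$). On the other hand, $\theta$ is $W$-equivariant: for $w \in W$, the sectors $\phi(w Q^+)$ and $\phi(w Q^-)$ are again opposite sectors in the same flat, and $W \cong S_3$ permutes the six sectors of $\Sigma$ based at $0$, realizing the action of $W$ on $\Delta^{2,\op}$ that permutes a pair of opposite chambers to another such pair within the same apartment at infinity (the Weyl group action on a pair of opposite chambers of a spherical building of type $A_2$). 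Equivariance under $\Gamma$ (postcomposition) is immediate since $\Gamma$ acts on $X$ by simplicial isometries and hence on $\Delta$. So $\theta$ descends to a $\Gamma \times W$-equivariant continuous map $\bar\theta \colon \scrF/A \to \Delta^{2,\op}$.

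Next I would build the inverse. Given a pair of opposite chambers $(C^+, C^-) \in \Delta^{2,\op}$, by Lemma~\ref{opp-apt} there is a unique apartment $F$ of $X$ at infinity containing both, equivalently a unique flat $F \subset X$ whose boundary sectors in the classes $C^+$ and $C^-$ are opposite; more concretely, fixing any vertex $x \in X$ one has the sectors $Q(x, C^+)$ and $Q(x, C^-)$, and their union (together with the strip between them) spans a flat. To get a genuine marked flat, i.e. an element of $\scrF$, I need to choose a simplicial embedding $\Sigma \to X$ with image $F$ sending $Q^+$ into the class $C^+$ and $Q^-$ into the class $C^-$; any two such embeddings differ precisely by precomposition with an element of $A$ (translations), since requiring the positive sector to map to $C^+$ and the negative to $C^-$ pins down the embedding up to translation — the Weyl group $W$ is killed because it moves $C^+$ off itself. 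This gives a well-defined map $\Delta^{2,\op} \to \scrF/A$, which is visibly a set-theoretic inverse to $\bar\theta$ and is $\Gamma \times W$-equivariant by naturality of all the constructions involved.

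For continuity I would argue as follows. Continuity of $\bar\theta$ reduces to continuity of $\theta \colon \scrF \to \Delta^{2,\op}$: if $\phi_n \to \phi$ pointwise, then for every vertex $y$ of $\Sigma$ we have $\phi_n(y) = \phi(y)$ eventually, so in particular $\phi_n$ and $\phi$ eventually agree on an arbitrarily large chunk of $Q^+$ and of $Q^-$; by the description of the basic open sets $\Omega_x(y) = \{C \mid y \in Q(x, C)\}$ of the topology on $\Ch(\Delta)$, agreement on a large chunk of $\phi(Q^+)$ forces $[\phi_n(Q^+)]$ and $[\phi(Q^+)]$ to lie in a common small neighbourhood, and similarly for $Q^-$, giving $\theta(\phi_n) \to \theta(\phi)$. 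Conversely, for continuity of the inverse it suffices to exhibit a continuous lift $\Delta^{2,\op} \to \scrF$ locally (a local section of $\scrF \to \scrF/A$ composed with the inverse): given $(C^+, C^-)$ and a vertex $x$, the assignment $(C'^+, C'^-) \mapsto$ (the marked flat with $\phi(0) = x'$ for an appropriate nearby vertex $x'$, mapping $Q^+$, $Q^-$ to the classes $C'^+$, $C'^-$) is continuous because the finitely many values $\phi(y)$ for $y$ ranging over any finite subset of $\Sigma$ are locally constant in $(C'^+, C'^-)$ — this is again exactly the statement that $Q(x, \cdot)$ depends continuously (indeed locally constantly, on basic opens) on the chamber, which is how the topology on $\Delta$ was set up. Since $\scrF \to \scrF/A$ is open, passing to the quotient gives continuity of the inverse on $\scrF/A$.

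The main obstacle I anticipate is the bookkeeping around the $A$ versus $W$ ambiguity: one must be careful that precomposition by the full group $W \ltimes A$ changes $\theta$ by the $W$-action only, so that quotienting by $A$ alone gives a bijection, and in the inverse construction one must check that the freedom in choosing a simplicial embedding with prescribed boundary sectors is exactly a torsor under $A$ and nothing more. This uses that a marked flat is determined by its underlying flat plus the combinatorial data of which sector maps where, together with the fact (from Theorem~\ref{aptfull} and the standard theory of $\tld A_2$-apartments) that the image is a genuine flat and the two boundary data $C^+, C^-$ being opposite forces uniqueness of that flat via Lemma~\ref{opp-apt}. Once that is pinned down, continuity in both directions is a routine consequence of the definition of the cone topology on $\partial X$ and the induced totally disconnected topology on $\Ch(\Delta)$ with its basis $\{\Omega_x(y)\}$.
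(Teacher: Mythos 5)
Your treatment of the continuity of $\theta$ and of bijectivity is essentially the paper's: continuity is checked the same way on pointwise-convergent sequences, and bijectivity rests on Lemma~\ref{opp-apt} plus the assertion that the $\theta$-fibres are exactly the $A$-orbits. Where you genuinely diverge is the last step. The paper never constructs sections: it covers $\scrF/A$ by the images $\scrF'_x$ of the compact open pieces $\scrF_x$, uses that a continuous bijection restricted to a compact set is a homeomorphism onto its image, and then proves openness of $\theta(\scrF_x)$ by identifying it with the union of the boxes $(\Omega_x(y)\times\Omega_x(y'))\cap\D^{2,\op}$ over pairs $y,y'$ symmetric about $x$ in an apartment through $x$. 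Your route, via a continuous local section of $\D^{2,\op}\to\scrF/A$, can be made to work, but the compactness trick is precisely what lets the paper avoid the delicate point you gloss over.

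That point is the gap. Closeness of $(C'^+,C'^-)$ to $(C^+,C^-)$ only says that $y\in Q(x,C'^+)$ and $y'\in Q(x,C'^-)$ for some far-away pair $y,y'$ antipodal about $x$; i.e.\ the new sectors at $x$ agree with the old ones along the combinatorial convex hulls of $\{x,y\}$ and $\{x,y'\}$. That is all that ``local constancy of $Q(x,\cdot)$'' gives, and it is not yet what your section needs, namely that the unique apartment containing $C'^+$ and $C'^-$ at infinity passes through $x$ (so that you may take $x'=x$ rather than an unspecified ``appropriate nearby vertex'') and agrees with the old flat on a large ball around $x$, so that the normalized marking is locally constant on finite subsets of $\Sigma$. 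This requires an actual building argument, which is exactly the content of the paper's description of $\theta(\scrF_x)$: for $y,y'$ antipodal about $x$ and in general position, the germs at $x$ of $Q(x,C'^+)$ and $Q(x,C'^-)$ are opposite, two opposite sector germs lie in a common apartment (which is then the apartment of the pair, hence contains $x$), and agreement on a ball around $x$ follows from convexity of the intersection of two apartments. Without this input your section is not known to be defined at $x$, and the choice of $x'$ cannot be made continuously. A secondary remark: your justification that prescribing both chambers pins the marking down up to $A$ ``because $W$ moves $C^+$ off itself'' only disposes of the type-preserving point group; since marked flats are allowed to be non-type-preserving, the reflection of $\Sigma$ through the bisector of $Q^+$ also fixes both $[\phi(Q^+)]$ and $[\phi(Q^-)]$, so the fibre computation (which the paper asserts without detail as well) needs either a restriction to type-preserving markings or an explicit accounting of that extra symmetry; your one-line reason does not settle it.
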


\begin{proof}
We first check that $\theta$ is continuous.
Assume that $\phi_n\in\scrF$ are embeddings of $\Sigma$ converging to $\phi$. Then there exists $x$ such that for $n$ large enough we have $\phi_n(0)=x$, and for even larger $n$, $\phi_n$ is constant on the ball of radius $r$ around $x$. In particular $\phi_n(Q^+)$ and $\phi(Q^+)$ are in $\Omega_x(y)$ for some $y$ at distance $r$ from $x$ (see Section~\ref{sec:TopologyAtInfinity} for the definition of the set $\Omega_x(y)$). Hence $\phi_n(Q^+)$ converges to $\phi(Q^+)$, and similarly for $\phi_n(Q^-)$.

By Lemma \ref{opp-apt}, $\theta$ is surjective and its fibers are exactly the $A$-orbits on $\scrF$ since $A$ is vertex-transitive on $\Sigma$.
By the definition of the quotient topology we thus obtain a continuous bijection $\theta' \colon \scrF/A\to \D^{2,\op}$.
We need to show that $\theta'$ is an open map.
For $x\in X^{(0)}$ we denote by $\scrF'_x$ the image of $\scrF_x$ in $\scrF/A$.
Note that $\{\scrF'_x\}$ is an open cover of $\scrF/A$.
Thus it is enough to show that $\theta'|_{\scrF'_x}$ is open.
By compactness, $\theta'|_{\scrF'_x}$ is a homeomorphism of $\scrF'_x$ onto its image,
thus it is enough to show that $\theta'|_{\scrF'_x}$ has an open image.
Note that $\theta'(\scrF'_x)=\theta(\scrF_x)$  is the union of all $(\Omega_x(y)\times\Omega_x(y'))\cap\Delta^{2,\op}$, where the vertices $y$ and $y'$ have the following properties:   the geodesic segment $[y, y']$ contains $x$, and  $[y, y']$ is not contained in any wall of any apartment in $X$.
Hence this set is indeed open in $\Delta^{2,\op}$.
\end{proof}

\subsection{The space of marked wall trees} \label{susec:walltrees}

A flat in $X$ may be viewed as the union of all geodesic lines that are parallel to a given \emph{regular} line in $X$ (see Section~\ref{subsec:paneltree} for the definition of regular and singular lines). We now develop analogous notions for the case of a \emph{singular} line. Let $\ell$ be a singular geodesic line in $X$, so that the two endpoints of $\ell$ are vertices of $\Delta$. The union of all geodesic lines in $X$ that are parallel to $\ell$ is a closed CAT($0$)-convex subset of $X$. As a CAT($0$) space, it is isometric to the product $T_\ell \times \mathbf R$, where $T_\ell$ is the wall tree associated to $\ell$ (see Proposition~\ref{prop:walltree}). In particular $T_\ell$ is isomorphic to the model   tree $T$ (see Definition~\ref{def:modeltree}). Conversely, using Theorem~\ref{aptfull} it is not hard to see that any isometric embedding of the CAT($0$) space $T\times \RR$ in $X$ arises in that way.

The subspace $T_\ell \times \mathbf R \subset X$ also inherits a simplicial structure from $X$. We shall now describe that simplicial structure abstractly. The \textbf{model wall tree} of $X$ is the simplicial complex denoted by $\Upsilon$ and defined as follows. Fix a base vertex $o \in VT$ in the model tree $T$ of $X$. The vertex set $\Upsilon^{(0)}$ of $\Upsilon$ is the collection of pairs $(v, s)$ where $v$ is a vertex of $T$ and $s$ is a real number that satisfies the following conditions: if the distance $d(o, v)$ is even, then $s \in \mathbf Z$ while if $d(o, v)$ is odd, then $s- \frac 1 2 \in \mathbf Z$. Two distinct vertices $(v, s)$  and $(w, t)$ form an edge of $\Upsilon$ if and only if $d(v, w) \leq 1$ and $|s-t| \leq 1$. The $2$-simplices of $\Upsilon$ are defined so that every triple of distinct vertices that are pairwise adjacent are contained in  a unique $2$-simplex. This defines the model wall tree $\Upsilon$ as a simplicial complex. 

Let us mention that a CAT($0$) metric realization of $\Upsilon$ is obtained by giving each edge length $1$ and by endowing each $2$-simplex with the metric of a Euclidean  equilateral triangle. In that way $\Upsilon$ carries a complete CAT($0$) metric that is isometric to $T \times \mathbf R$, where $T$ is viewed as a metric tree in which edges have length $\frac{\sqrt 3} 2$. That isometry $\Upsilon \to T \times \mathbf R$ is the identity on the vertex set $\Upsilon^{(0)}$, which has been defined as a subset of $VT \times \mathbf R$. Although we shall not need that fact, let us mention moreover that every simplicial embedding $\Upsilon \to X$ is also an isometric embedding with respect to the respective CAT($0$) metric realizations of $\Upsilon$ and $X$.

There are two types of vertices in the spherical building $\Delta$, which we called \emph{points} and \emph{lines} in Definition~\ref{def:ProjectivePlane}. From now on, we adopt the convention to denote these two types by $+$ and $-$ respectively. We let
$\Delta_+$ (resp. $\Delta_-$) be the set of vertices of type $+$ (resp. of type $-$) in  $\Delta$. The natural maps $\Ch(\Delta) \to\Delta_+$ and $\Ch(\Delta) \to \Delta_-$, associating to each chamber its two boundary vertices, are $\Aut(X)$-equivariant, continuous  and surjective (see \cite[Prop.~3.5]{CaMo_coctamen}). We denote by $\Omega_x^+(y)$ and $\Omega_x^-(y)$ the image in $\Delta_+$ and $\Delta_-$ of the basic open sets $\Omega_x(y)$.

\begin{definition} \label{def:walltrees}
A \emph{marked wall tree} in $X$ is a simplicial embedding $\phi \colon \Upsilon \to X$ of the model wall tree $\Upsilon$ to $X$, such that the limit of $\phi(x,s)$ when $s$ tends to $+\infty$ is a vertex of type $+$.
We denote by $\widetilde\scrS$ the set of marked wall trees in $X$ and endow it with the topology of pointwise convergence.
\end{definition}

Note that $\widetilde\scrS$ is locally compact as $X$ is locally finite.
In fact it is easy to see how it decomposes into a disjoint union of compact open subspaces.
As before, let  $o\in VT$ be the base vertex and define a map
\begin{align} \label{eq:tldS}
\widetilde\scrS\to X^{(0)} :   \phi \mapsto \phi(o,0).
\end{align}
For each $x\in X^{(0)}$ we let $\widetilde\scrS_x$ denote the preimage of $x$ under this map.
Clearly the subsets $\widetilde\scrS_x$ are compact open and $\widetilde\scrS=\sqcup \widetilde\scrS_x$.

Let us now describe the full automorphism group $\Aut(\Upsilon)$ of the model wall tree. It is a totally disconnected locally compact group with respect to the topology of pointwise convergence. It embeds continuously as a closed subgroup in $\Aut(T) \times \Isom(\RR)$ via the metric realization $\Upsilon \to T \times \RR$ explained above. In particular there are canonical projections 
$$p_T \colon \Aut(\Upsilon) \to  \Aut(T) \hspace{1cm} \text{and} \hspace{1cm} p_\RR \colon \Aut(\Upsilon) \to  \Isom(\RR)$$
that are both continuous homomorphisms. Moreover $p_T$ is surjective, whereas the image of $p_\RR$ is isomorphic to the infinite dihedral group. However, we emphasize that the continuous surjection  $p_T \colon \Aut(\Upsilon) \to  \Aut(T)$ has no section: indeed, no involution that swaps two adjacent vertices in $T$ can be lifted to an automorphism of $\Upsilon$. Let 
$$\Aut(\Upsilon)^0 = \{ \alpha \in \Aut(\Upsilon) \mid p_\RR(\alpha) \text{ is a translation}\}.$$
Since the translation subgroup of $\Isom(\RR)$ is an open subgroup of index~$2$, we see that $\Aut(\Upsilon)^0 $ is an open subgroup of index~$2$ in $\Aut(\Upsilon)$. We shall also need to consider the subgroup 
$$S = \Ker(p_T) \cap \Aut(\Upsilon)^0 \lhd \Aut(\Upsilon).$$
Observe that $S \cong (\ZZ,+)$.

For each $\alpha \in \Aut(\Upsilon)$ and $\phi \in \widetilde\scrS$, the map $\phi \circ \alpha$ belongs to $\widetilde\scrS$ if and only if $\alpha \in \Aut(\Upsilon)^0 $. Similarly, for each $\gamma \in \Aut(X)$ and $\phi \in \widetilde\scrS$, the map $\gamma \circ \phi$ belongs to $\widetilde\scrS$ if and only if the $\gamma$-action on the spherical building $\Delta$ is type-preserving. 
Recall that $\Gamma<\Aut(X)$ is type-preserving,
thus the $\Gamma$ action on $\Delta$ is also type-preserving.
Thus we get a continuous action of $\Aut(\Upsilon)^0$ on $\widetilde\scrS$ by precompositions, 
and an action of $\Gamma$ on $\widetilde\scrS$ by postcompositions.  Clearly these actions  commute. 
We endow the quotient spaces $\widetilde\scrS/\Gamma$ and $\widetilde\scrS/\Aut(\Upsilon)^0$ with the quotient topologies.
Note that $\widetilde\scrS/\Gamma$ is an $\Aut(\Upsilon)^0$-space and $\widetilde\scrS/\Aut(\Upsilon)^0$ is a $\Gamma$-space.

\begin{lemma}\label{lem:tldcocompact}
The $\Gamma$-action on $\widetilde\scrS$ is proper. Moreover  $\widetilde\scrS/\Gamma$ is a compact $\Aut(\Upsilon)^0$-space.
\end{lemma}

\begin{proof}
The properness is clear from the definitions. The compactness of   $\widetilde\scrS/\Gamma$    follows   from the fact that $\Gamma$ has a finite fundamental domain in $X^{(0)}$ and the compactness of the spaces $\widetilde\scrS_x$.
\end{proof}

Let $\D_\pm^\op$  be the set of pairs of opposite  vertices $(u,v)$ with $u\in\D_+$ and $v\in \D_-$.
This is an open subset of the compact space $\D_+\times \D_-$, hence a locally compact space.
We define a map
\begin{align} \label{eq:scrsdpm}
\sigma \colon \widetilde\scrS\to \D_\pm^\op :
 \phi \mapsto (\phi^+, \phi^-) = \left(\lim_{s\to +\infty} \phi(o,s),\lim_{s\to -\infty} \phi(o,s)\right),
\end{align}
where $\lim_{s\to +\infty} \phi(o,s)$ represents the vertex at infinity of the ray $(\phi(o, s))_{s \geq 0}$  and similarly for $\lim_{s\to -\infty} \phi(o,s)\in \D_-$.

\begin{lemma} \label{lem:S/A=D}
The $\Aut(\Upsilon)^0$-action on $\widetilde\scrS$ is proper and free and 
the map $\sigma \colon \widetilde\scrS\to \D_\pm^\op$ induces a $\Gamma$-equivariant homeomorphism
$$\sigma':\widetilde\scrS/\Aut(\Upsilon)^0\overset{\sim}{\longrightarrow} \D_\pm^\op.$$
\end{lemma}

\begin{proof}
The $\Aut(\Upsilon)^0$-action on $\widetilde\scrS$ is free by construction. To see that it is proper
it is   enough to show that for all $x,y\in X^{(0)}$, the collection
\[ K=\{\alpha\in \Aut(\Upsilon)^0\mid \alpha(\widetilde\scrS_x) \cap \widetilde\scrS_y \neq \emptyset \} \]
is a compact subset of $\Aut(\Upsilon)^0$ and this is indeed the case,
as if $\alpha\in K$ then $d(\alpha(o,0),(o,0))=d(x,y)$.

To prove that $\sigma'$ is bijective, observe that for all $(u, v) \in \D_\pm^\op$, there exists a marked wall tree $\phi \in \widetilde\scrS$ with $\sigma(\phi) = (u, v)$. Moreover, for any other   marked wall tree  $\phi' \in \widetilde\scrS$, we have $\sigma(\phi') = (u, v)$ if and only if there exists $\alpha \in \Aut(\Upsilon)^0$ such that $\phi' = \phi \circ \alpha$. This proves that the map in the Lemma is indeed bijective.  

Let us now check that the map $\sigma$ is continuous. Let $\phi_n$ are elements of $\widetilde \scrS$, with $\phi_n\to \phi$. Let $u_n=\lim_{s\to +\infty} \phi_n(o,s)$ and $u=\lim_{s\to +\infty} \phi(o,s)$. Then for every $r>0$, for $n$ large enough, $\phi_n$ is constant on a ball of radius $r$ around $(o,0)$. If $y$ is a point in this ball, it follows that $u_n\in \Omega_x^+(y)$ for $n$ large enough. Similarly, $u\in \Omega_x^+(y)$, so that $(u_n)$ converges to $u$.

We are left to show that $\sigma$ is an open map.
As in the proof of Lemma~\ref{lem:op=F}, it suffices  by compactness to prove that $\sigma(\widetilde\scrS_x)$ is open, for every $x\in X^{(0)}$. Indeed, $\sigma(\widetilde\scrS_x)$ is a union of all $\Omega_x^+(y)\times \Omega_x^-(y')\cap \D_\pm^\op$, where $y$ and $y'$ are vertices such that $x$ belongs to the geodesic segment $[y, y']$, hence open.
\end{proof}

\begin{definition} \label{def:bard}
We let $\overline\Delta_+$ be the set of pairs $(v,f)$, where $v\in\Delta_+$ and $f$ is a simplicial isomorphism from $T$ to the panel tree $T_v$.
We let $\pi_+$ be the map $\widetilde\scrS\to \overline\Delta_+$ given by $\pi_+(\phi)=(v,f)$ where $v$ is the common limit of $\phi(x,t)$ when $t$ tends to $+\infty$, and $f(x)$ is the class of the geodesic line $\phi(x,\mathbf R)$ in $T_v$. We define similarly $\overline\Delta_-$ and $\pi_-$.
Observe that the maps $\pi_+$ and $\pi_-$ are surjective. We endow $\overline\D_+$ and $\overline \D_-$ with the quotient topologies obtained under $\pi_+$ and $\pi_-$.
\end{definition}

\begin{lemma}
Fix $u\in\D_+$. Let $F_u$ be the set of simplicial isomorphisms from $T$ to $T_u$, endowed with the topology of pointwise convergence. Then the map $F_u\to \overline\D_+$ defined by $f\mapsto (u,f)$ is a homeomorphism on its image.
\end{lemma}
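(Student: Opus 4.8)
The plan is to set up a continuous bijection from $F_u$ onto its image in $\overline{\D}_+$ and then to verify that the inverse is continuous by exhibiting, locally, an explicit continuous section of $\pi_+$. First I would observe that by the definition of $\overline{\D}_+$ the assignment $f \mapsto (u,f)$ is well-defined (its image is precisely the fiber of $\overline{\D}_+$ over $u\in\D_+$), and that it is injective, since the second coordinate already recovers $f$. Continuity of $f\mapsto (u,f)$ amounts to continuity of the quotient map $\pi_+$ restricted to the $\pi_+$-preimage of $\{u\}\times F_u$: I would verify directly from the definition of the topology of pointwise convergence on $\tld{\scrS}$ that if $\phi_n\to\phi$ in $\tld\scrS$ then $\pi_+(\phi_n)\to\pi_+(\phi)$ — the point being that if $\phi_n$ agrees with $\phi$ on a large ball around $(o,0)$, then for each vertex $x\in T$ the geodesic rays $\phi_n(x,\cdot)$ and $\phi(x,\cdot)$ agree on an initial segment whose length grows with $n$, hence represent the same point of the panel tree $T_u$ up to arbitrarily large distance, which is exactly what convergence in $F_u$ (pointwise convergence of isometries of the tree) means.

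The crux is the reverse direction: showing that $f\mapsto(u,f)$ is open onto its image, equivalently that a convergent net $(u,f_n)\to(u,f)$ in $\overline{\D}_+$ is the image of a convergent net in $F_u$. Here I would use the same compactness-and-local-section strategy employed in the proofs of Lemma~\ref{lem:op=F} and Lemma~\ref{lem:S/A=D}. Fix $x\in X^{(0)}$ and consider the compact open piece $\tld\scrS_x$; its image under $\pi_+$ inside $\overline{\D}_+$ is then compact, so $\pi_+|_{\tld\scrS_x}$ is a closed, hence quotient, map onto that image, and it suffices to check that $\pi_+(\tld\scrS_x)\cap(\{u\}\times F_u)$ is open in $\{u\}\times F_u$ and that the correspondence is a homeomorphism there. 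Concretely: given $(u,f)$ with $f(o)$ a point of $T_u$ realized by a geodesic line $\ell$ through the vertex $x$, every simplicial isomorphism $f'\colon T\to T_u$ sufficiently close to $f$ sends a large ball of $T$ to the same configuration of parallel lines near $x$, and one produces a marked wall tree $\phi'\in\tld\scrS_{x'}$ (for an appropriate nearby base vertex $x'$, or after translating by $S$) with $\pi_+(\phi')=(u,f')$, depending continuously on $f'$ by Theorem~\ref{aptfull}, which guarantees that the relevant finite chunk of $T\times\RR$ embeds uniquely once its image near the base point is specified.

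Assembling these pieces: the map $F_u\to\overline{\D}_+$ is a continuous injection, and restricted to each of the open sets $F_u\cap\pi_+(\tld\scrS_x)$ (as $x$ ranges over vertices of $X$, these cover $F_u$) it is an open map onto its image by the local-section argument above; since openness is a local property on the source, the global map is open onto its image, hence a homeomorphism onto that image.

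The step I expect to be the main obstacle is the construction of the continuous local section of $\pi_+$ — that is, going from a simplicial isomorphism $f'\colon T\to T_u$ back to an actual marked wall tree in $X$ in a way that varies continuously with $f'$. The subtlety is that $f'$ only records asymptotic data (classes of parallel geodesic lines), while a marked wall tree is an honest simplicial embedding of $T\times\RR$; one must use the uniqueness clause of Theorem~\ref{aptfull} (two flats, or flat strips, sharing a half-apartment coincide there) to pin down the embedding from the finite combinatorial data available after truncating to a large ball, and then check that the pointwise-convergence topologies on both sides match up under this identification. Everything else is a routine transcription of the compact-open-cover argument already used twice in this section.
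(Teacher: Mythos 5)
Your proposal has the two halves of the equivalence crossed, and as written neither half is actually established. Continuity of $\iota\colon f\mapsto(u,f)$ does \emph{not} ``amount to continuity of $\pi_+$ restricted to $\pi_+^{-1}(\{u\}\times F_u)$'': $\pi_+$ is continuous by the very definition of the quotient topology on $\overline\D_+$, and that tells you nothing about $\iota$. What you actually verify in that paragraph (if $\phi_n\to\phi$ in $\tld\scrS$ then the induced isomorphisms $T\to T_u$ converge pointwise) is the ingredient for the \emph{inverse} direction, i.e.\ continuity of $\iota^{-1}$; this is the easy half, and it is exactly what the paper's proof does by observing that $f_n(x)$ is the projection of $\phi_n(x,0)$ to $T_u$. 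The half that genuinely requires work is continuity of $\iota$ itself: one must lift a pointwise-convergent sequence $f_n\to f$ in $F_u$ to a convergent sequence $\phi_n\to\phi$ in $\tld\scrS$ with $\pi_+(\phi_n)=(u,f_n)$ and $\pi_+(\phi)=(u,f)$ — equivalently, produce a section of $\pi_+$ over $\{u\}\times F_u$ that is continuous for the pointwise topology on $F_u$. So your ``crux'' local-section construction is aimed at the wrong half (composed with $\pi_+$ it would give continuity of $\iota$, not openness), and moreover the step you yourself flag as the main obstacle is resolved in the paper by a much simpler device than truncation plus Theorem~\ref{aptfull}: fix once and for all a vertex $v\in\D_-$ opposite $u$, realize each $f_n$ on the wall tree joining $u$ and $v$ (this pins down $\phi_n$ up to the $S$-action), and normalize by $S$ so that $\phi_n(x,0)$ is eventually constant for a fixed $x\in T$; then $\phi_n$ converges to a lift of $(u,f)$.

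Separately, the compactness scaffolding does not go through as stated. To deduce that $\pi_+|_{\tld\scrS_x}$ is a closed (hence quotient) map from compactness of $\tld\scrS_x$, you need the target to be Hausdorff; but $\overline\D_+$ carries only the quotient topology, and its Hausdorffness has not been established at this stage — it is essentially of the same nature as the statement you are proving. This is precisely where the analogy with Lemmas~\ref{lem:op=F} and~\ref{lem:S/A=D} breaks down: there the targets $\D^{2,\op}$ and $\D_\pm^\op$ are Hausdorff locally compact spaces defined independently of any quotient (subspaces of products of $\Ch(\Delta)$ resp.\ $\D_+\times\D_-$), which is what legitimizes the ``compact source, continuous bijection, hence homeomorphism onto image'' shortcut; here no such ambient Hausdorff model is available, and the openness of $\pi_+(\tld\scrS_x)\cap(\{u\}\times F_u)$ is also left unproved. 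Once you (i) use your easy computation for continuity of $\iota^{-1}$ and (ii) carry out the opposite-vertex/$S$-normalization lift for continuity of $\iota$, the compact-cover machinery can be dropped entirely.
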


\begin{proof}
Let $f_n\in F_u$ be such that $(f_n)$ converges to $f$. Fix a vertex $v\in\D_-$ opposite   $u$. Let $\phi_n\in\widetilde\scrS$ be such that for every $n$, $\lim_{s\to -\infty}\phi_n(x,s)=v$ and $\pi_+(\phi_n)=(u,f_n)$. Let $x\in T$. Then the map $t \mapsto \phi_n(x,t)$ is a parametrization of a geodesic from $v$ to $u$ which, for $n$ large enough, corresponds to $f(x)$. Upon replacing $\phi_n(x, t)$ by $\phi_n(x, t-t_0)$ for some suitable $t_0 \in \RR$, we can assume that the $\phi_n$ are chosen so that $\phi_n(x,0)$ is eventually constant. Then $\phi_n$ converges to some $\phi\in\widetilde\scrS$ with $\pi_+(\phi)=(u,f)$.

Conversely, assume that $\phi_n$ converges to $\phi$ with $\pi_+(\phi_n)=(u,f_n)$ and $\pi_+(\phi)=(u,f)$. Let $x\in T$. Then we have for $n$ large enough $\phi_n(x,0)=\phi(x,0)$. Since $f_n(x)$ (resp. $f(x)$) is the projection of $\phi_n(x,0)$ (resp. $\phi(x,0)$) to $T_u$, we get immediately that $f_n$ converges to $f$.
\end{proof}

It will be convenient for us to consider various fiber products. If $A,B,C$ are topological spaces, with continuous maps $p_A:A\to C$ and $p_B:B\to C$, the \emph{fiber product} $A\times_C B$ is defined as the set $\{(a,b)\in A\times B\mid p_A(a)=p_B(b)\}$. It is equipped with the topology induced from the product topology.

\begin{lemma}\label{ShomeoFiber}
The map $\widetilde\scrS/S\to \D_\pm^\op\times_ {\D_+}\overline\D_+$ which associates to $\phi$ the pair $((\phi^+,\phi^-),\pi_+(\phi))$ is a homeomorphism, where $\widetilde\scrS/S$ is endowed with the quotient topology.

Similarly, the map  $\widetilde\scrS/S\to \D_\pm^\op\times_ {\D_-}\overline\D_-$ which associates to $\phi$ the pair $((\phi^+,\phi^-),\pi_-(\phi))$ is a homeomorphism.

\end{lemma}

\begin{proof}
We argue for  $\D_\pm^\op\times_ {\D_+}\overline\D_+$ only, as the other case is similar. 
The map of the Lemma is bijective: every pair of opposite vertices is linked by geodesics forming a tree, and given a pair of opposite vertices, there is an element of $\widetilde\scrS$ realizing a given embedding of the tree, and that element is unique modulo $S$. Since both maps $\widetilde\scrS/S\to \D_\pm^\op$ and $\widetilde\scrS/S\to \overline \D_+$ are continuous, their product also is continuous.

We now have to check that this map is open. Again, by compactness, it suffices to prove that the image of $\widetilde\scrS_x$ in $\D_\pm^\op\times_{\D_+}\overline \D_+$ is an open set. But this set is exactly the set of pairs whose first coordinate is in $\sigma(\widetilde\scrS_x)$. Hence it is open.
%
%
\end{proof}

\begin{lemma}\label{FiberContinuous}
The map $\D_\pm^\op\times_{\D_+}\overline \D_+\to \overline \D_-$, which associates to an element $((u,v),f)$ with $f \colon T\to T_v$  the pair $(v,f')$ where $f' \colon T\to T_u$ is the map obtained by composing $f$ with the identification $T_v\to T_u$, is continuous.
\end{lemma}

\begin{proof}
The space $\D_\pm^\op\times_{\D_+}\overline \D_+$ is homeomorphic to $\widetilde\scrS/S$ by Lemma \ref{ShomeoFiber}. The map of the Lemma is transported by this homeomorphism to the  map $\pi_-:\widetilde\scrS/S\to \overline \D_-$, which is continuous.
\end{proof}

\begin{proposition}\label{prop:persp continuous}
Let $n \geq 2$ be an integer and  let $V_n(u,v)$ be the set of sequences 
$$(u_1,u_2,\dots,u_n)$$ 
of vertices which are consecutively opposite, and such that $u_1$ is opposite $u$ and $u_n$ is opposite $v$. The set $V_n(u,v)$ is equipped with the topology induced from the product topology. 
Let also $M(u, v)$ be the set of simplicial isomorphisms of $T_u$ to $T_v$, with the topology of pointwise convergence.

Then the map
$$ V_n(u,v) \to M(u, v) : (u_1,u_2,\dots,u_n) \mapsto [u;u_1;u_2;\dots;u_n;v]$$
 is continuous.
\end{proposition}

\begin{proof}
We prove the proposition in the special case where $u$ and $v$ are of type $+$ (so that $n$ is odd). The other cases are similar.  Let $V_n$ be the set of sequence of $n$ consecutively opposite vertices, starting from a vertex of type $+$. Note that $V_n$ is naturally identified with  the fiber product $\D_\pm^\op\times_{\D_-}\D_\pm^\op\times_{\D_+}\times\dots\times_{\D_-}\D_\pm^\op$ (with $n$ factors).

Let us form the fiber product $V_n\times_{\D_+}\overline\D_+$ associated with the map $V_n \to \D_+ : (u_1,\dots,u_n) \mapsto u_n$. We then consider the map $P_n \colon V_n\times_{\D_+}\overline\D_+\to \overline \D_+$  which associates to $\big((u_1,\dots,u_n), (u_n,f) \big)$  the map $f':T\to T_{u_1}$ obtained by composing the map $f$ with the perspectivity $[u_n;u_{n-1};\dots;u_1]$.
Applying Lemma \ref{FiberContinuous} (and its symmetric statement with $+$ and $-$ exchanged), and using the associativity of the fiber product, we see that the map $P_n$ is continuous.


For every isomorphism $f \colon T\to T_u$, we have: 
$$P_{n+2}\big((v,u_n,\dots,u_1, u),(u,f)\big)= (v, [u, u_1, \dots, u_n, v] \circ f).$$ 
Let us now choose  an isomorphism $f \colon T\to T_u$ that we keep fixed.  Given $(u_1,u_2,\dots,u_n) \in V_n(u,v) $, let us denote by $m \in M(u, v)$ the perspectivity $[u;u_1;u_2;\dots;u_n;v]$. The map $(u_1,u_2,\dots,u_n) \mapsto m$  can be factored as the composite of the maps
$$(u_1,u_2,\dots,u_n)  \mapsto \big((v,u_n,\dots,u_1, u),(u,f)\big) \underset{P_{n+2}}{\longmapsto} (v, m \circ f) \mapsto m \circ f \mapsto m.$$
 Since each individual map in that sequence is continuous,  the Proposition follows.
\end{proof}

\subsection{The space of restricted marked wall trees} \label{subsec:rmwt}

In the next section we will construct measures on the various spaces we consider here.
There is a natural measure on $\widetilde\scrS$ that we could construct.
However, we will not do it, as this measure fails to satisfy the ergodic properties that we desire.
Instead we will consider a certain subspace of $\widetilde\scrS$, to be denoted $\scrS$, on which the analogous measure will be better behaved.
This subsection is devoted to the definition and the study of this subspace and its properties.

For the next definition, recall from Section~\ref{susec:walltrees} that $\Aut(\Upsilon)$ acts on $\widetilde\scrS$ by precomposition and $\Gamma$ acts on $\widetilde \scrS$ by postcomposition. Moreover $S$ is a cyclic normal subgroup of $\Aut(\Upsilon)$  acting as integer translations along the line factor of $\Upsilon$.

\begin{definition}\label{def:equivalent}
Two embeddings $\phi,\psi\in \widetilde \scrS$ are \textbf{positively equivalent} if for every $x\in VT$, there exists $t_x \in \RR$ such that   $\phi(x, t)=\psi(x, t)$ for all $t > t_x$. 
The embeddings $\phi$ and $\psi$ are \textbf{negatively equivalent} if for every $x\in VT$, there exists $t_x \in \RR$ such that   $\phi(x, -t)=\psi(x, -t)$ for all $t > t_x$. 
We denote the positive and negative equivalences by $\sim_+$ and $\sim_-$ respectively. 
We denote  by $\sim$ the equivalence relation on $\widetilde\scrS$ generated by $\sim_+$, $\sim_-$ and by  the orbit equivalence induced by the action of $S\times \Gamma$.
\end{definition}


\begin{lemma}\label{lem:Aut(T)transitive}
The equivalence relation $\sim$ is $(\Aut(\Upsilon)^0\times \Gamma)$-invariant.
Moreover, every equivalence class is $(S\times \Gamma)$-invariant, and   $\Aut(\Upsilon)^0$ acts transitively on $\widetilde\scrS/\sim$.
\end{lemma}

\begin{proof}
By construction, every individual $\sim$-class is $(S\times\Gamma)$-invariant.
Clearly $\sim_+$ and $\sim_-$ are invariant under  $\Aut(\Upsilon)^0$. Moreover since  $S\times \Gamma$ is a normal subgroup of $\Aut(\Upsilon)^0 \times \Gamma$, the  $(S\times \Gamma)$-orbits are permuted by $\Aut(\Upsilon)^0\times \Gamma$.   It follows that $\sim$ is also $(\Aut(\Upsilon)^0\times \Gamma)$-invariant. In particular $\Aut(\Upsilon)^0$ acts on $\widetilde\scrS/\sim$. 

It remains to show that $\Aut(\Upsilon)^0$ is transitive on $\widetilde\scrS/\sim$. Let   $\phi_1, \phi_2 \in \widetilde \scrS$. We must find $\alpha \in \Aut(\Upsilon)^0$ such that $\phi_1 \circ \alpha \sim \phi_2$. Set $(\phi_i^+, \phi_i^-) = (u_i, v_i) \in \D_\pm^\op$ for $i=1,2$. We choose $u\in \D_-$ which is opposite   both $v_1,v_2$, and $v\in \D_+$ which is opposite   both $u,u_2$ (see Lemma~\ref{lem:twoop}). We next choose successively $\psi_1, \psi_2, \psi_3, \psi_4 \in \widetilde \scrS$ such that:
\begin{itemize}
\item  $\psi_1^+ = u$ and $\psi_1 \sim_- \phi_1$;
\item $\psi_2^- = v$ and $\psi_2 \sim_+ \psi_1$;
\item $\psi_3^+ =  u_2$ and $\psi_3 \sim_- \psi_2$;
\item  $\psi_4^- = v_2$ and $\psi_4 \sim_+ \psi_3$.

\end{itemize}
 Thus we have $\phi_1 \sim_- \psi_1 \sim_+ \psi_2 \sim_- \psi_3 \sim_+ \psi_4$, so that $\phi_1 \sim \psi_4$. Moreover $(\psi_4^+, \psi_4^-) = (u_2, v_2) = (\phi_2^+, \phi_2^-)$. Since $\Aut(\Upsilon)^0$ is transitive on each fiber of the map $ \widetilde \scrS \to \D_\pm^\op$ by Lemma~\ref{lem:S/A=D}, we infer that there exists $\alpha \in \Aut(\Upsilon)^0$ with $\psi_4\circ \alpha = \phi_2$. The conclusion follows since $\phi_1 \sim \psi_4$. 
\end{proof}

Fix $\gamma\in\Gamma$ and $u\in\Delta_-$. The map  $\gamma$ induces a map $T_u\to T_{\gamma u}$.
Let $(\gamma u,v_1,\dots,v_n)$ be a sequence of consecutively opposite vertices.
We denote by $[\gamma:u;v_1;\dots;v_n]$ the map $T_u\to T_{v_n}$ obtained by first applying $\gamma$ and then the perspectivity $[\gamma u;v_1;\dots;v_n]$.

\begin{lemma} \label{lem:pers-representatives}
Let $\phi,\phi'\in\widetilde\scrS$  and set $\pi_-(\phi)=(u_-,f)$, $\pi_+(\phi) = (u_+, f_+)$ and $\pi_-(\phi')=(u'_-,f')$.
Then the following assertions are equivalent. 
\begin{enumerate}[(i)]
\item  $\phi\sim\phi'$.
\item There exist an element $\gamma\in\Gamma$, an  integer $n$, and vertices $v_0, v_1,\ldots,v_n$ in $\Delta$ such that $(\gamma u_-, \gamma u_+ ) = (v_0, v_1)$, the pairs $(v_1,v_2)$, \dots, $(v_{n-1},v_n)$, $(v_n,u'_-)$ are all opposite, 
and $f'=[\gamma : u_-;v_1;\dots;v_n; u'_-]\circ f$.

\end{enumerate}
 \end{lemma}

\begin{proof}
Assume that  $\phi\sim\phi'$. 
By   definition, this means that there exists a sequence $(\phi=\phi_0,\phi_1,\dots,\phi_{n-1},\phi_n=\phi')$ such that $\phi_i$ and $\phi_{i+1}$ are   equivalent for $\sim_+$ or $\sim_-$ or in the same $(S\times \Gamma)$-orbit.
Since $\sim_-$ and $\sim_+$ are preserved by $S\times \Gamma$ (see Lemma~\ref{lem:Aut(T)transitive}) we may assume that $\phi_1=(s,\gamma)\phi$ and for $i>1$, $\phi_i$ and $\phi_{i+1}$ are either $\sim_+$ or $\sim_-$ equivalent.
Since $\sim_+$ and $\sim_-$ are transitive relations we can assume further that they alternate.
Thus (i) implies (ii). 

Conversely, let $\gamma\in\Gamma$ and let $v_0, v_1,\ldots,v_n$ be vertices of $\Delta$ such that $(\gamma u_-, \gamma u_+) = (v_0, v_1)$, and  $(v_1,v_2)$, \dots, $(v_{n-1},v_n)$ , $(v_n,u'_-)$ are all opposite pairs, and such that  $f'=[\gamma : u_-;v_1;\dots;v_n;u'_-]\circ f$. Since $u_-, \gamma u_-$ and $u'_-$ all belong to $\Delta_-$, it follows that $n$ must be odd. Set $\phi_0 =  \gamma \circ \phi$. For all  $i \in \{1, \dots, n-1\}$, choose successively $\phi_i \in \widetilde\scrS$ satisfying  $\big((\phi_i)_+, (\phi_i)_-\big) = (v_i, v_{i+1})$ and  $\phi_i \sim_+ \phi_{i-1}$ if $i$ is odd, while $\phi_i$ must satisfy for $\big((\phi_i)_-, (\phi_i)_+\big) = (v_i, v_{i+1})$ and $\phi_i \sim_- \phi_{i-1}$ if $i$ is even. Finally, choose $\phi_n \in \widetilde\scrS$ with $\{(\phi_n)_{+}, (\phi_n)_{-}\} = (v_n, u_-')$ and $\phi_n \sim_+ \phi_{n-1}$. By construction we have $\phi \sim \phi_0 \sim \dots \sim \phi_n$. It remains to observe that the condition $f'=[\gamma : u_-;v_1;\dots;v_n;u'_-]\circ f$ ensures that $\phi_n \sim_- \phi'$. Thus (ii) implies (i). 
\end{proof}

In the next lemma, we use the projection map $p_T \colon \Aut(\Upsilon) \to \Aut(T)$ from Section~\ref{susec:walltrees}. 

\begin{lemma} \label{lem:M'}
Fix an $\sim$-equivalence class $\scrS' \subset \widetilde\scrS$ and let $M'<\Aut(\Upsilon)^0$ be the stabilizer of $\scrS'$, so that $M'$ contains $S$. 
Fix $\phi\in \scrS'$ and set $\pi_-(\phi)=(u,f)\in \overline\Delta_-$ and $\pi_+(\phi)=(u',f')\in \overline\Delta_+$.
Let  also 
$$M'_\phi=f \big(p_T(M') \big)f^{-1}<\Aut(T_u).$$
Then
\begin{align*}
M'_\phi=\big\{[\gamma : u; & \; v_1;\dots;v_n;u]\mid  \  \gamma \in \Gamma, \ v_0, v_1, \dots, v_n \in \Delta_\pm,  \ (\gamma u, \gamma u') = (v_0, v_1),\\
&  \text{and the pairs } (v_1,v_2),\ldots(v_{n-1},v_n),(v_n,u)\text{ are all opposite}\big\}.
\end{align*} 
In particular, the projectivity group $\Pi(u)$ is contained in $M'_\phi$, and $p_T(M')$ is $3$-transitive on $\partial T$. 
\end{lemma}

\begin{proof}
Fix $m'\in M'$. We have $\phi\circ m'\sim \phi$ by the definition of $M'$. Moreover, recalling that $\pi_-(\phi) = (u, f)$, we have $\pi_-(\phi \circ m') = (u, f \circ \pi_T(m'))$.   By Lemma~\ref{lem:pers-representatives}, there exists an automorphism of $T_u$ of the form $[\gamma:u;v_1;\dots;v_n;u]$ for some $\gamma \in \Gamma$ and some  vertices $v_0, v_1, \dots, v_n $ of $ \Delta$ with $(\gamma u, \gamma u') = (v_0, v_1)$  and   $(v_1,v_2)$, \dots,  $(v_{n-1},v_n)$, $(v_n,u)$     all opposite pairs, such that $f \circ \pi_T(m') =[\gamma : u;v_1;\dots;v_n;u]\circ f$. 

Conversely, let  $\gamma \in \Gamma$ and  let  $v_0, v_1, \dots, v_n $ be vertices of $ \Delta$ such that $(\gamma u, \gamma u') = (v_0, v_1)$, the pairs   $(v_1,v_2)$, \dots,  $(v_{n-1},v_n)$, $(v_n,u)$ are    all opposite pairs, and define the automorphism  $\alpha = [\gamma : u; \; v_1;\dots;v_n;u] \in \Aut(T_u)$. Since $p_T \colon \Aut(\Upsilon)^0 \to \Aut(T)$ is surjective, there exists $m' \in \Aut(\Upsilon)^0 $ with $p_T(m')  = f^{-1} \circ \alpha \circ f$. We have  $\phi\circ m'\sim \phi$  by Lemma~\ref{lem:pers-representatives}.

That $M'_\phi$ is $3$-transitive on $\partial T_u$ follows from Lemma~\ref{lem:modeltree}. Therefore $p_T(M')$ is $3$-transitive on $\partial T$. 
\end{proof}

We wish to consider a single equivalence class in $\widetilde\scrS$.
A technical issue is that such an equivalence class will not be closed.
The obvious solution is to replace the equivalence relation $\sim$ by its closure (as a subset of $\widetilde\scrS\times\widetilde\scrS$).
Note that in general the closure of an equivalence relation need not be an equivalence relation.
However the situation is better when a topological group is acting continuously on the ambient space and transitively on the quotient space.

\begin{definition}
We let $\simeq$ be the closure of $\sim$ as a subset of $\widetilde\scrS\times\widetilde\scrS$.
\end{definition}

\begin{lemma} \label{lem:simeq}
The relation $\simeq$ is an equivalence relation and each $\simeq$-equivalence class is closed in $\widetilde\scrS$.
In fact, the $\simeq$-equivalence classes are the closures of the $\sim$-equivalence classes.
In particular, every $\simeq$-equivalence class is $S\times\Gamma$-invariant. The quotient space $\widetilde\scrS/\simeq$ is compact, and the natural $\Aut(\Upsilon)^0$-action on $\widetilde\scrS/\simeq$ is continuous. 
%
\end{lemma}

\begin{proof}
Recall from Lemma~\ref{lem:Aut(T)transitive} that $\Aut(\Upsilon)^0$ acts transitively on $\widetilde\scrS/\sim$. Therefore, the orbit map induces an equivariant bijection  $\widetilde\scrS/\!\sim  \ \to \Aut(\Upsilon)^0/M'$, where $M'$ is the subgroup of $\Aut(\Upsilon)^0$ stabilizing some $\sim$-equivalence class. Let   $M$ be the closure of $M'$ in $\Aut(\Upsilon)^0$. We have canonical maps
$$\widetilde \scrS \to \widetilde \scrS /\! \sim \ \to \Aut(\Upsilon)^0/M' \to \Aut(\Upsilon)^0/M.$$
We may now identify $\simeq$ with the fiber equivalence relation of the   map $\widetilde\scrS\to \Aut(\Upsilon)^0/M$. Since $S \leq M$ and $p_T(\Aut(\Upsilon)^0) = \Aut(T)$, it follows that $p_T(M) \cong M/S$ is a closed subgroup of $\Aut(T)$, and that $\Aut(\Upsilon)^0/M$ is homeomorphic to $p_T(\Aut(\Upsilon)^0)/p_T(M) = \Aut(T)/p_T(M)$. Since $p_T(M')$ is $3$-transitive on $\partial T$ by Lemma~\ref{lem:M'}, the same holds for $M$, and it follows from Theorem~\ref{thm:BuMo} that $p_T(\Aut(\Upsilon)^0)/p_T(M)$ is compact. All the required assertions now follow.
\end{proof}

The following crucial definition involves a non-canonical choice.

\begin{definition} \label{def:S}
We fix once and for all an $\simeq$-equivalence class in $\widetilde\scrS$ and denote it by $\scrS$.
We call the elements of $\scrS$ the \textbf{restricted marked wall trees}.
We let $M<\Aut(\Upsilon)^0$ be the stabilizer of $\scrS$. In particular we have $S \lhd M$. 
\end{definition}

\begin{lemma} \label{lem:scrs}
The space $\scrS$ is closed in $\widetilde\scrS$ and invariant under the action of $M\times \Gamma$.
The $\Gamma$-action on $\scrS$ is proper
and $\scrS/\Gamma$ is a compact $M$-space.
The $M$-action on $\scrS$ is proper and free, and 
the restriction of the map $\widetilde\scrS\to \D_\pm^\op$ to $\scrS$ yields a $\Gamma$-equivariant homeomorphisms $\scrS/M \overset{\sim}{\longrightarrow}\D_\pm^\op$ and $\scrS/S \big/ M/S \overset{\sim}{\longrightarrow}\D_\pm^\op$.
\end{lemma}

\begin{proof}
The first sentence holds by the construction of $\scrS$.
The second sentence follows by Lemma~\ref{lem:scrs} and Lemma~\ref{lem:tldcocompact}.
To see the third, use Lemma~\ref{lem:S/A=D} and observe that 
the embedding $\scrS \to \widetilde \scrS$ descends to a continuous   map $\scrS/M \to \widetilde \scrS/\Aut(\Upsilon)^0$ which is injective by the definition of $M$. Moreover, this map is surjective in view of \ref{lem:Aut(T)transitive}. Therefore we have a $\Gamma$-equivariant homeomorphism  $\scrS/M  \to \widetilde \scrS/\Aut(\Upsilon)^0 $. The last required assertions now follow from Lemma~\ref{lem:S/A=D} and the fact that $S$ is a closed normal subgroup of $M$.
\end{proof}

\begin{theorem}\label{thm:S}
The group $M$ is a unimodular group and $p_T \colon M \to \Aut(T)$ yields a continuous isomorphism of $M/S$ onto a closed subgroup of $\Aut(T)$ acting $3$-transitively on the set of ends $\partial T$. In particular $M$ has a closed cocompact normal subgroup $M^+$ containing $S$ such that $M^+/S$ is 
compactly generated,  topologically simple and non-discrete. 
Furthermore, if $X$ is not Bruhat--Tits, then $M^+/S$ has no non-trivial continuous linear representation over any local field.
\end{theorem}

\begin{proof}
By Lemma~\ref{lem:simeq}, the group $M$ is the closure of the group $M'$ and, by Lemma~\ref{lem:M'}, the group $p_T(M)$ contains a copy $\Pi$ of the projectivity group $\Pi(u)$ for some $u\in \Delta_-$. 
It follows by Lemma~\ref{lem:modeltree} that $M$ acts $3$-transitively on $\partial T$. As observed before, we have $S \leq M \leq \Aut(\Upsilon)^0$ and $p_T(\Aut(\Upsilon)^0) = \Aut(T)$, so that $p_T(M)$ is a closed subgroup of $\Aut(T)$ acting $3$-transitively on $\partial T$. 
The group $p_T(M)^+$ is described in Theorem \ref{thm:BuMo} where its various properties are recorded, as well as the unimodularity of $p_T(M)$. Since $S$ is discrete, we see that $M$ is unimodular. We set $M^+ = p_T^{-1}(p_T(M)^+)$. 
We are left to show that the non-linearity of $M^+/S \cong p_T(M)^+$ in case $X$ is not Bruhat--Tits.
To this end we consider the topologically simple group $\overline{\Pi}^+$ which is obtained by applying Theorem \ref{thm:BuMo} to the closure $\overline{\Pi}$ of the projectivity group $\Pi$. 
As $M/M^+$ is profinite and $\overline{\Pi}^+$ topologically simple we have $\overline{\Pi}^+<p_T(M)^+$ since $\Pi < p_T(M)$.
By Theorem~\ref{thm:charBT}, the group $\overline{\Pi}^+$ has no linear representations over local fields.
We deduce that $p_T(M)^+ \cong M^+/S $ shares this property, as it is topologically simple.
\end{proof}

\begin{definition}
We call the $M\times S$-space $\scrS/\Gamma$ the \textbf{singular Cartan flow}.
\end{definition}

\subsection{The broad picture} \label{subsec:broadpic}

In this subsection we pack together the various $\Gamma$-spaces constructed previously in the following commutative diagram of $\Gamma$-equivariant maps:

\begin{align} \label{eq:broadpic}
\begin{split}
\xymatrix{
X^{(0)}/\Gamma & \scrF/\Gamma \ar[l] & \scrS/\Gamma \ar[l] & & \\
X^{(0)} \ar[u] & \scrF \ar[dd] \ar[l] \ar[u] &  \scrS \ar[l] \ar[d] \ar[u] & & \\
& & \scrS/S \ar[d]\ar[dl]  \ar[r] \ar@/^2pc/[rr] & \overline\Delta_+ \ar[d] & \overline\Delta_- \ar[d] \\
\Ch(\D)  & \Delta^{2,\op} \ar[r] \ar[l]_{p_1,p_2} & \Delta_\pm^\op \ar[r] \ar@/^2pc/[rr] & \D_+ & \D_-
}
\end{split}
\end{align}
In order to keep our notation as light as possible we will not name most of the arrows in Diagram~(\ref{eq:broadpic}),\
but in what follows we will make clear what they are.

In the beginning of \S\ref{subsec:cartan-setup} we have fixed the model apartment $\Sigma$ and a positive Weyl chamber $Q^+$ emanating from $0\in\Sigma$.
We defined $\scrF\to X^{(0)}$ by
\[ \scrF=\{\text{simplicial embeddings}~\Sigma\to X\} \ni \phi \mapsto \phi(0) \in X^{(0)}.\]
We further defined the map $\scrF\to \D^{2,\op}$ by
\[ \scrF=\{\text{simplicial embeddings}~\Sigma\to X\} \ni \phi \mapsto ([\phi(Q^+)],[\phi(Q^-)]) \in \D^{2,\op},\]
where $[\phi(Q^+)]$ denotes the equivalence class of the section $\phi(Q^+)\subset X$ which is a point in $\Delta$, similarly for $[\phi(Q^-)]$ which is an opposite point.

In the beginning of \S\ref{susec:walltrees} we fixed the model marked wall tree  $\Upsilon$  and a base point $o\in VT$.
We now relate $\Sigma$ and $\Upsilon$. 
We denote by $\ell$ the unique wall through $0$ in $\Sigma$ such that $\ell\cap \overline{Q^+}=\{0\}$
and fix a simplicial embedding $\iota:\Sigma \hookrightarrow \Upsilon$ such that $\iota(\ell)$ is $\{o\}\times \RR \subset \Upsilon \subset T\times \RR$. Recall that $\scrS \subset \widetilde \scrS = \{\text{simplicial embeddings}~\Upsilon\to X\}$.
We define the map
\[\scrS\to \scrF :   \psi \mapsto \psi|_{\iota(\Sigma)}. \]
Note that the composition $\scrS\to \scrF\to X^{(0)}$ agrees with the restriction to $\scrS$ of the map~(\ref{eq:tldS}) given in \S\ref{susec:walltrees}.
The maps $\scrS\to \scrF\to X^{(0)}$ are clearly $\Gamma$-equivariant ($\Gamma$ acts on the target space $X$) and induce corresponding maps $\scrS/\Gamma \to \scrF/\Gamma \to X^{(0)}/\Gamma$. The commutativity of the upper part of Diagram~(\ref{eq:broadpic}) is obvious.

Note that the composed map $\scrS\to\scrF\to\D^{2,\op}$ is $S$-invariant and  thus factors through $\scrS/S$.
This explains the map $\scrS/S\to \D^{2,\op}$ appearing in Diagram~(\ref{eq:broadpic}).
We define the map $\D^{2,\op}\to \D_\pm^\op$ by associating with a pair of chambers $(C_1,C_2)\in \D^{2,\op}$ the unique pair of elements $\delta_+\in\D_+$ and $\delta_-\in \D_-$ such that $\delta_+$ and $\delta_-$ are in the boundary of the unique flat that contains both $C_+$ and $C_-$ but disjoint from their closures.
By composing $\scrS/S\to \D^{2,\op}\to \D_\pm^\op$ we also obtain the map $\scrS/S\to\D_\pm^\op$.
Note that the composed map $\scrS\to\scrS/S \to \D_\pm^\op$ is the restriction to $\scrS$ of the map~(\ref{eq:scrsdpm}) defined in \S\ref{susec:walltrees}, namely
\[  \phi \mapsto (\phi^+, \phi^-) = \left(\lim_{s\to +\infty} \phi(o,s),\lim_{s\to -\infty} \phi(o,s)\right) \in \D_\pm^\op. \]
Similarly we can restrict to $\scrS$ the maps $\widetilde\scrS\to  \overline\Delta_+, \overline\Delta_-$ appearing in Definition~\ref{def:bard} and obtain maps $\scrS\to  \overline\Delta_+, \overline\Delta_-$.
The latter, being $S$-invariant, factor through $\scrS/S$ and we obtain the maps $\scrS/S\to  \overline\Delta_+, \overline\Delta_-$ appearing in the diagram.
The maps $\overline\Delta_+\to \D_+$ and $\overline\Delta_-\to \D_-$ are the obvious ones and
the maps $\Ch(\D) \to \D_+$ and $\Ch(\D) \to\D_-$ are the
ones described in the beginning of \S\ref{susec:walltrees}. The commutativity of the diagram is clear from the definitions.
Finally, the maps $p_1,p_2\colon \D^{2,\op}\to \Ch(\D)$ are the first and second factor projections.

\subsection{Equivariance properties of Diagram~(\ref{eq:broadpic})}

Note that all the maps in Diagram~(\ref{eq:broadpic}) are $\Gamma$-equivariant.
The following
 lemmas describe $\scrF$ and $\D^{2,\op}$
as spaces of orbits of certain group actions on $\scrS$.

The group $W$ contains a subgroup $W_0=\{1,\tau\}$, where $\tau$ is the longest element of $W$, which exchanges the positive and the negative Weyl chamber of $\Sigma$. In particular the map $\scrF\to \D^{2,\op}$ is $W_0$-equivariant, where $\tau$ acts on $\D^{2,\op}$ by flipping the two coordinates.

Recall our fixed embedding $\iota:\Sigma  \hookrightarrow \Upsilon$ from the previous section. Recall also that $M$ acts on $\scrS$ by precomposition via its action on
$\Upsilon$. 

Now let $M'$ be the stabilizer of $\iota(\Sigma)$ in $M$. Note that $M'$ is also the stabilizer of $\partial (\iota(\Sigma))$, since a maximal flat is uniquely defined by its boundary. Let $M_0$ (resp. $M'_0$) be the pointwise fixator of $\iota(\Sigma))$ (resp. $\partial(\iota(\Sigma))$). By definition, the groups $M'/M_0$ and $M'/M'_0$ act on $\Sigma$ and $\partial \Sigma$, so that there is an embedding of the group $M'/M_0$ in $\Aut(\Sigma)=W\ltimes A$ and of $M'/M_0'$ in $\Aut(\partial \Sigma)=W$.

\begin{proposition}\label{prop:M0'}

Consider the space $\scrS/M_0$ (resp. $\scrS/M_0'$), endowed with the quotient topology and the action of $M'/M_0$ (resp. $M'/M_0'$). Then:
\begin{enumerate}[(i)]
\item The groups $M'$, $M_0$ and $M'_0$ are unimodular. 
\item The image of $M'/M_0$ (resp. $M'/M'_0$) in $W\ltimes A$ (resp. $W$) is equal to $W_0\ltimes A$ (resp. $W_0$).
\item There is $\Gamma\times M'/M_0$-equivariant homeomorphism  $\scrS/M_0\to \scrF$,
\item There is a $\Gamma\times M'/M'_0$-equivariant homeomorphism $\scrS/M'_0\to \D^{2,\op}$. 
\end{enumerate}
\end{proposition}

\begin{proof}
The image of $\Sigma$ under the projection $\Upsilon \subset T\times \RR\to T$ is a geodesic $L$ in $T$. Note that $p_T(M')$ is exactly the stabilizer of $L$, whereas $p_T(M_0)$ (resp. $p_T(M_0')$) is the pointwise fixator of $L$ (resp. of $\partial L$). 

Since $p_T(M')$ acts $3$-transitively on $\partial T$, it contains an element which flips the two endpoints of $L$. By our choice of $\iota$, this elements acts as $\tau$ on $\Sigma$. Furthermore, the two endpoints corresponding to the $\RR$ factor in $\Upsilon\subset T\times \RR$ are fixed by $M'$, since $M'\subset \Aut^0(\Upsilon)$. This implies that the image of $M'/M'_0$ is exactly $W_0$. 
It also follows that the linear part of $M'/M_0$ is equal to $W_0$. Let $A'=M'/M_0\cap A$. To show that $A'=A$, we first notice that $M'$ contains $S$, so that the stabilizer of $\ell$ in  $A'$ acts transitively on the vertices of $\ell$.  Using again the $3$-transitivity of $M$ on $\partial T$, we see that  $M'$ contains an element which acts as a translation of length~$1$ on $L$. Hence $A'$   acts transitively on the set of all lines in $\Sigma$ which are parallel to $\ell$. Therefore $A'$ is transitive on the vertices of $\Sigma$, so that $A'=A$ since $A$ acts sharply transitively on those vertices. Since $M'/M_0$ contains a reflection whose linear part is $\tau$, it follows that $M'/M_0=W_0\ltimes A$.

To prove that $M'$ is unimodular, it suffices to prove that $M'/S$ is unimodular, since $S$ is discrete. But $M'/S\simeq p_T(M')$ is the stabilizer of $L$ in $T$, which contains  a translation of $L$. In particular $p_T(M')$ contains a cocompact lattice, isomorphic to $\ZZ$. Therefore it is unimodular. Any open normal subgroup of a unimodular group is also unimodular. Thus $M_0$ and $M'_0$ are unimodular.

The map $\scrS\to \scrF$ defined (as above) by $\psi\mapsto \psi_{|\iota(\Sigma)}$ is clearly $M_0$-invariant and $M'$-equivariant, so it factors through a map $\theta \colon \scrS/M_0\to\scrF$, which is $M'/M_0$-equivariant, and continuous by definition of the quotient topology. The injectivity of $\theta$ is clear. Let us  prove its surjectivity.
Using Lemma \ref{lem:twoop}, we see  that for every $u,v\in \D^{\op}_{\pm}$,  $\scrS$ contains an element $\phi$ such that $(\phi_+,\phi_-)=(u,v)$. It follows that the image of the map $\theta$ contains every  marked flat  up to translation. Since $M'/M_0$ contains $A$, it is always possible to translate in order to get back to the origin, so that the map $\scrS\to \scrF$ is indeed surjective.
If $\scrS_x=\{\phi\in \scrS\mid \phi(o,0)=x\}$ then we see that $\scrS_x$ is a compact open subset of $\scrS$. By compactness, $\theta_{|\scrS_x/M'_0}$ is a homeomorphism on its image, which is $\scrF_x$, hence is again open. It follows that $\theta$ is indeed a homeomorphism.

The homeomorphism between $\scrS/M_0'$ and $\D^{2,\op}$ is obtained by composing the map $\theta$ above with the map $\scrF\to \D^{2,\op}$. This new map clearly factors through $M_0'$. The proof that it is an equivariant homeomorphism is similar as before.
\end{proof}

\begin{corollary}\label{cor:flip}
There exists a closed subgroup $N<M^+$ and a continuous surjective homomorphism $N \to \{1,\tau\}$, such that the maps $\scrS \to \D^{2,\op}$ and $\scrS/S \to \D^{2,\op}$ are equivariant with respect to this homomorphism.
\end{corollary}

\begin{proof}
As above, the image of $\Sigma$ under the projection $\Upsilon \subset T\times \RR\to T$ is a geodesic line $L$ in $T$. 
We put $N=M'\cap M^+$. By Theorem \ref{thm:S}, the group $p_T(M^+)$ is still $3$-transitive on $\partial T$, hence $p_T(N)$ contains an element which flips the two endpoints of $L$. This element acts as $\tau$ on $\scrF$, hence also on $\D^{2,\op}$.
\end{proof}


\section{Measuring the topology} \label{sec:meas}

In this section, we retain the setting and the notation of the previous one and explain how to construct natural measures on the various topological spaces considered there.
In \S\ref{subsec:d}, we   give the construction of visual measures on $\D$ and we   use it in \S\ref{subsec:d2} to construct a Radon measure on $\D^{2,\op}$.
In \S\ref{subsec:dpm}, we   use the above to construct a Radon measure on $\D_\pm^\op$.
Using the measure on $\D_\pm^\op$ we are   able to define a measure on $\scrS$ and by this on the various spaces appearing in Diagram~(\ref{eq:broadpic}). That process will be carried out in \S\ref{subsec:measgen}.


\subsection{The visual measures on $\Ch(\Delta)$} \label{subsec:d}

Our goal is to define  natural probability measures $\mu_x$ on $\Ch(\Delta)$, indexed by $x\in X$. We follow the construction of J.~Parkinson from \cite{Parkinson}.

Recall that $A$ is the group of simplicial translations of our model apartment $\Sigma$ and $Q^+\subset \Sigma$ is the positive Weyl chamber.
We denote by $A^+$ the subsemigroup of $A$ formed by elements translating $0$ to a point in $Q^+$ (including possibly points on the walls). This gives a partial order on $A$, defined by $t>t'$ if $t-t'\in A^+$.

Let $\alpha_1$ and $\alpha_2$ be the simple roots associated with the choice of the positive Weyl chamber.
In other words, they are the $\ZZ$-linear forms on $\Sigma$ taking integer values on vertices, whose kernel are walls delimiting $Q^+$ which are positive on $Q^+$ and achieve the value $1$.  We also denote by $A^{++}$ the set of $\lambda\in A$ such that $\alpha_1(t)>0$ and $\alpha_2(t)>0$.

Let $t_1$ and $t_2$ be the two generators of $A$ such that every $t\in A$ is written $t=\alpha_1(t)t_1+\alpha_2(t)t_2$. We denote
$\ell(t)=\alpha_1(t)+\alpha_2(t)$.

 For $t\in A^+$ and $x$ a vertex in $X$ we denote by $V_t(x)$ the set of vertices $y\in X$ such that, in an apartment $F\subset X$ containing $x$ and $y$,  we have $t(x)=y$.

There is a natural action of the Weyl group $W$, isomorphic to the symmetric group $S_3$, on $\Sigma$. The group $W$ is generated by the two reflections $s_1$ and $s_2$, which act on $\Sigma$ by reflections with respect to the walls corresponding to $\alpha_1$ and $\alpha_2$ respectively.

\begin{lemma}\label{lem:NLambda}

Let $t\in A^{++}$ and $x$ be a vertex in $X$. Then the cardinality $N_t$ of $V_{t}(x)$ does not depend on $x$ and we have
$$N_t= K q^{2\ell(t)},$$
for some constant $K$ depending on $q$.
\end{lemma}

\begin{proof}
See \cite[Corollary 2.2]{CartwrightMlotowski} or \cite[Theorem 5.15]{ParkinsonHecke}. 
\end{proof}

We will need a more precise calculation. Fix a chamber $C\in\Ch(\Delta)$ and a vertex $x\in X$ contained in some apartment.
Consider the collection $F$ of all apartments in $X$ containing $x$ and $C$.
Each element of $F$ could be seen as a marked flat, by choosing its unique type preserving identification with $\Sigma$ taking $0$ to $x$ and $Q^+$ to  $Q(x,C)$.
For given $t\in A^+$ and $w\in W$ we consider in each  apartment in $F$ the image of the element $w\circ t(0)\in \Sigma$ under the associated identification.
We denote by $Y_w^t$ the set of all vertices of $X$ obtained this way.

\begin{lemma}\label{Ywlambda}

Denoting $(i,j)=(\alpha_1(t),\alpha_2(t))$, we have:
\[ |Y_{e}^t|=1, \quad |Y_{s_1}^t|= K_1q^{j}, \quad |Y_{s_2}^t|=K_2q^{i}, \quad |Y_{s_1s_2}^t|=K_3q^{2i+j} \quad\mbox{and}\quad |Y_{s_2s_1}^t|=K_4 q^{i+2j}, \]
for some constants $K_1, \dots , K_4$ depending only on $q$.
 In particular, for every $w\neq w_0$, the quantity $|Y_w^t|/|V_t(y)|$ tends to $0$ as $i$ and $j$ tend (simultaneously) to $+\infty$.
\end{lemma}

\begin{proof}
We argue by induction on $\ell(t)=i+j$. The first step of the induction is absorbed by the constants.
 Let us do the calculation of $|Y_{s_1}^t|$, the other ones being similar. We refer to Figure \ref{fig:counting}.

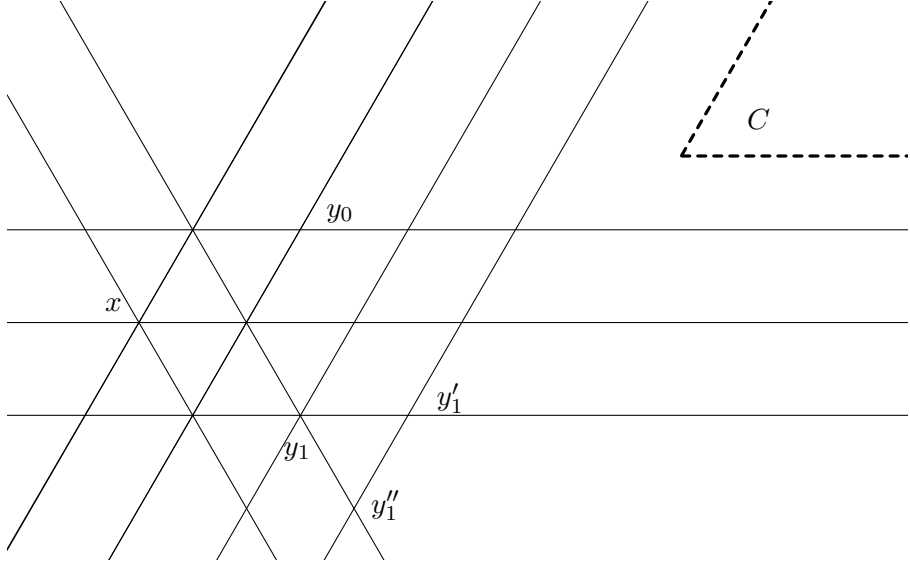
\begin{figure}[h]
\begin{center}
\begin{tikzpicture}[line cap=round,line join=round,>=triangle 45,x=0.5cm,y=0.5cm]
\clip(-4.31,-6.27) rectangle (19.39,8.51);
\draw [domain=-4.31:19.39] plot(\x,{(-0-0*\x)/1});
\draw [domain=-4.31:19.39] plot(\x,{(--2.07--2.46*\x)/1.42});
\draw [domain=-4.31:19.39] plot(\x,{(--2.46-0*\x)/1});
\draw [domain=-4.31:19.39] plot(\x,{(-1.73--0.87*\x)/-0.5});
\draw [domain=-4.31:19.39] plot(\x,{(-0.73-0.87*\x)/-0.5});
\draw [domain=-4.31:19.39] plot(\x,{(--1.73-0.87*\x)/-0.5});
\draw [domain=-4.31:19.39] plot(\x,{(--1.73-0.87*\x)/-0.5});
\draw [domain=-4.31:19.39] plot(\x,{(--0.73--0.87*\x)/-0.5});
\draw [domain=-4.31:19.39] plot(\x,{(-2.46-0*\x)/1});
\draw [domain=-4.31:19.39] plot(\x,{(--4.2-0.87*\x)/-0.5});
\draw (-2,0.9) node[anchor=north west] {$x$};
\draw (14.9,5.94) node[anchor=north west] {$C$};
\draw (3.83,3.4) node[anchor=north west] {$y_0$};
\draw (2.7,-2.9) node[anchor=north west] {$y_1$};
\draw (6.73,-1.33) node[anchor=north west] {$y'_1$};
\draw [domain=-4.31:19.39] plot(\x,{(--6.66-0.87*\x)/-0.5});
\draw (5,-4.18) node[anchor=north west] {$y''_1 $};
\draw [line width=1.2pt,dash pattern=on 3pt off 3pt,domain=13.444416691289469:19.39388933797909] plot(\x,{(-38.14--3.5*\x)/2.01});
\draw [line width=1.2pt,dash pattern=on 3pt off 3pt,domain=13.444416691289469:19.39388933797909] plot(\x,{(--19.39-0*\x)/4.39});
\end{tikzpicture}
\end{center}
\caption{Counting elements in $Y_{s_1}^t$}
\label{fig:counting}
\end{figure}

Fix $t=it_1+jt_2$. Let $t'=(i+1)t_1+jt_2$. Let $Y=Y_{s_1}^{t}$ and $Y'=Y_{s_1}^{t'}$. We consider the following map $\psi$: to a point $z'$ in $V_{t'}(x)$, we associate the unique point $z=\psi(z')$ in the intersection of $V_t(x)$ and the convex hull of $x$ and $z$. The map $\psi$ is surjective: indeed, if $z\in V_t(x)$, then $x$ and $z$ are in some common apartment, and it is possible to find $z'$ in this apartment such that $\psi(z')=z$. We also claim that the restriction of this map to $Y'$ is also injective.
Indeed, if $y_1\in Y$, then any apartment containing $y_1,x$ and $C$ also contains the convex hull of $y_1$ and $C$, and therefore already contains a point $y'_1\in Y'$. On the other hand, any point  $y'\in Y'$ such that $\psi(y')=y_1$ is contained in some apartment containing $y_1,x$ and $C$, and therefore must be equal to $y'_1$. This proves the claim.
Hence we have $|Y'|=|Y|=K_1q^{2j}$.

Now let $t''=it_1+(j+1)t_2$, and denote $Y''=Y_{s_1}^{t''}$. We still have a surjective map $Y''\to Y$, defined in the same way as the map $Y'\to Y$. However this map is no longer injective. To count the cardinality of the fibers, choose $y_1\in Y$. This gives a choice of $y'_1\in Y'$, defined as $y'_1=\psi^{-1}(y_1)$. There are $q$ chambers which have $y_1$ and $y'_1$ as their vertices, and are not in the convex hull of $C$ and $y_1$. The third vertex of each of these chambers is a possible choice for $y''_1\in Y''$. Hence $|Y''|=q|Y|=K_1q^{j+1}$.
\end{proof}

\begin{definition}
Let $x$ be a vertex in $X$. The \textbf{visual measure} based at $x\in X$, denoted $\mu_x$, is the unique measure on $\Ch(\Delta)$ such that for all $y\in V_t(x)$,
the set $\Omega_{x}(y)$ has measure $\frac{1}{N_t}$.
\end{definition}

The construction of this measure is justified more carefully in \cite{Parkinson} ; it follows from the fact that $\Ch(\Delta)$ can be viewed as the projective limit of sets $V_t(x)$ as $t$ grows in $A^+$. We note also that  $g.\mu_x=\mu_{gx}$. 

\begin{definition}

Let $C\in \Ch(\Delta)$. The \textbf{horofunction} based at $C$, denoted $h_C$ is a function $X\times X$ defined as follows. Let $z\in Q(x,C)\cap Q(y,C)$. Assume that $z\in V_s(x)\cap V_{t}(y)$. Then  $h_C(x,y) =s-t$.
\end{definition}

This definition does not depend on the particular choice of $z$ (see \cite[Theorem 3.4]{Parkinson}). 
The horofunction $h_C(\cdot,\cdot)$  satisfies the following cocycle relation \cite[Proposition 3.5]{Parkinson}:

\begin{lemma}\label{cocycle}
For all $x,y,z\in X$ and $C\in \Ch(\Delta)$, we have $h_C(x,y)=h_C(x,z)+h_C(z,y)$.
\end{lemma}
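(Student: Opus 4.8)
The plan is to prove the cocycle relation $h_C(x,y) = h_C(x,z) + h_C(z,y)$ by finding a single point $w$ in the intersection of the three sectors $Q(x,C)$, $Q(y,C)$ and $Q(z,C)$ that is ``deep enough'' to compute all three horofunction values simultaneously. First I would recall that the sectors $Q(x,C)$, $Q(y,C)$ and $Q(z,C)$ all point to the same chamber $C$ at infinity, hence they are pairwise asymptotic: the intersection of any two of them contains a subsector, and in fact the intersection of all three of them contains a subsector $Q$. Pick any vertex $w \in Q$. Then $w$ lies in $Q(x,C) \cap Q(y,C) \cap Q(z,C)$, so $w$ is simultaneously a legitimate choice of base point in the definition of $h_C(x,y)$, $h_C(x,z)$ and $h_C(z,y)$.

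Next I would compute. Since $w \in Q(x,C)$, there is $\lambda_x \in A^+$ with $w \in V_{\lambda_x}(x)$; similarly $w \in V_{\lambda_y}(y)$ and $w \in V_{\lambda_z}(z)$ for suitable $\lambda_y, \lambda_z \in A^+$. By the definition of the horofunction (and its independence of the chosen point $z$, quoted from \cite[Theorem 3.4]{Parkinson}), we then have
\[
h_C(x,y) = \lambda_x - \lambda_y, \qquad h_C(x,z) = \lambda_x - \lambda_z, \qquad h_C(z,y) = \lambda_z - \lambda_y.
\]
Adding the last two gives $h_C(x,z) + h_C(z,y) = (\lambda_x - \lambda_z) + (\lambda_z - \lambda_y) = \lambda_x - \lambda_y = h_C(x,y)$, which is exactly the claim.

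The only genuine point requiring care — the main (minor) obstacle — is the first step: checking that $Q(x,C) \cap Q(y,C) \cap Q(z,C)$ actually contains a subsector (equivalently, a common vertex $w$ that is arbitrarily far out in the $C$-direction from each of $x,y,z$). This follows from the general theory of sectors in affine buildings: two sectors pointing to the same chamber at infinity are equivalent, so their intersection contains a subsector, and intersecting three subsectors pairwise equivalent to a common one again yields a subsector. One should also make sure $w$ can be taken to be a vertex, which is immediate since subsectors contain vertices of $X$. Once this geometric fact is in hand, the algebraic verification above is routine, and the definition of $h_C$ together with its well-definedness does the rest. A figure showing the three sectors fanning out toward $C$ and their common tail $Q$ would make the argument transparent, but is not strictly necessary.
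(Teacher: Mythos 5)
Your proof is correct. Note that the paper itself gives no argument for this lemma: it simply cites Proposition~3.5 of Parkinson's paper, so any self-contained verification is necessarily "different" from the paper's route. Your one-point argument is the natural direct one: since $Q(x,C)$, $Q(y,C)$ and $Q(z,C)$ all have ideal boundary $C$, they are pairwise equivalent in the sense of the paper, and a two-step application of "equivalent sectors share a subsector" produces a common subsector, hence a common vertex $w$ lying in $V_{\lambda_x}(x)\cap V_{\lambda_y}(y)\cap V_{\lambda_z}(z)$ with $\lambda_x,\lambda_y,\lambda_z\in A^+$; evaluating the three horofunctions at this single $w$ (legitimate by the independence of the auxiliary point, which the paper quotes from Parkinson's Theorem~3.4) gives $h_C(x,y)=\lambda_x-\lambda_y=(\lambda_x-\lambda_z)+(\lambda_z-\lambda_y)=h_C(x,z)+h_C(z,y)$. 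The only cosmetic caveat is that sectors, as defined in the paper, are open (complements of walls), so "a vertex $w$ in the triple intersection" should be read as a vertex deep in its interior or closure; this imprecision is already present in the paper's definition of $h_C$ and does not affect the argument. What your approach buys is a short proof using only facts already stated in the paper, at the cost of invoking the (nontrivial, but quoted) well-definedness of $h_C$; the paper's citation instead defers both the well-definedness and the cocycle identity to the same external source.
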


\begin{proposition}\label{prop:RN}

For $x,y$ vertices in $X$, the measure $\mu_x$ and $\mu_y$ are absolutely continuous relatively to each other. Furthermore, for all $C\in\Ch(\Delta)$, we have

$$\frac{\d\mu_x}{\d\mu_y}(C)=q^{2\ell( h_C(x,y))}$$
\end{proposition}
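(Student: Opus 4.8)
The plan is to compute the Radon–Nikodym derivative $\frac{\d\mu_x}{\d\mu_y}$ locally on the basic open sets $\Omega_x(z)$ and $\Omega_y(z)$, and then show the resulting density is exactly $q^{2\ell(h_C(x,y))}$. The key observation is that both measures are defined via the projective-limit structure coming from the cone neighborhoods $\Omega_x(z)$, whose $\mu_x$-mass is $1/N_\lambda$ when $z \in V_\lambda(x)$, with $N_\lambda = Kq^{2\ell(\lambda)}$ by Lemma~\ref{lem:NLambda}. So the density, where it is locally constant, will be a ratio of two such quantities, and the powers of $q$ will combine into $2\ell(\lambda) - 2\ell(\mu)$; the point is to recognize $\lambda - \mu$ as the value of the horofunction.

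First I would fix $C \in \Ch(\Delta)$ and choose a vertex $z$ deep inside the intersection $Q(x,C) \cap Q(y,C)$ — more precisely, far enough from both $x$ and $y$ that a large neighborhood of $C$ in $\Ch(\Delta)$ is contained in both $\Omega_x(z)$ and $\Omega_y(z)$; this is possible because $Q(x,C)$ and $Q(y,C)$ are eventually parallel sectors pointing at $C$, so their intersection contains arbitrarily large pieces of a common sector. Say $z \in V_\lambda(x) \cap V_\mu(y)$, so that by definition $h_C(x,y) = \lambda - \mu$. Then $\mu_x(\Omega_x(z)) = 1/N_\lambda$ and $\mu_y(\Omega_y(z)) = 1/N_\mu$. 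Next I would refine: for any vertex $z'$ with $\Omega_x(z') \subset \Omega_x(z)$ and $z'$ lying in the sector $Q(z,C)$, one has $z' \in V_{\lambda'}(x) \cap V_{\mu'}(y)$ with $\lambda' = \lambda + \nu$ and $\mu' = \mu + \nu$ for the same $\nu \in A^+$ (the relative position $z' - z$ is the same whether measured from $x$ or from $y$, because $z, z'$ both lie in a common apartment with each of them, inside the shared sector). Hence $\mu_x(\Omega_x(z'))/\mu_y(\Omega_y(z')) = N_{\mu'}/N_{\lambda'} = q^{2(\ell(\mu') - \ell(\lambda'))} = q^{2(\ell(\mu) - \ell(\lambda))}$, which is independent of the refinement.

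This shows the ratio of masses is the constant $q^{2\ell(\lambda - \mu)} = q^{2\ell(h_C(x,y))}$ on a neighborhood basis of $C$ consisting of sets $\Omega_x(z')$; since these sets (as $C$ and $z'$ vary) generate the Borel $\sigma$-algebra of $\Ch(\Delta)$ and the two measures agree up to this locally constant factor, $\mu_x \ll \mu_y$ and $\mu_y \ll \mu_x$, and the density is as claimed. To make the last step clean, I would invoke the projective-limit description of $\Ch(\Delta)$ (as in Parkinson \cite{Parkinson}): a function that is constant equal to $q^{2\ell(h_C(x,y))}$ on each cylinder $\Omega_x(z')$ around $C$ is a well-defined Borel function (the value depends only on $C$ through $h_C(x,y)$, which is continuous — indeed locally constant — in $C$ by the cocycle Lemma~\ref{cocycle} and the construction of $h_C$), and it is immediate from the mass computation that it is the Radon–Nikodym derivative.

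The main obstacle I expect is the bookkeeping in the refinement step: one has to be careful that choosing $z'$ inside $Q(z,C)$ genuinely forces $\lambda' - \lambda = \mu' - \mu$ in $A^+$, i.e. that the "shape" of the approach to $C$ seen from $x$ and from $y$ stabilizes once we are past $z$. This is where the hypothesis that $z$ lies in $Q(x,C) \cap Q(y,C)$ (so both sectors have already merged into a common sub-sector before reaching $z$) does the work, and it is essentially the same geometric input that makes $h_C$ well-defined in \cite[Theorem 3.4]{Parkinson}. Everything else — the exponent arithmetic, absolute continuity, and identification of the density — is then formal.
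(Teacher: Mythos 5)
You have taken the right route: the paper itself does not prove Proposition~\ref{prop:RN} but simply cites \cite[Theorem 3.17]{Parkinson}, and your cylinder-set computation (masses of $\Omega_x(z')$ for $z'$ deep in $Q(x,C)\cap Q(y,C)$, local constancy of $C'\mapsto h_{C'}(x,y)$, then a projective-limit/uniqueness argument) is essentially a reconstruction of Parkinson's proof. But as written there are two genuine gaps. First, a Radon--Nikodym derivative is computed by comparing the two measures on the \emph{same} set, whereas you compare $\mu_x(\Omega_x(z'))$ with $\mu_y(\Omega_y(z'))$, and your depth condition (``a large neighborhood of $C$ is contained in both $\Omega_x(z)$ and $\Omega_y(z)$'') does not give $\Omega_x(z')=\Omega_y(z')$. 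The missing ingredient is exactly \cite[Lemma 3.13]{Parkinson}, which the paper itself invokes later in the analogous computation for $\mu_{x,\pm}$: once $z'$ is deep in $Q(x,C)\cap Q(y,C)$, deep relative to $d(x,y)$, one has $\Omega_x(z')=\Omega_y(z')$ together with $z'\in V_{\lambda'}(x)\cap V_{\mu'}(y)$ and $\lambda'-\mu'=h_C(x,y)$. With that equality the ratio on the common clopen set is $N_{\mu'}/N_{\lambda'}$, and your concluding step is fine because, for each large regular $\lambda$, the sets $\Omega_x(z')$ with $z'\in V_\lambda(x)$ form a finite clopen partition of $\Ch(\Delta)$ and these partitions refine and generate the Borel $\sigma$-algebra.

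Second, the algebra breaks at the decisive point. You correctly obtain $\mu_x(\Omega_x(z'))/\mu_y(\Omega_y(z'))=N_{\mu'}/N_{\lambda'}=q^{2(\ell(\mu)-\ell(\lambda))}$, but then assert this equals $q^{2\ell(\lambda-\mu)}=q^{2\ell(h_C(x,y))}$; since $\ell$ is linear, $q^{2(\ell(\mu)-\ell(\lambda))}=q^{-2\ell(h_C(x,y))}$, the reciprocal of what you claim. So with the paper's normalizations ($\mu_x(\Omega_x(z))=1/N_\lambda$, $N_\lambda=Kq^{2\ell(\lambda)}$ from Lemma~\ref{lem:NLambda}) and the paper's definition $h_C(x,y)=\lambda-\mu$, the computation you set up yields the inverse of the displayed exponent, and this cannot be absorbed by the steps you wrote. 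You need to resolve the sign before the proof is complete: either identify where a sign should flip (e.g.\ whether the formula is for $\d\mu_x/\d\mu_y$ or $\d\mu_y/\d\mu_x$, or how $h_C$ is oriented), or align carefully with the convention used in Parkinson's Theorem 3.17, which is what the paper relies on. Note that the same tension reappears in the paper's later lemma computing $\d\mu_{x,+}/\d\mu_{y,+}$, so pinning down the convention explicitly is not a cosmetic matter but the actual content separating your draft from a correct proof.
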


\begin{proof}
See \cite[Theorem 3.17]{Parkinson}.
\end{proof}

\begin{definition} \label{defmu}
We denote by $\mu$ the measure class of $\mu_x$.
\end{definition}

\begin{proposition}\label{prop:opposite}
The set $\Delta^{2,\op}$ is of conull measure (relatively to the measure class $\mu\otimes\mu$).
\end{proposition}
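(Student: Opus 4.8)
The plan is to show that for a fixed chamber $C_0 \in \Ch(\Delta)$, the set of chambers \emph{not} opposite $C_0$ is $\mu_x$-null for any base vertex $x$; the claim for $\Delta^{2,\op}$ then follows by Fubini. Indeed, if $N(C_0) \subset \Ch(\Delta)$ denotes the set of chambers at distance $\leq 2$ from $C_0$, and we can show $\mu_x(N(C_0)) = 0$, then
$$(\mu_x \otimes \mu_x)\bigl((\Ch(\Delta)\times\Ch(\Delta)) \setminus \Delta^{2,\op}\bigr) = \int_{\Ch(\Delta)} \mu_x(N(C)) \, \d\mu_x(C) = 0,$$
using that $\mu_x$ is a probability measure and that the complement of $\Delta^{2,\op}$ has $C$-slices exactly $N(C)$. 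Since $\mu \otimes \mu$ is the measure class of $\mu_x \otimes \mu_x$, this gives the result.

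So the heart of the matter is the estimate $\mu_x(N(C_0)) = 0$. First I would fix a vertex $x$ in an apartment through (a chamber opposite) $C_0$ and use the description of $\mu_x$ via the projective limit: $\Ch(\Delta)$ is the inverse limit of the finite sets $V_\lambda(x)$ as $\lambda$ runs over $A^{++}$ (with $\alpha_1(\lambda), \alpha_2(\lambda) \to +\infty$), where $\mu_x(\Omega_x(y)) = 1/N_\lambda$ for $y \in V_\lambda(x)$ and $N_\lambda = Kq^{2\ell(\lambda)}$ by Lemma~\ref{lem:NLambda}. The key geometric observation is that a chamber $C$ fails to be opposite $C_0$ precisely when the sector $Q(x,C)$ is ``aligned'' with $Q(x,C_0)$ in a degenerate way; more precisely, a chamber $C$ near $C_0$ (in the graph metric on $\Delta$) has its sector $Q(x,C)$ sharing a large wall-piece with $Q(x, C_0)$, which forces, for each large $\lambda$, the vertex of $V_\lambda(x)$ lying in $Q(x,C)$ to land in one of the ``boundary strata'' $Y_w^\lambda$ with $w \neq w_0$ from Lemma~\ref{Ywlambda}. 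Concretely: $\Omega_x(y) \cap N(C_0)$ is covered by the sets $\Omega_x(y')$ with $y' \in Y_w^\lambda$ for $w \neq w_0$ (relative to the chamber $C_0$ and a suitable base vertex), so
$$\mu_x(N(C_0)) \leq \sum_{w \neq w_0} \frac{|Y_w^\lambda|}{N_\lambda}.$$
By the last sentence of Lemma~\ref{Ywlambda}, each ratio $|Y_w^\lambda|/N_\lambda \to 0$ as $\alpha_1(\lambda), \alpha_2(\lambda) \to +\infty$; since there are finitely many $w \neq w_0$, the right-hand side tends to $0$, forcing $\mu_x(N(C_0)) = 0$.

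I expect the main obstacle to be the combinatorial-geometric bookkeeping in the step ``$N(C_0)$ is covered by the $\Omega_x(y')$ over $y' \in \bigcup_{w\neq w_0} Y_w^\lambda$''. One must choose the base vertex $x$ and the reference chamber used to define $Y_w^\lambda$ compatibly with $C_0$ (take $x$ on a geodesic joining $C_0$ to an opposite chamber, and use the convex hull of $x$ and $C_0$), and then verify that a chamber $C$ with $d_\Delta(C, C_0) \leq 2$ has $Q(x,C)$ meeting $Q(x,C_0)$ along a wall or half-flat, so that the $\lambda$-vertex in $Q(x,C)$ — which a priori ranges over all of $V_\lambda(x)$ — is in fact constrained to a $W$-translate $Y_w^\lambda$ with $w \neq w_0$. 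This is where one uses that opposition in $\Delta$ corresponds to the two sectors spanning a full flat (the $w_0$-stratum $Y_{w_0}^\lambda = V_\lambda(x) \setminus \bigcup_{w\neq w_0} Y_w^\lambda$ being exactly the generic, ``opposite'' case). Once this covering is in place, the limiting argument is immediate from Lemma~\ref{Ywlambda}.
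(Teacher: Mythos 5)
Your overall plan (show that for each fixed chamber $C_0$ the set of chambers not opposite $C_0$ is $\mu_x$-null, then conclude by Fubini) would suffice, and the null-ness statement is in fact true; but the step you yourself identify as the heart of the matter — covering the non-opposite chambers by $\bigcup_{w\neq w_0}\Omega_x(Y_w^\lambda)$ — is not just delicate bookkeeping, it is false as stated. The sets $Y_w^\lambda$ of Lemma~\ref{Ywlambda} consist only of vertices lying in apartments that contain $x$ together with the whole sector $Q(x,C_0)$, so they can only detect chambers $C'$ that bound a common apartment through $x$ with $C_0$, i.e.\ pairs in the slice $\Delta^2_x$ of the paper's proof. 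A chamber which is merely non-opposite to $C_0$ need not lie in any such apartment. Concretely, let $v$ be a vertex of $C_0$, let $\xi_0\in\partial T_v$ be the end of the panel tree corresponding to $C_0$, and let $C'$ correspond to an end $\xi'$ whose geodesic to $\xi_0$ branches off the ray from $\pi_v(x)$ (the projection of $x$ to $T_v$) towards $\xi_0$ at some distance $n\geq 1$ beyond $\pi_v(x)$. Every apartment $F$ containing $x$ with $C_0\subset\partial F$ projects to a geodesic of $T_v$ through $\pi_v(x)$ having $\xi_0$ as one end; on the other hand, the unique vertex of $Q(x,C')$ lying in $V_\mu(x)$, for $\mu$ regular with both coordinates large, projects to a point of $[\pi_v(x),\xi')$ at distance $>n$ from $\pi_v(x)$, hence lies in no such $F$ and in particular in no $Y_w^\lambda$. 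Since a sector based at $x$ contains exactly one vertex of each $V_\mu(x)$, this $C'$ — which is adjacent, hence non-opposite, to $C_0$ — belongs to none of the sets $\Omega_x(Y_w^\lambda)$ once $\lambda\in A^{++}$ is large, so the inequality $\mu_x(N(C_0))\leq\sum_{w\neq w_0}|Y_w^\lambda|/N_\lambda$ has no justification.

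This is precisely why the paper's proof begins with the decomposition $\Ch(\Delta)\times\Ch(\Delta)=\bigcup_{x\in X^{(0)}}\Delta^2_x$: any two chambers at infinity bound a common apartment, and one sums over the countably many vertices $x$ of that apartment (using that all $\mu_y$ are equivalent). For a fixed $x$ one then only needs to treat pairs $(C,C')$ lying in a common apartment through $x$; such a $C'$ has a well-defined relative position $w\in W$ with respect to $C$, the slice is covered by $\bigcup_{w\in W}\Omega_x(Y_w^\lambda)$ for every $\lambda$, and Lemma~\ref{Ywlambda} finishes the argument. If you want to keep your stronger per-chamber statement $\mu_x(N(C_0))=0$ instead, you need a different count: an estimate of the number of $y\in V_\lambda(x)$ such that $\Omega_x(y)$ meets the set of chambers incident to a fixed vertex of $\Delta$ (equivalently, that the point-row of $L_0$ and the line-pencil of $p_0$ are null for the projected measures $\mu_{x,+}$, $\mu_{x,-}$), which is in the spirit of Lemmas~\ref{lem:mu+(Omega)} and~\ref{Zeps(y)} from the later section, not of Lemma~\ref{Ywlambda}.
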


\begin{proof}
For every vertex $x\in X$, let $\Delta^2_x$ (resp. $\Delta^{2,\op}_x$) be the subset of $\Ch(\Delta)\times\Ch(\Delta)$ composed of pairs of chambers which are in the boundary of an apartment containing $x$ (resp. and are opposite).

Then $\Ch(\Delta)\times\Ch(\Delta)=\bigcup_{x\in X} \Delta^2_x$.
By countability it is sufficient to prove that for every $x$ we have $\mu_o^2(\Delta^{2,\op}_x)=\mu_o^2(\Delta^2_x)$, for a fixed vertex $o\in X$. Since $\mu_x$ and $\mu_o$ are in the same measure class, it is sufficient to prove that $\mu_x^2(\Delta^{2,\op}_x)=\mu_x^2(\Delta^2_x)$ for every vertex $x$.

By Fubini, it is enough to prove that for almost every chamber at infinity $C$, the set $\mathcal B$ of chambers $C'$ such that $(C,C')\in \Delta^{2,\op}_x$ is of full measure (in the set of chambers in a same apartment as $C$ and $x$).
Let $\mu_{x,C}$ be the restriction of the measure $\mu_x$ to the set $\mathcal B'$ of those chambers $C' \in \Ch(\Delta)$ with $(C,C')\in \Delta^2_x$. We have to prove that $\mu_{x,C}(\mathcal B)=\mu_{x,C}(\mathcal B')$.

Recall the sets $Y_w^t$ discussed in  Lemma~\ref{Ywlambda} and set $\Omega_x(Y^t_w)=\bigcup_{y\in Y_w^t}\Omega_x(y)$.
It is clear that $\mathcal B\subseteq \bigcap_{t\in A^+} \Omega_x(Y^t_{w_0})$.
We claim that this inclusion is actually an equality.
Indeed, if $C'\in \bigcap_{t\in A^+} \Omega_x(Y^t_{w_0})$, then every finite subset of $Q(x,C')\cup Q(x,C)$ is contained in some apartment, hence by Theorem \ref{aptfull} also $Q(x,C')\cup Q(x,C)$ is contained in an apartment, and then it is clear that $C$ and $C'$ are opposite in it.

Note that the intersection $\bigcap_{t\in A^+} \Omega_x(Y^t_{w_0})$ is decreasing, in the sense that for every $t,t'\in A^+$ such that $t'-t\in A^+$, we have $\Omega_x(Y^{t'}_{w_0})\subseteq \Omega_x(Y^t_{w_0})$. Hence it is enough to prove that $\mu_{x,C}(\Omega_x(Y^t_{w_0}))$ tends to $\mu_{x,C}(\mathcal B')$ when $t\to \infty$.
Equivalently, we need to show that that for every $w\neq w_0$, the limit of  $\mu_{x,C}(\Omega_x(Y^t_{w}))$ is $0$ as $t$ tends to infinity (simultaneously in both directions).
This follows from Lemma \ref{Ywlambda}, as $\mu_x(\Omega_x(Y^t_w))=|Y_w^t|/|V_t(y)|$.
\end{proof}

\subsection{The Radon measure on $\D^{2,\op}$} \label{subsec:d2}

In this section we construct a natural Radon measure on $\D^{2,\op}$, which 
is invariant by the actions of $\Gamma$. The existence of such a measure is well-known to experts (see for example \cite{pansu} for a possible definition); its construction is nevertheless usually different than what we propose here.

\begin{lemma}\label{lem:beta}
For $(C,C')\in \Delta^{2,\op}$ and $x$ a vertex in $X$,  the quantity
\[ \beta_x(C,C'):=h_C(x,z)+h_{C'}(x,z), \]
with $z$ a point in the apartment containing $C$ and $C'$, does not depend on $z$.

Furthermore,  for every vertex $y\in X$, we have
$$\beta_x(C,C')-\beta_y(C,C')=h_C(x,y)-h_{C'}(x,y).$$
\end{lemma}

\begin{proof}
Let us prove first that $\beta_x(C,C')$ is independent of $z$. Let $z$ and $z'$ be two points in the apartment $F$ containing $C$ and $C'$. Using the assumption that $C$ and $C'$ are opposite, we have that $h_C(z',z)=-h_{C'}(z',z)$.

Hence by Lemma \ref{cocycle} we have
\begin{align*}
\beta_x(C,C')&=h_C(x,z)+h_{C'}(x,z)\\
&= h_C(x,z')+h_C(z',z)+h_{C'}(x,z')+h_{C'}(z',z)\\
&=h_C(x,z')+h_{C'}(x,z').
\end{align*}

Now let us turn to the second equality. Fix $x,y\in X$. Then, using Lemma \ref{cocycle} again, we have

\begin{align*}
\beta_x(C,C')-\beta_y(C,C')&=h_C(x,z)+h_{C'}(x,z)-h_C(y,z)-h_{C'}(y,z)\\
&=h_C(x,z)+h_{C'}(x,z)+h_C(z,y)+h_{C'}(z,y)\\
&=h_C(x,y)+h_{C'}(z,y).\qedhere
\end{align*}
\end{proof}

For a fixed vertex $x$ in $X$ we define the measure $m_x$ on $\Ch(\Delta)\times \Ch(\Delta)$
by the formula
$$\d m_x(C,C')=q^{-2\ell(\beta_x(C,C'))}\d \mu_{x}(C) \d\mu_x(C').$$
By Lemma~\ref{lem:beta} and Proposition \ref{prop:RN}, using also the fact that $\ell(t+t')=\ell(t)+\ell(t')$ for every $t,t'\in A$,
we have
$$\frac{\d m_x}{\d m_y}(C,C')=q^{-2\ell( \beta_x(C,C')) +2\ell( \beta_y(C,C')) }\frac{\d \mu_x}{\d \mu_y}(C)\frac{\d \mu_x}{\d \mu_y}(C')=1.$$
Thus the measure $m_x$ is independent on the choice of $x$.
We get that this measure is $\Gamma$-invariant, as for all $g\in \Gamma$ we have $g\mu_x=\mu_{gx}$ and $\beta_{gx}(gC,gC')=\beta_x(C,C')$,
hence $gm_x=m_{gx}=m_x$.

Using Proposition~\ref{prop:opposite} we view $m_x$ as a measure on $\Delta^{2,\op}$
which, in view of its independence on $x$, we denote $m$.
Thus $m$ is a $\Gamma$-invariant measure on $\Delta^{2,\op}$.

Recall that for a vertex $x\in X$ we defined $\scrF_x$ to be the space simplicial embeddings $f \colon  \Sigma\to X$ satisfying $f(0)=x$ and let $\scrF'_x$ be the image of $\scrF_x$ under the projection  $\scrF\to\D^{2,\op}$. Note that $\scrF\to \D^{2,\op}$ is injective in restriction to $\scrF_x$, thus induces a homeomorphism $\scrF_x\to\scrF'_x$  by Lemma~\ref{lem:op=F}.

\begin{lemma}\label{lem:finite}
We have $m(\scrF'_x)<\infty$.
\end{lemma}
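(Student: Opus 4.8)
The plan is to show that the measure $m$, restricted to $\scrF'_x$, is finite by decomposing $\scrF'_x$ according to the "combinatorial type" of the flat at the base vertex $x$, and then bounding the total mass of each piece.

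First I would recall that, by Lemma~\ref{lem:op=F}, $\scrF'_x = \theta(\scrF_x)$ is exactly the set of pairs $(C, C') \in \Delta^{2,\op}$ such that $C$ and $C'$ are the two boundary chambers of (the boundary of) an apartment passing through $x$; equivalently, such that $x$ lies on the apartment spanned by $C$ and $C'$. By the discussion in \S\ref{subsec:d}, for a pair $(C,C') \in \scrF'_x$ the point $x$ lies in $V_\lambda(x)$-type position relative to both sector-tips: more precisely, writing $z$ for a point far out in $Q(x,C) \cap (\text{apt})$, the horofunctions $h_C(x,z)$ and $h_{C'}(x,z)$ are controlled, and $\beta_x(C,C')$ is, by Lemma~\ref{lem:beta}, independent of the choice of $z$. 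The key observation is that since $(C,C')\in\scrF'_x$ means $x$ is on the flat spanned by $C,C'$, the value $\beta_x(C,C')$ is then \emph{bounded} (in fact I expect it to be essentially constant, or to take finitely many values, depending on the type of $x$ and on which chamber of the apartment at $x$ the flat passes through): indeed if $x$ lies on the apartment, then $Q(x,C)$ and $Q(x,C')$ together with $x$ fill out that apartment, so the combinatorial distances measured by $h_C(x,\cdot)$ and $h_{C'}(x,\cdot)$ are complementary, forcing $\beta_x(C,C') \in A$ to lie in a fixed bounded (hence finite) subset. Consequently $q^{-2\ell(\beta_x(C,C'))}$ is bounded above on $\scrF'_x$ by a constant $K' = K'(x)$.

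With that bound in hand, I would estimate
\[
m(\scrF'_x) = \int_{\scrF'_x} q^{-2\ell(\beta_x(C,C'))}\, \d\mu_x(C)\, \d\mu_x(C') \;\leq\; K'\cdot (\mu_x\otimes\mu_x)\bigl(\scrF'_x\bigr) \;\leq\; K'\cdot \mu_x(\Ch(\Delta))^2 = K',
\]
using that $\mu_x$ is a probability measure on $\Ch(\Delta)$ (Definition and Proposition~\ref{prop:RN}) and that $\scrF'_x \subseteq \Ch(\Delta)\times\Ch(\Delta)$. This gives $m(\scrF'_x) < \infty$ as desired. Since $m$ is independent of the base vertex used to define it, and the formula $\d m_x(C,C') = q^{-2\ell(\beta_x(C,C'))}\,\d\mu_x(C)\,\d\mu_x(C')$ is the one to use here (with the \emph{same} $x$ as in $\scrF'_x$), no change of base point is needed and the estimate is clean.

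The main obstacle I anticipate is justifying rigorously that $\beta_x(C,C')$ is bounded on $\scrF'_x$ — that is, pinning down the precise combinatorial statement that "if $x$ lies on the apartment spanned by the opposite chambers $C, C'$, then $h_C(x,z) + h_{C'}(x,z)$ takes only finitely many values as $(C,C')$ ranges over $\scrF'_x$." This requires unwinding the definition of the horofunction in \S\ref{subsec:d} and using that, within a single apartment $F$ through $x$, the sectors $Q(x,C)$ and $Q(x,C')$ are opposite sectors based at $x$, so that a point $z \in F$ deep in $Q(x,C)$ satisfies $h_{C'}(x,z) = -h_C(x,z) + (\text{a correction depending only on which Weyl chamber of }F\text{ at }x)$, of which there are only $|W| = 6$ possibilities; combined with the type of $x$ this bounds $\ell(\beta_x)$. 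Alternatively — and perhaps more cleanly — one can avoid the boundedness claim by directly observing that $\scrF'_x$ is a \emph{compact open} subset of $\Delta^{2,\op}$ (it is the image of the compact space $\scrF_x$ under the homeomorphism of Lemma~\ref{lem:op=F}), while $m$ is a Radon measure on the locally compact space $\Delta^{2,\op}$; a Radon measure is finite on compact sets, so $m(\scrF'_x) < \infty$ immediately. I would present this second argument as the main proof, since it is shortest, and mention the density-bound computation only if the Radon property of $m$ has not been established at this point in the text.
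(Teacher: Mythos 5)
Your ``main proof'' (the Radon argument) is circular in the paper's logical order: at this point $m$ is only \emph{defined} by the density $q^{-2\ell(\beta_x(C,C'))}$ against $\mu_x\otimes\mu_x$, and the statement that $m$ is a Radon measure --- i.e.\ finite on compact sets --- is precisely the corollary that the paper \emph{deduces from} Lemma~\ref{lem:finite}, using that the sets $\scrF'_x$ form an open cover of $\Delta^{2,\op}$. So you cannot invoke ``Radon measures are finite on compacts'' to prove this lemma; compactness of $\scrF'_x$ alone gives nothing, since a Borel measure with an unbounded density could well be infinite there. You did hedge by saying you would fall back on the density-bound computation if Radonness were not yet available, and that fallback is indeed the paper's route, so the fix is to promote it to the main (and only) argument.

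Concerning that fallback: the ``main obstacle'' you anticipate --- proving that $\beta_x(C,C')$ is bounded on $\scrF'_x$ --- dissolves into a one-line observation, and you do not need the Weyl-group case analysis you sketch. If $(C,C')\in\scrF'_x$, then by Lemma~\ref{lem:op=F} the pair comes from a marked flat $\phi$ with $\phi(0)=x$, so $x$ itself lies in the apartment containing $C$ and $C'$. Lemma~\ref{lem:beta} says $\beta_x(C,C')=h_C(x,z)+h_{C'}(x,z)$ is independent of the choice of $z$ in that apartment, so take $z=x$: both horofunction terms vanish and $\beta_x(C,C')=0$ exactly. Hence the density is identically $1$ on $\scrF'_x$ and $m(\scrF'_x)=(\mu_x\otimes\mu_x)(\scrF'_x)\le 1<\infty$, since $\mu_x$ is a probability measure. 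This is exactly the paper's proof; your estimate $m(\scrF'_x)\le K'\cdot(\mu_x\otimes\mu_x)(\scrF'_x)$ is the same inequality with $K'=1$, once the boundedness claim is pinned down in this sharper form.
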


\begin{proof}
By definition, $m(\scrF'_x)=\int_{(C,C')\in {\scrF'_x}} q^{2\ell(\beta_x(C,C'))}\d\mu_x(C)\d \mu_x(C')$.

By Lemma \ref{lem:beta}, if $(C,C')\in \scrF'_x$, we have $\beta_x(C,C')=h_C(x,x)-h_{C'}(x,x)=0$. Hence $m(\scrF'_x)=(\mu_x\otimes\mu_x)(\scrF'_x)<\infty$.
\end{proof}

\begin{corollary} \label{cor:minvariant}
The measure $m$ is a $\Gamma$-invariant Radon measure on the locally compact space $\D^{2,\op}$ which is in the measure class of 
$\mu\times \mu$.
\end{corollary}

\begin{proof}
The only thing that does not immediately follow from the discussion above is the fact that $m$ is finite on compact sets.
This follows from Lemma~\ref{lem:finite}, as $\{\scrF'_x\mid x\in X^{(0)}\}$ is an open cover of $\D^{2,\op}$.
\end{proof}


\subsection{The Radon measure on $\D_\pm^\op$} \label{subsec:dpm}

The first part of this section is devoted to the construction of a Radon measure $m_\pm$ on $\D_\pm^\op$, see Definition~\ref{defn:mpm} and Lemma~\ref{lem:mpmindependent} below. The construction is similar to the construction of $m$ given in the previous section. The second part of this section is a preperation for the proof that the measures $m$ and $m_\pm$ are compatible which will be given in the next section, see Theorem~\ref{thm:compactiblemeasure}.

 For $u\in \D_+$ (resp. $v\in \D_-$), we define the horofunction $h_u$ (resp. $h_v$) by $h_u(x,y)=\alpha_1(h_C(x,y))$, for any chamber $C$ adjacent to $u$ (resp. $h_v(x,y)=\alpha_2(h_C(x,y))$ for any chamber $C$ adjacent to $v$). It is easy to check that this quantity does not depend on the choice of $C$. Let $\pr_+:\Ch(\Delta)\to \Delta_+$ be the natural projection. We define $\mu_{x,+}=(\pr_+)_ * \mu_x$.
Similarly, we define $\mu_{x,-}=(\pr_{-})_*(\mu_x)$, where $\pr_-:\Ch(\Delta)\to \Delta_-$. Recall also the notation $\Omega^+_x(y)=\pr_+(\Omega_x(y))$ and $\Omega^-_x(y)=\pr_-(\Omega_x(y))$.

\begin{lemma}\label{lem:mu+(Omega)}

Fix $s\in A^{++}$, and $z\in V_s(x)$. Then we have $\mu_{x,+}(\Omega_x^+(y))=\frac{1}{K_1q^{2\alpha_1(s)}}$ for some constant $K_1$ depending only on $q$.
Similarly, we have $\mu_{x,-}(\Omega^-_x(y))=\frac{1}{K_2q^{2\alpha_2(s)}}$ for some constant $K_2$.
\end{lemma}

\begin{proof}
We treat only the case of $\mu_{x,+}$, the other case being similar. Assume first that $s\neq 0$ satisfies $\alpha_2(s)=0$ (the result is also true in this case, even if $s\not\in A^{++}$). Let $y\in V_s(x)$. The point
 $y$ is on some geodesic ray from $x$ to a vertex of type $+$. Then $\Omega_x^+(y)$ is the set of $u$ such that $y\in [x, u)$. If $C\in \Ch(\Delta)$ is a chamber adjacent to $u$, then this ray belongs to $Q(x,C)$, and in particular $C\in \Omega_x(y)$. Hence $\pr_+^{-1}(\Omega^+_x(y))=\Omega_x(y)$. So $\mu_{x,+}(\Omega_x^+(y))=\mu_x(\Omega_x(y))=\frac{1}{N_s}$. In this case, by \cite[Theorem 5.15]{ParkinsonHecke} we have $N_s=K_1q^{2\alpha_1(s)}$ for some constant $K_1$, hence the result.

Now let us treat the case of  $s\in A^{++}$. Let $z\in V_s(x)$ and let $s_1=\alpha_1(s) t_1$. Let $y$ be the point in $V_{s_1}(x)$ which is in an apartment containing $x$ and $z$. Note that since $s\in A^{++}$, we have $s_1\neq 0$, hence $y\neq x$.

Let $u\in\Omega^+_x(y)$. In other words, $u$ is the endpoint of some geodesic starting by the segment $[x, y]$. We claim that there exists an apartment containing $u,x$ and $z$. Indeed, let $y_0=y,y_1,\dots,y_n,\dots$ be the consecutive vertices of the ray $[yu)$, and let $F_n$ be the combinatorial convex hull of $y_n$, $x$ and $z$. We prove by induction that, for every $n$, $F_n$ is in some sector based at $x$, and furthermore, seeing $\alpha_1$ as a linear form on this sector in the obvious way, we have $\alpha_1( y')\leq \alpha_1(y_n)$.

The induction basis being clear, let us show the induction step. Assume that $F_n$ is contained in some apartment $A_n$. Furthermore, the induction hypothesis implies that $F_n$ is contained in the half-space $H$ delimited by $y_n+\Ker(\alpha_1)$. There exists a simplex adjacent to $H$ which contains $y_{n+1}$. By Theorem \ref{aptfull}, the union of $H$ and this chamber is contained in an apartment. It follows that $F_{n+1}$ is also contained in an apartment. Now it is easy to see that it is actually contained in a sector and that $\alpha_1(y')\leq \alpha_1(y_{n+1})$ for every $y'\in F_{n+1}$.

This proves the claim. It follows that there exists some chamber $C\in \Omega_x(z)$ such that $u\in\pi_+(C)$, so that $\Omega_x^+(y)\subset \Omega_x^+(z)$. Since it is clear that $\Omega_x(z)\subset\Omega_x(y)$, we have that $\Omega_x^+(y)=\Omega_x^+(z)$, and the result follows.
\end{proof}

\begin{lemma}
Let $x,y\in X^{(0)}$.
 For all $u\in\D_+$ and $v\in \Delta_-$ we have
$$\frac{\d\mu_{x,+}}{\d\mu_{y,+}}(u)=q^{2h_u(x,y)}
\hspace{1cm} \text{and} \hspace{1cm}
\frac{\d\mu_{x,-}}{\d\mu_{y,-}}(v)=q^{2h_v(x,y)}.$$
\end{lemma}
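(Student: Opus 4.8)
The statement is a Radon--Nikodym computation for the pushforward measures $\mu_{x,+}$ and $\mu_{x,-}$, and it should follow by combining the corresponding statement for $\mu_x$ on $\Ch(\Delta)$ (Proposition~\ref{prop:RN}) with the local computation of $\mu_{x,+}$ on basic open sets (Lemma~\ref{lem:mu+(Omega)}). Concretely, the plan is to verify the identity on a generating family of sets, namely the basic open sets $\Omega_x^+(y)$, and then invoke uniqueness of measures agreeing on a $\pi$-system generating the $\sigma$-algebra.

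First I would recall that, by definition, $h_u(x,y) = \alpha_1(h_C(x,y))$ for any chamber $C$ adjacent to $u$, so that $h_u$ is a genuine function on $\Delta_+$; a symmetric remark applies to $h_v$ with $\alpha_2$. Since the two cases are symmetric (exchanging $+$ and $-$, $\alpha_1$ and $\alpha_2$), I would treat only the $+$ case. The key is to evaluate both sides on a set of the form $\Omega_x^+(y)$ with $y \in V_\lambda(x)$, $\lambda \in A^{++}$. By Lemma~\ref{lem:mu+(Omega)} one has $\mu_{x,+}(\Omega_x^+(y)) = 1/(K_1 q^{2\alpha_1(\lambda)})$, and likewise, running the same computation from the base vertex $y$, $\mu_{y,+}(\Omega_y^+(y')) = 1/(K_1 q^{2\alpha_1(\lambda')})$ for a suitable $y' \in V_{\lambda'}(y)$ representing the same subset of $\Delta_+$. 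The ratio of the two measures on such a set is then $q^{2(\alpha_1(\lambda) - \alpha_1(\lambda'))}$, and the geometric content is that $\alpha_1(\lambda) - \alpha_1(\lambda')$ is exactly the horofunction value $h_u(x,y)$ for $u$ in the relevant piece; this is essentially the definition of $h_C$ together with the cocycle relation of Lemma~\ref{cocycle} (applied through $\alpha_1$), so it reduces to bookkeeping in a single apartment containing $x$, $y$ and a point of $Q(x,C) \cap Q(y,C)$.

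Having checked that $\frac{\d\mu_{x,+}}{\d\mu_{y,+}}$ and $u \mapsto q^{2h_u(x,y)}$ agree on all basic open sets $\Omega_x^+(y)$, I would conclude as follows: the sets $\Omega_x^+(y)$ form a basis of the topology of $\Delta_+$ closed under finite intersections (intersections of two such are again of this form or empty, since $\Delta_+$ is the boundary of a tree-like structure and these sets are nested or disjoint), hence a $\pi$-system generating the Borel $\sigma$-algebra; the function $q^{2h_u(x,y)}$ is continuous and the measure $q^{2h_u(x,y)}\,\d\mu_{y,+}(u)$ is finite, so the standard uniqueness lemma forces it to coincide with $\mu_{x,+}$ everywhere. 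An equivalent and perhaps cleaner route, which I would actually prefer to write, is to push Proposition~\ref{prop:RN} forward directly: since $\mu_{x,+} = (\pr_+)_*\mu_x$, for any Borel $B \subseteq \Delta_+$ one has $\mu_{x,+}(B) = \mu_x(\pr_+^{-1}(B)) = \int_{\pr_+^{-1}(B)} q^{2\ell(h_C(x,y))}\,\d\mu_y(C)$, and the point is that $\ell(h_C(x,y)) = \alpha_1(h_C(x,y)) + \alpha_2(h_C(x,y))$ while on the fibres of $\pr_+$ one needs to integrate out the $\alpha_2$-part; after that integration, which uses Lemma~\ref{lem:mu+(Omega)} (or rather its proof) to see that the $\alpha_2$-contribution averages to a constant over each fibre, one is left with $\int_B q^{2\alpha_1(h_C(x,y))}\,\d\mu_{y,+} = \int_B q^{2h_u(x,y)}\,\d\mu_{y,+}(u)$.

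The main obstacle is the second step: carefully identifying, on each basic set $\Omega_x^+(y)$, the exponent $\alpha_1(\lambda) - \alpha_1(\lambda')$ with $h_u(x,y)$, i.e.\ making precise that the "$\alpha_1$-height difference" seen from $x$ versus from $y$ is constant on the fibre and equals the horofunction. This is not deep, but it requires choosing apartments compatibly (an apartment containing $x$, $y$ and a point $z \in Q(x,C) \cap Q(y,C)$, which exists by Theorem~\ref{aptfull} applied to the convex hull) and tracking how $\lambda$ and $\lambda'$ relate through the translation carrying $x$ to $y$ inside that apartment; the cocycle relation of Lemma~\ref{cocycle}, composed with the linear form $\alpha_1$, then does the rest. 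Everything else is a routine application of the earlier lemmas and the uniqueness of measures determined on a generating $\pi$-system.
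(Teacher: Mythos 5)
Your first route is essentially the paper's own proof: the paper likewise evaluates both measures on the basic sets $\Omega_x^+(z)$ with $z\in V_\lambda(x)$ and reads the Radon--Nikodym derivative off the ratio of the two values as $\lambda$ becomes large. The only real difference is how the ``main obstacle'' you single out is disposed of: instead of redoing the apartment bookkeeping and the cocycle relation of Lemma~\ref{cocycle} by hand, the paper simply quotes \cite[Lemma 3.13]{Parkinson}, which says that for $\lambda$ large enough (relative to $x,y$) one has $z\in V_{\lambda-h_C(x,y)}(y)$ and $\Omega_x(z)=\Omega_y(z)$ for $C\in\Omega_x(z)$; combined with Lemma~\ref{lem:mu+(Omega)} applied from $x$ and from $y$, this gives the ratio $q^{2\alpha_1(h_C(x,y))}=q^{2h_u(x,y)}$ on each such set, and since these sets shrink to points as $\lambda$ grows the lemma follows. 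Your by-hand version of this step is workable, but note that you also need $h_C(x,y)$ (hence $h_u(x,y)$) to be constant over $C\in\Omega_x(z)$ for large $\lambda$ --- part of what the citation delivers --- and that your parenthetical ``nested or disjoint'' claim holds only for the sets $\Omega_x^+(z)$ arising from points on type-$+$ singular rays through $x$ (which, by the proof of Lemma~\ref{lem:mu+(Omega)}, is all you use), not for arbitrary $\Omega_x^+(y)$.

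By contrast, the second route you say you would prefer to write down is the one with a genuine gap: the claim that ``the $\alpha_2$-contribution averages to a constant over each fibre'' of $\pr_+$ is not a consequence of Lemma~\ref{lem:mu+(Omega)} or its proof. Making it precise requires a disintegration of $\mu_y$ along $\pr_+$ together with the identification of $q^{2\alpha_2(h_C(x,y))}$ on each fibre with the Radon--Nikodym derivative of the conditional (panel-tree) measures, which is essentially the content of Proposition~\ref{prop:desintmu} --- a harder statement, proved later and by different means, so it cannot be invoked here. Stick with your first plan, which is the paper's.
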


\begin{proof}
Let $t\in A^+$,  and $z\in V_t(x)$.
 Let $C\in \Omega_x(z)$. By  \cite[Lemma 3.13]{Parkinson} we have, as soon as $t$ is large enough, $z\in V_{t-h_C(x,y)}(y)$ and $\Omega_x(z)=\Omega_y(z)$. Hence we get $\mu_{y,+}(\Omega^+_x(z))=\frac{1}{Kq^{2(\alpha_1(t)-h_u(x,y))}}$ while $\mu_{x,+}(\pi_+(\Omega_x(z)))=\frac{1}{Kq^{2\alpha_1(t)}}$. As $t$ is arbitrary large, this proves the result.
\end{proof}

\begin{definition} \label{defn:mpm}
Let $(u,v)\in \D_\pm^\op$, and $x\in X^{(0)}$. We define  $\beta_x(u,v)=h_u(x,z)+h_v(x,z)$ where $z$ is a point on a geodesic between $u$ and $v$. This does not depend on the choice of $z$.
We define the measure $m_\pm$ on $\D_\pm^\op$ by
$$\d m_\pm(u,v)=q^{2\beta_x(u,v)} \d\mu_{x,+}(u)\d \mu_{x,-}(v).$$
\end{definition}

\begin{lemma}\label{lem:mpmindependent}
The measure $m_\pm$ does not depend on the choice of $x$. It is a $\Gamma$-invariant Radon measure on $\D_\pm^\op$. Furthermore, the map $(\D^{2,\op},m)\to ( \D_\pm^\op,m_\pm)$ is measure class preserving.
\end{lemma}

\begin{proof}
The first part follows from similar calculations as in \S\ref{subsec:d2}.
For the latter part, observe that (for a given $x$) $m$ is in the measure class of 
$\mu_{x}\times  \mu_{x}$, $m_\pm$ is in the measure class of
$\mu_{x,+}(u)\times \mu_{x,-}$ and the map 
$(\D^{2,\op},\mu_{x}\times  \mu_{x})\to ( \D_\pm^\op,\mu_{x,+}(u)\times \mu_{x,-})$ is measure preserving.
\end{proof}

We now want to investigate more precisely the relationship between $m_\pm$ and $m$. Our goal is to obtain a preliminary relation between $\mu_x$ and $m_\pm$ (Proposition \ref{prop:desintmu}). We need first to introduce a few more notations.

 Let $\Delta_{u,v}$ be the set of chambers $C$ which are adjacent to both a chamber adjacent to $u$ and a chamber adjacent to $v$. The set of pairs of distinct chambers in $\Delta_{u,v}$ is the preimage of $(u,v)$ by the map $\D^{2,\op}\to \D_\pm^\op$.

Recall from \S\ref{subsec:paneltree} the construction of the panel tree $T_u$. The topology of the boundary of $T_u$ is generated by the sets $\Omega^T_x(y)$ of endpoints of geodesic rays starting from $x$ passing through $y$.
There is a natural projection $\pi_u \colon X\to T_u$, which associates to a point $x\in X$ the class of the geodesic ray $[x, u)$. If $C$ is a chamber adjacent to $u$, then $\pi_u(Q(x,C))$ is a half-line in $T_u$. The endpoint of this half-line does not depend on $x$. This defines a map from the set $\Ch(u)$ of chambers adjacent to $u$ to the boundary of $T_u$. This map is a homeomorphism.

\begin{lemma}
Let $p_{u,v}$ be the restriction of the projection map $\proj_u \colon \Ch(\D)\to \Ch(u)$  to $\D_{u,v}$. Then $p_{u,v}$ is a homeomorphism.
\end{lemma}

\begin{proof}
The inverse image by $\pi_u$ of a geodesic ray is a half-flat. The boundary of this flat (in $X$) will consist of three chambers, exactly one of them being in $\Delta_{u,v}$. This proves the bijectivity of $p_{u,v}$.

Let us fix a point $x\in X$, and let $x'=\pi_u(x)$. Fix also $y'\in T_u$ and $C\in p_{u,v}^{-1}(\Omega^T_x(y'))$. Then $\pi_u(Q(x,C))$ is a geodesic ray in $T_u$ passing through $y'$. So there exists $y\in Q(x,C)$ such that $\pi_u(y)=y'$.  Conversely, if $y\in X$ is such that $\pi_u(y)=y'$, let $C\in\Omega_x(y)\cap \D_{u,v}$. Then $y\in Q(x,C)$, so that $y'$ is on the geodesic ray from $x'$ to $p_{u,v}(C)$.
 Hence $p_{u,v}^{-1}(\Omega^T_{x'}(y'))$ is the union of all $\Omega_x(y)\cap \Delta_{u,v}$, where $y$ varies in $\pi_u^{-1}(y')$. So it is open in $\D_{u,v}$, which proves that $p_{u,v}$ is continuous.
 Since $\D_{u,v}$  is compact (as an intersection of closed subsets of $\Ch(\Delta)$), $p_{u,v}$ is a homeomorphism.
\end{proof}

We can define a measure on $\partial T_u$ in a similar way as we defined the measure on $\Ch(\Delta)$. For $x,y\in T_u$, let $\Omega^T_x(y)$ be the set of $\xi\in \partial T_u$ such that $y$ is on the geodesic ray $[x, \xi)$. We define the measure $\mu^T_x$ by the formula $\mu^T_x(\Omega_x^T(y))=\frac{1}{K' q^{d(x,y)}}$ valid for all  $x\neq y\in T_u$ and for some normalization constant $K'$ (which is actually equal to $\frac{q+1}{q}$).

This allows us to define a measure on $\D_{u,v}$, via the map $p_{u,v}$: for $x\in X^{(0)}$, we define the measure $\mu_{u,v;x}$ as $\mu_{u,v;x}=(p_{u,v}^{-1})_*\mu^T_{\pi_u(x)}$.

\begin{lemma}\label{desint m}
Let $y\in V_t(x)$.
The measure $\mu_{u,v;x}$ satisfies $\mu_{u,v;x}(\Omega_x(y))=\frac{1}{K'q^{\ell(t)}}$ whenever $\Omega_x(y)\cap \D_{u,v}\neq \varnothing$.
\end{lemma}

\begin{proof}
Assume that $\Omega_x(y)\cap \Delta_{u,v}$ is not empty. Let $C\in\Omega_x(y)\cap \Delta_{u,v}$, and let $C_1=\proj_u(C)$. By definition $C$ and $C_1$ share a vertex of type $-$, say $v'$. It follows that the geodesic ray $[xv')$ is contained in both $Q(x,C_1)$ and $Q(x,C)$.

Let $y'$ be the point on this geodesic ray such that $\pi_u(y)=\pi_u(y')$. We first claim  that
$$p_{u,v}(\Omega_x(y')\cap \D_{u,v})=\Omega^T_{\pi_u(x)}(\pi_u(y)).$$

Indeed, it is clear that $p_{u,v}(\Omega_x(y')\cap \D_{u,v})\subset \Omega^T_{\pi_u(x)}(\pi_u(y)).$ Let $C\in p_{u_,v}^{-1}( \Omega^T_{\pi_u(x)}(\pi_u(y)))$, and let $C'_1=\proj_u(C')$. As $C'_1$ is adjacent to $u$, the sector $Q(x,C'_1)$ contains the geodesic ray $[xu)$. Furthermore, since $\xi\in\Omega^T_{\pi_u(x)}(\pi_u(y))$, the sector $Q(x,C'_1)$ must contain some point $z$ with $\pi_u(y)=\pi_u(z)$, hence the geodesic ray $[zu)$. But $y'$ is in the convex hull of this geodesic ray and $x$. This proves that $y'\in Q(x,C'_1)$. Let $v''$ be the vertex of type $-$ of $C'_1$. It follows that $y'\in [x v'')$. Since $[xv'')\subset Q(x,C')$, we also have $y'\in Q(x,C')$, or equivalently, $C'\in\Omega_x(y')$, which proves the claim.

Now, since it is clear that $p_{u,v}(\Omega_x(y)\cap\Delta_{u,v})\subset \Omega^T_{\pi_u(x)}(\pi_u(y))$, it follows that $\Omega_x(y)\cap \Delta_{u,v}\subset \Omega_x(y')\cap \Delta_{u,v}$. Now let $C_2=\proj_v(C)$, $u'$ the vertex contained in both chambers $C$ and $C_2$, and $y''$ the point on $[xu')$ such that $\pi_v(y)=\pi_v(y'')$. By a symmetric argument, we have $\Omega_x(y)\cap \Delta_{u,v}\subset \Omega_x(y'')\cap\Delta_{u,v}$. The same argument, replacing $y$ with $y'$, leaves to the inclusion $\Omega_x(y')\cap \Delta_{u,v}\subset \Omega_x(y'')\cap\Delta_{u,v}$.

So we have $\Omega_x(y')\cap \D_{u,v}\subset \Omega_x(y')\cap\Omega_x(y'')\cap \D_{u,v}$. Since $y$ is in the convex hull of $y'$ and $y''$, we have $\Omega_x(y')\cap \Omega_x(y'')\subset \Omega_x(y)$. Therefore, we have the equality
$$p_{u,v}(\Omega_x(y)\cap \D_{u,v})=\Omega^T_{\pi_u(x)}(\pi_u(y)).$$

Hence, we get that $\mu_{u,v;x}(\Omega_x(y))=\frac{1}{K' q^{d(\pi_u(x),\pi_u(y)}}$. By definition, we have $\pi_u(x)=\pi_u(y')$. Let $t'$ be such that $y'\in V_{t'}(x)$. In the chamber $C$, it is easy to see that
$\ell(t)=\ell(t')$, and looking in the chamber $C_1$ we see that $d(\pi_u(x),\pi_u(y'))=\ell(t')$.
\end{proof}

Finally, we need a calculation similar to that of Lemma \ref{Ywlambda}. Let $x\in X^{(0)}$, $t\in A^+$, and fix $y\in V_t(x)$.
For $\eps=+$ or $-$, let $Z_\eps(y)$ be the set of $y'\in V_t(x)$ such that there exists some $C\in \Omega_x(y)$ and $C'\in \Omega_x(y)$ which is $\eps$-adjacent to $C$.

\begin{lemma}\label{Zeps(y)}
Assume that $t\in A^{++}$. Then there exist constants $K_+$ and $K_-$ such that $|Z_+(y)|=K_+ q^{\alpha_1(t)}$ and $|Z_- (y)|=K_- q^{\alpha_2(t)}$.
\end{lemma}

\begin{proof}
We treat the case of $Z_+$. We proceed by induction on $\max(\alpha_1(t),\alpha_2(t))$. Let $t'=t+t_1$ and $t''=t+t_2$.
As in Lemma \ref{Ywlambda}, we use the projections $p_1:V_{t'}(x)\to V_t(x)$ and $p_2:V_{t''}(x)\to V_t(x)$, which associates to a point $y$ the unique point in $V_t(x)$ which is in the convex hull of $x$ and $y$.

Let $y'\in V_{t'}(x)$ and $y''\in V_{t''}(x)$ be such that $p_1(y')=p_2(y'')$. Let $y=p_1(y')$. It is sufficient to prove that $Z_+(y')$ has the same cardinality as $Z_+(y)$ and that $|Z_+(y'')|=q |Z_+(y)|$.
Note that $p_1$ (resp. $p_2$) restricts to a map $p_1':Z_+(y')\to Z_+(y)$ (resp. $p_2':Z_+(y'')\to Z_+(y)$). We prove first that $p_1'$ is bijective.

 Let $y_1\in Z_+(y)$. By definition there exists $C\in \Omega_x(y)$ and $C_1\in \Omega_x(y_1)$ which share a vertex of type $+$, say $u$. Let $y_1'$ be the first vertex on the geodesic ray $[y_1u)$ after $y_1$. Then $y_1\in V_t(x)$ satisfies $p_1(y'_1)=y_1$. Furthermore, any $z_1\in p_1^{-1}(y_1)$ is on this geodesic ray, so that $z_1=y_1'$. Hence $p'_1$ is bijective, and we have $|Z_+(y')|=|Z_+(y)|=K_+ q^{\alpha_1(t)}= K_+ q^{\alpha_1(t')}$ by induction.

Consider now a simplex with $[y_1, y'_1]$ as an edge, and let $y''_1$ be the third vertex of this simplex. Since $\alpha_2(t)>0$, there is one choice of such a simplex for which $y''_1\not\in V_{t''}(x)$ (the one in the convex hull of $y'_1$ and $x$), and $q$ other choices. Each of these other choices is a point in $p_2^{-1}(y_1)$. So $|Z_+(y'')|=q|Z_+(y)|=K_+ q^{\alpha_1(t'')}$.
\end{proof}

\begin{proposition}\label{prop:desintmu}
For every $x\in X^{(0)}$ we have
$$\mu_x=\int \mu_{u,v;x} \d \mu_{x,+}(u) \d \mu_{x,-}(v).$$
\end{proposition}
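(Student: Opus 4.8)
The plan is to verify the asserted disintegration of $\mu_x$ by testing both sides against the basic open sets $\Omega_x(y)$, $y \in V_\lambda(x)$, which generate the Borel structure on $\Ch(\Delta)$ as $\lambda$ ranges over $A^{++}$ (this is how $\mu_x$ was defined in the first place, via the projective limit of the sets $V_\lambda(x)$). Fix such a $\lambda$ and $y$. On the left, $\mu_x(\Omega_x(y)) = \tfrac{1}{N_\lambda} = \tfrac{1}{Kq^{2\ell(\lambda)}}$ by Lemma~\ref{lem:NLambda}. On the right, I would compute
$$\int \mu_{u,v;x}(\Omega_x(y)) \, \d\mu_{x,+}(u)\, \d\mu_{x,-}(v),$$
and the point is that $\mu_{u,v;x}(\Omega_x(y))$ is nonzero only when $\Omega_x(y)$ meets $\Delta_{u,v}$, in which case Lemma~\ref{desint m} tells us it equals $\tfrac{1}{K'q^{\ell(\lambda)}}$. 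So the integral reduces to $\tfrac{1}{K'q^{\ell(\lambda)}}$ times the $(\mu_{x,+}\otimes\mu_{x,-})$-measure of the set of pairs $(u,v)\in\D_\pm^\op$ for which $\Omega_x(y)\cap\Delta_{u,v}\neq\varnothing$.

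The core of the argument is therefore to identify that set of pairs and compute its measure. A chamber $C\in\Omega_x(y)$ lies in $\Delta_{u,v}$ precisely when $u\in\pr_+(\Omega_x(y))$ and $v\in\pr_-(\Omega_x(y))$ together with an opposition/compatibility condition; more precisely, I expect the relevant set of pairs to be, up to a $(\mu_{x,+}\otimes\mu_{x,-})$-null set, the set of $(u,v)$ with $u\in\Omega_x^+(y)$, $v\in\Omega_x^-(y)$, further cut down so that the geodesic from $u$ to $v$ passes suitably near $y$ — this is where the counting Lemma~\ref{Zeps(y)}, giving $|Z_+(y)|=K_+q^{\alpha_1(\lambda)}$ and $|Z_-(y)|=K_-q^{\alpha_2(\lambda)}$, enters. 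Combining Lemma~\ref{lem:mu+(Omega)} ($\mu_{x,+}(\Omega_x^+(y))=\tfrac{1}{K_1q^{2\alpha_1(\lambda)}}$ and the analogous minus statement) with these cardinality counts should yield that the measure of the relevant set of pairs is comparable to $\tfrac{1}{K_1K_2}q^{-2\alpha_1(\lambda)-2\alpha_2(\lambda)}\cdot K_+q^{\alpha_1(\lambda)}\cdot K_-q^{\alpha_2(\lambda)} = (\text{const})\, q^{-\alpha_1(\lambda)-\alpha_2(\lambda)} = (\text{const})\,q^{-\ell(\lambda)}$. Multiplying by the $\tfrac{1}{K'q^{\ell(\lambda)}}$ from Lemma~\ref{desint m} gives $(\text{const})\,q^{-2\ell(\lambda)}$, matching $\mu_x(\Omega_x(y))$ up to a constant; one then checks the constant is exactly $1$ by comparing normalizations (or, more cleanly, by testing against $\Omega_x(y)$ for $\lambda=0$, i.e.\ the total mass, which forces the constant since both sides are probability measures on $\Ch(\Delta)$ — note $\mu_x$ is a probability measure and $\mu_{x,+},\mu_{x,-}$ are its pushforwards, while $\mu_{u,v;x}$ is a probability measure on $\Delta_{u,v}$).

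Before the measure computation, one should record the measurability and well-definedness points: the map $(u,v)\mapsto\mu_{u,v;x}(\Omega_x(y))$ is Borel (it is locally constant in $(u,v)$ once one fixes $y$, since membership of a chamber in $\Delta_{u,v}$ depends only on finite data), so the integral makes sense; and the disintegration is over the map $\D^{2,\op}\to\D_\pm^\op$ composed with $\Ch(\Delta)\hookrightarrow$, consistently with the conull statement of Proposition~\ref{prop:opposite}. The main obstacle I anticipate is bookkeeping the exact combinatorial description of $\{(u,v): \Omega_x(y)\cap\Delta_{u,v}\neq\varnothing\}$ and matching the product of the three constants $K_1, K_2, K', K_+, K_-$ against $K$ — this is a finite computation in a single apartment (using Theorem~\ref{aptfull} to realize all the relevant configurations inside one flat) but requires care to avoid over- or under-counting the chambers of $\Omega_x(y)$ projecting to a given $(u,v)$. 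Once the constant is pinned down by the total-mass normalization, the verification on all $\Omega_x(y)$ together with a monotone class / projective limit argument finishes the proof.
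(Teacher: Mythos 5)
Your overall framework is the same as the paper's: test both measures on the basic sets $\Omega_x(y)$ with $y\in V_\lambda(x)$, $\lambda\in A^{++}$, use Lemma~\ref{desint m} to reduce everything to computing $(\mu_{x,+}\otimes\mu_{x,-})(D)$ where $D=\{(u,v)\in\D_\pm^\op : \Omega_x(y)\cap\Delta_{u,v}\neq\varnothing\}$, and pin the final constant by observing that both sides are probability measures. But the step you defer as the ``main obstacle'' --- identifying $D$ --- is precisely the heart of the proof, and your guess at its shape is wrong. You describe $D$ as $\Omega_x^+(y)\times\Omega_x^-(y)$ ``further cut down''. In fact $D$ is much larger than that product: a chamber $C\in\Omega_x(y)\cap\Delta_{u,v}$ is not adjacent to $u$ or $v$; what is adjacent to $u$ is the projection $\proj_u(C)$, and the sector $Q(x,\proj_u(C))$ need not contain $y$ --- it contains some vertex $z_2\in Z_-(y)$ which is in general different from $y$ (and symmetrically for $v$). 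The correct identification, which requires a genuine two-sided geometric argument with the projections $\proj_u,\proj_v$ and convexity of sectors inside suitable apartments, is
$$D=\bigcup_{z_1\in Z_+(y)}\ \bigcup_{z_2\in Z_-(y)}\bigl(\Omega_x^+(z_2)\times\Omega_x^-(z_1)\bigr)\cap\D_\pm^\op,$$
that is, a union of $|Z_+(y)|\cdot|Z_-(y)|$ product cylinders indexed by the sets of Lemma~\ref{Zeps(y)}, not a restriction of a single one.

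Your own arithmetic exposes the inconsistency: by Lemma~\ref{lem:mu+(Omega)} any subset of $\Omega_x^+(y)\times\Omega_x^-(y)$ has measure at most of order $q^{-2\ell(\lambda)}$, whereas the computation you (correctly) aim for requires $D$ to have measure of order $q^{-\ell(\lambda)}$; multiplying the measure of one product cylinder by the cardinalities $|Z_+(y)|\,|Z_-(y)|$ is exactly what the union decomposition above yields, but it cannot arise from ``cutting down'' that product. So the missing content is: (i) proving both inclusions in the displayed identity (this is where one chases the projections of a chamber of $\Delta_{u,v}$ to $\Res(u)$ and $\Res(v)$ and uses that $y$ lies in the convex hull of the two bounding rays of the relevant sector), and (ii) noting that the cylinders attached to distinct vertices of $V_\lambda(x)$ are disjoint, so that the measure of $D$ is the sum over $Z_+(y)\times Z_-(y)$ needed for the exact exponent. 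Without these the verification on the sets $\Omega_x(y)$ does not go through; the remaining points of your outline (measurability, generation of the Borel structure by the $\Omega_x(y)$, and the normalization argument forcing the constant to be $1$) do match the paper.
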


\begin{proof}
Let $\widetilde\mu=\int \mu_{u,v;x} \d \mu_{x,+}(u) \d \mu_{x,-}(v)$.
 It suffices to prove that $\widetilde\mu(\Omega_x(y))=\frac{1}{Kq^{2\ell(t)}}$ for every $y\in V_t(x)$. In fact, it is sufficient to prove this for every $t$ large enough. So we can assume that $t\in A^{++}$.

  By Lemma \ref{desint m}, we have $\mu_{u,v;x}(\Omega_x(y))=\frac{1}{K'q^{\ell(t)}}$ whenever $\Omega_x(y)\cap \D_{u,v}\neq \varnothing$, and $0$ if not.  So we have to calculate the $\mu_{x,+}\times \mu_{x,-}$ measure of the set $D$ of $(u,v)$ such that  $\Omega_x(y)\cap \D_{u,v}\neq \varnothing$.

 We claim that
 $$D=\bigcup_{z_1\in Z_+(y)}\bigcup_{z_2\in Z_-(y)} (\Omega^+_x(z_2)\times \Omega^-_x(z_1))\cap \D_\pm^\op.$$
 
Let $(u,v)\in D$. Let $C\in \Omega_x(y)\cap \D_{u,v}$ and let $C_2=\proj_u(C)$. By definition $C_2$ and $C$ have a common vertex of type $-$. Let $z_2\in Q(x,C_2)\cap V_t(x)$. Then we have $z_2\in Z_-(y)$, and $u=\pi_+(C_2)\in \Omega^+_x(z_2)$. Similarly we find $z_1\in Z_+(y)$ such that $v\in \Omega_-(z_1)$.


  Conversely, let   $z_1\in Z_+(y)$ and $z_2\in Z_-(y)$.  By definition there exists a chamber $C_2$ in $\Omega_x(z_2)$ which is adjacent to some chamber in $\Omega_x(y)$. Let $u=\pi_+(C_2)$ and $v'=\pi_-(C_2)$. Let  $v\in \Omega_x^-(z_1)$ and assume that $u$ and $v$ are opposite. 
 Let $C=\proj_{v'}(v)$ and $u'=\pi_+(C)$.
   Note that $Q(x,C)$ is the sector whose boundary is the union of the geodesic rays from $x$ to $u'$ and from $x$ to $v'$. By construction $y$ is in the convex hull of these two geodesic rays. Hence $y\in Q(x,C)$, or in other words,
   $C\in \Omega_x(y)$, and  $C$ is also in $\D_{u,v}$, which proves that $(u,v)\in D$.

Thus the claim is proven and we deduce that
 $$(\mu_{x,+}\times\mu_{x,-})(D)=\sum_{z_1\in Z_+(y)}\sum_{z_2\in Z_-(y)} \mu_{x,+}(\Omega_x^+(z_2))\mu_{x,-}(\Omega_x^-(z_1)).$$
 From Lemma \ref{Zeps(y)} we know that $|Z_+(y)|=K_+q^{\alpha_1(t)}$ and $|Z_-(y)|=K_-q^{\alpha_2(t)}$.
 Furthermore, for every $z\in V_t(x)$, we have 
$\mu_{x,+}(\Omega^+_x(z)))=\frac{1}{Kq^{2\alpha_1(t)}}$ and $\mu_{x,-}(\Omega_x^-(z))=\frac{1}{Kq^{2\alpha_2(t)}}$. 
Hence we find that 
$$(\mu_{x,+}\times\mu_{x,-})(D)=K_+q^{\alpha_1(t)}\times K_-q^{\alpha_2(t)}\times \frac{1}{K_1' K_2'q^{2\alpha_2(t)+2\alpha_1(t)}}.$$
It follows that $\widetilde\mu(\Omega_x(y))=\frac{1}{K''q^{2\ell(t)}}$ for some constant $K''$. In other words $\widetilde\mu$ is a multiple of $\mu_x$. Since both are probability measures, we have $\widetilde\mu=\mu_x$.
\end{proof}

\subsection{Measuring Diagram~(\ref{eq:broadpic})} \label{subsec:measgen}

The following is a general principle.

\begin{lemma} \label{lem:MeasHaarFiber}
Let $H,\Lambda$ be second countable locally compact groups and let $U$ be a locally compact metrizable space on which $H\times \Lambda$ acts continuously by homeomorphisms.
Assume the $H$-action on $U$ is proper and free.
Assume further that $H$ is unimodular and $\nu$ is a 
$\Lambda$-invariant Radon measure on $H \backslash U$.
Then there exists an $H\times \Lambda$-invariant Radon measure $\zeta$
on $U$ and an $H$-equivariant measure preserving Borel isomorphism
\[ (U,\zeta) \simeq (H\backslash U,\nu) \times (H,\eta), \]  
where $\eta$ is a Haar measure on $H$ and $H$ acts on $H\backslash U \times H$ via its left action on the second coordinate.
In particular, the map $(U,\zeta) \to (H \backslash U,\nu)$ is measure class 
preserving. 

Furthermore, for any closed unimodular subgroup $H'<H$
and every unimodular closed normal subgroup $H'_0\lhd H'$ such that $H'/H'_0$ is unimodular, there exists an $H'/H'_0\times \Lambda$-invariant Radon measure $\zeta'$
on $H'_0\backslash U$ 
and an $H'/H'_0$-equivariant measure preserving Borel isomorphism
\[ (H'_0\backslash U,\zeta) \simeq (H\backslash U,\nu) \times (H_0'\backslash H,\eta'), \]  
where $\eta'$ is a right $H$-invariant Radon measure on $H'_0\backslash H$ and $H'/H'_0$ acts on $H\backslash U \times H'_0\backslash H$ via its left action on the second coordinate.
In particular, the maps $(U,\zeta) \to (H'_0 \backslash U,\zeta')$ and
$(H'_0 \backslash U,\zeta') \to (H \backslash U,\nu)$ are measure class 
preserving. 
\end{lemma}

\begin{proof}
In what follows we set $V=H\backslash U$ and 
fix a Haar measure $\eta$ on $H$.
Note that $\eta$ is both left and right invariant, by the unimodularity 
of $H$.

Given $\phi\in C_c(U)$ we let $\bar{\phi}\in C_c(U)$ be defined by the 
formula $\bar{\phi}(u)=\int_H \phi(hu) d\eta(h)$ 
and note that, by the right invariance of $\eta$, $\bar{\phi}$ is $H$-invariant.
We consider, as we may, $\bar{\phi}$ as an element of $C_c(V)$
and set $I(\phi)=\int_V \bar{\phi}(v)d\nu(v)$.
By Riesz representation theorem we obtain a Radon measure $\zeta$ on 
$U$ such that $\int_U \phi d\zeta=I(\phi)$
and note that $\zeta$ is $H$-invariant, as $I$ is, and that the map $(U,\zeta) \to (V,\nu)$ is measure class 
preserving. 

By the Federer--Morse selection theorem
(see \cite[Lemmas 3 and 4]{BorelSelection} for a nice presentation) we
obtain a Borel isomorphism $U \simeq V \times H$ which is
$H$-equivarant with respect to the left $H$ action on the second coordinate of $V\times H$
and commutes with the map $U\to V$ and the projection $V \times H\to V$.
Note that under this isomorphism, by its construction, the measure $\zeta$ is identified with $\nu \times \eta$.
Note also that $H$ acts on $V\times H$ via its right action on the second
coordinate (the corresponding $H$-action on $U$ is Borel, but need not
be continuous). This action is measure preserving, by the unimodularity
of $H$. The $\Lambda$-action on $U$ gives rise to a $\Lambda$-action on $V\times H$ which is given by a cocycle $c:\Lambda \times V\to H$,
$\lambda(v,h)=(\lambda v,hc(\lambda,v))$.
This action preserves $\nu\times \eta$. Indeed, for every $\phi\in L^1(V\times H,\nu\times \eta)$ and $\lambda\in \Lambda$, using Fubini and the right $H$-invariance of 
$\eta$ we get 
\[ \int_{V\times H} d(\nu\times \eta) \lambda\cdot \phi =\int_V d\nu(v)
\int_H d\eta(h) \lambda\cdot \phi(v,h)=
\int_V d\nu(v)
\int_H d\eta(h) \phi(v,hc(\lambda,v))=
\]
\[
\int_V d\nu(v)
\int_H d\eta(h) \phi(v,h)=
\int_{V\times H} d(\nu\times \eta)\phi. \]
We conclude that $\zeta$ is 
indeed $\Lambda$-invariant.

Assume now given a closed unimodular subgroup $H'<H$
and a closed normal subgroup $H'_0\lhd H'$.
As $H$ and $H'_0$ are unimodular, we get a right $H$-invariant Radon measure $\eta'$ on $H'_0\backslash H$.
We claim that $\eta'$ is also invariant under the left $H'/H'_0$-action on $H'_0\backslash H$.
Indeed, we may identify $H'_0\backslash H$ with the coset space
$(H\times H'/H'_0)/D$, where $D<H\times H'/H'_0$ is the image of $H'$ under
the obvious diagonal homomorphism, and use the fact that both $H\times H'/H'_0$ and $D\simeq H'$ are unimodular.
We denote $U'=H'_0\backslash U$ and identify it with 
$V \times H'_0\backslash H$. We endow $V \times H'_0\backslash H$ with the measure $\nu\times \eta'$ and denote by $\zeta'$ the corresponding measure on $U'$. Clearly, $\zeta'$ is $H'/H'_0$-invariant. Expressing the $\Lambda$-action on $V \times H'_0\backslash H$ using the cocycle $c$, we get as before that $\zeta'$ is also
$\Lambda$-invariant. 
\end{proof}

For the following theorem, recall that $W_0=\{1,\tau\}$ is the subgroup of $W$ generated by the longest element $\tau$.

\begin{theorem} \label{thm:compactiblemeasure}
There exist Radon measures $\zeta$ on $\scrS$ and $\zeta'$ on $\scrF$
such that $\zeta$ is  
$\Gamma\times M$-invariant 
and $\zeta'$ is $\Gamma\times (W_0\ltimes A)$-invariant
and such that the maps
$(\scrS,\zeta)\to (\scrF,\zeta')$
and
$(\scrF,\zeta')\to (\D^{2,\op},\mu\times \mu)$
appearing in Diagram~(\ref{eq:broadpic})
are measure class preserving.
\end{theorem}

\begin{proof}
The construction of $\zeta$ is obtained by specializing Lemma~\ref{lem:MeasHaarFiber} to $U=\scrS$, $H=M$
and $\Lambda = \Gamma$.
Note that by Lemma~\ref{lem:scrs}, the $H$ action on $U$ is proper and free,
and $U/H$ could be identified with $\D_\pm^\op$.
By Theorem~\ref{thm:S} $H$ is unimodular and by Lemma~\ref{lem:mpmindependent} the measure $\nu$ on $U/H$ corresponding to $m_\pm$ on $\D_\pm^\op$ is $\Lambda$-invariant.
We thus obtain a Radon measure $\zeta$ on $\scrS$ which is 
$\Gamma\times M$-invariant and such that $(\scrS,\zeta)\simeq (\D_\pm^\op \times M,m_\pm\times \eta)$,
where $\eta$ is the Haar measure on $M$.
We let $M',M_0,M_0'$  be as in Proposition~\ref{prop:M0'} and note that, by this proposition, these groups are unimodular, and the quotients $M'/M_0$ and $M'/M'_0$ are discrete, hence unimodular. 
Applying Lemma~\ref{lem:MeasHaarFiber} again we obtain Radon measures
$\zeta'$ and $\zeta''$ on $M_0\backslash\scrS$ and $M_0\backslash\scrS$
which are $\Gamma\times M'/M_0$-invariant and $\Gamma\times M'/M'_0$-invariant correspondingly and such that 
\[ (M_0\backslash \scrS,\zeta') \simeq (M\backslash \scrS,\nu) \times (M_0\backslash M,\eta') \quad \mbox{and} \quad  
(M'_0\backslash \scrS,\zeta'') \simeq (M\backslash \scrS,\nu) \times (M_0'\backslash M,\eta''),\]  
where $\eta'$ and $\eta''$ are the obvious corresponding measures.
In particular, as $M'_0<M_0$, we get that the maps 
$(\scrS,\zeta) \to (M'_0 \backslash \scrS,\zeta')$,
$(M_0 \backslash \scrS,\zeta') \to (M'_0 \backslash \scrS,\zeta'')$ and
$(M'_0 \backslash \scrS,\zeta'') \to (\D_\pm^\op,m_\pm)$ are all measure class 
preserving. 
Using Proposition \ref{prop:M0'} again
we identify $M'/M_0$ with $W_0\ltimes A$ and $M'/M'_0$ with $W_0$ and we identify $\scrF$ with $M_0 \backslash \scrS$ and $\D^{2,\op}$ with $M'_0 \backslash \scrS$ equivariantly, thus consider $\zeta'$ and $\zeta''$ as measures
on $\scrF$ and $\D^{2,\op}$ correspondingly.

We are left to show that
$\mu\times \mu$ is equivalent to $\zeta'$.
Fixing $x\in X^{(0)}$ and
recalling that the measures $m_\pm$ are equivalent to 
$\mu_{x,+}\times \mu_{x,-}$ on $\D_\pm^\op$,
we will argue to show that the fibered measures with respect to their
disintegrations of $\mu_x\times \mu_x$ and $\zeta'$ over $(\D_\pm^\op, \mu_+\times \mu_-)$ are equivalent.
As $\zeta'$ corresponds to the measure $m_\pm\times \eta''$  on $\D_\pm^\op \times M''_0\backslash M$,
in view of Proposition~\ref{prop:desintmu}, it suffices to prove that for almost every $(u,v)\in \D_\pm^\op$, the measure $\mu_{u,v;x} \times \mu_{u,v;x}$ is quasi invariant by the action of $M$ on the fiber over $(u,v)$. We will argue to show that.
Fix $(u,v)\in\D_\pm^\op$. 
By Theorem~\ref{thm:S} the action of $M$ on the boundary $\partial T$ of the model tree $T$ is $3$-transitive. The stabilizer of a pair of distinct points $(\xi,\xi')\in \partial T\times \partial T$ is conjugated to $M'_0=\Fix_M(\partial\Sigma)$. The measure $\eta''$ on 
$M'_0\backslash M$ could be identified with an $M$-invariant measure $m^T$ on the set  of distinct pairs of points in the boundary of $T$. This measure, as can be checked with similar computations as in \S\ref{subsec:d2}, is given by the formula $\d m^T(\xi,\xi')=q^{-\beta^T_x(\xi,\xi')}\d \mu^T_x(\xi)\d \mu^T_x(\xi')$ (where $\beta^T_x(\xi,\xi')=(\xi,\xi')_x$ is the usual Gromov product on $T$, and $\mu^T_x$ is the measure defined before Lemma~\ref{desint m}). Hence it is in the same class as $\mu^T_x\times \mu^T_x$, which is identified (when viewed via $p_{u,v}$ as a measure on $\Ch(\D)$) with $\mu_{u,v;x}\times \mu_{u,v;x}$.
\end{proof}


\begin{remark}
It is possible, by a more precise calculation, in the spirit of Lemma \ref{desint m}, to prove that the measure $\zeta''$ on $\D^{2,\op}$ which was constructed in the proof above is actually the same measure as $m$ discussed in \S\ref{subsec:d2}.
This also follows from Theorem~\ref{CartanFlow} below, as the ergodicity of the $\Gamma$-action on $\D^{2,\op}$ implies that  there exists at most one $\Gamma$-invariant Radon measure in the class $\mu\times\mu$ (up to a scaling factor), so using Corollary~\ref{cor:minvariant}, we get that $\zeta''$ coincides with $m$ up to a factor, which is easily checked to be 1.
It is also possible to show that the measure $\zeta'$ on $\scrF$ given in Theorem~\ref{thm:compactiblemeasure} is $W\ltimes A$-invariant.
For this one can use Lemma~\ref{lem:MeasHaarFiber} in the setting $U=\scrF$ and $H=W\ltimes A$ and take $\nu$ to be the measure corresponding to $m$ under the identification $H\backslash U\simeq \D^{2,\op}$.
One then needs to argue that the measure thus obtained on $\scrF$ (which is $W\ltimes A$-invariant by Lemma~\ref{lem:MeasHaarFiber}) coincides with the measure $\zeta'$. This is not hard, but we will not carry the details here, as the $W_0\ltimes A$-invariance of $\zeta'$, which is already guarenteed suffices for our purposes in the sequel.
\end{remark}

Note that $\Gamma$ acts properly on the spaces $\scrS$ and $\scrF$, as it acts properly on their factor, $X^{(0)}$.
Since $\scrF/\Gamma$ and $\scrS/\Gamma$ are compact in view of Lemma~\ref{lem:FmodGammaCompact} and Theorem~\ref{thm:S}, we obtain the following.

\begin{corollary} \label{cor:CompatibleMeas}
There exists an $M$-invariant probability measure on $\scrS/\Gamma$ and a $W_0\ltimes A$-invariant probability measure on $\scrF/\Gamma$ such that
$\scrS/\Gamma\to\scrF/\Gamma$ is measure preserving.
\end{corollary}

In the above we explained how to construct invariant Radon measures on some spaces.
Focusing now on measure classes merely, we observe that $\scrS$ is a source in Diagram~(\ref{eq:broadpic})
(for every other space in the diagram there is a map from $\scrS$ to the space), thus pushing the measure class from $\scrS$ to each other space and using Theorem~\ref{thm:compactiblemeasure} and the commutativity of the diagram we obtain the following corollary.

\begin{corollary} \label{cor:measuss}
Each of the spaces in Diagram~(\ref{eq:broadpic})  is endowed with a measure class such that all maps in the diagram are measure class preserving. Moreover, all measures considered on the various spaces so far on a given space from Diagram~(\ref{eq:broadpic}) belong to the same measure class.
\end{corollary}

\section{Ergodicity} \label{sec:ergodicity}

In this section we establish the ergodicity properties required by the proof of the main result.
For the sake of readability, we often omit the measures from our notations.
All measures are understood to be those defined in \S\ref{sec:meas}, see Corollary~\ref{cor:measuss}.
Similarly, unspecified maps are those discussed in \S\ref{subsec:broadpic}, see Diagram~(\ref{eq:broadpic}).

\subsection{Ergodicity the Cartan flow}

This subsection is devoted to the proof of the following theorem.

\begin{theorem}\label{CartanFlow}
The following two actions are ergodic:
\begin{itemize}
\item The $\Gamma$-action   on $\Delta^{2,op}$.
\item The $A$-action  on $\scrF/\Gamma$.
\end{itemize}
\end{theorem}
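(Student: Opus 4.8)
The plan is to prove the two ergodicity statements in the reverse order to how they are listed, since the $A$-action on $\scrF/\Gamma$ is the more robust object and the $\Gamma$-action on $\Delta^{2,\op}$ will follow from it via the homeomorphism $\scrF/A\cong\Delta^{2,\op}$ of Lemma~\ref{lem:op=F}. First I would observe that, via that homeomorphism, a $\Gamma$-invariant measurable function on $\Delta^{2,\op}$ is the same data as an $A\times\Gamma$-invariant measurable function on $\scrF$, which in turn descends to an $A$-invariant function on $\scrF/\Gamma$; conversely an $A$-invariant function on $\scrF/\Gamma$ lifts to an $A\times\Gamma$-invariant function on $\scrF$ and hence to a $\Gamma$-invariant function on $\scrF/A=\Delta^{2,\op}$. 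So the two assertions are literally equivalent, and it suffices to prove ergodicity of the $A$-action on $\scrF/\Gamma$.

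For the $A$-action, the natural tool — announced in the introduction — is the Hopf argument in the abstract form of Appendix~\ref{app-hopf} (Theorem~\ref{hopf}). The idea is to exploit the two ``opposite'' boundary projections. The group $A\cong\ZZ^2$ acts on $\scrF$, and via $\theta\colon\scrF\to\Delta^{2,\op}$, $(C^+,C^-)=([\phi(Q^+)],[\phi(Q^-)])$, pushing a marked flat forward in a direction $\lambda\in A^{++}$ contracts towards the chamber $C^+$ and expands away from $C^-$; more precisely, for $\lambda$ deep inside the positive Weyl chamber, $\lambda\phi$ agrees with $\phi'$ on larger and larger balls whenever $\phi,\phi'$ have the same image $C^+$ under the first boundary projection (and symmetrically for $A^{--}$ and $C^-$). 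This is the usual stable/unstable manifold picture: the ``stable'' foliation of $\scrF/\Gamma$ for the $\ZZ^2$-flow in the $\lambda$-direction has leaves given by fibres of $\phi\mapsto[\phi(Q^+)]$ (equivalently the first factor $p_1\colon\Delta^{2,\op}\to\Ch(\Delta)$), and the ``unstable'' foliation by fibres of $\phi\mapsto[\phi(Q^-)]$. An $A$-invariant $L^2$ function on $\scrF/\Gamma$, being invariant under a unidirectional $\ZZ$-subflow, is (by the Hopf argument, applying the Birkhoff ergodic theorem along that subflow and using uniform continuity on compacta to compare forward averages) constant along stable leaves; applying the same to the opposite subflow, it is constant along unstable leaves. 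So one is reduced to: a measurable function on $\Delta^{2,\op}$ which is essentially independent of each coordinate separately is essentially constant. This last step is a Fubini-type argument using that the measure class $m$ is, by Proposition~\ref{prop:opposite} and Proposition~\ref{prop:RN}, equivalent to the (conull) restriction of $\mu\otimes\mu$ to $\Delta^{2,\op}$, and that $(\Ch(\Delta),\mu)$ has no atoms, so the product measure class on the conull set $\Delta^{2,\op}$ is still ``ergodic for the separate-coordinate relation''.

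Concretely, the key steps in order are: (1) reduce ``$\Gamma$ ergodic on $\Delta^{2,\op}$'' to ``$A$ ergodic on $\scrF/\Gamma$'' via Lemma~\ref{lem:op=F}; (2) verify the dynamical hypotheses of the abstract Hopf theorem of Appendix~\ref{app-hopf} for the $\ZZ$-subflow of $A$ in a direction $\lambda\in A^{++}$ acting on the compact space $\scrF/\Gamma$ (Lemma~\ref{lem:FmodGammaCompact}) equipped with the invariant probability measure from the Corollary at the end of \S\ref{subsec:measgen} — the crucial geometric input being that $\lambda^n\phi$ and $\lambda^n\phi'$ eventually agree on any fixed ball whenever $[\phi(Q^+)]=[\phi'(Q^+)]$, which follows from the fact that two sectors in the same equivalence class eventually coincide together with local finiteness; (3) conclude that an $A$-invariant function is a.e.\ constant on the fibres of $p_1\circ\theta$ and, by the symmetric argument with $-\lambda$ and $Q^-$, on the fibres of $p_2\circ\theta$; (4) finish with the Fubini/atomlessness argument on $(\Delta^{2,\op}, m)\cong(\Delta^{2,\op},\mu\otimes\mu|_{\Delta^{2,\op}})$. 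The main obstacle I expect is step (2): making the Hopf argument go through on a space that is only measure-theoretically (not topologically) a quotient, and in particular pinning down precisely the ``stable/unstable'' partitions of $\scrF/\Gamma$ and checking they are exactly the fibres of the boundary maps — i.e.\ that $[\phi(Q^+)]=[\phi'(Q^+)]$ is not just sufficient but, modulo the $A$-action, necessary for forward-asymptoticity. This is where the rigidity of apartments (Theorem~\ref{aptfull}) and the explicit basic open sets $\Omega_x(y)$ do the work, and where the precise statement of the abstract Hopf lemma in Appendix~\ref{app-hopf} must be matched carefully to the geometry.
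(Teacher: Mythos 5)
Your proposal follows essentially the same route as the paper: both apply the abstract Hopf argument of Appendix~\ref{app-hopf} to $Z=\scrF$, $V=\scrF/\Gamma$, $Y=\scrF/A\simeq\Delta^{2,\op}$ with the two boundary projections $p_1,p_2$ to $\Ch(\Delta)$, obtain the proximality hypothesis by flowing in a direction interior to a Weyl chamber (so that the common subsector of two flats agreeing at one end of the boundary eventually covers any fixed ball), and conclude using the fact that $m$ is equivalent to $\mu\otimes\mu$, so that a function pulled back from both factors is essentially constant; the equivalence of the two ergodicity statements that you set up via Lemma~\ref{lem:op=F} is exactly the first line of the proof of Theorem~\ref{hopf}. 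Two minor points: the geometric input should be phrased with the pre-translations $t,t'$ that Theorem~\ref{hopf} allows (marked flats with the same chamber at infinity only share a subsector, so after flowing they agree on large balls only up to composing each with a suitable element of $A$), and the ``necessity'' of the stable-leaf description that you flag as the main obstacle is not needed --- the abstract Hopf theorem only requires that equality of the boundary projection be sufficient for proximality.
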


It is possible to use \cite{Cartwright} to prove that $(\Ch(\Delta),\mu)$ is the Poisson boundary of $\Gamma$ (endowed with a suitable probability measure), at least in the case when $\Gamma$ acts transitively on vertices in $X$. Using the fact that the diagonal action of a group is always ergodic on the square of its Poisson boundary, we deduce the ergodicity of the $\Gamma$ action on $\Ch(\Delta)\times \Ch(\Delta)$.
We choose to present another proof, which is based a version of the classical ``Hopf argument'' presented in Appendix~\ref{app-hopf}, and is valid without assuming that $\Gamma$ is vertex-transitive on $X$. 
In order to facilitate the readability of the proof, we summarize some of the constructions and discussions given in the previous section in the following diagram,
which should be compared with the diagram~(\ref{diad:hopf}) appearing in Theorem~\ref{hopf}.

\begin{align} \label{diag:cartan}
\xymatrix{
& \scrF \ar[dl]_\phi\ar[dr]^\psi& &\\
\scrF/\Gamma & & \scrF/A\simeq  \Delta^{2,\op} \ar[dl]_{p_1}\ar[dr]^{p_{2}}& \\
& \Ch(\D) &  & \Ch(\D)
}
\end{align}

\begin{proof}[Proof of Theorem~\ref{CartanFlow}]
Set $T=A$. Note that this is an abelian, hence amenable group.
Set also $\Lambda=\Gamma$ and $Z=\scrF$.
Note that $Z$ is a locally compact topological space endowed with  commuting actions of $T$ and $\Lambda$
and both actions (separately) $\Lambda \curvearrowright Z$ and $T\curvearrowright Z$ are proper.
Set $V=Z/\Lambda$ and $Y=Z/T$
and let $\phi \colon Z\to X$ and $\psi \colon Z\to Y$ be the quotient maps.
By Lemma~\ref{lem:FmodGammaCompact}, the space $V$ is compact.
By Lemma~\ref{lem:op=F}, we have $Y\simeq \Delta^{2,\op}$.
We let $W_1=W_2=\Ch(\Delta)$ and let $\pi_1 = p_1$ and $\pi_2 = p_2$ be the first and second coordinate projections $\Delta^{2,\op}\to \Ch(\Delta)$. 

Now, let $f\in \pi_1^*\big(L^\infty(\Ch(\Delta))\big)^\Gamma \cap \pi_2^*\big(L^\infty(\Ch(\Delta))\big)^\Gamma$.
Then by definition there exists $f_1,f_2\in L^\infty(\Ch(\Delta))$ such that for $m$-almost every $(x,y)$ we have $(x,y)=f_1(x)$ and $f(x,y)=f_2(y)$. Using the fact that $m$ is equivalent to $\mu\times \mu$, and Fubini, this means that there is a $y_0$ such that for almost every $x$ we have $f_1(x)=f_2(y_0)$. Hence $f(x,y)=f_2(y_0)$ for almost all $(x,y)$, and $f$ is essentially constant.

Recall that $Z=\scrF$ is the set of simplicial embeddings from $\Sigma$ to $X$ where a basis for the topology on $\scrF$ is given by the sets of all maps which are the same in restriction to a ball around $0$.
Let $t_1$ be a vector of $T=A$ contained in the interior of the positive Weyl chamber.
It is of course central, by the commutativity of $A$.
Fix $z,z'\in \scrF$ such that $\pi_2\psi(z)=\pi_2\psi(z')$. The latter equality means that the   maps $z$ and $z'$ coincide at infinity of the   negative Weyl chamber. Thus $z(\Sigma)$ and $z'(\Sigma)$ have a common negative subsector, and there exist $t,t'\in T$ such that $z|_{Q^--t}=z'|_{Q^--t'}$.
Equivalently we write $tz|_{Q^-}=t'z'|_{Q^-}$.
Since $t_1$ is in the interior of the positive Weyl chamber, we see that for every $\sigma\in\Sigma$, we have 
$\sigma-nt_1\in Q^-$ for all $n$ sufficiently large.
It follows that for every compact neighborhood $B$ of $0\in\Sigma$, for every $n$ large enough,
$B-nt_1\in Q^-$
and therefore, $tz|_{B-nt_1}=t'z'|_{B-nt_1}$.
Equivalently we write $t_1^ntz|_{B}=t_1^nt'z'|_{B}$.
It   follows that the two sequences $(t_1^nt\phi(z))_{n\in\mathbb{N}}$ and $(t_1^nt'\phi(z'))_{n\in\mathbb{N}}$ are proximal in $\scrF$, hence also in $V$.
We note also that the measure class  on $Z$ constructed in Section~\ref{subsec:measgen} satisfies the assumptions of Theorem~\ref{hopf} by Corollaries~\ref{cor:CompatibleMeas} and~\ref{cor:measuss}.
It follows from Theorem~\ref{hopf} that the $A$-action on $\scrF/\Gamma$ and the $\Gamma$-action on $\Delta^{2,\op}$
are ergodic.
\end{proof}

The following corollary is straightforward, as $\D_\pm^\op$ is a $\Gamma$-equivariant factor of $\D^{2,\op}$.

\begin{corollary} \label{cor:Dpmerg}
$\Gamma$ acts ergodically on $\D_\pm^\op$.
\end{corollary}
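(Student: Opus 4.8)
The plan is to obtain this immediately from Theorem~\ref{CartanFlow}, via the general fact that a measure-class-preserving equivariant factor of an ergodic system is again ergodic; indeed, as the remark following the statement indicates, $\D_\pm^\op$ is realised as such a factor of $\D^{2,\op}$.

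First I would recall from \S\ref{subsec:broadpic} the $\Gamma$-equivariant map $\D^{2,\op}\to \D_\pm^\op$, which sends a pair of opposite chambers $(C_+,C_-)$ to the unique pair of opposite vertices in the boundary of the flat spanned by $C_+$ and $C_-$. This map is essentially surjective: every pair of opposite vertices of $\Delta$ is the pair of endpoints of a (wall) flat, and that flat has opposite boundary chambers mapping to the given vertices. Moreover, by Corollary~\ref{cor:measuss}, this map is measure-class preserving for the measure classes fixed there. Then, given any $\Gamma$-invariant measurable subset $B\subseteq \D_\pm^\op$, its preimage $B'\subseteq \D^{2,\op}$ is a $\Gamma$-invariant measurable set, hence null or conull by the ergodicity of the $\Gamma$-action on $\D^{2,\op}$ asserted in Theorem~\ref{CartanFlow}; since the map is measure-class preserving and essentially onto, it follows that $B$ itself is null or conull. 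Hence $\Gamma$ acts ergodically on $\D_\pm^\op$.

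There is no genuine obstacle here: the only two points requiring (minimal) care are that the factor map is measure-class preserving rather than merely measurable — which is precisely the content of Corollary~\ref{cor:measuss} — and that it is essentially surjective, which is immediate from the description of wall flats and their boundaries. I would present the argument in a couple of lines, as the paper already signals that the corollary is straightforward.
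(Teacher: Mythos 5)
Your argument is exactly the paper's: the paper deduces the corollary in one line from Theorem~\ref{CartanFlow}, using that $\D_\pm^\op$ is a $\Gamma$-equivariant, measure-class-preserving factor of $\D^{2,\op}$ (the map from \S\ref{subsec:broadpic}, with compatibility of measure classes from Corollary~\ref{cor:measuss}). Your write-up just spells out the standard fact that ergodicity passes to such factors, which is fine.
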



\subsection{Ergodicity of the singular Cartan flow}

Our goal in this subsection is to prove the following.


\begin{theorem}\label{ergodic}
The $\Gamma$-action  on $\scrS/S$ is ergodic.
\end{theorem}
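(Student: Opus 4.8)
The plan is to deduce the ergodicity of the $S$-action on $\scrS/S$ — equivalently, of the $M\times S$-action on $\scrS/(\Gamma\times S)$ viewed through the homeomorphism $\scrS/(M\times S)\simeq\D_\pm^\op$ — by another application of the abstract Hopf argument (Theorem~\ref{hopf}), now with the singular direction playing the role of the central element. First I would set up the data: take $Z=\scrS$, the group $\Lambda=\Gamma$ acting by postcomposition, and $T=S\simeq(\ZZ,+)$ acting by the singular translations on $T\times\RR$; by Theorem~\ref{thm:S} the $\Gamma$-action on $\scrS$ is proper and $\scrS/\Gamma$ is compact, and the $S$-action is proper as well. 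The quotient $Z/T=\scrS/S$ is homeomorphic, via Lemma~\ref{ShomeoFiber}, to the fiber products $\D_\pm^\op\times_{\D_+}\overline\D_+$ and $\D_\pm^\op\times_{\D_-}\overline\D_-$; these two descriptions furnish the two projections $\pi_1,\pi_2$ needed for Hopf's argument, namely $\scrS/S\to\overline\D_+$ and $\scrS/S\to\overline\D_-$ (or, composing further, the maps to $\D_+$ and $\D_-$). The transversality hypothesis $\pi_1^*(L^\infty)\cap\pi_2^*(L^\infty)=$ constants is where I expect to spend real effort: it amounts to showing that the $\Gamma$-action on $\scrS/S$, pushed to $\overline\D_+\times\overline\D_-$ with the measure from \S\ref{sec:meas}, has no nontrivial function measurable with respect to both factors. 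Here I would use Corollary~\ref{cor:Dpmerg} (ergodicity of $\Gamma$ on $\D_\pm^\op$) together with the bundle description of the measure from Lemma~\ref{lem:MeasHaarFiber} and Corollary~\ref{cor:measuss}: a function invariant under both $\pi_1$ and $\pi_2$ descends to a $\Gamma$-invariant function on $\D_\pm^\op$ that is additionally constant along the $M$-orbits in the fibers $\overline\D_+\to\D_+$, and $M$ acts transitively on those fibers by Lemma~\ref{lem:Aut(T)transitive} and Lemma~\ref{lem:scrs}; combined with $\Gamma$-ergodicity on the base this forces the function to be constant.

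The second thing to verify is the proximality input of Theorem~\ref{hopf}: I must exhibit a central element $s_1\in S$ such that whenever $z,z'\in\scrS$ have the same image under $\pi_2$ — meaning, as in the Cartan-flow proof, that the two marked wall trees agree at the $-$ end, i.e. $\phi^-=\psi^-$ — the $S$-translates $s_1^n z$ and $s_1^n z'$ become proximal in $\scrS/\Gamma$. Concretely: if $\phi^-=\psi^-$ then, after finite $S$-translation, $\phi$ and $\psi$ are negatively equivalent in the sense of Definition~\ref{def:equivalent}, so for large negative parameter they literally agree on every ball; since $s_1$ translates toward the $-$ end, applying $s_1^n$ drags an arbitrarily large ball around the basepoint into the region where the two embeddings coincide, exactly mirroring the proximality verification in the proof of Theorem~\ref{CartanFlow}. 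The only subtlety is that $\scrS$ consists of restricted wall trees, so I must make sure that negative equivalence stays inside the fixed $\simeq$-class $\scrS$, which is automatic because $\sim_-$ is one of the relations generating $\simeq$ and each $\simeq$-class is $S$-invariant (Lemma~\ref{lem:simeq}). I would also record that the Radon measure $\overline m$ on $\scrS$ built in \S\ref{sec:meas} satisfies the regularity hypotheses of Theorem~\ref{hopf}, which is immediate from its construction as a fibered Haar measure over the $\Gamma$-invariant Radon measure $m_\pm$.

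Granting these two ingredients, Theorem~\ref{hopf} yields simultaneously that $S$ acts ergodically on $\scrS/\Gamma$ and that $\Gamma$ acts ergodically on $\scrS/S$, which is the assertion of Theorem~\ref{ergodic}. The main obstacle, as indicated, is the transversality condition $\pi_1^*(L^\infty)\cap\pi_2^*(L^\infty)=\CC$: unlike in the Cartan-flow case, where it followed cleanly from $m\sim\mu\times\mu$ on $\D^{2,\op}$, here the two projections land in $\overline\D_+$ and $\overline\D_-$ rather than in a genuine product, so one genuinely needs the extra transitivity of $M$ on the panel-tree-marking fibers (Lemma~\ref{lem:Aut(T)transitive}, Lemma~\ref{lem:M'}) to collapse a bi-measurable invariant function down to a $\Gamma$-invariant function on $\D_\pm^\op$ before invoking Corollary~\ref{cor:Dpmerg}. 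A clean way to package this is: a $(\pi_1,\pi_2)$-bimeasurable function on $\scrS/S$ pulls back to a function on $\scrS$ invariant under $M\times S$, hence descends to $\scrS/(M\times S)\simeq\D_\pm^\op$ (Lemma~\ref{lem:scrs}), where it is $\Gamma$-invariant and therefore constant by Corollary~\ref{cor:Dpmerg}; one then checks that "invariant under $M\times S$" is exactly the intersection of "constant along $\pi_1$" and "constant along $\pi_2$", using that $M$ is generated (topologically) by the two end-stabilizers arising from the $+$ and $-$ ends — this is the point at which $3$-transitivity of $M$ on $\partial T$ (Theorem~\ref{thm:S}) gets used.
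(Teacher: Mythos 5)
Your overall skeleton matches the paper's: both apply Theorem~\ref{hopf} with $Z=\scrS$, $\Lambda=\Gamma$, $T=S$, $W_\pm=\overline\Delta_\pm$, both verify proximality by reducing (after an $S$-translation) to positive/negative equivalence in the sense of Definition~\ref{def:equivalent}, and both aim to conclude by descending to $\D_\pm^\op$ and invoking Corollary~\ref{cor:Dpmerg}. But the step that carries essentially all the weight of the theorem is handled by an assertion that is not correct as stated: you claim that a function on $\scrS/S$ which is a pullback from both $\overline\D_+$ and $\overline\D_-$ ``pulls back to a function on $\scrS$ invariant under $M\times S$,'' justified by ``$M$ is generated topologically by the two end-stabilizers.'' Being a pullback under $\pi_+$ (resp.\ $\pi_-$) is \emph{not} invariance under any subgroup of $M$: the group $M$ acts by precomposition on the tree coordinate and hence preserves each fiber $Y(u,v)$ of $\scrS/S\to\D_\pm^\op$, whereas the fibers of $\pi_\pm$ move transversally to these fibers (a $\sim_+$-move changes the negative end $\phi^-$, a $\sim_-$-move changes $\phi^+$). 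In particular the stabilizers in $M$ of ends of $T$ have nothing to do with the two pullback algebras, and the proposed generation statement does not deliver the needed fiberwise invariance. The correct mechanism is holonomy: chains of $\sim_+$, $\sim_-$ and $\Gamma$-moves returning to a given pair $(u,v)$ act on the fiber $Y(u,v)\simeq M$ by projectivities and $\Gamma$-twisted perspectivities (Lemmas~\ref{lem:pers-representatives} and~\ref{lem:M'}), and the function's constancy along $\pi_\pm$-fibers together with its $\Gamma$-invariance makes it invariant under exactly these holonomies.

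Two genuine difficulties then remain, and neither is addressed in your proposal. First, the $\Gamma$-twists are indispensable: using only $\sim_\pm$ and $S$ one obtains invariance only under the projectivity group $\Pi(u)$, whose closure $\overline\Pi$ may be a proper subgroup of $M$, so the intersection $\pi_+^*(L^\infty)\cap\pi_-^*(L^\infty)$ alone need not reduce to constants --- the $\Gamma$-invariance must enter the fiberwise argument, not merely be applied on the base $\D_\pm^\op$ at the end. Second, everything is only almost everywhere defined: the function and the equalities expressing that it is a pullback hold off a null set, so one can only use holonomies supported on full-measure configurations. Showing that, for almost every $u$, these ``good'' holonomies still generate a dense subgroup of $M$ is the technical heart of the paper's proof --- the formalism of $D$-full pairs and sequences (Lemmas~\ref{compAfull} and~\ref{aeAfull}), the density statement Lemma~\ref{MAdense}, the continuity of perspectivities (Proposition~\ref{prop:persp continuous}), and the full support of the visual measures. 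Your proposal, by passing directly from bimeasurability to $M\times S$-invariance, skips this entirely; as written the argument does not close, and the first paragraph's version (``descends to $\D_\pm^\op$ \dots constant along the $M$-orbits \dots $M$ acts transitively on the fibers'') begs the question, since constancy along the fibers is precisely what has to be proved. (A minor further slip: the proximality hypothesis concerns equal images in $\overline\D_-$, i.e.\ equal negative end \emph{and} marking, not merely $\phi^-=\psi^-$; with the stronger hypothesis your verification, which mirrors the paper's, is fine.)
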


In the sequel we denote by $Y$ the space $\scrS/S$. It is equipped with the measure class pushed from $\scrS$. Moreover  $M\times \Gamma$ acts on $Y$ and preserves   this measure class.
Note that the continuous surjective map $\scrS\to\D_\pm^\op$ factors through $Y$ by Lemma~\ref{lem:scrs}.
For $(u,v)\in\Delta_\pm^\op$ we let $Y(u,v)$ be the associated fiber under the map $Y\to \D_\pm^\op$.

Note that the space $Y(u,v)$ is in bijection with $M/S$. More precisely, it is the $M/S$-orbit of some (or any) $\phi\in Y(u,v)$, by Lemma~\ref{lem:scrs}.


Recall from Definition~\ref{def:bard} the definitions of the maps $\pi_+ \colon  \widetilde\scrS\to \overline\Delta_+$ and $\pi_- \colon \widetilde\scrS\to \overline\Delta_-$.
Restricting these maps to $\scrS$ and noting that they factor through $Y$,
we abuse the notation and consider these maps as $\pi_+ \colon Y\to\overline\Delta_+$ and $\pi_- \colon Y\to \overline\Delta_-$.
We endow these spaces with the measure classes pushed under $\pi_+,\pi_-$.

Let $p_+$ (resp. $p_-$) be the second coordinate of $\pi_+$ (resp. $\pi_-$).
In other words, $p_+(\phi)$ is the map $f \colon T\to T_{\phi_+}$ which is defined as $f(x)=\phi(x,t)$ for $t$ large enough.
The space $Y_u$ (resp. $Y_v$) is defined as $p_-(Y(u,v))$ (resp. $p_+(Y(u,v))$). It is a set of maps from $T$ to $T_u$, equipped with an action of $M/S$ (by precomposition).
By definition, we have equivariant bijections $p_-:Y(u,v) \to Y_u$ and $p_+:Y(u,v)\to Y_v$. By composition we have a bijection $Y_u\to Y_v$ for every $(u,v)\in\D_\pm^\op$.

Fix $\phi\in Y(u,v)$, and let $O_\phi \colon M/S \to Y(u,v)$ be the orbit map $a\mapsto a.\phi$. We define $\lambda(u,v)=(O_\phi)_*(\Haar(M/S))$. Similarly,  for $\phi'\in Y_u$, we define $\lambda_u$ as the image of the Haar measure on $M/S$ by the map $O_{\phi'}:a\mapsto a.\phi'$.

\begin{lemma}\label{lambda invariant}
The measures $\lambda(u,v)$ and $\lambda_u$ are independent on the choice of $\phi$ and $\phi'$. Furthermore, we have $(p_-)_*\lambda(u,v)=\lambda_u$.
\end{lemma}

\begin{proof}
Let us argue first that $\lambda(u,v)$ is independent of the choice of $\phi\in Y(u,v)$.
Let $\phi,\phi_1$ be elements of $Y(u,v)$.
We know that there exists an $a\in M/S$ such that $\phi_1=a.\phi$.
So, for any $m\in M/S$, we have $m\phi_1=ma\phi$. Let $R_a \colon M/S \to M/S$ be the right translation by $a$.
This means that $O_{\phi_1}=O_\phi \circ R_a$.
By Theorem~\ref{thm:S}, the quotient $M/S$ of $M$ by the discrete normal subgroup $S$ is unimodular, so that $(O_\phi)_*(\Haar(M/S))=(O_{\phi_1})_*(\Haar(M/S))$. The argument is similar for $\lambda_u$.

Now let us prove that $(p_-)_*\lambda(u,v)=\lambda_u$. In view of the previous argument, we can choose $\phi'=p_-(\phi)$.  It follows that $p_-=O_{\phi'}\circ O_{\phi}^{-1}$ is measure preserving.
\end{proof}

Given $n\geq 1$ and a sequence $(u_1,u_2,\dots,u_n)$   of    consecutively opposite vertices, with $u_1\in\Delta_-$, we define $Y(u_1,u_2,\dots,u_n)$ as the set of all sequence  $(\phi_1,\dots,\phi_{n-1})$ such that $\phi_1\in Y(u_1,u_2)$, and $\phi_i$ is the unique element of $Y(u_i,u_{i+1})$ such that $\phi_i\sim_\eps \phi_{i+1}$, where $\eps=+$ if $i$ is odd and $\eps=-$ if $i$ is even. Remark that the map  
$$Y(u_1,u_2,\dots,u_n) \to Y(u_1, u_2) : (\phi_1,\dots,\phi_{n-1}) \mapsto \phi_1$$ 
is bijective, so that $Y(u_1,u_2,\dots,u_n) $ is naturally identified with $Y(u_1,u_2)$. Similarly, the space $Y(u_1,u_2,\dots,u_n) $ is naturally identified with $Y(u_{n-1},u_n)$

In the sequel we will have to consider some full measure subsets of various spaces.  We fix a subset $D\subset Y$. Let $D(u,v)=Y(u,v)\cap D$. More generally, given a sequence 
 $(u_1,u_2,\dots,u_n)$   of consecutively opposite vertices, with $u_1\in\Delta_-$, we define 
 $$D(u_1,\dots,u_n)=\{ (\phi_1,\dots,\phi_{n-1})\in Y(u_1,\dots,u_n)\mid \; \forall i,\;\phi_i \in D \}$$
 
\begin{definition}
A pair $(u,v)$ of opposite vertices is \textbf{$D$-full} if the set $D\cap Y(u,v)$ is of full $\lambda_{u,v}$-measure in $Y(u,v)$.
A sequence $(u_1,u_2,\dots,u_n)$ of consecutively opposite vertices is \textbf{$D$-full} if the set $D(u_1,u_2,\dots,u_n)$ is of full $\lambda(u_1,u_2)$-measure in the set $Y(u_1,u_2,\dots,u_n)\simeq Y(u_1,u_2)$.
\end{definition}

\begin{lemma}\label{compAfull}
If $(u_1,\dots, u_n,v)$ is $D$-full, and $(v,v_1,\dots,v_m)$ is also $D$-full, then the sequence $(u_1,\dots,u_n,v,v_1,\dots,v_m)$ is $D$-full.
\end{lemma}

\begin{proof}
An element of $D(u_1,\dots,u_n,v)$ can be described by a sequence $\phi_1,\dots,\phi_{n-1},\phi_n$ as above, where in particular $\phi_n$ goes from $u_n$ to $v$. Since we assume $(u_1,\dots,u_n,v)$ to be $D$-full, the set of maps $\phi_n$ that can be the last element of such a sequence is of full measure in $Y(u_n,v)$. So (by Lemma \ref{lambda invariant}) its projection to $Y_v$ is also of full measure.
Similarly, the set $D(v,v_1,\dots,v_m)$ is described by sequences $\phi'_0,\phi'_1,\dots,\phi'_{m-1}$, and the set of possible $\phi_0$ is of full measure in $Y(v,v_1)$. So its projection is again of full measure in $Y_v$.

So we have two full measure sets of $Y_v$. It follows that their intersection is again of full measure in $Y_v$. If an element is in this intersection, then there are $\phi_n$ and $\phi'_0$ as above which are positively equivalent. Hence the sequence $(\phi_1,\dots,\phi_n,\phi'_0,\dots,\phi'_m)$ will define an element of $D(u_1,\dots,v,v_1,\dots,v_m)$. It follows that the sequence $(u_1,\dots,v_m)$ is $D$-full.
\end{proof}

Let $n\geq 2$. In what follows, we define $\mu_{x,-,n}$ as the measure  $\mu_{x,-}\otimes \mu_{x,+}\otimes\mu_{x,-}\otimes\dots$, where there are $n$ factors. Similarly we define $\mu_{x,+,n}$ as the measure  $\mu_{x,+}\otimes \mu_{x,-}\otimes\dots$ with $n$ factors. Finally, we define $\mu_{x,n}=\mu_{x,+,n}+\mu_{x,-,n}$, and denote $\mu_n$ the class of $\mu_{x,n}$. This measure is supported on the sets of sequences $(u_1,u_2,\dots,u_n)$, where $u_i$ is opposite to $u_{i+1}$.

\begin{lemma}\label{aeAfull}
Assume that $D$ has full measure in $Y$, and let $n\geq 2$. Then $\mu_n$-almost every sequence $(u_1,\dots,u_n)$ is $D$-full.
\end{lemma}

\begin{proof}
Recall that the set $Y$ is identified to $\Delta_\pm^\op \times M/S$, and the measure is in the same class as the product measure. Using Fubini, the fact that $D$ is of full measure means that for almost all $u,v$, the set of elements in $D(u,v)$ is of full measure in $M/S$. This proves the Lemma for $n=2$.

Using that fact, a straightforward induction using Lemma \ref{compAfull} shows that  almost every sequence is $D$-full.
\end{proof}

\begin{definition}
Let $u,v\in\Delta_\pm^\op$. We define the subgroup $M_D(u, v)$ of $M$ as the group generated by all automorphisms of the panel tree $T_u$ of the 
form $[\gamma:u;v_1=\gamma v;v_2;\dots;v_n,u]$, where $\gamma \in \Gamma$, the successive pairs $(\gamma u, \gamma v)$, $(v_1, v_2)$, \dots, $(v_n, u)$ are opposive, and the sequence $(\gamma u,v_1,\dots,v_n,u)$ is $D$-full. 
\end{definition}

We observe that, when $D=Y$, the group $M_D(u,v)$ is isomorphic to the group $M'_\phi$ described in Lemma~\ref{lem:M'}, where $\phi \in \scrS$ is such that $\phi_- = u$, and $\phi_+=v$.
By Lemma~\ref{lem:simeq}, the group $M/S$ is isomorphic to the closure of the group $M'_\phi$. Since for $D\subset Y$ we have $M_D(u,v)\subset M_Y(u,v)$ we get that for every $D$ and for every $(u,v)$, the group $M_D(u,v)$  is a subgroup of $M/S$.

\begin{lemma}\label{MAdense}
Assume that $D$ has full measure in $Y$. Then, for almost every $(u,v)\in\D_\pm^\op$, the group $M_D(u,v)$ is dense in $M/S$.
\end{lemma}

\begin{proof}

Using Lemma \ref{aeAfull} and Fubini, we deduce that for any fixed odd $n$,  for almost every $(u,v)$, the set of those $(v_2,\dots,v_n)$ such that $(u,v=v_1,v_2,\dots,v_n)$ is $D$-full, is of full measure. Similarly, the set of $v_n\in\Delta_+$ such that $(v_n,u)$ is $D$-full is of full measure.  
 A countable intersection of full measure sets is again of full measure, so that for almost all $(u,v)$, for all $n$, the set of those $(v_2,\dots,v_n)$ such that $(u,v,v_2\dots,v_n,u)$ is $D$-full is of full measure.
 
 A similar argument proves that, for every $\gamma\in\Gamma$, for almost every $(u,v)$,  the sequence $(\gamma u,v_1=\gamma v,v_2,\dots,v_n,u)$ is $D$-full. Taking the intersection over all $n\in\NN$ and $\gamma\in\Gamma$, we get a full measure subset of $(u,v)\in\D_\pm^\op$ such that for every $\gamma$ and $n$, for almost every $(v_2,\dots,v_n)$, the sequence $(\gamma u,v_1=\gamma v,v_2,\dots,v_n,u)$ is $D$-full.

So let us fix some $(u,v)$ in this full measure set, and prove that $M_D(u,v)$ is dense in $M/S$. For any fixed $n$, the set of $(v_2,\dots,v_n)$ such that $(u,v,\dots,v_n,u)$ is $D$-full is of full measure. The measure $\mu_n$ has full support, so this set is dense in $\Delta_+\times\Delta_-\times\dots\times\Delta_+$. By Proposition~\ref{prop:persp continuous}, it follows that the set of projectivities of length $n+2$ which are in $M_D(u,v)$ is dense in all projectivities of length $n+2$ starting by $[u;v]$ and ending at $u$. Applying the same argument with the sequence $(\gamma u,\gamma v,v_2,\dots,v_n,u)$ we see that the closure of $M_D(u,v)$ contains every automorphism of the form $[\gamma : u;\gamma v,v_2,\dots,v_n]$. 

 Hence, for almost every $(u,v)$, the closure of $M_D(u,v)$ contains $M'_\phi$ by Lemma~\ref{lem:M'}. Since $M/S$ is isomorphic to the closure of $M'_\phi$ by Lemma~\ref{lem:simeq},  
it follows that $M_D(u,v)$ is dense in $M/S$, for almost every $(u,v)$. 
\end{proof}

Finally we are ready to prove the main theorem of this section.
In order to facilitate the readability of the proof, we summarize the relevant constructions and discussions given in the previous section in the following diagram,
which should be compared with the diagram~(\ref{diad:hopf}) appearing in Theorem~\ref{hopf}.

\begin{align} \label{diag:singcartan}
\xymatrix{
& \scrS \ar[dl]_\phi\ar[dr]^\psi& &\\
\scrS/\Gamma & & \scrS/S   \ar[dl]_{\pi_+}\ar[dr]^{\pi_{-}}& \\
&  \overline \Delta_+ &  & \overline \Delta_-
}
\end{align}

\begin{proof}[Proof of Theorem \ref{ergodic}]
We want to apply Theorem~\ref{hopf}, putting $\Lambda = \Gamma$, $T= S$, $Z=\scrS$, $V=\scrS/\Gamma$, $Y=\scrS/S$ and $W_+=\overline \Delta_+$, $W_-=\overline \Delta_-$ (with the maps $\pi_-$ and $\pi_+$ as described above).

Let us start by checking the proximality condition.
Let $z,z'\in\scrS$ which have the same image in $W_+$ (the argument is similar for $W_-$).
It follows from the definition of the map to $W_+$ that, up to replacing $z$ by some translate $sz$ with $s\in S$, we can  assume that $z$ and $z'$ are positively equivalent (see Definition \ref{def:equivalent}).
In other words, for every $x \in VT$, there exists $t_x$ such that $z(x, t)=z'(x, t)$ for every $t>t_x$. Let $B_k$ be the $k$-ball centered at the origin $(o, 0)$ in the model wall tree $\Upsilon \subset T \times \RR$. Therefore we may find $t_0$ depending on $k$ such that $z(x, t)=z'(x, t)$ for all $x$ with $(x, 0) \in B_k$, and all $t > t_0$. 

Using the  multiplicative notation for elements of $S$ and denoting by $s$ the positive generator of $S$, we infer that  $s^\alpha z(x, t)= s^\beta z'(x,t )$ for   $\alpha = \beta = t_0$ and all $x$ with $(x, 0) \in B_k$, and all $t > 0$. 
Therefore,   for all sufficiently large $m$ (depending on the value of  $k$), we have $s^m s^\alpha z|_{B_k}=s^ms^\beta z'|_{B_k}$.
This proves that the sequences $(s^ms^\alpha z)_{m\in\NN}$ and $(s^ms^\beta z')_{m\in\NN}$ are proximal in $\scrS$.

In view of Corollary~\ref{cor:measuss}, the only condition left to check in order to apply Theorem~\ref{hopf} and conclude the proof is that every function class in $L^\infty(Y)$ which is $\Gamma$-invariant and is a pullback of functions on $W_+$ and $W_-$ is essentially constant.
We will do it by showing that every such function class is in a pull back under the factor map $Y\to \D_\pm^\op$ of a $\Gamma$-invariant function class on $\D_\pm^\op$.
Indeed, by Corollary~\ref{cor:Dpmerg}, $\Gamma$ acts ergodically on the latter space, hence every such function class must be constant.

We now fix a function class $f$ which is $\Gamma$-invariant and is a pullback of functions on $W_+$ and $W_-$
and argue to show that $f$ is constant on a.e. fiber of $Y\to \D_\pm^\op$,
that is for almost every $(u,v)\in\D_\pm^\op$, $f$ is essentially constant on $Y(u,v)$.
Note that by Fubini Theorem $f$ is well defined for a.e. such $(u,v)$ and denote its restriction to $Y(u,v)$ by $f_{u,v}$.
As before, we identify $Y(u,v)$ with $M/S$ and view $f_{u,v}$ as a function on $M/S$, a.e. defined with respect to the Haar measure.

We let $D$ be the subset of $Y$ on which the two pullbacks of $f_+$ and $f_-$ agree.
Then $D$ is a $\Gamma$-invariant full measure subset of $Y$.
We view $f$ as an actual function defined on $D$.
Since $f$ is the pullback of $f_+$ and $f_-$, we see that if $\phi\sim_+\phi'$ and $\phi,\phi'\in D$, then $f(\phi)=f(\phi')$.
Let  $(u,v_1,\dots,v_n,u)$ be a $D$-full sequence, and $m=[u,v_1,\dots,v_n,u]$ the associated projectivity.
Then the set of all $\phi\in D(u,v)$ such that $\phi\circ m\in D(u,v)$ is of full measure.
If $\phi$ is an element of this set, then $f_{u,v}(m\phi)=f_{u,v}(\phi)$.
So $f_{u,v}$ is invariant by $m$.
Similarly, using also the $\Gamma$-invariance of $D$, if $\gamma\in\Gamma$ and $(\gamma u,v_1,\dots,v_n,u)$ is $D$-full, then $f_{u,v}$ is essentially $[\gamma:u;v_1;\dots;v_n;u]$-invariant.
It follows that $f_{u,v}$ is $M_D(u,v)$ invariant.
For almost every $(u,v)$, we know by Lemma \ref{MAdense}  that $M_D(u,v)$ is dense in $M/S$.

Now, $f_{u,v}$, is a function in $L^{\infty}(Y(u,v))\simeq L^{\infty}(M/S)$ which is invariant by $M_D(u,v)$, and $M_D(u,v)$ is dense in $M/S$. Let us prove that $f_{u,v}$ is actually $M$-invariant. Let $g\in M$, and let $(g_n)$ be a sequence of elements of $M_D(u,v)$ converging to $g$. Let $\varphi$ be a continuous function on $M/S$. Then
\begin{align*}
 \int_{M/S} f_{u,v}(x)\varphi(x) dx &= \int_{M/S} (g_n.f_{u,v})(x)\varphi(x) dx
 = \int_{M/S} f_{u,v}(x)\varphi(g_n x) dx\\
 &\xrightarrow[n\to +\infty]{} \int_{M/S}f_{u,v}(x)\varphi(gx) dx=\int_{M/S} (g.f_{u,v})(x)\varphi(x) dx
\end{align*}
where all the integrals are taken with respect to the Haar measure on $M/S$. Since this is valid for every continuous $\varphi$, it follows that $f_{u,v}=g.f_{u,v}$.

It follows that $f_{u,v}$ is invariant by $M/S$, hence constant.
\end{proof}



\section{Non-linearity of $\Gamma$} \label{sec:nonlinear}

In this section we finally prove Theorem~\ref{thm:nonlinear-local}.
We will do so in \S\ref{subsec:pfofmain}.
Our proof relies on the Bader--Furman  theory of algebraic representations that is recalled in \S\ref{sub:BF}.

\subsection{Algebraic Representations}\label{sub:BF}

In this section we review Bader--Furman's theory of gates and algebraic representations, about  which the  main reference is \cite{BF-Superrigid}.

Let $\Gamma$ be a countable group  and  $k$ be a local field. We fix a representation $\rho \colon \Gamma\to \bfG(k)$, where $\bfG$ is a $k$-simple $k$-algebraic group, and assume that its image is Zariski-dense.

Let $Y$ be a standard Borel space, equipped with a measure $\mu$. We assume that $\Gamma$ has a measure-class preserving action  on $Y$. Furthermore, a standing assumption is that the $\Gamma$-action is moreover ergodic.

\begin{definition}
An \textbf{algebraic representation} of $Y$ is a pair $(\bfV,\phi)$, where
\begin{itemize}
\item $\bf V$ is a $k$-algebraic variety endowed with a $k$-algebraic action of $\bfG$,
\item $\phi\colon Y\to {\bf V}(k)$ is a measurable map such that $\phi(\gamma y)=\rho(\gamma) y$ for every $\gamma\in \Gamma$ and a.e. $y\in Y$.
\end{itemize}

A \textbf{morphism} from a representation $({\bf U},\phi_U)$ to $({\bf V},\phi_V)$ is a $k$-algebraic $\bf G$-equivariant map $\psi\colon {\bf U}\to {\bf V}$ such that $\phi_U\circ \psi=\phi_V$.
\end{definition}

We sometimes abuse  notation and speak of the algebraic representation $\bfV$ instead of $(\bfV,\phi)$.

\begin{theorem} \label{thm:locallyclosed}
Let $\bf V$ be a $k$-algebraic variety endowed with a $k$-algebraic action of $\bfG$.
Then the orbits of the $\bfG(k)$-action on $\bfV(k)$ are locally closed for the $k$-topology on $\bfV(k)$.
\end{theorem}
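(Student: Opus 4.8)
The plan is to reduce the statement to the corresponding fact in the Zariski topology and then to descend it to $k$-points by a Baire category argument. Recall the classical fact from the theory of algebraic transformation groups that the $\mathbf{G}$-orbits on $\mathbf{V}$ are locally closed for the Zariski topology. Fix $v\in\mathbf{V}(k)$ and let $\mathbf{O}:=\mathbf{G}\cdot v$ be its orbit, a smooth locally closed $k$-subvariety of $\mathbf{V}$ which is homogeneous under $\mathbf{G}$. Since polynomial functions are continuous for the $k$-topology, a Zariski-closed subset of $\mathbf{V}$ meets $\mathbf{V}(k)$ in a $k$-closed set and a Zariski-open subset in a $k$-open set; hence $\mathbf{O}(k)$ is locally closed in $\mathbf{V}(k)$ for the $k$-topology, and it is moreover locally compact, Hausdorff and second countable, being the set of $k$-points of a $k$-variety. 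As ``locally closed'' is a transitive relation among subsets, it therefore suffices to prove that the single $\mathbf{G}(k)$-orbit $\mathbf{G}(k)\cdot v$ is locally closed in $\mathbf{O}(k)$.

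For this last step I would use two inputs. First, $\mathbf{G}(k)$ is a locally compact, second countable, hence $\sigma$-compact group; consequently every $\mathbf{G}(k)$-orbit in $\mathbf{O}(k)$ is $\sigma$-compact and, $\mathbf{O}(k)$ being Hausdorff, the image in it of any compact subset of $\mathbf{G}(k)$ is $k$-closed. Second, the set $\mathbf{O}(k)/\mathbf{G}(k)$ of $\mathbf{G}(k)$-orbits on $\mathbf{O}(k)$ is finite, as it injects into the Galois cohomology set $H^1(k,\mathbf{G}_v)$ of the stabiliser $\mathbf{G}_v$ of $v$, which is finite over the local field $k$. Write $\mathbf{O}(k)=O_1\sqcup\dots\sqcup O_m$ with $O_1=\mathbf{G}(k)\cdot v$. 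Exhausting $\mathbf{G}(k)$ by compact sets exhibits $\mathbf{O}(k)$ as a countable union of $k$-closed subsets, so by the Baire category theorem (applicable since $\mathbf{O}(k)$ is locally compact Hausdorff) one of the $O_i$ has non-empty interior; translating such an interior around by the transitive $\mathbf{G}(k)$-action on $O_i$ shows that this $O_i$ is $k$-open in $\mathbf{O}(k)$. Its complement is a $k$-closed, locally compact Hausdorff union of $m-1$ orbits, to which the same argument applies, and an induction on $m$ yields that every $O_i$, in particular $O_1=\mathbf{G}(k)\cdot v$, is open in a $k$-closed subset of $\mathbf{O}(k)$, hence locally closed in $\mathbf{O}(k)$ and therefore in $\mathbf{V}(k)$.

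The step that requires genuine care is precisely this descent to $k$-points, concentrated in the finiteness of $H^1(k,\mathbf{G}_v)$: over a local field this follows from the Borel--Serre finiteness theorem for Galois cohomology of linear algebraic groups when $\mathbf{G}_v$ is smooth, which is automatic in characteristic zero. In positive characteristic a non-smooth stabiliser may produce infinitely many $\mathbf{G}(k)$-orbits on $\mathbf{O}(k)$, and one then argues instead via the structure of the orbit morphism $\mathbf{G}\to\mathbf{O}$: it is faithfully flat of finite type, and after passing to the reduced subgroup of $\mathbf{G}_v$ it factors through a finite purely inseparable morphism — which induces a homeomorphism on $k$-points, purely inseparable morphisms of $k$-varieties doing so over a local field — together with a smooth surjection, to which the implicit function theorem over $k$ applies to give openness of the image on $k$-points. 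Either way, the heart of the matter is upgrading the scheme-theoretic local closedness of orbits to the $k$-analytic topology on rational points, and that is where I expect the main difficulty to lie.
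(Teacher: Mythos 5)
For this theorem the paper offers no argument of its own: the proof is a pointer to Zimmer's book (Proposition~3.3.1) in characteristic zero, to Bernstein--Zelevinsky for the general case, and to \cite{BDL-tameness} for a complete discussion. Your first paragraph is precisely Zimmer's characteristic-zero proof: pass to the Zariski orbit $\mathbf{O}=\mathbf{G}\cdot v$, which is locally closed and whose $k$-points are locally closed and locally compact in $\mathbf{V}(k)$; observe that the $\mathbf{G}(k)$-orbits in $\mathbf{O}(k)$ inject into $H^1(k,\mathbf{G}_v)$, finite by Borel--Serre; and conclude by $\sigma$-compactness and Baire. That part is complete and correct. Your second paragraph, which is the case the paper actually needs (the local field $k$ may well have positive characteristic here), follows the same strategy as the cited general proofs --- factor the orbit map as a smooth quotient $\mathbf{G}\to\mathbf{G}/(\mathbf{G}_v)_{\mathrm{red}}$ followed by a finite purely inseparable morphism onto $\mathbf{O}\cong\mathbf{G}/\mathbf{G}_v$ --- but two points in your sketch need repair. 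First, a purely inseparable morphism does \emph{not} induce a homeomorphism on $k$-points: the Frobenius $x\mapsto x^p$ on the affine line has image $k^p\subsetneq k$. What the argument needs, and what is true, is that a finite purely inseparable $k$-morphism is injective and proper on $k$-points, hence a homeomorphism onto a \emph{closed} subset of $\mathbf{O}(k)$; combined with the openness on $k$-points of the smooth map (your implicit function theorem step), the rational orbit appears as an open subset of a closed subset of $\mathbf{O}(k)$, which is exactly local closedness. Second, in characteristic $p$ the finiteness of the set of rational orbits is not rescued by smoothness of the stabilizer: Borel--Serre is a characteristic-zero theorem, and already $H^1(k,\ZZ/p\ZZ)\cong k/\wp(k)$ is infinite when $\operatorname{char}k=p$, so a smooth (even \'etale) stabilizer can produce infinitely many $\mathbf{G}(k)$-orbits. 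Hence in positive characteristic you should run the smooth-plus-purely-inseparable factorization argument uniformly, rather than treat it as a patch for non-reduced stabilizers and fall back on Baire otherwise. With these adjustments your outline matches the proofs in the sources the paper cites.
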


\begin{proof}
This is proved in \cite[Proposition 3.3.1]{Zimmerbook} under the assumption that the characteristic of $k$ is 0.
The general case is proved \cite{BernsteinZelevinski}. For a complete discussion see \cite[Proposition 2.2]{BDL-tameness}.
\end{proof}

Let $({\bf V},\phi)$ be an algebraic representation of $Y$.
It follows from Theorem~\ref{thm:locallyclosed} and the ergodicity assumption that the pushforward of $\mu$ is supported on a unique $\bfG$-orbit in $\bf V$, hence on a variety which is a quotient of $\bfG$.

Now consider the set of all representations of $Y$ into quotients of $\bf G$. More precisely, consider the set of all algebraic subgroups ${\bf H}<{\bf G}$ such that there exists an algebraic representation $({\bf G}/{\bf H},\phi)$. By Noetherianity, there exists a minimal element in this set of subgroups.

The above argument is the first step in the proof of the following theorem, which can be found in \cite[Theorem 5.3]{BF-Superrigid}.

\begin{theorem}\label{thm:gate}
There exists an algebraic subgroup $\bfH<\bfG$, and an algebraic representation $\phi\colon Y\to \bfG/\bfH$, such that for every algebraic representation $\bf V$ of $Y$, there exists a morphism of algebraic representations ${\bf G}/{\bf H}\to \bf V$.
\end{theorem}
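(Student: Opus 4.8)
The plan is to construct the pair $(\bfH, \phi)$ by a "universal minimality" argument in the category of algebraic representations of $Y$, using the fact established above (via Theorem~\ref{thm:locallyclosed} and ergodicity) that every algebraic representation essentially factors through a single $\bfG$-orbit, i.e. through a homogeneous variety $\bfG/\bfH'$ for some algebraic subgroup $\bfH' < \bfG$. First I would let $\mathcal{H}$ denote the (non-empty, since the trivial representation to a point exists) collection of algebraic subgroups $\bfH' < \bfG$ such that there is an algebraic representation $\phi_{\bfH'}\colon Y \to (\bfG/\bfH')(k)$ with Zariski-dense-image — more precisely such that the essential image is not contained in a proper $\bfG$-subvariety. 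By Noetherianity of the Zariski topology on $\bfG$ (descending chain condition on closed subgroups), $\mathcal{H}$ has a minimal element $\bfH$; fix it together with a corresponding $\phi\colon Y \to (\bfG/\bfH)(k)$.

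The key step is then to show this minimal $(\bfG/\bfH, \phi)$ is \emph{universal}: given any algebraic representation $(\bfV, \phi_{\bfV})$ of $Y$, I must produce a $\bfG$-equivariant $k$-morphism $\psi\colon \bfG/\bfH \to \bfV$ with $\phi_{\bfV} = \psi \circ \phi$. To do this, consider the fiber product representation on $\bfG/\bfH \times \bfV$ with the diagonal-type map $y \mapsto (\phi(y), \phi_{\bfV}(y))$; its essential image lies (by the orbit argument) in a single $\bfG$-orbit $\mathcal{O} \subseteq \bfG/\bfH \times \bfV$, which is a homogeneous variety $\bfG/\bfH''$ where $\bfH''$ is the stabilizer of a point. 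The first projection $\mathcal{O} \to \bfG/\bfH$ is $\bfG$-equivariant and dominant, so $\bfH''$ is conjugate into $\bfH$; but $\bfH'' \in \mathcal{H}$ via this new representation, so minimality of $\bfH$ forces $\bfH''$ to be conjugate to $\bfH$, whence the projection $\mathcal{O} \to \bfG/\bfH$ is an isomorphism. Composing its inverse with the second projection $\mathcal{O} \to \bfV$ gives the desired $\psi$, and the identity $\phi_{\bfV} = \psi \circ \phi$ holds by construction on the essential image and hence a.e.

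The main obstacle is the careful bookkeeping around "essential image" and the measurability/rationality issues: one needs that a measurable $\Gamma$-equivariant map whose essential image is Zariski-dense in $\bfG/\bfH$ genuinely interacts with the orbit structure as claimed — this is exactly where Theorem~\ref{thm:locallyclosed} (local closedness of orbits, so that the pushforward measure, being $\Gamma$-quasi-invariant and ergodic, must concentrate on one orbit rather than on a countable union or a non-locally-closed set) is essential — and that the abstractly-defined stabilizer $\bfH''$ is an algebraic subgroup defined over $k$ with the homogeneous space $\bfG/\bfH''$ having the right $k$-points. I would handle this exactly as in \cite[\S5]{BF-Superrigid}, appealing to Theorem~\ref{thm:locallyclosed} at each place where a measure is pushed to a variety, and noting that conjugacy of the relevant stabilizers can be arranged over $k$ since all the maps and orbits in question are defined over $k$. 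The uniqueness of $(\bfG/\bfH, \phi)$ up to isomorphism (not asserted in the statement but implicit) then follows formally from universality by the usual argument.
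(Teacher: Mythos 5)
Your proposal is correct and follows essentially the same route as the paper: the paper's own treatment consists precisely of the first step you describe (ergodicity plus Theorem~\ref{thm:locallyclosed} forcing the pushforward onto a single orbit, then Noetherian minimality over the family of subgroups $\bfH'$ admitting a representation $Y\to\bfG/\bfH'$), after which it cites \cite{BF-Superrigid} for the universality, and your product-representation argument (project the single orbit in $\bfG/\bfH\times\bfV$ back to $\bfG/\bfH$, use conjugation-invariance of the family and minimality to see the projection is an isomorphism, then compose with the second projection) is exactly the cited Bader--Furman argument, with the rationality and positive-characteristic stabilizer issues rightly deferred to that reference.
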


\begin{definition}
The algebraic representation $({\bf G}/{\bf H},\phi)$ is called the \textbf{gate} of $Y$.
\end{definition}

The morphism in Theorem \ref{thm:gate} is in fact uniquely defined. From this, we deduce the following.

\begin{proposition}\label{Yoneda}
Let $({\bf G}/{\bf H},\phi)$ be the gate of $Y$. Let $M$ be a Polish group with a Borel action on $Y$ which preserves the measure class of $\mu$ and commutes with the action of $\Gamma$. There exists a continuous homomorphism $M\to N_{\bf G}({\bf H})/{\bf H}(k)$ which turns the gate map $Y\to {\bf G}/{\bf H}$ into a $M$-equivariant map.
\end{proposition}

\begin{proof}
The existence of the homomorphism $\psi\colon M\to N_{\bf G}({\bf H})/{\bf H}(k)$ is proved in \cite[Theorem 6.1]{BF-Superrigid}. The only thing left to check is its continuity.

To simplify the notations we let $V={\bf G}/{\bf H}(k)$, $L=N_{\bf G}({\bf H})/{\bf H}(k)$
and $U=\LL(Y,V)$, that is $U$ is the set of classes of measurable functions from $Y$ to $V$, identified up to a.e. equality, with the topology of convergence in measure.
We endow $U$ with the action of $L$ by post-composition, and the action of $M$  by precomposition.
By the fact that $L$ acts freely on $V$, we get that $L$ acts freely on $U$ as well.
Using Theorem~\ref{thm:locallyclosed}, \cite[Proposition~3.3.1]{Zimmerbook} gives that the $L$-action on $U$ has locally closed orbits\footnote{Actually, in 
\cite[Proposition~3.3.1]{Zimmerbook} it is assumed that $k$ is a local field of zero characteristic, as it relies on \cite[Theorem~3.1.3]{Zimmerbook}, but upon replacing \cite[Theorem~3.1.3]{Zimmerbook} with  Theorem~\ref{thm:locallyclosed} the proof applies verbatim here as well.}.
It follows that the $L$-orbit map $L\to U$, $l\mapsto l\circ \phi$ is a homeomorphism onto its image, $L\phi$. 
We let $\alpha:L\phi\to L$ be its inverse.

By the fact that the map $M\times Y \to V$, $(m,y)\mapsto \phi(my)$ is a.e. defined and measurable, we get that the associated map $\beta:M\to U$, $m\mapsto \phi\circ m$ is a.e. defined and measurable, see  \cite[Chapter VII, Lemma 1.3]{Margulis}.
By the $M$-equivariance of $\phi$, $\phi\circ m=\psi(m)\circ \phi$ and we conclude that $\psi$ agrees a.e. with $\alpha\circ \beta$ which is a.e. defined and measurable.
It follows that $\psi$ is measurable. 
By \cite[Lemma 2.1]{Rosendal} we conclude that $\psi$ is a continuous homomorphism.
\end{proof}

\begin{proposition} \label{prop:gatemorph}
Assume $Y\to Z$ is a factor map.
Let $\phi_Y \colon Y\to \mathbf{G}/\mathbf{H}_Y(k)$ and $\phi_Z \colon Z\to \mathbf{G}/\mathbf{H}_Z(k)$ be the corresponding gate maps.
Then there exists a canonical $k$-defined algebraic $\mathbf{G}$-map $\psi \colon \mathbf{G}/\mathbf{H}_Y\to \mathbf{G}/\mathbf{H}_Z$ such that $\phi_Z=\psi\circ\phi_Y$.
\end{proposition}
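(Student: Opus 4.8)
The plan is to exploit the universal property of the gate from Theorem~\ref{thm:gate} together with the naturality of the construction. First I would observe that the composite $Y \to Z \xrightarrow{\phi_Z} \mathbf{G}/\mathbf{H}_Z(k)$ is an algebraic representation of $Y$: the factor map $Y \to Z$ is $\Gamma$-equivariant (being a morphism of $\Gamma$-spaces) and measure-class preserving, and $\phi_Z$ is $\Gamma$-equivariant by definition, so the composite is a measurable $\Gamma$-equivariant map from $Y$ to the $k$-points of the $\mathbf{G}$-variety $\mathbf{G}/\mathbf{H}_Z$. Here one uses that ergodicity of $\Gamma$ on $Y$ passes to $Z$ (a factor of an ergodic system is ergodic), so that Theorem~\ref{thm:gate} applies to $Z$ and the gate $\phi_Z$ genuinely exists.

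Next, I would invoke the universal property of the gate $(\mathbf{G}/\mathbf{H}_Y, \phi_Y)$ of $Y$ given by Theorem~\ref{thm:gate}: since $\bigl(\mathbf{G}/\mathbf{H}_Z, \phi_Z \circ (Y\to Z)\bigr)$ is an algebraic representation of $Y$, there exists a morphism of algebraic representations $\psi \colon \mathbf{G}/\mathbf{H}_Y \to \mathbf{G}/\mathbf{H}_Z$, that is, a $k$-defined algebraic $\mathbf{G}$-equivariant map with $\phi_Z \circ (Y \to Z) = \psi \circ \phi_Y$, which is precisely the asserted identity $\phi_Z = \psi \circ \phi_Y$ (reading $\phi_Z$ as pulled back to $Y$). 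For canonicity, I would recall the remark made in the excerpt just after Theorem~\ref{thm:gate}, that the morphism provided there is \emph{uniquely} determined; hence $\psi$ is canonical. This uniqueness is what makes the statement meaningful, and it follows because any two such morphisms agree on the essential image of $\phi_Y$, which is Zariski dense in $\mathbf{G}/\mathbf{H}_Y$ (again using ergodicity and Theorem~\ref{thm:locallyclosed}, exactly as in the construction of the gate), and a $\mathbf{G}$-equivariant algebraic map between homogeneous $\mathbf{G}$-varieties is determined by its value at one point.

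The step I expect to require the most care is verifying that the composite $Y \to Z \to \mathbf{G}/\mathbf{H}_Z(k)$ really satisfies all the hypotheses needed to be an ``algebraic representation of $Y$'' in the precise sense of the definition, and in particular that the factor map interacts correctly with the measure classes so that the $\Gamma$-equivariance holds almost everywhere with respect to $\mu_Y$ (not just $\mu_Z$); this is a routine but non-vacuous compatibility check, since almost-everywhere statements downstairs pull back to almost-everywhere statements upstairs precisely because the factor map is measure-class preserving. Everything else is a direct citation of Theorem~\ref{thm:gate} and its uniqueness clause. In summary: (i) check the pulled-back gate of $Z$ is an algebraic representation of $Y$; (ii) apply the universal property of the gate of $Y$ to obtain $\psi$; (iii) read off $\phi_Z = \psi \circ \phi_Y$; (iv) deduce canonicity from the uniqueness of the morphism and Zariski density of the image of $\phi_Y$.
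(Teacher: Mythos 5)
Your proposal is correct and follows exactly the paper's argument: the paper also obtains $\psi$ by viewing the composition $Y\to Z\to \mathbf{G}/\mathbf{H}_Z(k)$ as an algebraic representation of $Y$ and invoking the initiality (universal property) of the gate $\phi_Y$, with canonicity coming from the uniqueness of the morphism noted after Theorem~\ref{thm:gate}. The extra compatibility checks you flag (ergodicity of the factor, pullback of almost-everywhere equivariance along the measure-class preserving factor map) are sound and are exactly the routine verifications the paper leaves implicit.
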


\begin{proof}
The existence of $\psi$ follows by considering the composition $Y\to Z\to \mathbf{G}/\mathbf{H}_Z(k)$ as a representation of $Y$ and the fact that $\phi_Y$ is an initial object.
%
\end{proof}


\subsection{Proof of Theorem~\ref{thm:nonlinear-local}} \label{subsec:pfofmain}

In this section we fix a  local field $k$, a connected adjoint $k$-simple $k$-algebraic group $\mathbf{G}$ and a group homomorphism $\Gamma\to \mathbf{G}(k)$ with an unbounded and Zariski dense image.
We will argue to show that $X$ is Bruhat--Tits.

We will consider the $\Gamma$-spaces $\Ch(\Delta)$, $\Delta^{2,\op}$ and $\scrS/S$.	
These spaces are endowed with natural measure classes as discussed in \S\ref{sec:meas}
(see in particular \S\ref{subsec:d} and Corollary~\ref{cor:measuss}).
We will fix these measure classes, but make them implicit in our notation.
By Theorem~\ref{CartanFlow} and Theorem~\ref{ergodic} all these spaces are $\Gamma$-ergodic.
Thus we may consider their algebraic gates as discussed in \S\ref{sub:BF}.
We consider the map $\scrS/S\to \D^{2,\op}$ discussed in~\S\ref{subsec:broadpic} and its $N\to\{1,\tau\}$-equivariancy discussed in Corollary~\ref{cor:flip}.
We also consider the projections $p_1,p_2\colon \D^{2,\op}\to \Ch(\D)$.
We invoke the discussion of \S\ref{sub:BF} and summarize our situation in the following diagram:

\begin{align} \label{eq:main}
\begin{split}
\xymatrix{N<M^+\ar@{>>}[]!<-4ex,0ex>;[d]!<0ex,0ex>  \ar@/^2.5pc/@{.>}[rrrrr]^{} &\ar@(ul,dl) & \scrS/S \ar[r] \ar[d] & {\bf G}/{\bf H}_ {\scrS/S}(k) \ar@{>>}[d] &
\ar@(ur,dr) & N_{\bf G}({\bf H}_{ \scrS/S})/{\bf H}_{ \scrS/S}(k) \\
\{1,\tau\} \ar@/^2.5pc/@{.>}[rrrrr]^{} &\ar@(ul,dl) & \Delta^{2,op} \ar[r]\ar@< 2.5pt>[d]_{p_1~} \ar@< -2.5pt>[d]^{~~p_2} & {\bf G}/{\bf H}_{\Delta^{2,op}}(k) \ar@{>>}@< 2.5pt>[d] \ar@{>>}@< -2.5pt>[d] &  \ar@(ur,dr) &  N_{\bf G}({\bf H}_{\Delta^{2,op}})/{\bf H}_{\Delta^{2,op}}(k)   \\
& & \Ch(\Delta) \ar[r] & {\bf G}/{\bf H}_{\Ch(\Delta)}(k) &  & }
\end{split}
\end{align}

Here the spaces on the right-hand side are the gates of the corresponding spaces on the left-hand side and the maps between them are the ones guaranteed by Proposition~\ref{prop:gatemorph}. The dotted arrows are continuous group homomorphisms as guaranteed by Proposition~\ref{Yoneda}.

We break the proof of Theorem~\ref{thm:nonlinear-local} into the following four steps:
\begin{enumerate}
\item ${\bf H}_{\Ch(\Delta)},{\bf H}_{\Delta^{2,op}}$ and ${\bf H}_{\scrS/S}$ are proper subgroups of $\bf G$.
\item The homomorphism $\{1,\tau\}\to N_{\bf G}({\bf H}_{\Delta^{2,op}})/{\bf H}_{\Delta^{2,op}}(k)$ is non-trivial.
\item The homomorphism $M^+\to  N_{\bf G}({\bf H}_{\scrS/S})/{\bf H}_{\scrS/S}(k)$ is non-trivial.
\item $X$ is Bruhat--Tits.
\end{enumerate}

\begin{proof}[Proof of Step 1:]
Clearly it is enough to show that ${\bf H}_{\Ch(\Delta)} \neq\bfG$,
and by the universal property of the gate, it suffices to prove that there exists an algebraic representation ${\Ch(\Delta)} \to \bfG/\bfH(k)$ with $\bfH\neq\bfG$.
By the main result of \cite{Lecureux_amen} (see also the earlier reference \cite[\S4.2]{RobertsonSteger} for the special case of a lattice $\Gamma$ acting sharply transitively on the vertices of $X$), the action of $\Gamma$ on ${\Ch(\Delta)} $ is topologically amenable. It follows that the $\Gamma$-action on $({\Ch(\Delta)} ,\mu)$ is also amenable in the sense of Zimmer \cite[Proposition 3.3.5]{AnantharamanRenault}.
By \cite[Theorem 6.1]{BDL-tameness}, either there exists an algebraic representation of ${\Ch(\Delta)} \to \bfG/\bfH(k)$, with $\bfH$ a proper subgroup of $\bfG$, or there exists a $\Gamma$-equivariant map to a metric space $Z$ with a proper isometric action of $\bfG(k)$.
The second option is excluded by \cite[Theorem~1.1]{Glasner-Weiss}, as $({\Ch(\Delta)} \times{\Ch(\Delta)} ,\mu\times\mu)$ is $\Gamma$-ergodic.
\end{proof}

\begin{proof}[Proof of Step 2:]
If the image of $\tau$ in $N_{\bf G}({\bf H}_{\Delta^{2,op}})/{\bf H}_{\Delta^{2,op}}(k)$ were trivial then, by the commutativity of the diagram~(\ref{eq:main}),
the two maps ${\Ch(\Delta)} \times {\Ch(\Delta)}  \overset{p_1}\to {\Ch(\Delta)} \to {\bf G}/{\bf H}_{\Ch(\Delta)} (k)$ and ${\Ch(\Delta)} \times {\Ch(\Delta)}  \overset{p_2}\to {\Ch(\Delta)} \to {\bf G}/{\bf H}_{\Ch(\Delta)} (k)$ would coincide.
This  happens if and only if these maps are essentially constant, as the two factors ${\Ch(\Delta)} \times {\Ch(\Delta)}  \overset{p_1}\to {\Ch(\Delta)} $ and ${\Ch(\Delta)} \times {\Ch(\Delta)}  \overset{p_2}\to {\Ch(\Delta)} $ are obviously independent.
But the Zariski closure of the support of the image of $ {\Ch(\Delta)} $ in ${\bf G}/{\bf H}_{\Ch(\Delta)} (k)$ is $\Gamma$-invariant, hence $\bfG$-invariant, as $\rho(\Gamma)$ is Zariski-dense, hence it is all of the homogeneous space $\bfG/\bfH_{\Ch(\Delta)} $. This contradicts the fact that $\bfH_{\Ch(\Delta)} $ is a proper subgroup of $\bfG$ established in the previous step.
\end{proof}

\begin{proof}[Proof of Step 3:]
By Theorem~\ref{thm:S} the group $M^+/S$ is topologically simple.
By Proposition~\ref{Yoneda} the homomorphism $M^+\to N_{\bf G}({\bf H}_{\scrS/S})/{\bf H}_{\scrS/S}(k)$ is continuous, and by construction its kernel contains $S$ since it factors through the $M^+$-action on $\scrS/S$.
Thus, in order to prove its non-triviality it is enough to exhibit a single element $n\in M^+$ having a non-trivial image.
By Corollary~\ref{cor:flip} the homomorphism $N\to \{1,\tau\}$ is surjective. We let $n\in N<M^+$ be a preimage of $\tau$
and observe that if its image in $N_{\bf G}({\bf H}_{\scrS/S})/{\bf H}_{\scrS/S}(k)$ were trivial then   the image of $\tau$ in
$N_{\bf G}({\bf H}_{\Delta^{2,op}})/{\bf H}_{\Delta^{2,op}}(k)$ would also be trivial by the commutativity of the diagram~(\ref{eq:main}).
\end{proof}

\begin{proof}[Proof of Step 4:]
The representation $M^+/S\to  N_{\bf G}({\bf H}_{\scrS/S})/{\bf H}_{\scrS/S}(k)$
is a non-trivial continuous linear representation of $M^+/S$ over a local field.
We conclude by Theorem~\ref{thm:S}
that $X$ is Bruhat--Tits.
\end{proof}

\subsection{Proof of Theorem~\ref{nonlinear}}

Let $X$ be a locally finite $\widetilde A_2$-building and $\Gamma$ be a lattice in $\Aut(X)$.
Assume that $X$ is not isomorphic to the building associated to $\mathrm{PGL}_3(D)$, with $D$ a finite dimensional division algebra over a local field.
By \cite{pansu}, the group $\Gamma$ satisfies Kazhdan's property (T).
Fix a finite index subgroup $\Gamma_1<\Gamma$ which acts by type-preserving automorphisms.
Let $\Gamma_0<\Gamma_1$ be any further finite index subgroup.
Observe that $\Gamma_0$ is also a cocompact lattice in $\Aut(X)$.
Applying Theorem~\ref{thm:nonlinear-local} to $\Gamma_0$
we obtain that it has no unbounded Zariski dense representations into $\mathbf{G}(k)$, where $k$ is a local field and $\bf G$ is a connected adjoint $k$-simple $k$-algebraic group.
Therefore, the required conclusion follows from Corollary~\ref{cor:ringlinear}.\qed

\section{Galois lattices are non-linear}\label{app:Galois}

\subsection{The automorphism group of a simple algebraic group}

Given a locally compact group $G$, we denote by $\Aut(G)$ the group of bi-continuous automorphisms of $G$, endowed with the \textbf{Braconnier topology} (also sometimes called  the \textbf{Birkhoff topology}). In general $\Aut(G)$ need not be locally compact, but it will be so for the specific groups $G$ that we shall consider. Given a locally compact field $k$, we also denote by $\Aut(k)$ the group of bi-continuous automorphisms of $k$. If $k$ is not isomorphic to $\mathbf C$, any automorphism of $k$ is automatically continuous: this is clear if $k$ is discrete or if $k \simeq \mathbf R$ since $\Aut(\mathbf R)$ is trivial, and follows otherwise from the uniqueness of the discrete valuation with respect to which $k$ is complete. Moreover, if $k$ is non-discrete, then $\Aut(k)$ is infinite if and only if $k$ has positive characteristic.

Let now $\mathbf G$ be a  $k$-simple adjoint algebraic group over a local field $k$, with $k$-rank~$\geq 1$. The goal of this section is to show that if a lattice $\Gamma$ in the locally compact group $\Aut(\mathbf G(k))$ has infinite image in $\Out(\mathbf G(k))$ and if $\mathbf G$ has $k$-rank~$\geq 2$, then every finite-dimensional linear representation of $\Gamma$ has finite image. 
We first recall some useful facts on semi-simple groups and their automorphisms.

The first point to keep in mind is that given a group $\mathbf G$ over $k$ as above, there is a semi-simple absolutely   simple adjoint group $\mathbf H$ defined over a finite separable extension $k'$ of $k$, so that $\mathbf G$ is the Weil restriction of $\mathbf H$ from $k'$ to $k$ (see \cite{BoTi65}*{\S6.21(ii)}). In particular the locally compact groups $\mathbf H(k')$ and  $\mathbf G(k)$ are isomorphic, so there is no loss  of generality in assuming that $\mathbf G$ is absolutely   simple. After that reduction, a description of the automorphism group of $\mathbf G(k)$ is provided by the work of Borel and Tits \cite{BoTi}. We record the following points for the sake of future references.

\begin{proposition}\label{prop:Aut}
	Let $\mathbf G$ be an absolutely simple adjoint algebraic group over a local field  $k$, with $k$-rank~$\geq 1$. Let $X$ denote the Bruhat--Tits building of $\mathbf G(k)$, and $\mathbf G(k)^+$ denote  the subgroup generated by the unipotent radicals of $k$-parabolic subgroups. The following assertions hold.
	\begin{enumerate}[(i)]
		\item We have $\Aut(\mathbf G(k)) \cong \Aut(\mathbf G(k)^+)$. Moreover, as an abstract group $\Aut(\mathbf G(k))$ is an  extension of the group $\Aut(\mathbf G)(k)$ of algebraic automorphisms by the   group $\Aut_{\mathbf G}(k)$ consisting of those automorphisms $\alpha$ of $k$ such that the groups $\mathbf G$ and ${}^\alpha \mathbf G$ are $k$-isomorphic.
		
		\item The normal subgroup $\Inn(\mathbf G(k)) \cong  \mathbf G(k)$ is   of finite index in $\Aut(\mathbf G)(k)$. 
		
		\item If $\mathbf G$ is $k$-split, then $\Aut(\mathbf G(k)) \cong \mathbf G(k) \rtimes (A \times \Aut(k))$, where $A$ is the automorphism group of the Dynkin diagram of $\mathbf G$.
		
		\item $\Aut(\mathbf G(k))$ is a compactly generated  locally compact second countable group, the subgroup $\Inn(\mathbf G(k)) \cong  \mathbf G(k)$ is closed and cocompact. In particular $\Aut(\mathbf G)(k)$ is closed and cocompact in $\Aut(\mathbf G(k))$. The intersection of all   non-trivial closed normal subgroups of $\Aut(\mathbf G(k))$ is non-trivial and coincides with the natural image of $\mathbf G(k)^+$ in $\Aut(\mathbf G(k))$ (which is injective).
		
		\item  $\Aut(\mathbf G(k))$ acts continuously, properly and faithfully on $X$. This action induces an isomorphism of topological groups $\Aut(\mathbf G(k)) \cong N_{\Aut(X)}(\mathbf G(k))$, where $\mathbf G(k)$ is viewed as a closed subgroup of $\Aut(X)$.
		
		\item The map $\Aut(\mathbf G(k)) \to \Aut(X)$ is surjective if and only if the $k$-rank of $\mathbf G$ is~$\geq 2$.
	\end{enumerate}
\end{proposition}

\begin{proof}
	(i) We recall from \cite{BoTi}*{Prop.~6.14} that $\mathbf G(k)^+$ is a closed subgroup.  Moreover every abstract automorphism of $\mathbf G(k)$ (resp. $\mathbf G(k)^+$) is continuous by \cite{BoTi}*{Prop.~9.8}. The identifications
	$$\Aut(\mathbf G(k)) \cong \Aut(\mathbf G(k)^+)
	\hspace{1cm} \text{and} \hspace{1cm}
	\Aut(\mathbf G(k)) / \Aut(\mathbf G)(k) \cong \Aut_{\mathbf G}(k)$$
	as abstract groups follow from \cite{BoTi}*{Cor.~8.13}. The conjugation action of $\mathbf G(k)$ induces an embedding $\mathbf G(k) \to \Aut(\mathbf G)(k)$ whose image in $\Aut(\mathbf G(k))$ is $ \Inn(\mathbf G(k))$, and is thus normal.
	
	\medskip \noindent
	(ii) The index of  $ \Inn(\mathbf G(k)) \cong \mathbf G(k)$ in $\Aut(\mathbf G)(k)$ is finite in view of \cite{Conrad}*{Th.~7.1.9}. 

	\medskip \noindent
	(iii) If $\mathbf G$ is $k$-split, then it is defined over the prime field of $k$, so that  $\Aut_{\mathbf G}(k)=  \Aut(k)$. The result then follows from (i) and \cite{Springer}*{\S16.3}.
	
	\medskip \noindent
	(iv) The natural embedding of $\mathbf G(k)$ into $\Aut(\mathbf G(k))$ is continuous by definition of the Braconnier topology. A proof that $\Aut(\mathbf G(k))$ is locally compact and second countable may be found in \cite{CapStu}*{Prop.~4.10} (formally, the latter statement applies to $\Aut(\mathbf G(k)^+)$; the latter coincides with   $\Aut(\mathbf G(k))$ by (i)). Any continuous homomorphism of $\mathbf G(k)^+$ to a locally compact group has closed image (see \cite{CapStu}*{Prop.~2.1}). Since $\mathbf G(k)/\mathbf G(k)^+$ is compact by \cite{BoTi}*{Prop.~6.14}, the same statement holds for $\mathbf G(k)$. Hence $\Inn(\mathbf G(k))$ is closed in $\Aut(\mathbf G(k))$. Since $\mathbf G(k)^+$ is compactly generated (see e.g. \cite{CapStu}*{Th.~2.2}), so is $\mathbf G(k)$, and all remaining assertions of (iii) will follow once we establish that $\Aut(\mathbf G(k))/\Inn(\mathbf G(k))$ is compact.  We postpone the proof of this to the next paragraph.
	
	\medskip \noindent
	(v) Let $p$ be the residue characteristic of $k$. The chamber-stabilizers in $X$ are precisely the normalizers of the maximal pro-$p$ subgroups of $\mathbf G(k)$ (see the discussion in \cite{TitsCorvallis}*{\S3.7}). Moreover two distinct chambers are adjacent if and only if the index of the intersection of their stabilizers is minimal~$>1$. Those observations imply that $\Aut(\mathbf G(k))$ naturally acts by automorphisms on the building $X$. By definition of the Braconnier topology, the normalizer of any compact open subgroup of $\mathbf G(k)$ is open in $\Aut(\mathbf G(k))$. This implies that every vertex-stabilizer for the $\Aut(\mathbf G(k))$-action on $X$ is open. Thus the latter action is continuous. To see that it is faithful, observe that the kernel $K$ of this action normalizes every parahoric subgroup, and also every parabolic subgroup of $\mathbf G(k)$. In particular it also normalizes the unipotent radical $U$ of every minimal parabolic $P$. Since the latter unipotent radical $U$ acts freely and transitively on the parabolic subgroups opposite $P$, it follows that $K$ acts trivially on $U$. This holds for any $P$ and $U$, so that $K$ acts trivially on $\mathbf G(k)^+$. Therefore $K$ is trivial by (i).  Thus we have obtained a continuous injective homomorphism $\Aut(\mathbf G(k)) \to \Aut(X)$. It takes values in the normalizer $N_{\Aut(X)}(\mathbf G(k))$ (where $\mathbf G(k)$ has been identified with its image in $\Aut(X)$). The conjugation action of $N_{\Aut(X)}(\mathbf G(k))$ on $\mathbf G(k)$ also yields a continuous map $N_{\Aut(X)}(\mathbf G(k)) \to \Aut(\mathbf G(k))$ which is injective since any automorphism of $X$ commuting with $\mathbf G(k)$ has constant displacement function (because $\mathbf G(k)$ is transitive on the chambers), and must thus be trivial. It follows that the map $\Aut(\mathbf G(k)) \to N_{\Aut(X)}(\mathbf G(k)) $ is bijective. Since $\mathbf G(k)$ is closed in $\Aut(X)$, it follows that $N_{\Aut(X)}(\mathbf G(k))$ is closed. Therefore the continuous isomorphism $\Aut(\mathbf G(k)) \to N_{\Aut(X)}(\mathbf G(k)) $  is an isomorphism of topological groups by Baire, since $\Aut(\mathbf G(k))$ is second countable. This implies that the $\Aut(\mathbf G(k))$-action on $X$ is proper. Since $\mathbf G(k)$ acts cocompactly on $X$, it is a cocompact subgroup of $\Aut(X)$, and hence of $N_{\Aut(X)}(\mathbf G(k)) $. The compactness of $\Aut(\mathbf G(k))/\Inn(\mathbf G(k)) \cong  N_{\Aut(X)}(\mathbf G(k))/\mathbf G(k)$ follows.

	\medskip \noindent
	(vi) If $G$ has $k$-rank~$1$, then $X$ is a semi-regular tree. The group $\Aut(X)$ is then much larger than  $\Aut(\mathbf G(k))$. Indeed the group $\Aut(X)^+$ of type-preserving automorphisms of $X$ is simple (\cite{TitsTrees}) and of index~$\leq 2$ in $\Aut(X)$. If the map $\Aut(\mathbf G(k)) \to \Aut(X)$ were surjective, it would be an isomorphism of topological groups (by Baire) so that it would induce an isomorphism of $\mathbf G(k)^+$ onto $\Aut(X)^+$ by (iii). That $\Aut(X)^+$  is non-linear follows from \cite[Corollary~R]{CRW}. This is a contradiction.
	
	If $G$ has $k$-rank $\geq 2$, then  $\Aut(X)$ coincides with the full automorphism group of the spherical building at infinity $\Delta$, see \cite[Main Result 2]{Struyve} (alternatively, in the case at hand, this fact can be deduced by combining  \cite[Th.~26.37 and Prop.~26.40]{WeissBook} with  \cite{TitsLN}*{Cor.~5.9}).  The fact that every element of $\Aut(\Delta)$ comes from an element of $\Aut(\mathbf G(k))$ is due to J.~Tits: we refer to  \cite{TitsLN}*{Cor.~5.9} for a general result ensuring that this holds over arbitrary fields, with the notable exceptions of perfect fields of characteristic~$2$ or $3$. Those exceptions do not occur under the hypotheses of the Proposition, since local fields of positive characteristic are all imperfect. 
\end{proof}

\begin{remark}
	By Proposition~\ref{prop:Aut}(iii), the short exact sequence
	$$1 \to \Aut(\mathbf G)(k) \to \Aut(\mathbf G(k)) \to \Aut_{\mathbf G}(k) \to 1$$
	splits as soon as $\mathbf G$ is $k$-split. For non-split groups, the splitting of the short exact sequence may fail: explicit examples have  been found by T.~Stulemeijer \cite[Chapter~3]{Stulemeijer_thesis}. {
We believe that  the index of   $\Aut_{\mathbf G}(k)$ in $\Aut(k)$ is always finite. To see that, it suffices to show that the natural permutation action of $\Aut(k)$ on the set of $k$-forms of $\mathbf G$ is continuous. Notice that this claim trivially holds when  $k$ has characteristic~$0$, since in that case $\Aut(k)$ is finite. }
\end{remark}

\subsection{Galois lattices}

The goal of this section is to prove Theorem~\ref{thm:nonlinearGalois} from the introduction. We reproduce its statement here for the reader's convenience.

\begin{theorem}\label{thm:Galois}
	Let $\mathbf G$ be a $k$-simple algebraic group defined over a local field $k$, of $k$-rank~$\geq 2$. Let $\Gamma$ be a lattice in $\Aut(\mathbf G(k))$.
	
	If $\Gamma$ has infinite image in $\Out(\mathbf G(k))$, then for any commutative unital ring $R$ and any   $n\geq 1$, any homomorphism $\Gamma \to \GL_n(R)$ has finite image.
\end{theorem}

Since $\Out(\mathbf G(k))$ is virtually a subgroup of the Galois group $\Aut(k)$ by Proposition~\ref{prop:Aut}, a lattice as in the theorem with infinite image in $\Out(\mathbf G(k))$ will be called a \textbf{Galois lattice}. The existence of irreducible Galois lattices in higher rank is an open problem (which is explicitly mentioned as Problem 19 in the problem list \cite{FarbMozesThomas}; see also \cite[Annexe A, Problem 1]{CorHab}). In the rank~$1$ case, cocompact Galois lattices indeed exist, as shown by the following.

\begin{proposition}\label{prop:RankOneGaloisLattice}
	Let $k$ be a local field of characteristic~$p >0$, with residue field of order $q$. Given any $q$-generated residually $p$-group $\Lambda$, there is a uniform lattice in $G = \SL_2(k) \rtimes \Aut(k)$  whose projection to $\Aut(k) \cong G/ \SL_2(k)$ is isomorphic to $\Lambda$.
\end{proposition}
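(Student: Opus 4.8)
The plan is to produce $\Gamma$ by a \emph{twisted ping-pong} construction on the Bruhat--Tits tree of $\SL_2(k)$. Write $k\cong\FF_q((t))$, let $\mathcal{O}$ be its ring of integers, and let $T$ be the Bruhat--Tits tree of $\SL_2(k)$: a $(q+1)$-regular tree on which $G=\SL_2(k)\rtimes\Aut(k)$ acts, the factor $\Aut(k)$ acting through its action on homothety classes of $\mathcal{O}$-lattices in $k^2$. This action is proper and cocompact, with two orbits of vertices, and its kernel is compact (it fixes $T$ pointwise, hence fixes $v_0$); in particular, since $k$ has positive characteristic, $\Aut(k)$ is a profinite group fixing the standard vertex $v_0=[\mathcal{O}^2]$, and all vertex stabilizers in $G$ are compact and open. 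A key elementary fact is that the Nottingham-type automorphisms $\mathcal{N}=\mathcal{N}(\FF_p)=\{t\mapsto t+c_2t^2+\cdots:c_i\in\FF_p\}\leq\Aut(k)$ fix pointwise the ball of radius~$2$ about $v_0$ (one checks this directly on $\mathcal{O}$-lattices).

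\emph{Step 1: putting $\Lambda$ inside $\Aut(k)$.} As $\Lambda$ is residually-$p$, the canonical map $\Lambda\to\widehat{\Lambda}_p$ into its pro-$p$ completion is injective, and since $\Lambda$ is finitely generated $\widehat{\Lambda}_p$ is a countably based pro-$p$ group. By Camina's theorem that every countably based pro-$p$ group embeds as a closed subgroup of the Nottingham group, I would fix an embedding $\iota\colon\Lambda\hookrightarrow\mathcal{N}\leq\Aut(k)$. I would also fix a surjection $\pi\colon F_q\twoheadrightarrow\Lambda$ from the free group $F_q=\langle x_1,\dots,x_q\rangle$ (possible as $\Lambda$ is $q$-generated) and set $\sigma_i:=\iota(\pi(x_i))$; thus $\sigma_1,\dots,\sigma_q$ generate $\iota(\Lambda)\cong\Lambda$ in $\Aut(k)$, and each $\sigma_i$ fixes the ball of radius~$2$ about $v_0$.

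\emph{Step 2: the twisted generators.} By a direct Schottky/ping-pong construction in $\SL_2(k)$ (using edge-inversions such as $\bigl(\begin{smallmatrix}0&1\\t&0\end{smallmatrix}\bigr)$ and their conjugates by elementary matrices over $\FF_q\subseteq\mathcal{O}$, whose pairwise products are type-preserving and, after scaling, lie in $\SL_2(k)$), one obtains hyperbolic elements $\ell_1,\dots,\ell_q\in\SL_2(k)$ of translation length $2$ together with $2q$ pairwise disjoint half-trees $A_i^\pm$ — each branching off at a vertex of the ball of radius~$2$ about $v_0$, with $v_0$ in none of them — such that $\ell_i(T\smallsetminus A_i^-)\subseteq A_i^+$, $\ell_i^{-1}(T\smallsetminus A_i^+)\subseteq A_i^-$, and such that $\langle\ell_1,\dots,\ell_q\rangle\cong F_q$ acts on $T$ freely and cocompactly, with quotient the finite graph $D_{q+1}$ on two vertices and $q+1$ edges. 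Now put $\gamma_i:=\ell_i\sigma_i\in G$. Since $\sigma_i$ fixes the ball of radius~$2$ about $v_0$, it fixes each of the finitely many vertices defining the $A_j^\pm$, hence preserves each $A_j^\pm$ setwise and acts as the identity on the finite subgraph $T\smallsetminus\bigcup_j(A_j^-\cup A_j^+)$; therefore $\gamma_i(T\smallsetminus A_i^-)=\ell_i(T\smallsetminus A_i^-)\subseteq A_i^+$ (and similarly for $\gamma_i^{-1}$), and $\gamma_i$ induces on the quotient the same identifications as $\ell_i$. Hence, by the ping-pong lemma, $\langle\gamma_1,\dots,\gamma_q\rangle\cong F_q$, and this group acts on $T$ freely and cocompactly with quotient $D_{q+1}$.

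\emph{Step 3: conclusion.} Put $\Gamma:=\langle\gamma_1,\dots,\gamma_q\rangle\leq G$. Being free of finite rank it is torsion-free, so it meets $\{\pm I\}$ trivially and maps injectively into $\Aut(T)$ with discrete image; as $G\to\Aut(T)$ has compact kernel and $G$ acts properly and cocompactly on $T$, it follows that $\Gamma$ is discrete in $G$ with $\Gamma\backslash G$ compact, i.e.\ a uniform lattice in $G$. Finally, each $\ell_i$ lies in $\SL_2(k)=\ker\bigl(\mathrm{pr}\colon G\to\Aut(k)\bigr)$, so $\mathrm{pr}(\Gamma)=\langle\sigma_1,\dots,\sigma_q\rangle=\iota(\Lambda)\cong\Lambda$, as required. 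The only step where care is genuinely needed is Step~2: one must ensure that replacing $\ell_i$ by the twist $\ell_i\sigma_i$ destroys neither freeness nor cocompactness of the action on $T$. This is precisely why $\iota(\Lambda)$ is placed inside the part of $\Aut(k)$ fixing a fixed bounded ball about $v_0$ (available thanks to Camina's theorem) and why the ping-pong data for $\langle\ell_i\rangle$ is taken with its half-trees and its core concentrated in that ball — the twist then acts trivially on the core and permutes each half-tree only internally, leaving the relevant combinatorics untouched.
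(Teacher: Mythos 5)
Your proposal is correct and follows essentially the same route as the paper: a ping-pong construction on the Bruhat--Tits tree with $q$ generators matching the $q$ edges at one end of a base edge with the $q$ edges at the other end, twisted by Galois automorphisms fixing the relevant finite ball, with Camina's embedding theorem supplying a copy of $\Lambda$ (through its pro-$p$ completion) inside that pointwise stabilizer. The only differences are presentational --- you first build untwisted elements $\ell_i\in\SL_2(k)$ and then twist, and you exhibit the Nottingham subgroup explicitly where the paper simply uses the open finite-index subgroup of $\Aut(k)$ fixing all edges at $v$ and $w$ --- and they do not change the argument.
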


\begin{proof}
	Let $T$ be the Bruhat--Tits tree of $G$, which is regular of degree $q+1$. We denote by $\Aut(T)^+$ the index $2$ subgroup of $\Aut(T)$ consisting of the type-preserving automorphisms. Notice that $G$ acts by type-preserving automorphisms on $T$.
	
	Fix a base edge $\{v, w\} \in E(T)$. Let $e_1, \dots, e_q \in E(T)$ (resp. $f_1, \dots, f_q \in E(T))$ denote the edges different from $\{v, w\}$ and containing $v$ (resp. $w$).  We shall use the following.
	
	\begin{claim*}
		Let $g_1, \dots, g_q \in \Aut(T)^+$ be such that $g_i(e_i) = f_i$. Then $\Gamma = \langle g_1, \dots, g_q \rangle$ is a discrete free subgroup of $\Aut(T)^+$ acting cocompactly on $T$. In particular $\Gamma $ is a uniform lattice in $\Aut(T)$.
	\end{claim*}
	
	Since $g_i$ is type-preserving and maps the edge $e_i$ to an edge at distance~$1$, it must be a translation. The fact that $\Gamma$ is free and discrete then follows from a  ping-pong argument. For the cocompactness, consider an arbitrary edge $e \in E(T)$. If $e$ does not contain $v$ or $ w$, then the geodesic joining $e$ to the base edge $\{v, w\}$ contains $e_i$ or $f_i$ for some $i$, and the edge $g_i(e)$ or $g_i^{-1}(e)$ is strictly closer to $\{v, w\}$ than $e$. This proves by induction that the set of edges containing $v$ or $w$ contains a fundamental domain for the $\Gamma$-action. Thus the  claim holds.  
	
	We apply the claim as follows. Since $\SL_2(k)$ is edge-transitive on $T$, we may find $\gamma_i \in \SL_2(k)$ mapping $e_i$ to $f_i$. We next recall that $\Aut(k)$ is  compact and acts continuously on $T$ (by Proposition~\ref{prop:Aut}), so that the subgroup of $\Aut(k)$ fixing all edges containing $v$ or $w$ is open of finite index. We next invoke the fact, due to Rachel Camina \cite{Camina}, that $\Aut(k)$, and hence also every open subgroup of $\Aut(k)$, contains a copy of every countably based pro-$p$ group. We may thus choose $s_1, \dots, s_q \in \Aut(k)$ so that $s_i$ fixes every edge of $T$ containing $v$ or $w$ and that $\langle s_1, \dots, s_q\rangle$ is isomorphic to $\Lambda$.  Finally we put $g_i = \gamma_i s_i$. The claim ensures that $\Gamma = \langle g_1, \dots, g_q \rangle$ is a uniform lattice in  $G = \SL_2(k) \rtimes \Aut(k)$. By construction, its image in $\Aut(k)$ is $\langle s_1, \dots, s_q\rangle \cong \Lambda$. (A related argument which also provides a lifting of free groups to Galois groups appears in   \cite{TitsLN}*{\S11.14}.)
\end{proof}

Using Proposition~\ref{prop:Aut}, we can already show how Theorem~\ref{thm:Galois} implies Corollary~\ref{cor:Galois} from the introduction. 

\begin{proof}[Proof of Corollary~\ref{cor:Galois}]
	The sufficiency of the condition is clear. For the reverse implication, notice first by \cite{BoTi}*{Cor.~6.5 and Rem.~6.15} that the quotient $\mathbf G(k)^+/Z$ is isomorphic to $\mathbf G_{\mathrm{ad}}(k)^+$, where $\mathbf G_{\mathrm{ad}} = \mathbf G/Z(\mathbf G)$ is adjoint. Hence,  by Proposition~\ref{prop:Aut} we have $\Aut(X) \cong \Aut(\mathbf G(k)^+/Z) \cong  \Aut(\mathbf G_{\mathrm{ad}}(k))$. This allows us to invoke Theorem~\ref{thm:Galois}. The desired conclusion follows.
\end{proof}

\subsection{Proof of non-linearity}

This section is devoted to the proof of Theorem~\ref{thm:Galois}.

The following notation will be used throughout. The characteristic subgroup of $\mathbf G(k)$ generated by the unipotent radicals of $k$-parabolic subgroups is denoted by  $\mathbf G(k)^+$. The image of $\mathbf G(k)^+$ in $\Aut(\mathbf G(k))$, which is a closed normal subgroup, is denoted by $G^+$.  We also denote by $X$ the Bruhat--Tits building of $\mathbf G(k)$ and by $\Delta$ the spherical building at infinity of $X$. We may identify  $\Aut(\mathbf G(k))$ with $\Aut(X)$ in view of Proposition~\ref{prop:Aut}.

Upon replacing   $\Gamma$ by a finite index subgroup, we can assume  that the $\Gamma$-action on $X$ is  type-preserving and that $\Gamma/\Gamma \cap \Inn(\mathbf G(k))$ maps injectively to $\Aut_{\mathbf G}(k)$ under the map $\Aut(\mathbf G(k)) \to \Aut_{\mathbf G}(k)$ afforded by Proposition~\ref{prop:Aut} (see (i) and (ii) from the latter proposition).
Now we set
$$G = \overline{G^+ \Gamma}.$$
The quotient $G/G^+$ is compact by  Proposition~\ref{prop:Aut}.

We assume  that   $\Gamma$ has a linear representation over a commutative unital ring with infinite image. Our goal is thus to show that  $G/G^+$ is finite.

Let $A \subset X$ be an apartment and $\Fix_G(A)$ be the pointwise stabilizer of $A$ in $G$. Thus $\Fix_G(A)$ is compact, and we have
$$G = G^+ \Fix_G(A)$$
since $G^+$ is transitive on the set of apartments of $X$ and since $\Stab_{G^+}(A)$ maps onto the affine Weyl group $\Stab_G(A)/\Fix_G(A)$. Fix a chamber $C$ at infinity of $A$ and let $\pi_1, \dots, \pi_m$ be the panels of $C$. Recall that the stabilizer $\Stab_G(\pi_j)$ acts on a locally finite tree $\mathcal T_j$, which is the panel tree associated with $\pi_j$ (see \cite{WeissBook}*{11.18}). The set of ends of  $\mathcal T_j$ is in canonical one-to-one correspondence with the set of chambers of $\Delta$ incident with the panel $\pi_j$ (the corresponding statement in the case of $\widetilde A_2$-buildings was recalled in Proposition~\ref{bndpaneltree}).


Let $\pi'_j$ be the panel at infinity of $A$ which is opposite $\pi_j$ and set
$$L_j = \Stab_G(\pi_j, \pi'_j)
\hspace{1cm} \text{and} \hspace{1cm}
T_j  =  \Ker \varphi_j,$$
where
$$\varphi_j \colon L_j \to \Aut(\mathcal T_j)$$
is the natural action whose existence has just been recalled. Let also $C'$ be the chamber at infinity of $A$ which is opposite $C$ and set
$$T = \Stab_G(C, C').$$

The following result relies crucially on the hypothesis that the $k$-rank of $\mathbf G$ is~$\geq 2$.

\begin{lemma}\label{lem:T_jNonCompact}
	$T_j \cap G^+$ is not compact.
\end{lemma}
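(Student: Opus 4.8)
The goal is to show that the kernel $T_j = \Ker(\varphi_j \colon L_j \to \Aut(\mathcal T_j))$ intersects $G^+$ in a non-compact subgroup. The natural candidate for a non-compact collection of elements in $T_j \cap G^+$ is a root group of $\mathbf G(k)^+$. More precisely, consider the panel $\pi_j$ of the chamber $C$ at infinity of $A$, and its opposite panel $\pi'_j$. Both $\pi_j$ and $\pi'_j$ are contained in the boundary of $A$; together they determine a wall (a codimension-one singular subspace) $H$ of $A$, whose endpoints at infinity are exactly $\pi_j$ and $\pi'_j$. First I would identify, inside $\mathbf G(k)$, the unipotent root subgroup $U_\alpha$ associated to the root $\alpha$ of $(\mathbf G, \mathbf S)$ (with $\mathbf S$ a maximal $k$-split torus whose apartment is $A$) such that the wall $\partial U_\alpha$ of the half-apartment fixed by $U_\alpha$ is precisely $H$. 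Here it is crucial that the $k$-rank of $\mathbf G$ is $\geq 2$: the relevant root $\alpha$ must be chosen so that it is \emph{not} the highest root lying over the panel direction, but rather a root whose fixed half-apartment has boundary wall $H$ and which, crucially, does not move $\pi_j$ (this forces $\alpha$ to vanish on the panel direction). Such a root exists precisely because, in rank $\geq 2$, the set of walls through a fixed panel direction is non-empty and the corresponding root groups are non-trivial.

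\textbf{Key steps.} (1) Identify $U_\alpha \leq \mathbf G(k)^+$ with $U_\alpha$ fixing the half-apartment of $A$ bounded by the wall $H$ whose ends are $\pi_j, \pi'_j$. Since $U_\alpha$ fixes a half-apartment, it fixes $\pi_j$ and $\pi'_j$ (indeed it fixes the whole boundary chamber $C$ and $C'$ pointwise at infinity on one side), so $U_\alpha \leq L_j = \Stab_G(\pi_j, \pi'_j)$, and in fact $U_\alpha \leq T = \Stab_G(C, C')$. (2) Show $U_\alpha \leq T_j = \Ker \varphi_j$: recall that the panel tree $\mathcal T_j$ has its set of geodesic lines parallel to $H$ as vertices (via the wall-tree/panel-tree identification of Proposition~\ref{isompaneltree}); an element fixing a half-apartment bounded by $H$ fixes $H$ itself and every parallel line in that half, hence acts with a fixed end on $\mathcal T_j$ — but more is needed. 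The clean way is: $U_\alpha$ fixes pointwise a half-apartment, so under the canonical projection $\pi_{\pi_j} \colon X \to \mathcal T_j$ it fixes an entire half-tree, i.e. $\varphi_j(U_\alpha)$ fixes a half-tree of $\mathcal T_j$ pointwise; since such an element is determined by its action and $\mathcal T_j$ is a tree, this is not yet triviality. So instead I would pick $\alpha$ more carefully: choose $\alpha$ so that $U_\alpha$ fixes pointwise \emph{all} of $A$ except possibly translating along $H$ — concretely, take $\alpha$ with $\partial(\text{half-apartment of } U_\alpha) = H$ and additionally $U_\alpha$ centralizes the wall-tree direction; equivalently use that $U_\alpha$ fixes pointwise a full apartment's worth of the transverse tree. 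The cleanest formulation: $U_\alpha$ acts trivially on $\mathcal T_j$ because every line parallel to $H$ is sent by $U_\alpha$ to a line parallel to $H$ at the same transverse position (since $U_\alpha$ fixes the relevant half-apartment and acts by a "shear" that preserves each parallel wall), hence $\varphi_j(U_\alpha) = 1$. (3) Finally, $U_\alpha(k) \cong k^{d}$ for some $d \geq 1$ is non-compact (as $k$ is a local field, in particular non-discrete, and $U_\alpha$ is a non-trivial vector group), and $U_\alpha \leq \mathbf G(k)^+$, hence its image in $G^+$ is non-compact and contained in $T_j \cap G^+$.

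\textbf{Main obstacle.} The delicate point is step (2): verifying that a suitably chosen root group lands in $\Ker \varphi_j$ rather than merely fixing an end or a half-tree of $\mathcal T_j$. This requires choosing the root $\alpha$ with some care among the roots whose wall is $H$ — one wants a root group whose action on the transverse tree $\mathcal T_j$ is genuinely trivial, which is where the rank-$\geq 2$ hypothesis does real work: in rank one there would be no wall through the panel at all and the statement would fail. I expect the proof in the text to exploit the structure theory of the Bruhat--Tits building of $\mathbf G(k)$, identifying the panel tree $\mathcal T_j$ with the Bruhat--Tits tree of the rank-one Levi (or the relevant $\mathrm{SL}_2$/$\mathrm{SU}_3$ subgroup) attached to $\pi_j$, and observing that $U_\alpha$ for $\alpha$ orthogonal to the panel direction (which exists only in rank $\geq 2$) lies in the kernel of the map to that rank-one factor. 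The remaining steps — non-compactness of $U_\alpha(k)$ and the containments in $L_j$, $T$, $G^+$ — are routine consequences of the definitions and of Proposition~\ref{prop:Aut}.
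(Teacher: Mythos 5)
Your step (2) is where the argument breaks, and it cannot be repaired by a more careful choice of root. The roots $\alpha$ whose wall contains the singular line joining $\pi_j$ to $\pi'_j$ (equivalently, the roots vanishing on the panel direction) are precisely the roots of the rank-one Levi factor $L_j \cap G^+ = \Stab_{G^+}(\pi_j,\pi'_j)$, and under the identification of $\mathcal T_j$ with the Bruhat--Tits tree of the semisimple part of that Levi, the restriction of $\varphi_j$ to $U_\alpha$ is just the natural action of a unipotent root group of a rank-one group on its own tree. Such an element fixes a half-tree (a horoball around the end determined by $\alpha$) pointwise but moves all other ends; it is a non-trivial horocyclic automorphism, the very phenomenon exploited in Lemma~\ref{lem:horocyclic}. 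Concretely, for $\mathbf G=\SL_3$ with $\pi_j$ the vertex at infinity fixed by the block-upper-triangular maximal parabolic, your $U_\alpha$ consists of the matrices $I+tE_{12}$ inside the $\GL_2$-Levi, and these act non-trivially on the $\PGL_2(k)$-tree $\mathcal T_j$. Your heuristic that a ``shear'' fixing a half-apartment preserves every parallel class of lines fails on the non-fixed side of the wall: lines parallel to $H$ lying in the half-apartment that $U_\alpha$ moves do change their parallel class. In fact $T_j\cap G^+$ contains no non-trivial unipotent root elements at all: inside the Levi, the kernel of $\varphi_j$ is, up to compact pieces (anisotropic kernel and finite center), exactly the central split torus of the Levi.

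That central torus is the correct source of non-compactness, and it is the paper's one-line proof: the codimension-one split subtorus of a maximal split torus $\mathbf S$ that centralizes the rank-one Levi translates along the singular direction toward $\pi_j$, hence fixes every asymptote class of rays toward $\pi_j$, i.e.\ it acts trivially on $\mathcal T_j$ (equivalently, it maps into the center of the rank-one factor, which acts trivially on its tree). Since the $k$-rank of $\mathbf G$ is at least $2$, this subtorus is positive-dimensional and therefore non-compact; this is where the rank hypothesis enters, not through the existence of a root group in the kernel. Your peripheral reductions (containments in $L_j$ and $G^+$, non-compactness of a positive-dimensional split group over a local field) are fine, but the core claim $\varphi_j(U_\alpha)=1$ is false for every admissible $\alpha$, so the proposal as written does not prove the lemma.
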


\begin{proof}
	Indeed $T_j \cap G^+$ contains a split torus of codimension~$1$ in $G^+$, and is thus non-compact by hypothesis on the rank.
\end{proof}

\begin{lemma}\label{lem:ergodic}
	The $\Gamma$-actions on $G/T_j$ and on $G/T$ are ergodic.
\end{lemma}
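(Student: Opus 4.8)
\textbf{Proof plan for Lemma~\ref{lem:ergodic}.}

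The plan is to deduce ergodicity of the $\Gamma$-actions on $G/T_j$ and $G/T$ from the Howe--Moore theorem applied to the simple group $G^+$, exactly in the spirit of the ``classical picture'' described in the introduction. The first step is to pass from the $\Gamma$-action on $G/T_j$ to an equivalent statement about a group action on $\Gamma \backslash G$. Since $G = \overline{G^+\Gamma}$ carries a Haar measure, and since $\Gamma$ is a lattice in $\Aut(\mathbf G(k)) \cong \Aut(X)$ hence in $G$ (being a lattice in a closed cocompact-by-itself overgroup; more precisely $G$ is an open, hence finite-index-by-cocompactness, consequence of $G/G^+$ compact and $\Gamma$ cocompact modulo $G^+$), the homogeneous space $G/T_j$ carries a $G$-invariant (or at least $G$-quasi-invariant, which suffices) measure in the class we use. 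By the standard duality for lattices, ergodicity of the $\Gamma$-action on $G/T_j$ is equivalent to ergodicity of the $T_j$-action on $\Gamma\backslash G$, and similarly with $T$ in place of $T_j$.

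Next I would invoke the Howe--Moore theorem. The key input is Lemma~\ref{lem:T_jNonCompact}: the group $T_j \cap G^+$ is non-compact, since it contains a split torus of codimension one in $G^+$ (here the hypothesis $k$-rank $\geq 2$ is essential, as without it the relevant centralizer would be compact). Because $G^+$ is (up to center) a simple algebraic group over a local field, it has the Howe--Moore property: any unitary representation of $G^+$ without non-zero invariant vectors is a $C_0$-representation, so every non-compact closed subgroup of $G^+$ acts ergodically --- indeed mixingly --- on any $G^+$-space with finite invariant measure, such as $\overline{G^+\Gamma}/\Gamma \cap G^+$ after suitable normalization, equivalently on the relevant quotient of $\Gamma\backslash G$ supporting the $G^+$-action. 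Since $T_j \supseteq T_j \cap G^+$ and $T \supseteq T \cap G^+$ (and one checks $T\cap G^+$ is likewise non-compact, e.g. because $T$ is the common stabilizer of two opposite chambers at infinity and hence contains a maximal split torus of $G^+$), the larger groups $T_j$ and $T$ act ergodically on $\Gamma\backslash G$ a fortiori.

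The one point requiring care --- and the place I expect the main friction --- is the bookkeeping to reduce from the abstract group $G$ to its simple part $G^+$: the space $\Gamma\backslash G$ is not itself a homogeneous space of $G^+$, since $G/G^+$ is only profinite rather than trivial. The standard remedy is to observe that $\Gamma\backslash G$ decomposes into finitely many $G^+$-orbit-closures (as $G/G^+$ is compact and $\Gamma$ projects to a dense-modulo-$G^+$ subset), and that the $T_j$-action, respectively $T$-action, permutes these transitively because $\Gamma$ does; ergodicity of $T_j$ on each piece plus transitivity of the permutation yields ergodicity on the whole. Concretely, one writes $G/T_j \to G/(G^+ T_j)$, notes the base is $\Gamma$-transitive (by density of $\Gamma$ in $G/G^+$ combined with the fact that $G^+T_j \supseteq G^+$), and checks the fibers are $G^+$-spaces on which the Howe--Moore argument applies; an invariant function on $G/T_j$ restricts to an a.e.-constant function on a.e.\ fiber and the constant is $\Gamma$-invariant, hence constant on the transitive base. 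This is precisely the mechanism already used for the Cartan flow in the body of the paper, and it goes through verbatim here because all the required non-compactness and simplicity facts have been recorded in Proposition~\ref{prop:Aut} and Lemma~\ref{lem:T_jNonCompact}.
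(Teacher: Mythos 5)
Your skeleton is the same as the paper's: Moore's inversion (duality) to pass from the $\Gamma$-action on $G/T_j$ to the $T_j$-action on $\Gamma\backslash G$, then the Howe--Moore property of $G^+$ together with the non-compactness of $T_j\cap G^+$ from Lemma~\ref{lem:T_jNonCompact}, and finally a reduction of the discrepancy between $G$ and $G^+$ to the compact quotient $G/G^+$, where density of the image of $\Gamma$ is the decisive input. Your treatment of $G/T$ differs only cosmetically: you argue directly that $T\cap G^+$ is non-compact, whereas the paper simply observes that $T_j\le T$ makes $G/T$ a $\Gamma$-factor of $G/T_j$; both are fine.

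There are, however, two points where your wording is actually false and needs the correction that the paper makes explicit. First, Howe--Moore does not say that a non-compact closed subgroup of $G^+$ acts ergodically on \emph{any} finite-measure $G^+$-space; it upgrades an already \emph{ergodic} $G^+$-action to mixing, so one must first prove that $G^+$ itself acts ergodically on $\Gamma\backslash G$ --- this is precisely the residual issue, and it cannot be waved away. Second, your proposed remedy asserts that $\Gamma\backslash G$ splits into finitely many $G^+$-orbit closures permuted transitively, equivalently that the base $G/(G^+T_j)$ is $\Gamma$-transitive. This is not true in general: $G/G^+$ is a compact (typically infinite, e.g.\ Galois-type) group in which $\Gamma$ has merely \emph{dense} image, so its orbits on $G/G^+$ or on $G/(G^+T_j)$ are dense but not all of the space (in fact every $G^+$-orbit in $\Gamma\backslash G$ is already dense, so the ``finitely many orbit closures'' picture degenerates). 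The correct statement, and the one your density remark is groping for, is that $\Gamma$ acts \emph{ergodically} on $G/G^+$ with respect to Haar measure because it is dense in that compact group (Lebesgue density argument); combined with a second application of Moore's inversion this yields ergodicity of $G^+$ on $\Gamma\backslash G$, after which your Howe--Moore step applies verbatim. With ``transitive'' replaced by ``ergodic'' and that ergodicity actually proved, your argument becomes the paper's proof.
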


\begin{proof}
	By Moore's inversion trick \cite{Zimmerbook}*{Cor.~2.2.3}, it suffices to show that $T_j$ acts ergodically on $G/\Gamma$, hence that $T_j \cap G^+$ acts ergodically on $G/\Gamma$. In view of Lemma~\ref{lem:T_jNonCompact}, the Howe--Moore property  for $G^+$ (see \cite{HoweMoore}) reduces the problem to showing that $G^+$ acts ergodically on $G/\Gamma$. Now we apply Moore's inversion again, and see that the conclusion will follow if we show that $\Gamma$ is ergodic on $G/G^+$. By definition $\Gamma$ has dense image in the compact quotient $G/G^+$; the desired ergodicity is now a consequence of the Lebesgue Density Theorem (see \cite{AbertNikolov}*{Lem.~7} for a detailed proof).
	
	Since $T_j \leq T$, the space $G/T$ is a $\Gamma$-quotient of $G/T_j$, and is thus ergodic as well.
\end{proof}

\begin{lemma}\label{lem:ClosedImage}
	The  group $M_j = L_j/T_j$ maps continuously, injectively onto  a closed subgroup of $\Aut(\mathcal T_j)$ which is $2$-transitive on $\partial \mathcal T_j$.  In particular $M_j$ has a smallest non-trivial closed normal subgroup $M_j^+$ which is cocompact, compactly generated, topologically simple and non-discrete.
	
	Moreover $M_j^+$ is isomorphic to the quotient of the derived group of the rank~$1$ Levi factor $L_j \cap G^+ = \Stab_{G^+}(\pi_j, \pi'_j)$ by its center.
\end{lemma}

\begin{proof}
	The fact that the homomorphism $L_j/T_j \to \Aut(\mathcal T_j)$ is continuous and injective is clear from the definition.
	We have $G = G^+ \Fix_G(A)$. Moreover $\Fix_G(A) \leq \Stab_G(\pi_j, \pi'_j) = L_j$. It follows that
	$$L_j = (L_j \cap G^+) \Fix_G(A).$$
	Since $\Fix_G(A)$ is compact, the closedness of the image of $L_j$ follows from that of $L_j \cap G^+$. Notice that $L_j \cap G^+ = \Stab_{G^+}(\pi_j, \pi'_j)$ is a Levi factor of the parabolic subgroup  $\Stab_{G^+}(\pi_j)$. By construction, the semi-simple part of that Levi factor is of relative rank~$1$, and $\mathcal T_j$ is isomorphic to its Bruhat--Tits tree. The assertions of closedness and of boundary-$2$-transitivity follow, recalling that a semi-simple group over a local field acts properly on its Bruhat--Tits building (see Proposition~\ref{prop:Aut}(v)). The remaining assertions follow from Theorem~\ref{thm:BuMo}.
\end{proof}

\begin{lemma}\label{lem:GammaFiniteAb}
	$\Gamma$ has Kazhdan's property (T).
\end{lemma}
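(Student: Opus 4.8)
The plan is to deduce property (T) for $\Gamma$ from property (T) of the ambient locally compact group $G = \overline{G^+\Gamma}$, together with the fact that $\Gamma$ is a lattice in $G$. First I would recall that property (T) passes from a locally compact second countable group to any of its lattices: this is a classical result (see Bekka--de la Harpe--Valette, Theorem~1.7.1), and it applies here because, by Proposition~\ref{prop:Aut}(iv), $\Aut(\mathbf G(k))$ is locally compact second countable, and $G$ is a closed subgroup of it, hence itself locally compact second countable; moreover $\Gamma$ is a lattice in $G$ by hypothesis (it is a lattice in $\Aut(\mathbf G(k)) = \Aut(X)$ contained in $G$, and $G$ is a closed subgroup containing it which is cocompact in $\Aut(X)$ modulo a compact quotient, so $\Gamma$ remains a lattice in $G$). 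So it suffices to prove that $G$ has property (T).

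Next I would establish property (T) for $G$. The key point is that $G^+$ is the image of $\mathbf G(k)^+$ under the embedding into $\Aut(\mathbf G(k))$, and $\mathbf G(k)^+$ has property (T) because $\mathbf G$ is a $k$-simple algebraic group of $k$-rank~$\geq 2$ over a local field $k$, so that $\mathbf G(k)$ is a higher-rank simple group which has property (T) by the Kazhdan--Delaroche--Wang theorem; property (T) is inherited by the (closed, finite-index-cocompact, cocompact) subgroup $\mathbf G(k)^+$ and hence by $G^+$. Then, since $G/G^+$ is compact by Proposition~\ref{prop:Aut}(iv) and property (T) is stable under extensions by compact groups (indeed, it is inherited by any group containing a closed normal subgroup with (T) and compact quotient), $G$ has property (T) as well.

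The combination of the two paragraphs above yields that $\Gamma$ has property (T). I expect the main (though minor) obstacle to be bookkeeping about which group $\Gamma$ is a lattice in: one must be careful that after the reductions performed just before the lemma (passing to a finite-index subgroup of $\Gamma$ with type-preserving action and with $\Gamma/\Gamma\cap\Inn(\mathbf G(k))$ injecting into $\Aut_{\mathbf G}(k)$), the group $\Gamma$ is still a lattice, and that $G = \overline{G^+\Gamma}$ is a closed subgroup of $\Aut(\mathbf G(k)) \cong \Aut(X)$ in which $\Gamma$ is a lattice --- this uses that $\Gamma$ is a lattice in $\Aut(X)$, that $G$ is closed, and that $\Aut(X)/G$ is compact since it is a quotient of the compact space $\Aut(X)/\overline{\mathbf G(k)^+\Gamma}$ (recall $\mathbf G(k)$, hence $G^+\,\cdot$ its cocompact normalizer, is cocompact in $\Aut(X)$ by Proposition~\ref{prop:Aut}(v)). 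Alternatively, and perhaps more cleanly, one can bypass $G$ entirely and simply invoke that $\Aut(\mathbf G(k))$ itself has property (T) --- it is an extension of $\Aut(\mathbf G)(k)$, which contains $\mathbf G(k)^+$ as a cocompact normal subgroup with (T), by the compact group $\Aut_{\mathbf G}(k)$ (compact by Proposition~\ref{prop:Aut}(ii), being an open finite-index subgroup of the compact group $\Aut(k)$ when $k$ is non-archimedean) --- and then $\Gamma$, being a lattice in $\Aut(\mathbf G(k))$ (at least virtually, which is enough since (T) is a commensurability invariant), inherits property (T). I would present this second route, as it avoids the need to analyze $G$.
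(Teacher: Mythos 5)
Your proposal is correct and, in the route you say you would present, it is essentially the paper's own argument: the paper notes that $\mathbf G(k)$ has property (T) by higher rank (\cite{BHV}, Th.~1.6.1), that it sits as a closed cocompact subgroup of $\Aut(X)\cong\Aut(\mathbf G(k))$ by Proposition~\ref{prop:Aut}(iv), and then applies \cite{BHV}, Th.~1.7.1 twice to pass (T) up to $\Aut(X)$ and back down to the lattice $\Gamma$. Your only deviation is to route through $\mathbf G(k)^+$ instead of $\mathbf G(k)$ (and to contemplate, then discard, the detour through $G=\overline{G^+\Gamma}$), which changes nothing essential.
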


\begin{proof}
	Indeed, the group $\mathbf G(k)$ has property~(T) by \cite{BHV}*{Th.~1.6.1}. Now, combining  \cite{BHV}*{Th.~1.7.1} and Proposition~\ref{prop:Aut}(iv)), we see that this property is inherited by the locally compact group $\Aut(X)$, and then by the lattice   $\Gamma$.
\end{proof}

In the following step, we invoke again Bader--Furman's theory outlined in Section~\ref{sub:BF}.

\begin{lemma}\label{lem:LinearLevi}
	There exists $j \in \{1, \dots, m\}$ such that  group $M_j = L_j/T_j$  has a continuous faithful linear representation over a local field.
\end{lemma}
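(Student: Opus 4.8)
The statement to establish is that for some panel $\pi_j$ of the chamber $C$, the locally compact group $M_j = L_j/T_j$ admits a continuous faithful linear representation over a local field. The strategy is to feed the ergodic $\Gamma$-spaces $G/T_j$ and $G/T$ into the Bader--Furman gate machinery recalled in \S\ref{sub:BF}, mirroring the line of argument used for Theorem~\ref{thm:nonlinear-local}. First I would fix, using Theorem~\ref{thm:ringlinear} and Corollary~\ref{cor:ringlinear} together with the standing hypothesis that $\Gamma$ has a linear representation with infinite image (and property (T) from Lemma~\ref{lem:GammaFiniteAb}), a local field $\ell$, a connected adjoint $\ell$-simple algebraic group $\mathbf{J}$ and a homomorphism $\rho \colon \Gamma \to \mathbf{J}(\ell)$ with unbounded Zariski-dense image. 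By Lemma~\ref{lem:ergodic}, the $\Gamma$-actions on $G/T_j$ and $G/T$ are ergodic, so each of these spaces has an algebraic gate $\mathbf{J}/\mathbf{H}_{G/T_j}$ and $\mathbf{J}/\mathbf{H}_{G/T}$ in the sense of Theorem~\ref{thm:gate}. Since $T_j \leq T$, the space $G/T_j \to G/T$ is a factor map, and Proposition~\ref{prop:gatemorph} supplies a canonical $\mathbf{J}$-equivariant morphism between the gates.

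The next step is to argue that these gates are proper homogeneous spaces, i.e. that $\mathbf{H}_{G/T_j}$ and $\mathbf{H}_{G/T}$ are proper subgroups of $\mathbf{J}$. Here the amenability machinery used in Step~1 of the proof of Theorem~\ref{thm:nonlinear-local} is replaced by a more classical input: the $\Gamma$-space $G/T$, being a homogeneous space of the linear group $\Aut(\mathbf G(k))$, carries enough structure that one can directly exhibit a non-trivial algebraic representation. Concretely, I would use that $G/T$ is a $\Gamma$-equivariant quotient of the full flag variety boundary of $X$, on which the $\Gamma$-action is amenable (in the Zimmer sense), so that \cite[Theorem~6.1]{BDL-tameness} applies and, combined with the ergodicity of the $\Gamma$-action on the square (Lemma~\ref{lem:ergodic} again) ruling out the alternative via \cite[Theorem~1.1]{Glasner-Weiss}, yields a proper gate. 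Then, Proposition~\ref{Yoneda} applied with the Polish group $M = M_j = L_j/T_j$ acting on $Y = G/T_j$ (commuting with $\Gamma$, quasi-preserving the measure, which makes sense as $L_j$ normalizes $T_j$) produces a continuous homomorphism
\[
M_j \longrightarrow N_{\mathbf{J}}(\mathbf{H}_{G/T_j})/\mathbf{H}_{G/T_j}(\ell),
\]
whose target is a linear algebraic group over $\ell$, hence sits inside some $\GL_N(\ell)$.

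It remains to arrange that for at least one index $j$ this homomorphism is \emph{faithful}, or at least that one may pass to a sub-building direction where it becomes faithful. By Lemma~\ref{lem:ClosedImage}, $M_j$ has a smallest non-trivial closed normal subgroup $M_j^+$, which is topologically simple; so the above homomorphism, being continuous, is either trivial on $M_j^+$ or injective on $M_j^+$, and in the latter case one further checks (using that $M_j/M_j^+$ is compact and the extension structure) that it can be taken faithful on all of $M_j$ after replacing it by an appropriate finite-index modification. To force the non-triviality on $M_j^+$, I would use the flip equivariance as in Step~2 and Step~3 of the proof of Theorem~\ref{thm:nonlinear-local}: the map $G/T \to \D^{2,\op}$-type boundary carries an involution $\tau$ swapping $C$ and $C'$, and its image in the appropriate gate normalizer quotient is non-trivial because otherwise the two boundary projections would be measurably constant, contradicting Zariski-density of $\rho(\Gamma)$. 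Pulling this back along $G/T_j \to G/T$ and choosing the panel $\pi_j$ fixed by the flip shows that some element of $M_j^+$ (a preimage of $\tau$ inside the stabilizer of the relevant wall) has non-trivial image, so the representation of $M_j$ is non-trivial on $M_j^+$, hence faithful on $M_j^+$ by simplicity, and the claimed $j$ is found.

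\textbf{Main obstacle.} The delicate point is not the formal gate bookkeeping but verifying the faithfulness of the resulting representation of $M_j$ (as opposed to $M_j^+$): the gate construction only naturally controls the behaviour on the topologically simple subgroup, and one must carefully track how the compact quotient $M_j/M_j^+$ (a group of field automorphisms, by Lemma~\ref{lem:ClosedImage} and the description of the Levi factor) interacts with the continuous homomorphism into the algebraic normalizer quotient --- in particular ruling out that a non-trivial piece of $M_j/M_j^+$ is killed. Managing this, and correctly identifying which panel $\pi_j$ is stabilized by the flip so that the non-triviality transfers, is where the real work lies.
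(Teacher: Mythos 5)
Your overall architecture is the same as the paper's: reduce to a Zariski-dense unbounded representation $\Gamma\to\mathbf J(K)$ via Corollary~\ref{cor:ringlinear} and Lemma~\ref{lem:GammaFiniteAb}, use the ergodicity of $G/T_j$ and $G/T$ (Lemma~\ref{lem:ergodic}), obtain a proper gate from amenability of the $\Gamma$-action on $G/P$ together with double ergodicity, \cite[Theorem 6.1]{BDL-tameness} and \cite[Theorem~1.1]{Glasner-Weiss}, and get the continuous homomorphism $M_j\to N_{\mathbf J}(\mathbf H_j)/\mathbf H_j(K)$ (where $\mathbf J/\mathbf H_j$ is the gate of $G/T_j$) from Proposition~\ref{Yoneda}, with the non-triviality of the flip handled as in Step~2 of the proof of Theorem~\ref{thm:nonlinear-local}. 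However, your mechanism for transferring the non-triviality of the flip to some $M_j^+$ has a genuine gap: you propose to ``choose the panel $\pi_j$ fixed by the flip'' and to lift $\tau=w_0$ to an element of $M_j^+$. No such panel exists: $w_0$ sends $C$ to the opposite chamber $C'$, hence sends every panel of $C$ to a panel of $C'$; and since the $k$-rank is $\geq 2$, the longest element is not represented in any single $L_j$ --- the image of $L_j\cap N_G(T)$ in the Weyl group $N_G(T)/T$ is only the rank-one parabolic $\{1,s_j\}$, which never contains $w_0$. This is precisely why the lemma only asserts the existence of \emph{some} $j$. The paper's actual argument chooses reflections $\tau_j\in L_j^+$ fixing $\pi_j$, notes that their images generate the Weyl group (so $w_0$ is a product of them), and uses Proposition~\ref{prop:gatemorph} to see that if every $\tau_j$ had trivial image in $N_{\mathbf J}(\mathbf H_j)/\mathbf H_j(K)$, then $w_0$ would act trivially on the gate of $G/T$, contradicting the flip step; hence at least one $\tau_j$, and therefore one $M_j^+$, has non-trivial image. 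Your proposal is missing this generation-plus-transfer argument, and the step it replaces it with would fail.

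Second, the point you single out as the ``main obstacle'' --- passing from non-triviality on $M_j^+$ to faithfulness on $M_j$ --- is not an obstacle, and your proposed fix (``an appropriate finite-index modification'') would not even yield the statement, which requires a faithful representation of $M_j$ itself. By Lemma~\ref{lem:ClosedImage} (via Theorem~\ref{thm:BuMo}), $M_j^+$ is the \emph{smallest} non-trivial closed normal subgroup of $M_j$, i.e.\ it is contained in every non-trivial closed normal subgroup; the kernel of the continuous homomorphism from Proposition~\ref{Yoneda} is closed and normal, so if it does not contain $M_j^+$ it is trivial. Thus non-triviality on $M_j^+$ immediately gives faithfulness on all of $M_j$; there is no interaction with $M_j/M_j^+$ to control. (A further small slip: $G/T$ is not a quotient of the chamber space $G/P$ but an extension of it; the amenability input is applied to $G/P$, whose square is $G/T$, and properness of the gate is then inherited by the extensions $G/T$ and $G/T_j$.)
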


\begin{proof}
	By Lemma~\ref{lem:GammaFiniteAb}, we may apply Corollary~\ref{cor:ringlinear}. This ensures   that the given representation of $\Gamma$ may be assumed to be a representation $\rho \colon \Gamma \to \mathbf J(K)$, where  $\mathbf J$ is a simple algebraic group over a local field $K$, and $\rho(\Gamma)$ is unbounded and Zariski-dense. We may then apply the Bader--Furman theory with the ergodic $\Gamma$-space $Y_j = G/T_j$. The group $ N_G(T_j)/T_j$ has a natural action on $Y$ which commutes with the $\Gamma$-action. In particular, the same holds for the action of $M_j = L_j/T_j$ since $T_j \leq L_j \leq  N_G(T_j)$. It then follows from Proposition~\ref{Yoneda} that $M_j$ has a continuous representation over the same local field $K$. In order to prove that this representation is faithful, it suffices by  Lemma~\ref{lem:ClosedImage} to show that its restriction to $M_j^+$ is non-trivial.
	
	To this end, we proceed as in the proof of Theorem~\ref{thm:nonlinear-local}. 
	
	In a first step, we observe that the gate of the ergodic $\Gamma$-space $Y_j = G/T_j$ from Lemma~\ref{lem:ergodic} is non-trivial. Indeed, the $\Gamma$-action on $G/P$, where $P = \Stab_G(C)$, is amenable because $P$ is amenable (see \cite{Zimmerbook}*{Prop.~4.3.2}). Moreover, it is doubly ergodic by Lemma~\ref{lem:ergodic} since $G/P \times G/P$ is isomorphic to $G/T$ as a $\Gamma$-space. It then follows from \cite[Theorem 6.1]{BDL-tameness} and \cite[Theorem~1.1]{Glasner-Weiss} that the gate $\mathbf J/\mathbf H_P$ of the $\Gamma$-space $G/P$ is non-trivial. Therefore, the gate $\mathbf J/\mathbf H_Z$ of its extensions $Z = G/T$, and the gate $\mathbf J/\mathbf H_j$ of $Y_j = G/T_j$, are non-trivial either.

	In a second step, we observe that the longest element $w_0$ of the spherical Weyl group of $N_G(T)/T$ acts as the flip on $G/P \times G/P$, and therefore has a non-trivial image under the representation $N_G(T)/T \to N_{\mathbf J}(\mathbf H_Z)/\mathbf H_Z(K)$ afforded by Proposition~\ref{Yoneda}. This can be established with a similar argument as in Step~2 of the proof of  Theorem~\ref{thm:nonlinear-local}, as follows. If the image of the flip were trivial in $N_{\mathbf J}(\mathbf H_Z)/\mathbf H_Z(K)$, then the two maps $G/P \times G/P \to G/P \to \mathbf J/\mathbf H_P$ associated to to first and second projections $G/P \times G/P \to G/P$ would coincide, so that the map $G/P \to \mathbf J/\mathbf H_P$ would have to be essentially constant. Since that image is $\rho(\Gamma)$-invariant, hence $\mathbf J$-invariant by Zariski-density, we conclude that the gate $\mathbf J/\mathbf H_P$ of $G/P$ is trivial, a contradiction. 
	
	We now conclude the proof of the lemma as follows. For each $j \in \{1, \dots, m\}$, choose an element $\tau_j \in L_j^+$ (where $L_j^+$ denotes the preimage of $M_j^+$ in $L_j$) acting as a reflection of $A$ fixing the panel $\pi_j$. Then  the natural images of $\{\tau_1, \dots, \tau_m\}$ in the spherical Weyl group $N_G(T)/T$ is a generating set of that Weyl group. Therefore, if the image of $\tau_j$ under the continuous representation $M_j = L_j/T_j  \to N_{\mathbf J}(\mathbf H_j)/\mathbf H_j(K)$ afforded by Proposition~\ref{Yoneda} were trivial for all $j$, then so would be the image of $w_0$ under the representation $N_G(T)/T \to N_{\mathbf J}(\mathbf H_Z)/\mathbf H_Z(K)$ as a consequence of Proposition~\ref{prop:gatemorph}. This would contradict the conclusion of the second step. Therefore we find some  $j\in \{1, \dots, m\}$ such that the image of $\tau_j$ under the representation $M_j    \to N_{\mathbf J}(\mathbf H_j)/\mathbf H_j(K)$ is non-trivial. In particular the image of $M_j^+$ is non-trivial. Therefore the restriction of that representation to $M_j$ is faithful in view of Lemma~\ref{lem:ClosedImage}.
\end{proof}

\begin{lemma}\label{lem:vAbelianImage:1}
	The quotient $M_j/M_j^+$ is virtually abelian, where $j \in \{1, \dots, m\}$ is the element afforded by Lemma~\ref{lem:LinearLevi}.
\end{lemma}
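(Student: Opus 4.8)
The plan is to deduce this from the structure theory of linear locally compact groups, exactly as in the discussion of Galois lattices sketched in the introduction. By Lemma~\ref{lem:LinearLevi}, the group $M_j = L_j/T_j$ has a continuous faithful finite-dimensional representation over a local field; in other words $M_j$ is a linear locally compact group. By Lemma~\ref{lem:ClosedImage}, $M_j$ embeds as a closed subgroup of $\Aut(\mathcal T_j)$ acting $2$-transitively on $\partial\mathcal T_j$, and its smallest non-trivial closed normal subgroup $M_j^+$ is cocompact, compactly generated, topologically simple and non-discrete. Moreover, since $M_j^+$ is characteristic in $M_j$ (being the intersection of all non-trivial closed normal subgroups), the quotient $M_j/M_j^+$ is compact.

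First I would observe that $M_j^+$ coincides with the intersection $(M_j)^+$ of all non-trivial closed normal subgroups of $M_j$, since topological simplicity of $M_j^+$ together with its minimality among closed normal subgroups of $M_j$ forces any non-trivial closed normal subgroup of $M_j$ to contain $M_j^+$. Thus the hypotheses of Theorem~\ref{thm:LocallyLinear} are satisfied for $G = M_j$: its subgroup $(M_j)^+ = M_j^+$ is compactly generated, topologically simple and non-discrete, and the quotient $M_j/M_j^+$ is compact. Since $M_j$ has a linear open subgroup (indeed $M_j$ itself is linear), assertion~(ii) of Theorem~\ref{thm:LocallyLinear} applies and yields precisely that $M_j/M_j^+$ is virtually abelian.

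The proof is therefore short and presents no real obstacle, the main point being simply to verify that the structural input needed by Theorem~\ref{thm:LocallyLinear} is already assembled in Lemmas~\ref{lem:ClosedImage} and~\ref{lem:LinearLevi}. The only slightly delicate bookkeeping is identifying $M_j^+$ with the abstract invariant $(M_j)^+$ appearing in the statement of Theorem~\ref{thm:LocallyLinear}, which follows from topological simplicity as indicated above. Writing $j$ for the index produced by Lemma~\ref{lem:LinearLevi}, the conclusion that $M_j/M_j^+$ is virtually abelian is exactly what is claimed.

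\begin{proof}
Let $j \in \{1, \dots, m\}$ be the index afforded by Lemma~\ref{lem:LinearLevi}, so that $M_j = L_j/T_j$ admits a continuous faithful finite-dimensional representation over a local field. By Lemma~\ref{lem:ClosedImage}, the group $M_j^+$ is a cocompact, compactly generated, topologically simple, non-discrete closed normal subgroup of $M_j$. Since $M_j^+$ is topologically simple and is contained in every non-trivial closed normal subgroup of $M_j$ (being minimal among such), it coincides with the intersection of all non-trivial closed normal subgroups of $M_j$. In particular $M_j^+$ is characteristic, so $M_j/M_j^+$ is compact. Thus $M_j$ satisfies the hypotheses of Theorem~\ref{thm:LocallyLinear}, and since it has a linear open subgroup (namely itself), assertion~(ii) of that theorem applies and shows that $M_j/M_j^+$ is virtually abelian.
\end{proof}
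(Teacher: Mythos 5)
Your proof is correct and follows essentially the same route as the paper: the paper simply cites Proposition~\ref{prop:aux}(iv), whose proof is exactly the combination of Theorem~\ref{thm:LocallyLinear} with the Burger--Mozes structure result, and you have inlined this by feeding the structural facts from Lemma~\ref{lem:ClosedImage} directly into Theorem~\ref{thm:LocallyLinear}(ii). The only cosmetic remark is that the compactness of $M_j/M_j^+$ comes from the cocompactness of $M_j^+$ stated in Lemma~\ref{lem:ClosedImage}, not from its being characteristic, but this does not affect the argument.
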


\begin{proof}
	This follows from Proposition~\ref{prop:aux}(iv), whose hypotheses are satisfied in view of Lemmas~\ref{lem:ClosedImage} and~\ref{lem:LinearLevi}.
\end{proof}


\begin{lemma}\label{lem:FieldAutom}
	If $h \in \Fix_G(A)$ has non-trivial image in the Galois group $\Aut_{\mathbf G}(k)$ under the map $\Aut(\mathbf G(k)) \to \Aut_{\mathbf G}(k)$, then $\varphi_i(h) \not \in M_i^+$ for all  $i \in \{1, \dots, m\}$.
\end{lemma}
\begin{proof}
	This follows from how the proof of the Borel--Tits theorem works in \cite{BoTi}*{\S 8}. Here are more details.  Since $h $  acts trivially on the apartment $A$, it normalizes a maximal $k$-split torus $\mathbf S$ of $\mathbf G(k)$ and as well as  each of the root groups associated with the $k$-roots of $\mathbf S$. In \S\S 8.2 and 8.3 of \cite{BoTi}, the unique field automorphism involved in the automorphism $h$ of $\mathbf G(k)$ is constructed by considering the induced action of $h$ on a specific $3$-dimensional $k$-split reductive subgroup $L$ of a rank~$1$ Levi subgroup $Q$ of $\mathbf G(k)$ associated to a $k$-root of $\mathbf S$. Those arguments in \cite{BoTi} imply that   the $h$-action on $L$ involves a field automorphism if and only if   the $h$-action on $\mathbf G(k)$ does. For the same reason (viewing now $h$ as an automorphism of the Levi factor $Q$), we see that the $h$-action on $L$ involves a field automorphism if and only if   the $h$-action on $Q$ does. It is then showed in  \S\S 8.2  of \cite{BoTi} that the $h$-action on the rank~$1$ Levi factors associated to all $k$-roots of $\mathbf S$ induce field automorphisms that differ from one another by a power of the Frobenius automorphism. Since local fields of positive characteristic are not perfect, the Frobenius map is not a field automorphism, so in our setting the $h$-action on each rank~$1$ Levi factor involves the same field automorphism.

	Recalling from Lemma~\ref{lem:ClosedImage} that $M_i^+$ is the quotient of a rank~$1$ Levi factor of $G^+$ divided by its center, we deduce from the discussion above that if $\varphi_i(h) \in M_i^+$, then $h$ does not involve any non-trivial field automorphism, so that its image in $\Aut_{\mathbf G}(k)$ is trivial.
\end{proof}

\begin{lemma}\label{lem:Galois}
	Let $\widetilde G = \Inn( \mathbf G(k)) \cap G$.  Then the quotient $\Fix_G(A) /  \Fix_{\widetilde G}(A)$ is virtually abelian.
\end{lemma}
\begin{proof}
	%
	Let $h, h' \in \Fix_G(A) $ such that $h^{-1} h' \not \in \widetilde G$. Recall from the beginning of the proof that $\Gamma / \Gamma \cap \widetilde G$ maps injectively to   the Galois group. Therefore, we see that $h$ and $h'$ act as distinct field automorphisms on the group $G^+$. Thus they have distinct images in $M_i/M_i^+$ for all $i$ by Lemma~\ref{lem:FieldAutom}. This means that the kernel of the composed homomorphism  $\Fix_G(A) \overset{\varphi_i }\to M_i \to M_i/M_i^+$ is contained in $\widetilde G$. In particular $\Fix_G(A)/ \widetilde G \cap \Fix_G(A)$ maps injectively to a quotient of $M_i/M_i^+$. Applying that assertion to the index $i=j$ afforded by Lemma~\ref{lem:vAbelianImage:1}, we obtain the required conclusion.
\end{proof}

\begin{lemma}\label{lem:final}
	The quotient $G/G^+$ is finite.
\end{lemma}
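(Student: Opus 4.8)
The plan is to interpose the closed normal subgroup $\tld G = \Inn(\mathbf G(k))\cap G$ between $G^+$ and $G$ (note $G^+\le\tld G$, since $G^+$ is the image of $\mathbf G(k)^+\le\mathbf G(k)\cong\Inn(\mathbf G(k))$ and $G^+\le G$), and to prove separately that the indices $[G:\tld G]$ and $[\tld G:G^+]$ are both finite. Both finiteness statements will be squeezed out of Kazhdan's property~(T) of $\Gamma$ (Lemma~\ref{lem:GammaFiniteAb}), applied respectively to a virtually abelian quotient and to an abelian one.

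For the first index I would argue as follows. From $G=G^+\Fix_G(A)$ and $G^+\le\tld G$ one gets $G=\tld G\,\Fix_G(A)$, whence a group isomorphism $G/\tld G\cong\Fix_G(A)/(\tld G\cap\Fix_G(A))$, and the right-hand side is virtually abelian by Lemma~\ref{lem:Galois}. Since $G=\overline{G^+\Gamma}$ by definition, $\Gamma$ has dense image in $G/G^+$, hence also in $G/\tld G$; that image is $\Gamma/(\Gamma\cap\tld G)$ (note $\tld G\trianglelefteq G$ as $\Inn(\mathbf G(k))\trianglelefteq\Aut(\mathbf G(k))$), which is therefore simultaneously virtually abelian and a quotient of $\Gamma$, hence has property~(T). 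A finite-index abelian subgroup of it again has property~(T) and is thus finite (a countable abelian group with property~(T) is finite, \cite{BHV}); so $\Gamma/(\Gamma\cap\tld G)$ is finite, and being finite and dense in the Hausdorff group $G/\tld G$ it exhausts it. Hence $[G:\tld G]<\infty$.

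For the second index, set $\Gamma'=\Gamma\cap\tld G$; by the previous step $\Gamma'$ has finite index in $\Gamma$, hence also has property~(T). A short coset-bookkeeping argument — decompose $G^+\Gamma$ into the $[\Gamma:\Gamma']=[G:\tld G]$ cosets $\gamma_iG^+\Gamma'$, pass to closures to get $G=\bigcup_i\gamma_i\overline{G^+\Gamma'}$, observe that the $\gamma_i$ lie in pairwise distinct cosets of $\tld G$ (which by the first step are exactly all of them) and that $\overline{G^+\Gamma'}\subseteq\tld G$, and match the two decompositions of $G$ — identifies $\tld G$ with $\overline{G^+\Gamma'}$. Consequently $\tld G/G^+$ is the closure of the image of $\Gamma'$ under $G\to G/G^+$. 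But $\tld G/G^+$ embeds into $\Inn(\mathbf G(k))/G^+\cong\mathbf G(k)/\mathbf G(k)^+$, which is abelian because $\mathbf G(k)^+$ coincides with the image of the simply connected cover (see \cite{BoTi}); hence the image of $\Gamma'$ in $G/G^+$ is abelian, so it is a quotient of the abelianization of $\Gamma'$, which is finite by property~(T). Therefore $\tld G/G^+$ is finite, and combining the two steps gives $[G:G^+]=[G:\tld G]\,[\tld G:G^+]<\infty$.

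The step I expect to require the most care is the second one, and it is also the reason the intermediate subgroup $\tld G$ cannot be dispensed with: the quotient $\mathbf G(k)/\mathbf G(k)^+$ need not be finite in positive characteristic — for instance it is infinite for $\mathbf G=\PGL_3$ over a local field of residue characteristic~$3$ — so one really must use that this quotient is \emph{abelian} and exploit property~(T) of $\Gamma'$, rather than any finiteness of $\mathbf G(k)^+$ in $\mathbf G(k)$. Once Lemma~\ref{lem:Galois} is in hand, the rest is assembly of the above pieces.
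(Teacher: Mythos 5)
Your argument is correct and rests on exactly the same ingredients as the paper's proof: the decomposition $G=G^+\Fix_G(A)$, the interposed subgroup $\tld G=\Inn(\mathbf G(k))\cap G$, Lemma~\ref{lem:Galois}, the (virtual) abelianness of $\mathbf G(k)/\mathbf G(k)^+$, and property (T) of $\Gamma$ together with the density of its image. The difference is purely in the assembly: the paper stacks the two layers at once, observing that $G/G^+\cong\Fix_G(A)/(G^+\cap\Fix_G(A))$ is virtually \emph{metabelian} (combining Lemma~\ref{lem:Galois} with \cite{BoTi}*{Prop.~6.14}) and then invoking (T) plus density a single time, whereas you prove the two indices $[G:\tld G]$ and $[\tld G:G^+]$ finite separately; the price of your route is the extra coset-bookkeeping identifying $\tld G$ with $\overline{G^+\Gamma'}$ (which is fine as written, using normality of $G^+$ and the fact that the $\gamma_i$ represent all cosets of $\tld G$), while the gain is that each application of (T) only ever meets a virtually abelian, rather than virtually metabelian, dense-image situation. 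Two small remarks that do not affect correctness: you do not actually need $\mathbf G(k)/\mathbf G(k)^+$ to be abelian (a fact which for local fields indeed follows from Kneser--Tits, but is not literally in \cite{BoTi}) --- the virtual abelianness cited by the paper from \cite{BoTi}*{Prop.~6.14} already suffices in your second step by the same ``virtually abelian + (T) $\Rightarrow$ finite'' argument you use in the first; and your parenthetical example should be a local field of \emph{characteristic}~$3$ such as $\FF_3(\!(t)\!)$, since for $\QQ_3$ (residue characteristic $3$ but characteristic $0$) the quotient $k^\times/(k^\times)^3$ is finite.
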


\begin{proof}
	The group $\widetilde G/G^+$ is virtually abelian by \cite{BoTi}*{Prop.~6.14}. Therefore $ \Fix_{\widetilde G}(A)/ \Fix_{G^+}(A)$ is virtually abelian, too. In view of Lemma~\ref{lem:Galois}, we infer that $\Fix_G(A)/ \Fix_{G^+}(A)$ is virtually metabelian. On the other hand, we have $G/G^+ = G^+ \Fix_G(A) / G^+  \cong \Fix_G(A)/  \Fix_{G^+}(A)$, so that $G/G^+$ is virtually metabelian. Since   $\Gamma$ has property (T) by Lemma~\ref{lem:GammaFiniteAb} and dense image in $G/G^+$ by construction, we conclude that $G/G^+$ is indeed finite.
\end{proof}

Lemma~\ref{lem:final} concludes the proof of Theorem~\ref{thm:Galois}.

\section{Lattices in  exotic $\widetilde A_2$-buildings of arbitrary large order}\label{app:ExoticLargeOrder}

The goal of this section is to provide an explicit construction of an infinite family of lattices in exotic $\widetilde A_2$-buildings of arbitrarily large order. This relies on a construction,  originally due to M.~Ronan \cite{Ronan} and W.~Kantor \cite{Kantor84}, relating panel-regular $\widetilde A_2$-lattices with finite difference sets. That construction was recently concretized by means of explicit group presentations by J.~Essert \cite{Essert}, see Theorem~\ref{thm:Essert} below. Using those presentations together with Theorem~\ref{nonlinear}, we shall see that the exoticity of a panel-regular lattice in a building of order $q_0$ typically implies the exoticity of many such buildings whose orders are suitable powers of $q_0$. A concrete illustration is described in Corollary~\ref{cor:ExistenceExotic}.  Actually, our discussion will  show  that many   of the $\widetilde A_2$-buildings admitting a  panel-regular lattice are exotic. 

\subsection{Difference sets}

A \textbf{difference set} in a group $A$ is a subset $D \subset A$ such that any non-trivial element $a \in A$ can be written in a unique way as a product $a = d_1 d_2^{-1}$ with $d_1, d_2 \in D$. If $A$ is finite of order $n$, the existence of a difference set $D$ implies that $n = q^2 + q +1$, where $q = |D|-1$. A difference set $D \subset A$ naturally yields a projective plane $\mathscr P(A, D)$ whose points are the elements of $A$ and whose lines are the translates of $D$ in $A$. Conversely, given a finite projective plane $\mathscr P$ and a group $A$ acting freely and transitively on the points of $\mathscr P$, one obtains a difference set $D$ in $A$ by virtue of the following classical fact.

\begin{lemma}\label{lem:DiffSet}
	Let $\mathscr P$ be a finite projective plane and $A \leq \Aut(\mathscr P)$   a group acting freely and transitively on the points of $\mathscr P$. Then $A$ acts freely and transitively on the lines of $\mathscr P$. Moreover, for any point $p$ and any line $L$, the set $D = \{a \in A \mid a(p) \in L\}$ is a difference set in $A$.
	
	Moreover, the Desarguesian plane  $\mathscr P = \mathscr P(2, \FF_q)$ of order $q$ admits a cyclic group $A$ acting freely and transitively: namely, the canonical image of the  multiplicative group $\FF_{q^3}^\times$ in the group $\PGL_3(\FF_q)$, which is isomorphic to $\FF_{q^3}^\times/\FF_q^\times$.
\end{lemma}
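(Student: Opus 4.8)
The plan is to prove Lemma~\ref{lem:DiffSet} in three stages: first establish that $A$ acts freely and transitively on lines, then verify the difference-set property, and finally treat the explicit Desarguesian example.

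For the first stage, I would use a counting/incidence argument. Since $\mathscr P$ has order $q$, it has $n = q^2+q+1$ points and equally many lines. The group $A$ acts freely and transitively on the points, so $|A| = n$. To see that $A$ acts freely on lines, suppose $a \in A$ fixes a line $L$. Then $a$ permutes the $q+1$ points on $L$; if $a$ has a fixed point, freeness on points forces $a = 1$, so assume $a$ acts on the $q+1$ points of $L$ without fixed points. A parallel argument must rule this out: I would look at the action of $\langle a \rangle$ on the whole point set and use that $n = q^2+q+1$ is coprime to small primes in a way that is incompatible with the orbit structure — more cleanly, one can invoke the standard fact (e.g. via the theory of collineations of finite projective planes, or a direct Sylow-type count) that a collineation group acting regularly on points also acts regularly on lines; the cleanest self-contained route is: the number of flags $(p,L)$ with $p \in L$ is $n(q+1)$, and $A$ acts freely on flags (a flag-stabilizing element fixes a point, hence is trivial), so every $A$-orbit on flags has size $n$, giving $q+1$ orbits; since $A$ is transitive on the $n$ points and each point lies on $q+1$ lines, transitivity on points plus the orbit count forces transitivity on lines, and the line-stabilizer of any line then has order $n/|A\text{-orbit of }L| $. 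Counting incidences once more pins the line-orbit size to $n$, hence the line action is free and transitive. I expect this incidence bookkeeping to be the main obstacle, in the sense that one must be careful to phrase it so that no hidden appeal to deeper classification results is needed.

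For the second stage, fix a point $p$ and a line $L$, and set $D = \{a \in A \mid a(p) \in L\}$. Since exactly $q+1$ points lie on $L$ and $A$ is simply transitive on points, $|D| = q+1$. Now let $a \in A$ be non-trivial; I want to count pairs $(d_1, d_2) \in D \times D$ with $d_1 d_2^{-1} = a$, i.e. $d_1 = a d_2$. Such a $d_2$ satisfies $d_2(p) \in L$ and $a d_2(p) \in L$, so the point $d_2(p)$ lies on both $L$ and $a^{-1}(L)$. Since $a \neq 1$ acts freely on lines, $a^{-1}(L) \neq L$, and two distinct lines of a projective plane meet in exactly one point; that point is $d_2(p)$ for a unique $d_2 \in A$ by simple transitivity on points, and then $d_1 = a d_2$ is determined. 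One also checks $d_1 \in D$: indeed $d_1(p) = a d_2(p) \in L$ by construction. Hence the representation $a = d_1 d_2^{-1}$ exists and is unique, so $D$ is a difference set. (A parameter count — there are $|A| - 1 = q^2+q$ non-trivial elements and $|D|^2 - |D| = (q+1)^2 - (q+1) = q^2 + q$ ordered pairs of distinct elements of $D$ — confirms consistency.)

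For the third stage, take $\mathscr P = \mathscr P(2,\FF_q)$, the projective plane over $\FF_q$, whose points are the lines through the origin in $\FF_q^3$. Identify $\FF_q^3$ with the field $\FF_{q^3}$ as an $\FF_q$-vector space; then multiplication by an element of $\FF_{q^3}^\times$ is $\FF_q$-linear and invertible, giving a homomorphism $\FF_{q^3}^\times \to \GL_3(\FF_q)$, whose composition with $\GL_3(\FF_q) \to \PGL_3(\FF_q)$ has kernel exactly the scalars $\FF_q^\times$ (a nonzero element of $\FF_{q^3}$ acts as an $\FF_q$-scalar iff it lies in $\FF_q$). So we obtain an embedding of the cyclic group $A := \FF_{q^3}^\times/\FF_q^\times$, of order $(q^3-1)/(q-1) = q^2+q+1$, into $\Aut(\mathscr P) = \PGL_3(\FF_q)$. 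This $A$ is transitive on points: given two nonzero vectors $x, y \in \FF_{q^3}$, the element $y x^{-1} \in \FF_{q^3}^\times$ sends the line $\FF_q x$ to $\FF_q y$. Since $|A|$ equals the number of points, transitivity forces freeness. Then the first two stages apply to produce the difference set, completing the proof. I would present the Desarguesian computation in a sentence or two since it is entirely routine once the field model is set up.
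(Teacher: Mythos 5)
The paper gives no argument of its own here: it simply cites Dembowski (Assertion 4.2.7 and the surrounding discussion) for both the difference-set statement and the Singer group, so your self-contained proof is a genuinely different route. Your stages 2 and 3 are correct and complete: once one knows that every non-trivial $a\in A$ moves every line, the factorizations $a=d_1d_2^{-1}$ with $d_1,d_2\in D$ are parametrized, exactly as you say, by the unique point of $L\cap a^{-1}(L)$, which matches the paper's definition of a difference set; and the Singer-cycle computation (multiplication action of $\FF_{q^3}^\times$ on $\FF_{q^3}\cong\FF_q^3$, kernel $\FF_q^\times$, transitivity on the $q^2+q+1$ points, hence sharp transitivity since $|A|=q^2+q+1$) is routine and right.

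The one genuine gap is in stage 1, which is also the first assertion of the lemma and the input your stage 2 needs. Freeness on flags does give $q+1$ flag orbits, but the claim that ``transitivity on points plus the orbit count forces transitivity on lines'' is not justified: if the lines split into orbits of sizes $n_i$, the identity $\sum_i n_i(q+1)/n=q+1$ holds for any such decomposition, so the orbit count by itself pins nothing down. What does close the argument is the divisibility your setup provides: the flags lying over a line orbit of size $n_i$ form an $A$-invariant set of $n_i(q+1)$ flags, a union of flag orbits of size $n$, hence $n\mid n_i(q+1)$; since $q^2+q+1=q(q+1)+1$ gives $\gcd(n,q+1)=1$, this forces $n\mid n_i$, so $n_i=n$ and the line action is simply transitive. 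Alternatively you can bypass the bookkeeping with the classical incidence-matrix argument: the point-line incidence matrix $M$ satisfies $MM^{T}=qI+J$, which is invertible, and $PM=MQ$ for the permutation matrices $P,Q$ of a collineation on points and lines, so $Q=M^{-1}PM$ and every collineation fixes as many lines as points; thus each non-trivial $a\in A$ fixes no line, and a free action of a group of order $n$ on the $n$ lines is automatically simply transitive. With either repair the proof is complete.
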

\begin{proof}
	See Assertion~4.2.7 and the discussion on pp.~105--106 in \cite{Dembowski}.
\end{proof}

The cyclic group from Lemma~\ref{lem:DiffSet} was first pointed out by J.~Singer \cite{Singer}; it is called a \textbf{Singer group}.

We shall be interested in pairs   of finite groups $A_0, A$ and difference sets $D_0 \subset A_0$, $D \subset A$ together with an injective homomorphism $A_0 \to A$ mapping $D_0$ into $D$. The following shows that such configurations may be constructed using Singer groups.

\begin{lemma}\label{lem:SubDiffSets}
	Let $p$ be a prime, let $q_0 $ be a positive power of $p$ and $q = q_0^e$   a positive power of $q_0$. Assume that $q_0 \not \equiv 1 \mod 3$ and that  $e \not \equiv 0 \mod 3$. Then $n_0 = q_0^2 + q_0 +1$ divides $n = q^2 + q+ 1$, and there  exist  difference sets $D_0 \subset \ZZ/n_0\ZZ$ and $D \subset \ZZ/n\ZZ$ such that $\{0, 1\} \subset D_0$ and $\{dn/n_0 \mid d \in D_0\} \subseteq D$.
	
	If $p = q_0=2$, then we may further arrange that $D_0 = \{0, 1, 3\}$.
\end{lemma}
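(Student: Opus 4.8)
\textbf{Proof plan for Lemma~\ref{lem:SubDiffSets}.}

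The plan is to pass through finite fields and Singer cyclic groups, exploiting the tower of field extensions $\FF_{q_0} \subset \FF_q \subset \FF_{q^3}$, and the key arithmetic input will be that $n_0 \mid n$ together with a compatibility between the two Singer groups. First I would prove the divisibility: write $n = q^2+q+1 = \frac{q^3-1}{q-1}$ and $n_0 = \frac{q_0^3-1}{q_0-1}$, and use that $q = q_0^e$ with $3 \nmid e$ to show $q_0^3 - 1 \mid q^3 - 1$ while $\gcd$-considerations (using $q_0 \not\equiv 1 \bmod 3$, so that $3 \nmid q_0-1$ hence the ``extra'' factor of $3$ issues do not arise) give that $\frac{q_0^3-1}{q_0-1}$ divides $\frac{q^3-1}{q-1}$. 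Concretely, $\frac{q^3-1}{q-1} = \frac{q^3-1}{q_0^3-1}\cdot\frac{q_0^3-1}{q_0-1}\cdot\frac{q_0-1}{q-1}$, and since $3\nmid e$ one has $\gcd(\frac{q^3-1}{q_0^3-1},\ \frac{q-1}{q_0-1})$ equal to $\gcd$ of two numbers that are coprime to $3$ and whose product of the relevant cofactors works out integrally; I would make this precise by a direct valuation argument at each prime $\ell$, treating $\ell = 3$ separately using $q_0 \not\equiv 1 \bmod 3$ (which forces $q \not\equiv 1 \bmod 3$ as well since $3\nmid e$, so $\nu_3(n) = \nu_3(n_0) = 0$).

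Next I would set up the Singer groups. Let $A = \FF_{q^3}^\times/\FF_q^\times \cong \ZZ/n\ZZ$ and $A_0 = \FF_{q_0^3}^\times/\FF_{q_0}^\times \cong \ZZ/n_0\ZZ$, acting freely transitively on the points of $\mathscr P(2,\FF_q)$ and $\mathscr P(2,\FF_{q_0})$ respectively, by Lemma~\ref{lem:DiffSet}. The inclusion $\FF_{q_0^3} \hookrightarrow \FF_{q^3}$ (which exists because $q_0^3 \mid q^3$ as prime powers, i.e. $3\mid 3e$) together with $\FF_{q_0^3}\cap \FF_q = \FF_{q_0}$ (here $3\nmid e$ is used: $\FF_{q_0^3}\cap\FF_q = \FF_{q_0^{\gcd(3,e)}} = \FF_{q_0}$) induces an injective homomorphism $A_0 \to A$; under the identifications with $\ZZ/n_0\ZZ$ and $\ZZ/n\ZZ$ this is (up to an automorphism of $\ZZ/n_0\ZZ$) the map $a \mapsto a \cdot n/n_0$. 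Then, by Lemma~\ref{lem:DiffSet}, picking a point $p_0$ on a line $L_0$ of $\mathscr P(2,\FF_{q_0})$ gives a difference set $D_0 = \{a \in A_0 : a(p_0) \in L_0\}$; I would choose $p_0$ and $L_0$ so that the two elements $0$ and $1$ of $\ZZ/n_0\ZZ$ (i.e. the identity and a fixed generator) lie in $D_0$ — this is possible because the Singer group acts transitively on incident point-line flags in a suitable sense, or more simply one is free to translate $D_0$ additively and to apply a multiplier automorphism, so normalising $0 \in D_0$ and $1 \in D_0$ costs nothing. Now I embed the configuration: take a point $p$ and line $L$ in $\mathscr P(2,\FF_q)$ lying ``above'' $p_0, L_0$ under the natural inclusion of the smaller plane, so that $D = \{a\in A : a(p)\in L\}$ contains the image of $D_0$; translating so that $D$ still contains $0$ and then noting the image of $D_0$ under $a\mapsto an/n_0$ lands inside $D$ gives exactly the asserted containment $\{dn/n_0 : d\in D_0\} \subseteq D$.

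Finally, for the case $p = q_0 = 2$: here $n_0 = 7$ and I would exhibit the difference set $\{0,1,3\} \subset \ZZ/7\ZZ$ explicitly (the nonzero differences are $\pm 1, \pm 2, \pm 3$, all distinct mod $7$), check it corresponds to the Fano plane's Singer difference set, and then run the same embedding argument with $D_0 = \{0,1,3\}$; since $2 \not\equiv 1 \bmod 3$ and we only need $e \not\equiv 0 \bmod 3$, the hypotheses are compatible and the previous paragraph applies verbatim with this particular $D_0$. The main obstacle I anticipate is the careful bookkeeping in the first step — verifying $n_0 \mid n$ with the exact hypotheses $q_0 \not\equiv 1 \bmod 3$ and $3 \nmid e$, and more importantly checking that the field-theoretic inclusion $A_0 \hookrightarrow A$ really is, up to the harmless automorphisms of $\ZZ/n_0\ZZ$, the multiplication-by-$n/n_0$ map and is compatible with the chosen difference sets; the projective-plane incidence geometry (choosing $p,L$ above $p_0,L_0$) is routine once the subplane $\mathscr P(2,\FF_{q_0}) \subset \mathscr P(2,\FF_q)$ is set up correctly.
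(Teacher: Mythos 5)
Your overall route is the paper's: embed $\mathscr P(2,\FF_{q_0})$ into $\mathscr P(2,\FF_q)$, compare the Singer groups via $\FF_{q_0^3}^\times/\FF_{q_0}^\times \hookrightarrow \FF_{q^3}^\times/\FF_q^\times$, and apply Lemma~\ref{lem:DiffSet} to a compatible point--line flag. Your field-theoretic observation $\FF_{q_0^3}\cap\FF_q=\FF_{q_0}$ (from $\gcd(3,e)=1$) is correct and in fact already does all the arithmetic work: it gives an injective homomorphism $\ZZ/n_0\ZZ\hookrightarrow\ZZ/n\ZZ$, hence $n_0\mid n$, so no separate valuation argument is needed. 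Be aware, though, that your side-claim for the prime $3$ is false: $q_0\not\equiv 1\bmod 3$ and $3\nmid e$ do \emph{not} force $q\not\equiv 1\bmod 3$ (take $q_0=2$, $e=2$, $q=4$: then $n=21$ is divisible by $3$); this happens to be harmless since only $\nu_3(n_0)=0$ is relevant, but do not lean on it. (The paper instead uses the hypotheses to get $\gcd(n_0,q-1)=1$, via $q-1\equiv q_0^i-1 \bmod n_0$ with $i\in\{1,2\}$.) Also, the Singer group is not transitive on flags (it has order $n$, while there are $n(q+1)$ flags); the clean way to secure $\{0,1\}\subset D_0$, which is what the paper does, is simply to take $L_0$ to be a line through $x_0$ and $b(x_0)$. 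Your fallback of normalising $D_0$ afterwards by a multiplier is workable for $D_0$ alone, but it then forces you to adjust $D$ by a unit of $\ZZ/n\ZZ$ lifting that multiplier in order to preserve the containment $\{dn/n_0\}\subseteq D$ -- a compatibility your sketch leaves implicit.

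The genuine gap is the last assertion, $D_0=\{0,1,3\}$ when $q_0=2$. You cannot ``run the same embedding argument verbatim with this particular $D_0$'': the construction does not take $D_0$ as input, it outputs whatever set the chosen generator $b$ and flag produce, and for a fixed $b$ every flag yields a translate of one and the same set, so only one of the two classes $\{0,1,3\}$, $\{0,1,5\}$ can arise -- you may perfectly well land on $\{0,1,5\}$, and no choice of $x_0$ or $L_0$ will then repair it. The paper fixes this by replacing the generators $a,b$ by $a^5,b^5$: this is legitimate because $5\nmid q^2+q+1$ for $q$ a power of $2$ and because $5\in\{0,1,5\}$ keeps $1$ in the new $D_0$; it multiplies $D_0$ by $5^{-1}=3$, turning $\{0,1,5\}$ into $\{0,1,3\}$, while preserving $\varphi(b)=a^{n/n_0}$ and hence the containment in the (correspondingly modified) $D$. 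Some such generator change -- equivalently, lifting the multiplier $3\in(\ZZ/7\ZZ)^\times$ to a unit of $\ZZ/n\ZZ$ -- is an extra step your proposal is missing; without it the final claim of the lemma is unproved.
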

\begin{proof}
	Write $e = 3f+i$, with $i=1$ or $2$. Since $q_0^{3f}-1$ is a multiple of $n_0$, we have $\gcd(n_0, q-1) = \gcd(n_0, q_0^{3f}(q_0^i-1))$. Since $q_0$ is a prime power, we have $\gcd(n_0, q_0^{3f}) = 1$. Moreover $\gcd(n_0, q_0+1)=1$, and the hypothesis that $q_0 \not \equiv 1 \mod 3$ implies that  $ \gcd(n_0, q_0-1)=1$. Therefore, we infer that $\gcd(n_0, q-1) = 1$. This implies that  the canonical embedding   of cyclic groups $\FF_{q_0^3}^\times \to \FF_{q^3}^\times$ descends to an injective homomorphism $\varphi \colon \FF_{q_0^3}^\times /\FF_{q_0}^\times \to \FF_{q^3}^\times/\FF_q^\times $. In particular $n_0$ divides $n$.

	The inclusion $\FF_{q_0} \subset \FF_q$ yields an inclusion of  projective planes $\iota \colon \mathscr P(2, \FF_{q_0}) \to \mathscr P(2, \FF_q)$.  Identifying the Singer groups $A =\FF_{q^3}^\times/\FF_q^\times $ and $B =  \FF_{q_0^3}^\times /\FF_{q_0}^\times$   with their natural images in  $\PGL_3(\FF_q)$ and $\PGL_3(\FF_{q_0})$, we infer that the inclusion  $\iota \colon \mathscr P(2, \FF_{q_0}) \to \mathscr P(2, \FF_q)$ is equivariant with respect to $\varphi$. Let $a$ be a generator of $A$ and $b$   a generator of $B$ such that $\varphi(b) = a^{n/n_0}$.
	
	Let now $x_0$ a point of $ \mathscr P(2, \FF_{q_0})$ and $L_0$ be a line of  $ \mathscr P(2, \FF_{q_0})$ containing $x_0$ and $b(x_0)$.  Then
	$$D_0 = \{d \in \ZZ/n_0\ZZ \mid b^d(x_0) \in L_0\}$$
	is a difference set in $\ZZ/n\ZZ$ containing $0$ and $1$ by Lemma~\ref{lem:DiffSet}.
	Let also $x = \iota(x_0)$ and $L$ be a line of $ \mathscr P(2, \FF_{q})$ containing $\iota(L_0)$.  Then
	$$D = \{d \in \ZZ/n\ZZ \mid a^d(x) \in L\}$$
	is a difference set in $\ZZ/n\ZZ$ by Lemma~\ref{lem:DiffSet}. Since $\varphi (b) =   a^{n/n_0} $, we have $\{dn/n_0 \mid d \in D_0\} \subseteq D$ as required.
	
	Assume finally that $p=q_0=2$, so that $n_0 = 7$.
	Observe that the only difference sets in $\ZZ/7\ZZ$ containing $0$ and $1$ are $\{0, 1, 3\}$ and $\{0, 1, 5\}$. Since $q^2 +q + 1$ is not divisible by $5$, the element $a^5$ is also a generator of $A$. Therefore, in case we obtain $D_0 = \{0,1, 5\}$, we redo the construction with $a$ replaced by $a^5$ and $b$ by $b^5$. This yields $D_0 = \{0,1, 3\}$.
\end{proof}

\subsection{Panel-regular lattices and their presentations}

A type-preserving automorphism group of an $\widetilde A_2$-building $X$ acting freely and transitively on the set of panels of each type is called  a \textbf{panel-regular lattice}.
M.~Ronan \cite[Prop.~3.2 and Theorem~3.3]{Ronan} and W.~Kantor \cite[Theorem~C.3.1 and Cor.~C.3.2]{Kantor84} described a tight relation between difference sets in groups of order $n = q^2 + q +1$ and panel-regular lattices in $\widetilde A_2$-buildings of order~$q$.  That relation was made more explicit by J.~Essert \cite{Essert}, who characterized panel-regular lattices  by means of an explicit and elegant presentation.  

\begin{theorem}[J. Essert]\label{thm:Essert}
	Let $q > 1$ be an integer, $n = q^2 + q +1$ and $D$ be a difference set containing~$0$  in the cyclic group $\ZZ/n\ZZ$. For all permutations $\pi_1, \pi_2 \in \Sym(D)$ fixing $0$, the group
	$$\Gamma = \langle \sigma_0, \sigma_1, \sigma_2 \mid \sigma_0^n, \sigma_1^n, \sigma_2^n,  \ \sigma_0^d \sigma_1^{\pi_1(d)} \sigma_2^{\pi_2(d)}\  \forall d \in D\rangle$$
	is a panel-regular lattice in an $\widetilde A_2$-building $X$ of order $q$.
	Moreover there is a chamber with vertices $v_0, v_1, v_2$ such that the stabilizer $\Gamma_{v_i}$ coincides with the cyclic group $\langle \sigma_i \rangle$ for $i = 0, 1, 2$.
\end{theorem}

\begin{proof}
	See \cite[Theorem~5.8]{Essert}.
	The last assertion on the geometric action of the cyclic groups $\langle \sigma_i \rangle$ follows from \cite[Corollary~5.7]{Essert}.
\end{proof}

We shall need the following basic fact on panel-regular lattices.

\begin{lemma}\label{lem:TorsionElt}
	Let $X$ be an $\widetilde A_2$-building of order $q$ and $\Gamma \leq \Aut(X)$ be a panel-regular lattice. Let $v, v'$ be vertices of $X$  that have the same type and are distance~$2$ apart in the $1$-skeleton. Then the set $\{\gamma \in \Gamma \mid \gamma(v) = v'\}$ contains exactly~$2$ elements of finite order, and $q^2+q-1$ elements of infinite order.
	
	In particular, if $\Gamma$ is the panel-regular lattice  given by the presentation from Theorem~\ref{thm:Essert}, then for any  non-zero $d \in D$ and any $e \in  \ZZ/n\ZZ$, the element $\gamma = \sigma_0^d \sigma_1^e $ is of finite order if and only if  $e \in \{ 0,\pi_1(d)\}$.
\end{lemma}
\begin{proof}
	Since the $\Gamma$-action on $X$ is type-preserving, an element $\gamma \in \Gamma$ of finite order must fix a vertex $x$. Since the $\langle \gamma \rangle$-action on the link of $x$ is free on panels of each type, we infer that for each $y \in X$ different from $x$, we have the bound $\angle_x(y, \gamma y)  \geq 2\pi/3$ on the Alexandrov angle (in the sense of CAT($0$) geometry) as soon as $\gamma$ is non-trivial. This implies that if $\gamma$ maps a vertex $v$ to a vertex $v'$ at distance~$2$ in the $1$-skeleton, the only possible fixed vertex of $\gamma$ is one of the two vertices, say $v_1$ and $v_2$, that have distance~$1$ to both $v$ and $v'$. An element $\gamma$ fixing $v_i$ and mapping $v$ to $v'$ maps the edge $[v_i, v]$ to $[v_i, v']$. Since edges correspond to panels, the panel-regularity of $\Gamma$ implies that there is exactly one element of $\Gamma$  fixing $v_i$ and mapping $v$ to $v'$.
	
	Let us now consider  $\Gamma = \langle \sigma_0, \sigma_1, \sigma_2 \mid \sigma_0^n, \sigma_1^n, \sigma_2^n,  \ \sigma_0^d \sigma_1^{\pi_1(d)} \sigma_2^{\pi_2(d)}\  \forall d \in D\rangle$ as in Theorem~\ref{thm:Essert}, where $n = q^2 + q +1$. Let $v_0$ and $v_1$ be the adjacent vertices fixed by $\sigma_0$ and $\sigma_1$ respectively, afforded  by Theorem~\ref{thm:Essert}. Let  $d, e \in \ZZ/n\ZZ$. The element $\gamma = \sigma_0^d \sigma_1^e  \in \Gamma$ maps $v_1$ to a vertex $v'$ adjacent  to $v_0$, which is  at distance~$2$ from $v_0$ in the $1$-skeleton provided $d \neq 0$. Since we have $\{g \in \Gamma \mid g(v_1) = v'\} = \gamma \Gamma_{v_1} = \{\sigma_0^d\sigma_1^j \mid j \in \ZZ/n\ZZ\}$, we deduce from the first assertion that if $d\neq 0$, there are exactly two  elements  $j\in \ZZ/n\ZZ$ such that $\sigma_0^d\sigma_1^j$ is of finite order. If in  addition $d \in D$, those   must be $j=0$ or $j = \pi_1(d)$ since $ \sigma_0^d \sigma_1^{\pi_1(d)}  = \sigma_2^{-\pi_2(d)}$.
\end{proof}

\subsection{Maps between panel-regular lattices}

A groundbreaking example of an exotic $\widetilde A_2$-building with a vertex-transitive lattice and non-Desarguesian residues has recently been constructed in \cite{Radu}. That building is of order $9$, and contains exotic sub-buildings of order~$3$. This feature suggests to study the existence of maps between $\widetilde A_2$-lattices in buildings of different orders. In the context of panel-regular lattices, we record the following.

\begin{proposition}\label{prop:LatticeMorphism}
	Let $q, n, D$ and $q_0, n_0, D_0$ be two triples as in Theorem~\ref{thm:Essert}. Let also   $\tau_1, \tau_2 \in \Sym(D_0)$ and $\pi_1, \pi_2 \in \Sym(D)$ be permutations all fixing~$0$, and consider the groups
	$$\Gamma_0 = \langle s_0, s_1, s_2 \mid s_0^{n_0}, s_1^{n_0}, s^{n_0},  \  s_0^d s_1^{\tau_1(d)} s_2^{\tau_2(d)}\  \forall d \in D_0\rangle$$
	and
	$$\Gamma = \langle \sigma_0, \sigma_1, \sigma_2 \mid \sigma_0^n, \sigma_1^n, \sigma_2^n,  \ \sigma_0^d \sigma_1^{\pi_1(d)} \sigma_2^{\pi_2(d)}\  \forall d \in D\rangle,$$
	and the associated buildings $X_0$ and $X$ afforded by Theorem~\ref{thm:Essert}.
	
	Assume that the following conditions hold.
	\begin{itemize}
		\item $n_0$ divides $n$.
		
		\item $\{dn/n_0 \mid d \in D_0\} \subseteq D$.
		
		\item $\pi_i(dn/n_0) = \tau_i(d)n/n_0$ for all $d \in D_0$ and $i \in \{1, 2\}$.
	\end{itemize}
	Then the assignments $s_i \mapsto \sigma_i^{n/n_0}$ for $i=0, 1, 2$ extend to a homomorphism $\Gamma_0 \to \Gamma$ with infinite image.
\end{proposition}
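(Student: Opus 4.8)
The statement has two parts: first, that the assignment $s_i \mapsto \sigma_i^{n/n_0}$ respects the defining relations of $\Gamma_0$, hence extends to a homomorphism $\Gamma_0 \to \Gamma$; second, that the image of this homomorphism is infinite. I would treat these separately.

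\emph{Checking the relations.} For the well-definedness, I would verify that each defining relator of $\Gamma_0$ maps to the trivial element of $\Gamma$. The order relations $s_i^{n_0}$ present no difficulty: since $n_0 \mid n$, the element $\sigma_i^{n/n_0}$ has order dividing $n_0$ in $\Gamma$ (indeed $(\sigma_i^{n/n_0})^{n_0} = \sigma_i^n = 1$), so $s_i^{n_0} \mapsto 1$. For the triangle relations $s_0^d s_1^{\tau_1(d)} s_2^{\tau_2(d)}$ with $d \in D_0$: under the map this becomes $\sigma_0^{dn/n_0}\, \sigma_1^{\tau_1(d)n/n_0}\, \sigma_2^{\tau_2(d)n/n_0}$. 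By the second hypothesis $dn/n_0 \in D$, so $\sigma_0^{dn/n_0}\sigma_1^{\pi_1(dn/n_0)}\sigma_2^{\pi_2(dn/n_0)} = 1$ is a defining relation of $\Gamma$. By the third hypothesis $\pi_i(dn/n_0) = \tau_i(d)n/n_0$ for $i = 1, 2$, so this relator of $\Gamma$ is exactly the image of the relator $s_0^d s_1^{\tau_1(d)} s_2^{\tau_2(d)}$ of $\Gamma_0$. Hence all relators are killed and the homomorphism $f\colon \Gamma_0 \to \Gamma$ exists. This part is essentially bookkeeping with the three hypotheses.

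\emph{Infiniteness of the image.} This is the substantive point. I would argue geometrically using the last assertion of Theorem~\ref{thm:Essert} together with Lemma~\ref{lem:TorsionElt}. In $\Gamma$, the cyclic subgroups $\langle \sigma_i \rangle$ are vertex stabilizers $\Gamma_{v_i}$ for a chamber with vertices $v_0, v_1, v_2$; in particular each $\sigma_i$ is elliptic of finite order. Consider an element of $\Gamma_0$ of infinite order — for instance, one can take $g = s_0^d s_1^e$ for a suitable nonzero $d \in D_0$ and $e \notin \{0, \tau_1(d)\}$, which is of infinite order in $\Gamma_0$ by the second part of Lemma~\ref{lem:TorsionElt} (applied to $\Gamma_0$; note such $e$ exists since $n_0 = q_0^2 + q_0 + 1 > 3$ for $q_0 > 1$). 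Its image is $f(g) = \sigma_0^{dn/n_0}\sigma_1^{en/n_0}$. I want to show $f(g)$ has infinite order in $\Gamma$. Here I would invoke Lemma~\ref{lem:TorsionElt} for $\Gamma$: writing $d' = dn/n_0$ (a nonzero element of $D$, hence nonzero in $\ZZ/n\ZZ$ since $d \neq 0$ and $n_0 \mid n$... one must check $dn/n_0 \not\equiv 0$, which holds as $0 < d < n_0$) and $e' = en/n_0$, the element $\sigma_0^{d'}\sigma_1^{e'}$ is of finite order in $\Gamma$ if and only if $e' \in \{0, \pi_1(d')\}$. By the third hypothesis $\pi_1(d') = \pi_1(dn/n_0) = \tau_1(d)n/n_0$, and $e' = 0 \iff e = 0$ while $e' = \tau_1(d)n/n_0 \iff e = \tau_1(d)$ (using that multiplication by $n/n_0$ is injective on the image of $\ZZ/n_0\ZZ$ in $\ZZ/n\ZZ$). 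Since $e \notin \{0, \tau_1(d)\}$ by choice, $f(g)$ is of infinite order, so $f(\Gamma_0)$ is infinite.

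\emph{Expected main obstacle.} The routine part is the relation check. The one place requiring genuine care is the translation between torsion in $\Gamma_0$ and torsion in $\Gamma$: I must be careful that the embedding $\ZZ/n_0\ZZ \hookrightarrow \ZZ/n\ZZ$ given by multiplication by $n/n_0$ is injective (true) and that it carries nonzero elements of $D_0$ to nonzero elements of $D$ distinct from the relevant values, so that Lemma~\ref{lem:TorsionElt} can be applied on both ends in a compatible way. A cleaner alternative — which I would mention as a remark — is purely geometric: the homomorphism $f$ is induced by a $\Gamma_0$-equivariant map of buildings (or at least of the relevant local structure), the cyclic groups $\langle s_i \rangle$ fix vertices and map to the cyclic groups $\langle \sigma_i \rangle$ which also fix vertices, and since the panel stabilizers match up, an element acting as a hyperbolic translation on $X_0$ must map to an element that still moves vertices arbitrarily far in $X$; hence the image cannot be a bounded (finite) subgroup. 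Either route works, but the algebraic one via Lemma~\ref{lem:TorsionElt} is the most self-contained given what is available in the excerpt.
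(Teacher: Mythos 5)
Your proposal is correct and follows essentially the same route as the paper: you verify the relators exactly as the paper does, and you establish infiniteness of the image, as the paper does, by applying Lemma~\ref{lem:TorsionElt} in $\Gamma$ to the element $\sigma_0^{dn/n_0}\sigma_1^{en/n_0}$ with $d\in D_0$ non-zero and $e \notin \{0,\tau_1(d)\}$, using the injectivity of multiplication by $n/n_0$ on $\ZZ/n_0\ZZ$. The additional remarks (infinite order of $s_0^d s_1^e$ already in $\Gamma_0$, and the geometric alternative) are not needed for the conclusion but do no harm.
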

\begin{proof}
	Set $\varphi(s_i) = \sigma_i^{n/n_0}$ for $i=0, 1, 2$. Since $n_0$ divides $n$, we have $\varphi(s_i)^{n_0} = 1$ for all $i$. Moreover, given $d \in D_0$, we have
	$$\begin{array}{rcl}
	\varphi(s_0)^d \varphi(s_1)^{\tau_1(d)} \varphi(s_2)^{\tau_2(d)} & =  &
	\sigma_0^{dn/n_0} \sigma_1^{\tau_1(d)n/n_0} \sigma_2^{\tau_2(d)n/n_0}\\
	& = & \sigma_0^{dn/n_0} \sigma_1^{\pi_1(dn/n_0)} \sigma_2^{\pi_2(dn/n_0)}\\
	& = & 1
	\end{array}$$
	since $dn/n_0$ is an element of $D$ by hypothesis. Thus the elements $\varphi(s_i)$   satisfy the defining relations of $\Gamma_0$, so that $\varphi $ indeed extends to a homomorphism $\varphi \colon \Gamma_0 \to \Gamma$. To prove  that $\varphi(\Gamma_0)$ is infinite, we pick any non-zero $d \in D_0$. Then $s_0^d$ and $s_0^d s_1^{\tau_1(d)}$ are both of finite order. By virtue of Lemma~\ref{lem:TorsionElt}, we deduce that for any  $e \in \ZZ/n_0\ZZ \setminus \{0, \tau_1(d)\}$,   the element
	$\varphi(s_0^d s_1^{e}) = \sigma_0^{dn/n_0} \sigma_1^{en/n_0}$ is of infinite order.
\end{proof}

\begin{remark}\label{rem:EmbeddingBuildings}
	As mentioned in the introduction,  an unpublished work of Yehuda Shalom and Tim Steger ensures that every proper normal subgroup of a lattice in an $\widetilde{A}_2$-building is of finite index. In particular, the homomorphism $\Gamma_0 \to \Gamma$ constructed in Proposition~\ref{prop:LatticeMorphism} must be injective. Using \cite[Corollary~5.7]{Essert}, this implies that the embedding $\Gamma_0 \to \Gamma$ is accompanied by a $\Gamma_0$-equivariant isometric embedding $X_0 \to X$. This noteworthy geometric feature will however not be needed in the subsequent discussion.
\end{remark}

Invoking Theorem~\ref{nonlinear}, we deduce the following exoticity criterion for $X$.

\begin{proposition}\label{prop:ExoticityCriterion}
	In the setting of Proposition~\ref{prop:LatticeMorphism}, if $X$ is   Bruhat--Tits, then at least one of the following assertions holds:
	\begin{enumerate}[(I)]
		\item $X_0$ is also Bruhat--Tits.
		
		\item $\Gamma$ is a Galois lattice; moreover $\Gamma_0$ and $\Gamma$ both have an infinite quotient which is \{residually $p$\}-by-cyclic, where $p$ is the residue characteristic of the defining local field of $X$.
		
	\end{enumerate}
\end{proposition}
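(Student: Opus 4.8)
The plan is to combine Theorem~\ref{nonlinear} with the map $\varphi\colon\Gamma_0\to\Gamma$ produced by Proposition~\ref{prop:LatticeMorphism}, and then analyse the two possible outcomes. Suppose $X$ is Bruhat--Tits, say the building of $\mathrm{PGL}_3(D)$ for a finite-dimensional division algebra $D$ over a local field $k$ of residue characteristic~$p$. First I would observe that $\Gamma$, being a cocompact lattice in $\Aut(X)$, is then linear: it embeds in $\Aut(X)$, which by Proposition~\ref{prop:Aut}(v) is $N_{\Aut(X)}(\mathrm{PGL}_3(D))\cong\Aut(\mathrm{PGL}_3(D)(k))$, an extension of the algebraic automorphism group by the finite-index-in-$\Aut(k)$ Galois part $\Aut_{\mathbf G}(k)$. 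If the image of $\Gamma$ in $\Out(\mathbf G(k))$ is finite, then a finite-index subgroup of $\Gamma$ lies in $\mathbf G(k)=\mathrm{PGL}_3(D)$, which is linear over $k$; hence $\Gamma$ is linear and in particular has a faithful finite-dimensional representation with infinite image. If instead the image of $\Gamma$ in $\Out(\mathbf G(k))$ is infinite, then $\Gamma$ is a Galois lattice by definition, and we are in case~(II)'s first clause. So the dichotomy reduces to: either $\Gamma$ is linear, or $\Gamma$ is a Galois lattice.

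Next I would dispose of the linear case, showing it forces $X_0$ Bruhat--Tits, i.e.\ case~(I). Here the key point is that the homomorphism $\varphi\colon\Gamma_0\to\Gamma$ has infinite image (Proposition~\ref{prop:LatticeMorphism}), so composing $\varphi$ with a faithful linear representation of $\Gamma$ of infinite image gives a finite-dimensional linear representation of $\Gamma_0$ over a field (or over a commutative unital ring) whose image is infinite. By the contrapositive of Theorem~\ref{nonlinear} applied to $X_0$ and $\Gamma_0$, such a representation cannot exist unless $X_0$ is isomorphic to the building of $\mathrm{PGL}_3(D_0)$ for some finite-dimensional division algebra $D_0$ over a local field; that is, $X_0$ is Bruhat--Tits. (Strictly I should note $\Gamma_0$ is finitely generated, which is clear from its presentation, so that the ring-linear reduction of Appendix~\ref{app:RingLinear} applies as well, though here we only need the field version.)

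It remains to treat the Galois case and establish the second clause of~(II). Assume $\Gamma$ is a Galois lattice in $\Aut(X)=\Aut(\mathrm{PGL}_3(D)(k))$, with $k$ of residue characteristic~$p$. The image of $\Gamma$ in $\Aut(\mathbf G)(k)/\Inn(\mathbf G(k))$ is finite, so a finite-index subgroup of $\Gamma$ projects, with infinite image, into $\Aut_{\mathbf G}(k)$, which is an open finite-index subgroup of $\Aut(k)$; and $\Aut(k)$ for a positive-characteristic local field is a topologically finitely generated virtually pro-$p$ group. Thus $\Gamma$ surjects onto an infinite subgroup of a virtually pro-$p$ group; passing to a suitable finite-index overgroup in the target and pulling back, $\Gamma$ has an infinite quotient $Q$ which is an extension of a cyclic group (the finite quotient $\Aut(k)/\Aut_{\mathbf G}(k)$ or a further finite cyclic piece — more precisely one arranges a quotient that is \{residually $p$\}-by-cyclic) by a residually-$p$ group. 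For $\Gamma_0$ one gets the analogous infinite quotient by composing with $\varphi$: since $\varphi(\Gamma_0)$ is infinite and $Q$ is a quotient of $\Gamma$ on which $\varphi(\Gamma_0)$ still has infinite image (because any normal subgroup of $\Gamma$ has finite index only if its preimage does, and the kernel of $\Gamma\to Q$ is infinite index), $\Gamma_0\to\Gamma\to Q$ has infinite image, giving $\Gamma_0$ the same kind of infinite \{residually $p$\}-by-cyclic quotient. The main obstacle I anticipate is the bookkeeping in this last step: identifying precisely a quotient that is \{residually $p$\}-by-cyclic rather than merely virtually pro-$p$-by-something, and checking that the infinite image of $\varphi$ survives into that quotient; this uses the structure of $\Aut(\mathbf G(k))$ from Proposition~\ref{prop:Aut}, the residual finiteness/pro-$p$ structure of $\Aut(k)$, and the fact that $\varphi$ is injective (Remark~\ref{rem:EmbeddingBuildings}), so that in fact $\Gamma_0$ embeds in $\Gamma$ and inherits all finite and \{residually $p$\}-by-cyclic quotients through which its image is not trivial.
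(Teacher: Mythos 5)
Your overall strategy (the Borel--Tits description of $\Aut(X)=\Aut(\mathbf G(k))$ as an extension with Galois part $\Aut_{\mathbf G}(k)\leq\Aut(k)$, Camina's structure of $\Aut(k)$ as pro-$p$-by-finite-cyclic, and Theorem~\ref{nonlinear} fed through $\varphi$ in the non-Galois situation) is the same as the paper's, and your linear case is essentially fine; note that you do not even need a faithful representation of $\Gamma$ (nor the induced-representation detour): as in the paper it suffices to observe that a finite-index subgroup of $\Gamma_0$ maps under $\varphi$, with infinite image, into $\mathbf G(k)$, and to apply Theorem~\ref{nonlinear} to that subgroup acting cocompactly on $X_0$.

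The genuine gap is in your Galois case. You split according to whether the image of $\Gamma$ in $\Out(\mathbf G(k))$ is infinite, and in the Galois branch you assert that $\varphi(\Gamma_0)$ still has infinite image in the infinite quotient $Q$ of $\Gamma$ (its image in $\Aut_{\mathbf G}(k)$). The justification you give ("any normal subgroup of $\Gamma$ has finite index only if its preimage does, and the kernel of $\Gamma\to Q$ is infinite index") does not prove this: an infinite subgroup of $\Gamma$ can have finite image in an infinite quotient of $\Gamma$ — here $\varphi(\Gamma_0)$ could a priori be virtually contained in $\Gamma\cap\Aut(\mathbf G)(k)$, in which case the second clause of (II) fails for $\Gamma_0$ and your argument does not establish (I) either. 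The paper sidesteps this by making the dichotomy about the composite $\Gamma_0\to\Gamma\to\Aut_{\mathbf G}(k)$ itself: if its image is infinite, then a fortiori $\Gamma$ is a Galois lattice and both $\Gamma_0$ and $\Gamma$ have infinite \{residually $p$\}-by-cyclic quotients (any subgroup of a pro-$p$-by-cyclic group is \{residually $p$\}-by-cyclic); if it is finite, a finite-index subgroup of $\Gamma_0$ maps with infinite image into $\mathbf G(k)$, and Theorem~\ref{nonlinear} yields (I). Your fallback — injectivity of $\varphi$ plus "$\Gamma_0$ inherits all quotients through which its image is nontrivial" — does not repair the step as stated (subgroups do not inherit quotients of the ambient group); the variant that does work, namely using the Shalom--Steger normal subgroup theorem to embed $\Gamma$ itself into $\Aut_{\mathbf G}(k)$ and $\Gamma_0$ into $\Gamma$, rests on an unpublished result which the paper uses only in remarks and deliberately keeps out of this proof. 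Reorganizing the case split around the Galois image of $\Gamma_0$ closes the argument.
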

\begin{proof}
	Assume that $X$ is    Bruhat--Tits, and let $\mathbf G$ be the  simple adjoint algebraic group over a local field $k$ to which $X$ is associated. By Proposition~\ref{prop:Aut}(i), (v) and (vi), we have $\Aut(X) = \Aut(\mathbf G(k))$ and the group  $\Aut(\mathbf G(k))$ decomposes as an extension of $\Aut(\mathbf G)(k)$ by $\Aut_{\mathbf G}(k)$, which is a subgroup of the full Galois group $\Aut(k)$. Moreover, if $k$ has characteristic~$0$ then   $\Aut(k)$ is finite, while if $k$ has characteristic~$p$, then $\Aut(k)$ is an extension of a  pro-$p$ group by a finite cyclic group  (see Theorem~1 and the remark following it in \cite{Camina}). Therefore, composing the homomorphism $\varphi \colon \Gamma_0 \to \Gamma \leq \Aut(X)= \Aut(\mathbf G(k))$  with the quotient map $ \Aut(\mathbf G(k)) \to \Aut_{\mathbf G}(k)$, we obtain a homomorphism whose image is   \{residually $p$\}-by-cyclic. If that image is infinite, then $\Gamma$ is a Galois lattice and $\Gamma_0$ and $\Gamma$ have an infinite   \{residually $p$\}-by-cyclic quotient. Otherwise, using again Proposition~\ref{prop:Aut}(i),  the group $\Gamma_0$ has a finite index subgroup whose image under $\varphi$ is contained in the group  $\mathbf G(k) \leq \Aut(X)$. Thus $\varphi$ induces in particular a linear representation with infinite image of a finite index subgroup of $\Gamma_0$. By Theorem~\ref{nonlinear}, this implies that $X_0$ is Bruhat--Tits.
\end{proof}

\begin{remark}\label{rem:residually-2}
	As mentioned in Remark~\ref{rem:EmbeddingBuildings}, every proper quotient of $\Gamma$ and $\Gamma_0$ is finite, so that  in Case (II) of Proposition~\ref{prop:ExoticityCriterion}, both $\Gamma_0$ and $\Gamma$ must be \{residually $p$\}-by-cyclic. In particular, each of their finite index subgroups has a non-trivial abelianization. That condition will in particular exclude all the panel-regular lattices that are virtually perfect (see for example the lattice $\Gamma_0$ in Lemma~\ref{lem:G_0} below; many more perfect panel-regular lattices can be constructed using Theorem~\ref{thm:Essert}).
	
	On the other hand, a panel-regular lattice can be \{residually $p$\}-by-cyclic. A concrete example is provided by the group
	$$\Gamma_2 =  \langle s_0, s_1, s_2 \mid s_0^7, s_1^7, s_2^7, s_0 s_1 s_2, s_0^3 s_1^3 s_2^3\rangle$$
	associated with the difference set $\{0, 1, 3\}$  in $\ZZ/7\ZZ$. As pointed out by J.~Essert \cite[Remark following Th.~5.8]{Essert}, that group is an index~$3$ subgroup of an arithmetic lattice $U$ in the group $G = \SL_3\big(\FF_2(\!(t)\!) \big)$ considered by K\"ohler--Meixner--Wester \cite{KMW2}. The lattice $U$ is in fact contained in  $\SL_3\big(\FF_2[t, (t^2+t+1)^{-1}]\big)$ (see  \cite[Theorem~1]{KMW2}), and it is pointed out in \cite[\S3]{KMW2}    that the image of $\Gamma_2$ under the congruence quotient induced by the evaluation   $t \mapsto 0$   is cyclic of order~$7$. By Platonov's theorem \cite{Platonov}, the kernel of that congruence quotient is residually~$2$. Thus $\Gamma_2$ is \{residually~$2$\}-by-\{cyclic of order~$7$\}.
\end{remark}

\subsection{Examples in characteristic~$2$}

We shall now specify a choice of $\Gamma_0$ which is suitable for an application of Proposition~\ref{prop:ExoticityCriterion}.

\begin{lemma}\label{lem:G_0}
	Consider the  set $D_0 = \{0, 1, 3\}$, which  is a difference set in $\ZZ/7\ZZ$, and the panel-regular lattice
	$$\Gamma_0 = \langle s_0, s_1, s_2 \mid s_0^7, s_1^7, s_2^7, s_0 s_1 s_2^3, s_0^3 s_1^3 s_2\rangle$$
	and the associated  $\widetilde A_2$-building $X_0$ of order~$2$ afforded by Theorem~\ref{thm:Essert}. Then:
	\begin{enumerate}[(i)]
		\item $X_0$ is not Bruhat--Tits.
		
		\item The abelianization $\Gamma_0/[\Gamma_0, \Gamma_0]$ has order~$7$, and the derived group  $[\Gamma_0, \Gamma_0]$ is perfect.
		
		\item The assignments $s_0 \mapsto s_0^2$, $s_1 \mapsto s_1^2$, $s_2 \mapsto s_0 s_1$ extend to an automorphism $\alpha \in \Aut(\Gamma_0)$ of order~$3$. The extension $\Gamma_0 \rtimes \langle \alpha \rangle$ acts freely and transitively on the set of chambers of $X_0$ (and coincides with the group $G_4$ from \cite[Th.~2.5]{Ronan}).
	\end{enumerate}
\end{lemma}

\begin{proof}
	By \cite[Theorem~2.5]{Ronan}, there are exactly~$4$ possible groups acting freely and transitively on the chambers of an $\widetilde A_2$-building, denoted there by $G_1, G_2, G_3$ and $G_4$. (Those groups and their structure are also discussed in \cite{Tits_cours85},\cite{Tits_Andrews},  \cite{Tits_balls}, \cite[\S C.3]{Kantor84} and \cite{KMW}.) Using the explicit presentations from \cite[Theorem~C.3.6]{Kantor84}, one sees that $G_4$ is isomorphic to $\Gamma_0 \rtimes \langle \alpha \rangle$. This proves (iii).
	Assertion (i) follows from \cite[Cor.~1]{Tits_balls}, and assertion (ii)  from \cite[Prop.~3]{KMW}.
\end{proof}

\begin{proof}[Proof of Corollary~\ref{cor:ExistenceExotic}]
Let $q$ be a power of $2$ that is not a power of $8$, and let $n=q^2+q+1$. By Lemma~\ref{lem:SubDiffSets}, there is a difference set $D \subset \ZZ/n\ZZ$   containing $\{0, n/7, 3n/7\}$.
	
	We pick permutations $\pi_1, \pi_2 \in \Sym(D)$ fixing $0$ such that $\pi_1(n/7) = n/7$, $\pi_1(3n/7) = 3n/7$, $\pi_2(n/7) = 3n/7$ and $\pi_2(3n/7) = n/7$. Let then $\Gamma$ be the panel-regular lattice associated with $D$, $\pi_1$ and $\pi_2$ via Theorem~\ref{thm:Essert}. Its building $X$ has order $q$. Moreover, Proposition~\ref{prop:LatticeMorphism} provides a homomorphism $\Gamma_0 \to \Gamma$, where $\Gamma_0$ is the panel-regular lattice from Lemma~\ref{lem:G_0}. Since $\Gamma_0$ has a perfect subgroup of index~$7$, for any prime $p$, every \{residually $p$\}-by-cyclic quotient of $\Gamma_0$ has order at most~$7$. Therefore, Proposition~\ref{prop:ExoticityCriterion} ensures that $X$ is not Bruhat--Tits.
\end{proof}

\appendix

\section{On the linearity of finitely generated groups}\label{app:RingLinear}

In this appendix we present various characterizations of the existence of a finite-dimensional linear representation with non-amenable image for general  finitely generated groups. The proof is based on a trick invented by Tits in his proof of the so called Tits alternative, and on its strengthening due to Breuillard and Gelander.

\begin{theorem} \label{thm:ringlinear}
For a finitely generated group $\Gamma$ the following conditions are equivalent.
\begin{enumerate}[(1)]

\item There exists a commutative ring $R$ with unity,  a finitely generated projective $R$-module $M$ and a group homomorphism $\Gamma\to \Aut_R(M)$ whose image  is not virtually solvable.

\item There exists a commutative ring $R$ with unity,  an integer  $n>0$ and group homomorphism $\Gamma\to \GL_n(R)$ whose image  is not virtually solvable.

\item There exists a field $K$, an integer  $n>0$ and group homomorphism $\Gamma\to \GL_n(K)$ whose image  is not virtually solvable.

\item There exists a finite index subgroup $\Gamma_0<\Gamma$, a field $K$, a connected adjoint $K$-simple algebraic group $\mathbf{G}$ defined over $K$ and and group homomorphism $\Gamma_0\to \mathbf{G}(K)$ whose image is Zariski dense.

\item There exists a finite index subgroup $\Gamma_0<\Gamma$, a local field $k$, a connected adjoint $k$-simple algebraic group $\mathbf{G}$ defined over $k$ and group homomorphism $\Gamma_0\to \mathbf{G}(k)$ whose image is  Zariski dense and unbounded (i.e. with non-compact closure).

\end{enumerate}
\end{theorem}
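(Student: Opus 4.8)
The plan is to prove the cyclic chain of implications $(1)\Rightarrow(2)\Rightarrow(3)\Rightarrow(4)\Rightarrow(5)\Rightarrow(1)$. The arrow $(1)\Rightarrow(2)$ is purely formal: a finitely generated projective $R$-module $M$ is a direct summand of a free module $R^n$, and letting $\Aut_R(M)$ act by the identity on a complementary summand embeds it as a subgroup of $\GL_n(R)$; composing $\Gamma\to\Aut_R(M)$ with this embedding gives a homomorphism $\Gamma\to\GL_n(R)$ with the same image, hence still not virtually solvable. The substantial content lies in $(3)\Rightarrow(4)$ and $(4)\Rightarrow(5)$, where the real work happens; the remaining arrows are elementary. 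Throughout I would use freely that virtual solvability passes to subgroups, quotients, finite-index subgroups and finite-index overgroups, and that a finite direct product of virtually solvable groups is virtually solvable.

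For $(2)\Rightarrow(3)$ I would run Tits' specialization trick. Fix a finite generating set of $\Gamma$ and let $A\subseteq R$ be the (finitely generated, commutative, unital) subring generated by all matrix entries of the images of the generators and of their inverses; then the representation factors through $\GL_n(A)$. A finitely generated ring is Noetherian with nilpotent nilradical, so the kernel of $\GL_n(A)\to\GL_n(A/\operatorname{nil}(A))$ is a nilpotent group; hence passing to $A/\operatorname{nil}(A)$ does not destroy non-virtual-solvability of the image, and we may assume $A$ is reduced. Its finitely many minimal primes $\mathfrak{p}_1,\dots,\mathfrak{p}_s$ have zero intersection, so $\GL_n(A)$ embeds into $\prod_j\GL_n(A/\mathfrak{p}_j)$, and since a finite product of virtually solvable groups is virtually solvable, the image of $\Gamma$ in $\GL_n(A/\mathfrak{p}_j)$ must fail to be virtually solvable for some $j$. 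As $A/\mathfrak{p}_j$ is a domain, composing with the inclusion into $\GL_n(K)$ for $K=\operatorname{Frac}(A/\mathfrak{p}_j)$ yields $(3)$.

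The crux is $(3)\Rightarrow(4)$ together with $(4)\Rightarrow(5)$, both of which I would take from the work of Breuillard and Gelander on the topological Tits alternative. For $(3)\Rightarrow(4)$: replacing $K$ by the subfield generated by the matrix entries we may assume $K$ is finitely generated; let $\mathbf{H}$ be the Zariski closure of the image of $\Gamma$ in $\GL_n$ and let $\Gamma_0\le\Gamma$ be the preimage of the identity component $\mathbf{H}^\circ$, a finite-index subgroup with Zariski-dense image in the connected group $\mathbf{H}^\circ$. Since the image of $\Gamma$ is not virtually solvable, $\mathbf{H}^\circ$ is not solvable, so after quotienting by the radical and then by the center one obtains a nontrivial connected semisimple \emph{adjoint} group; projecting to one of its simple factors gives a connected adjoint $K$-simple group $\mathbf{G}$ (after replacing $K$ by a finite extension if needed) together with a homomorphism $\Gamma_0\to\mathbf{G}(K)$ with Zariski-dense image, because surjective morphisms of algebraic groups send Zariski-dense subgroups to Zariski-dense subgroups. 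The field-of-definition bookkeeping, in particular in positive characteristic, is exactly as in Breuillard--Gelander. For $(4)\Rightarrow(5)$: fix a faithful $K$-rational embedding $\mathbf{G}\hookrightarrow\GL_N$. If the image of $\Gamma_0$ were bounded under every embedding of the finitely generated field $K$ into a local field, then by the product formula together with Kronecker's theorem every element of that image would have all eigenvalues equal to roots of unity; a finitely generated linear group of this type contains no nonabelian free subgroup and is hence virtually solvable, contradicting Zariski-density in the non-solvable group $\mathbf{G}$. Thus some embedding $K\hookrightarrow k$ into a local field makes the image unbounded, giving $(5)$. I expect this passage — from abstract linearity all the way to an unbounded Zariski-dense representation into an adjoint simple algebraic group over a local field — to be the main obstacle.

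Finally, $(5)\Rightarrow(1)$ closes the cycle. Choose a faithful $k$-rational representation $\mathbf{G}\hookrightarrow\GL_N$; the image of $\Gamma_0$ under $\Gamma_0\to\mathbf{G}(k)\subseteq\GL_N(k)$ is Zariski-dense in the connected adjoint $k$-simple, hence non-solvable, group $\mathbf{G}$, so it is not virtually solvable. To produce a representation of $\Gamma$ itself, replace $\Gamma_0$ by its normal core $N\trianglelefteq\Gamma$ (still of finite index, and the restricted image $\rho(N)$ is still not virtually solvable) and form the induced representation $\mathrm{Ind}_N^\Gamma$ of the restriction: its restriction to $N$ is a finite direct sum of representations each with image $\rho(N)$, so if the image of $\Gamma$ under the induced representation were virtually solvable then so would be its restriction to $N$, whence $\rho(N)$ — being a quotient, via a coordinate projection, of that restricted image — would be virtually solvable, a contradiction. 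This exhibits a finite-dimensional representation of $\Gamma$ over $k$ with non-virtually-solvable image; taking $R=k$ and $M$ a free $k$-module of the corresponding finite rank yields $(1)$. With all five arrows in place the five conditions are equivalent.
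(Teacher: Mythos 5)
Your proposal is correct in substance, and for $(1)\Rightarrow(2)\Rightarrow(3)\Rightarrow(4)$ it is essentially the paper's argument: stabilize the projective module, pass to the finitely generated (hence Noetherian) subring of matrix entries, observe that the congruence kernel modulo the nilradical is nilpotent, project to the fraction field of a minimal prime quotient, then take the Zariski closure, its identity component, kill the radical and the center, and project to a $K$-simple factor. (Your parenthetical ``after replacing $K$ by a finite extension'' is not needed there, since the adjoint semisimple quotient is already a product of $K$-simple groups over $K$ itself; the statement only asks for $K$-simplicity, not absolute simplicity.) Where you genuinely diverge is in the organization and in $(4)\Rightarrow(5)$. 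The paper does not close a cycle: it notes that $(5)\Rightarrow(4)\Rightarrow(3)\Rightarrow(2)\Rightarrow(1)$ are immediate and proves the four converses, whereas you close with $(5)\Rightarrow(1)$ via induction from the normal core --- correct, and in fact a more explicit treatment of the finite-index passage than the paper's ``clear''. For $(4)\Rightarrow(5)$ the paper first replaces $K$ by the field $F$ generated by the matrix coefficients, checks via Borel (AG, Th.~14.4) that $\mathbf{G}$ is defined over $F$, and then applies Breuillard--Gelander's Lemma~2.1 directly to the infinite set of matrix coefficients, with no spectral analysis at all; you instead run the classical Tits dichotomy.

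That dichotomy does work, but the two justifications you give for it are the soft spots. ``Product formula together with Kronecker's theorem'' literally applies only when the eigenvalues are algebraic over the prime field; for a finitely generated field of positive transcendence degree you need Tits' lemma that a non-root-of-unity element admits an embedding into a local field where its absolute value exceeds $1$ (this is exactly the ingredient Breuillard--Gelander quantify). And the assertion that a finitely generated linear group all of whose elements have only root-of-unity eigenvalues contains no nonabelian free subgroup is itself a lemma requiring proof: the standard route is that roots of unity in extensions of bounded degree of a finitely generated field have bounded order, so the spectral condition is Zariski closed, forcing the identity component of the Zariski closure to contain no nontrivial torus and hence to be unipotent; this makes the group virtually nilpotent, which already contradicts Zariski density in $\mathbf{G}$ without any appeal to the Tits alternative. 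Since you explicitly defer this passage to Breuillard--Gelander, I would not call it a fatal gap, but as written these two sub-claims (plus the field-of-definition step when you shrink $K$ in $(4)\Rightarrow(5)$) are what must be supplied --- or avoided altogether by quoting BG Lemma~2.1 on matrix coefficients, as the paper does.
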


\begin{proof}
The implications $(5) \Rightarrow (4)\Rightarrow (3)\Rightarrow (2)\Rightarrow (1)$ are all clear (recalling that the Zariski closure of a solvable group is solvable, see Corollary~1 in \S2.4 of Chapter~1 in \cite{Borel}). We will prove their converses.

\medskip \noindent
$(1)\Rightarrow (2):$
We fix a commutative unital ring  $R$, a finitely generated projective $R$-module $M$ and a group homomorphism $\phi \colon \Gamma\to \Aut_R(M)$ whose image  is not virtually solvable.
We let $N$ be an $R$-module such that $M\oplus N\simeq R^n$ for some positive integer $n$
and consider the map $\phi':\Gamma\to \Aut(M\oplus N)\simeq \GL_n(R)$ given by $\phi'(\gamma)(m,n)=(\phi(\gamma)m,n)$.

\medskip \noindent
$(2)\Rightarrow (3):$
We fix a commutative unital ring  $R$, a positive integer  $n$ and group homomorphism $\phi \colon\Gamma\to \GL_n(R)$ whose image is not virtually solvable.
Since $\Gamma$ is finitely generate we assume as we may that $R$ is finitely generated, hence Noetherian by Hilbert's Basis theorem (see \cite[\S7.10]{Jacobson}).
We let $\mathfrak{n}$ denote the \textbf{nil radical} of $R$, i.e. the set of nilpotent elements of $R$.
Since  $R$ is  Noetherian, every ideal is finitely generated. Therefore there is a natural $d$ such that $\mathfrak{n}^d=\{0\}$. By a theorem of Krull \cite[Th.~7.1]{Jacobson}, the nil radical $\mathfrak n$ is the intersection of all prime ideals. Moreover,  a theorem of Noether ensures that the set of {minimal} prime ideals is finite (see Exercise 1.2 on p.~47 in \cite{Eisenbud}). In particular, denoting the minimal prime ideals by $\mathfrak p_1, \dots, \mathfrak p_m$, we infer that $\mathfrak n = \bigcap_{i=1}^m \mathfrak p_i$.

For every integer $e \geq 0$ we denote by $G(e)$ the kernel of the natural homomorphism $GL_n(R)\to GL_n(R/\mathfrak{n}^e)$.
Observe that for all $e,f \geq 0$, we have $[G(e),G(f)] \leq G(e+f)$. Moreover we have $G(e)\leq G(f)$ if $e \geq f$.
It follows that for $e\geq 1$, the quotient $G(1)/G(e)$ is nilpotent.
In particular, the group $G(1)\simeq G(1)/G(d)$ is nilpotent.
Let us denote by $K_i$ the fraction field of $R/\mathfrak{p}_i$ and by $\phi_i \colon \Gamma\to \GL_n(K_i)$ the homomorphism obtained by composing $\phi$ with the natural homomorphism $\GL_n(R)\to\GL_n(R/\mathfrak{p_i})\to\GL_n(K_i)$.
Since the image of $\Gamma$ in $\GL_n(R)$ is not virtually solvable and the kernel of $\GL_n(R)\to \prod_{i =1}^m \GL_n(K_i)$, which equals $G(1)$, is nilpotent, we conclude that for some $i$, the image of $\Gamma$ under $\phi_i$ is not virtually solvable.

\medskip \noindent
$(3)\Rightarrow (4):$
We fix a field $K$, an integer $n > 0$ and group homomorphism $\phi:\Gamma\to \GL_n(K)$ whose image  is not virtually solvable.
We let $\mathbf{H}$ be the Zariski closure of $\phi(\Gamma)$. By \cite[AG, Theorem~14.4]{Borel} $\mathbf{H}$  is defined over $K$ and $\phi(\Gamma)<\mathbf{H}(K)$.
We denote by $\mathbf{H}^0$ the identity connected component in $\mathbf{H}$
and observe that $\mathbf{H}^0(K)<\mathbf{H}(K)$ is of finite index.
We set $\Gamma_0=\phi^{-1}(\mathbf{H}^0(K))$ and deduce that $\Gamma_0<\Gamma$ is of finite index.
Note that the Zariski closure of $\phi(\Gamma_0)$ is $\mathbf{H}^0$, as it is contained in the connected group $\mathbf{H}^0$ and it is of finite index in $\mathbf{H}$.
We let $\mathbf{R}$ be the solvable radical of $\mathbf{H}^0$ and set $\mathbf{L}=\mathbf{H}^0/\mathbf{R}$.
Since $\phi(\Gamma)$ is not virtually solvable, the group $\phi(\Gamma_0)$ is not solvable, thus $\mathbf{R}\neq\mathbf{H}^0$ and $\mathbf{L}$ is non-trivial. Upon replacing $\mathbf L$ by its quotient modulo its center, which is finite, we may further assume that $\mathbf L$ is centerless. Thus $\mathbf{L}$ is a connected, adjoint, semisimple $K$-group and the image of $\phi(\Gamma_0)$ under $\mathbf{H}^0(K)\to\mathbf{L}(K)$  is Zariski dense in $\mathbf{L}$.
The adjoint group $\mathbf{L}$ is $K$-isomorphic to a direct product of connected, adjoint, $K$-simple $K$-groups.
We let $\mathbf{G}$ be one of these $K$-simple factors.
We let $\phi_0 \colon \Gamma_0\to \mathbf{G}(K)$ be the composition of $\phi$ with the natural map $\mathbf{H}^0(K)\to \mathbf{L}(K)\to \mathbf{G}(K)$
and conclude that $\phi_0(\Gamma_0)$ is Zariski dense in the connected adjoint $K$-simple $K$-algebraic group $\mathbf{G}$.

\medskip \noindent
$(4)\Rightarrow (5):$
We fix a finite index subgroup $\Gamma_0<\Gamma$, a field $K$, a connected adjoint $K$-simple $K$-algebraic group $\mathbf{G}$ and and group homomorphism $\Gamma_0\to \mathbf{G}(K)$ with a Zariski dense image.
We choose an injective $K$-morphism $\mathbf{G}\to\GL_n$ for some $n$ and let $\psi \colon \Gamma_0\to\GL_n(K)$ be the composition of $\phi$ with the homomorphism $\mathbf{G}(K)\to \GL_n(K)$.
We let $I$ be set of matrix coefficients of $\psi(\Gamma_0)$. Since $\Gamma_0$ is Zariski dense in $\mathbf{G}(K)$, the field $K$ is infinite and the set $I$ must be infinite as well. Let moreover $R$ be the ring generated by $I$ and $F$ be its fraction field.
Since $\phi(\Gamma_0)$ is Zariski dense in $\mathbf{G}$ and is contained in $\mathbf{G}(F)$ we conclude by \cite[AG, Theorem 14.4]{Borel} that $\mathbf{G}$ is defined over $F$.
The group $\Gamma$ is finitely generated, hence so is its finite index subgroup $\Gamma_0$.
It follows that $R$ is a finitely generated domain. By \cite[Lemma 2.1]{BreuillardGelander-Tits} there exists a local field $k$ and an embedding $F\hookrightarrow k$ such that the image of $I$ in $k$ is unbounded.
Since $\mathbf{G}$ is defined over $F$ we get that it is also defined over $k$, via this embedding.
The composition of $\phi \colon\Gamma_0\to \mathbf{G}(F)$ with the homomorphism $\mathbf{G}(F)\to\mathbf{G}(k)$ induced by the embedding $F\hookrightarrow k$ is again denoted by $\phi$.
We already know that $\phi(\Gamma_0)$ is Zariski dense in $\mathbf{G}$.
Since the image of $I$ is unbounded in $k$, we see that $\psi(\Gamma_0)$ is unbounded in $\GL_n(k)$, so that $\phi(\Gamma_0)$ is unbounded in $\mathbf{G}(k)$.
\end{proof}

\begin{remark}
The equivalence between $(2)$ and $(3)$ in Theorem~\ref{thm:ringlinear} remains true  if one replaces the phrases ``virtually solvable'' by either
``solvable'', ``finite'' or ``trivial''. The proof for the ``solvable'' case is the same, mutatis-mutandis.
For the ``finite'' and ``trivial'' cases we need one extra ingredient: the argument above fails only if the image of $\phi$ is virtually nilpotent, but every finitely generated virtually nilpotent group has a faithful representation over $\mathbf{Q}$.
\end{remark}

The following is a convenient tool for proving non-linearity of groups.

\begin{corollary} \label{cor:ringlinear}
Let $\Gamma$ be a finitely generated group such that every finite index subgroup has a finite abelization (e.g a group with property (T)).
Assume that there exists a finite index subgroup $\Gamma_1<\Gamma$ such that there is no homomorphism $\Gamma_0\to \mathbf{G}(k)$ with an unbounded and Zariski dense image,
where $\Gamma_0<\Gamma_1$ is a finite index subgroup, $k$ a local field and $\mathbf{G}$ a connected adjoint $k$-simple $k$-algebraic group.
Then every homomorphism $\Gamma\to \GL_n(R)$, where $R$ is a commutative ring with unity, has a finite image.
\end{corollary}

\begin{proof}
By assumption, the group $\Gamma_1$ does not satisfy condition $(5)$ of Theorem~\ref{thm:ringlinear}, thus it also does not satisfy condition $(2)$ of that theorem. 
It follows that also $\Gamma$ does not satisfy condition $(2)$.
Fixing a commutative ring with unity $R$,  an integer  $n>0$ and group homomorphism $\phi:\Gamma\to \GL_n(R)$, we get that $\phi(\Gamma)$ is finite,
otherwise we could find a finite index subgroup with an infinite
abelinization, as $\phi(\Gamma)$ is virtually solvable.
\end{proof}


\section{The Hopf argument} \label{app-hopf}

The Hopf argument is a familiar ergodicity criterion, see   \cite{Hopf}  and  \cite{Coudene}. 
In our paper we need to use it in a form which is slightly more abstract than the standard one.
This appendix is devoted to the formulation and proof of the Hopf argument in our needed form.
In fact, as no further complexity is introduced, we allowed ourselves a little more general form than actually needed:
for all applications we are aware of the group $T$ considered below is abelian and the collection $I$ consists of two elements.

As the formulation of the theorem below is not much simpler than its proof,
we advice the reader to have in mind the following classical setting:
$M$ is a hyperbolic surface whose universal cover is identified with the hyperbolic plane $\mathbf{H}^2$, $V=T^1(M)$ and $T=\mathbf{R}$ acts on it by the geodesic flow, $Z=T^1(\mathbf{H}^2)$ is endowed with the geodesic flow action of $T$ and the deck transformation action of $\Lambda=\pi_1(M)$,
$W_1=W_2$ is the ideal boundary of $\mathbf{H}^2$, $Y=W_1\times W_2$ and $\psi \colon Z\to Y$ is given by the asymptotic hitting points of the geodesic flow.

Two sequences $(a_n)$ and $(b_n)$ in a compact space $V$ are called \textbf{proximal}  if for every neighborhood $U$ of the diagonal in $V\times V$, we have $(a_n,b_n)\in U$ for all but finitely many $n$'s.

\begin{theorem}[The Hopf argument] \label{hopf}
Let $T$ be an amenable group and $\Lambda$ be any group.
Let $Z$ be a locally compact topological space endowed with commuting actions of $T$ and $\Lambda$.
Assume both actions (separately) $\Lambda \curvearrowright Z$ and $T\curvearrowright Z$ are proper, let $V=Z/\Lambda$ and $Y=Z/T$
and let $\phi \colon Z\to V$ and $\psi \colon Z\to Y$ be the quotient maps.
Assume $V$ is compact.
Note that there are natural actions $T\curvearrowright V$ and $\Lambda\curvearrowright Y$,
the map $\phi$ is $\Lambda$-invariant and $T$-equivariant and the map $\psi$ is $T$-invariant and $\Lambda$-equivariant.
Let $\{W_i\}_{i\in I}$ be a collection of $\Lambda$-spaces and for each $i\in I$ let $\pi_i \colon Y \to W_i$ be a $\Lambda$-equivariant map, as described in the following diagram:
\begin{align} \label{diad:hopf}
 \xymatrix{
& Z \ar[dl]_\phi\ar[dr]^\psi& &\\
V & & Y \ar[dl]_{\pi_i}\ar[dr]^{\pi_{j}}& \\
&  W_i& \cdots & W_j
}
\end{align}
Assume for each $i\in I$ there exists a central element $t_i\in T$ such that
for all  $z,z'\in Z$ satisfying $\pi_i\psi(z)=\pi_i\psi(z')$ there exist $t,t'\in T$ such that the two sequences $(t_i^nt\phi(z))_{n\in\mathbb{N}}$ and $(t_i^nt'\phi(z'))_{n\in\mathbb{N}}$ are proximal in $V$.

Let $m_Z$ be a $T\times \Lambda$-invariant measure class on $Z$ and denote its pushes to the various factors in the diagram by $m_V,m_Y$ and $m_{W_i}$.
Assume further:
\begin{itemize}
\item the measure class $m_V$ contains a $T$-invariant probability measure.
\item Any function in  $\bigcap_{i\in I} \pi_i^*(L^\infty(W_i,m_{W_i}))^\Lambda$ is essentially constant.

\end{itemize}
Then the two actions $T \curvearrowright (V,m_V)$ and $\Lambda\curvearrowright (Y,m_Y)$ are ergodic.
\end{theorem}

\begin{proof}
First note that
\[ L^\infty(V,m_V)^T \simeq L^\infty(Z,m_Z)^{T\times \Lambda}\simeq   L^\infty(Y,m_Y)^\Lambda, \]
thus the ergodicity of $T \curvearrowright (V,m_V)$ is equivalent to that of $\Lambda\curvearrowright (Y,m_Y)$.
We will prove the former.

Since the measure class $m_V$ contains a $T$-invariant probability measure (which by an abuse of notation we keep calling $m_V$), it is enough to prove that every $T$-invariant function class in $L^2(V,m_V)$ is essentially constant,
or equivalently, that the image of the orthogonal projection map $p_T \colon L^2(V,m_V)\to L^2(V,m_V)^T$ consists of the essentially constant functions.
To this end, since $C(V)$ is dense in $L^2(V,m_V)$, it is enough to show that for every $f\in C(V)$, $p_T(f)$ is essentially constant.
We now fix $f_0\in C(V)$ and argue to show that $p_T(f_0)$ is essentially constant.

Fix a F\o lner sequence $F_n$ in the amenable group $T$ and define, for every bounded measurable function $f\in B(V)$ and for every $x\in V$,
\[ \bar{f}(x)=\limsup_{n\to \infty}\frac{1}{|F_n|}\int_{F_n} f(sx)ds, \]
where the integration is with respect to a fixed left Haar measure on $T$ and $|F_n|$ denotes the measure of $F_n$.
We stress the fact that $\bar{f}$ is an actual function and it is everywhere $T$-invariant, while on the contrary $p_T(f)$ is a class of functions.
By the Mean Ergodic Theorem for amenable groups \cite{greenleaf}, $\bar{f}$ is a function in the class $p_T(f)$.

Fix $i\in I$ and let $t_i\in T$ be a central element with the property described in the formulation of the theorem.
Let $p_i \colon L^2(V,m_V)\to L^2(V,m_V)^{t_i}$ be the orthogonal projection onto the space of $t_i$-invariant functions
and note that by the centrality of $t_i$, $p_Tp_i=p_T$.
Define for every $x\in V$
\[ f_i(x)=\limsup_{N\to \infty}\frac{1}{N}\sum_{n=1}^N f_0(t_i^nx). \]
We obtain a bounded measurable function $f_i\in B(V)$.
By Birkhoff Ergodic Theorem $f_i$ is  in the class $p_i(f_0)$.

Fix $z,z'\in Z$ satisfying $\pi_i\psi(z)=\pi_i\psi(z')$.
By the defining property of $t_i$ there exist $t,t'\in T$ such that the two sequences $(t_i^nt\phi(z))_{n\in\mathbb{N}}$ and $(t_i^nt'\phi(z'))_{n\in\mathbb{N}}$ are proximal in $X$.
It follows that for every $s\in T$ the sequences $(t_i^nst\phi(z))_{n\in\mathbb{N}}$ and $(t_i^nst'\phi(z'))_{n\in\mathbb{N}}$ are proximal
(we have used here the centrality of $t_i$ again)
and by the (uniform) continuity of $f_0$, it is easy to see that $f_i(st\phi(z))=f_i(st'\phi(z'))$.
We deduce that $\bar{f}_i(t\phi(z))=\bar{f}_i(t'\phi(z'))$.
Since $\bar{f}_i$ is $T$-invariant we get $\bar{f}_i(\phi(z))=\bar{f}_i(\phi(z'))$.
Note that $\bar{f}_i$ is in the class of $p_Tp_i(f_0)=p_T(f_0)$.
In particular, its class is independent of $i\in I$.

Under the obvious isomorphisms
\[ B(V)^T \simeq B(Z)^{T\times \Lambda}\simeq  B(Y)^\Lambda \]
$\bar{f}_i$ corresponds to a function $g_i\in  B(Y)^\Lambda$. By the previous argument, if $y=\psi(z)$ and $y'=\psi(z')$ satisfy $\pi_i(y)=\pi_i(y')$ then $g_i(y)=g_i(y')$. Hence $g_i$
is a pull back of a function in $B(W_i)$.
Since the maps $\phi$ and $\psi$ are measure class preserving, we obtain that the class of $g_i$ is independent of $i$.
Denote this class by $g$ and get $g\in \bigcap_{i\in I} \pi_i^*(L^\infty(W_i,m_{W_i}))^\Lambda$.
Then by assumption $g$ is essentially constant.
We conclude that $p_T(f_0)$ is essentially constant, and the proof is complete.
\end{proof}

\bibliographystyle{alpha}
\bibliography{biblio}

\end{document}